\renewcommand{\arraystretch}{1.2}
\newcommand\nc\newcommand
\renewcommand
\nc\AND{\qquad\text{and}\qquad}
\nc\ANd{\quad\text{and}\quad}
\nc\COMMA{,\qquad}
\nc\COMMa{,\quad}
\rnc\iff{\ \Leftrightarrow\ }
\nc\IFf{\quad \Leftrightarrow\quad }
\nc\Iff{\ \ \Leftrightarrow\ \ }
\nc\IFF{\qquad \Leftrightarrow\qquad }
\rnc\implies{\ \Rightarrow\ }
\nc\IMPLIES{\qquad \Rightarrow\qquad }
\nc\set[2]{\{#1:#2\}}
\nc\bigset[2]{\big\{#1:#2\big\}}
\nc\bit{\begin{itemize}}
\nc\eit{\end{itemize}}
\nc\ben{\begin{thmenumerate}}
\nc\bena{\begin{enumerate}[label=\textup{(\alph*)},leftmargin=10mm]}
\nc\een{\end{thmenumerate}}
\nc\eena{\end{enumerate}}
\nc\pf{\begin{proof}}
\nc\epf{\end{proof}}
\nc\pfclaim{\begin{quote}\begin{proof}}
\nc\epfclaim{\end{proof}\end{quote}}
\nc\epfres{\hfill\qed}
\nc\epfreseq{\tag*{\qed}}
\let\oldproofname=\proofname
\renewcommand{\proofname}{\rm\bf{\oldproofname}}
\renewcommand{\H}{\mathscr H}
\renewcommand{\L}{\mathscr L}
\newcommand{\R}{\mathscr R}
\newcommand{\D}{\mathscr D}
\newcommand{\J}{\mathscr J}
\newcommand{\K}{\mathscr K}
\nc\rH{\mathrel{\H}}
\nc\rL{\mathrel{\L}}
\nc\rR{\mathrel{\R}}
\nc\rD{\mathrel{\D}}
\nc\rJ{\mathrel{\J}}
\nc\rK{\mathrel{\K}}
\nc\rsi{\mathrel{\si}}
\renewcommand{\P}{\mathcal P} 
\newcommand{\Ptw}[1]{\mathcal{P}_{#1}^\Phi}
\rnc\S{\mathcal S}
\nc\A{\mathcal A}
\nc\F{\mathcal F}
\nc\I{\mathcal I}
\nc\C{\mathcal C}
\nc\LL{\mathcal L}
\newcommand{\id}{\operatorname{id}}
\newcommand{\ba}{\boldsymbol{a}}
\newcommand{\bb}{\boldsymbol{b}}
\newcommand{\bc}{\boldsymbol{c}}
\newcommand{\Float}{\operatorname{\Phi}}
\newcommand{\multw}{}
\newcommand{\multwk}[1]{\cdot}
\nc\Proj[1]{\overline{#1}}
\nc\norm[1]{\langle\!\langle#1\rangle\!\rangle}
\newcommand{\N}{\mathbb{N}}
\nc\bn{{\bf n}}
\nc\bm{{\bf m}}
\nc\bnz{{\bf n}_0}
\nc\bmz{{\bf m}_0}
\nc\bdz{{\bf d}_0}
\nc\bq{{\bf q}}
\nc\zero{{\bf 0}}
\newcommand{\coker}{\operatorname{coker}}
\newcommand{\dom}{\operatorname{dom}} 
\newcommand{\codom}{\operatorname{codom}}
\newcommand{\rank}{\operatorname{rank}}
\newcommand{\per}{\operatorname{per}}
\newcommand{\mumin}{\operatorname{\mu in}}
\newcommand{\Cong}{\text{\sf Cong}}
\newcommand{\muup}{\mu^\uparrow}
\newcommand{\mudown}{\mu^\downarrow}
\newcommand{\cg}{\text{\sf cg}}
\newcommand{\cgx}{\text{\sf cgx}}
\newcommand{\fcg}{\text{\sf cg}}
\nc\thx{\theta^{\textsf{x}}}
\newcommand{\Cmatsetup}{\arraycolsep=1.4pt\def\arraystretch{1}\footnotesize}
\DeclareRobustCommand
\newcommand{\restr}{\mathord{\restriction}}
\newcommand{\leqc}{\leq_C}
\newcommand{\geqc}{\geq_C}
\newcommand{\hgt}{{\text{\sf x}}}
\rnc\emptyset{\varnothing}
\nc\pc[2]{(#1,#2)^\sharp}
\nc{\uv}[1]{\fill (#1,2)circle(.17);}
\nc{\lv}[1]{\fill (#1,0)circle(.17);}
\nc{\uvs}[1]{{\foreach \x in {#1} { \uv{\x}}}}
\nc{\lvs}[1]{{\foreach \x in {#1} { \lv{\x}}}}
\nc{\darcx}[3]{\draw(#1,0)arc(180:90:#3) (#1+#3,#3)--(#2-#3,#3) (#2-#3,#3) arc(90:0:#3);}
\nc{\darc}[2]{\darcx{#1}{#2}{.4}}
\nc{\uarcx}[3]{\draw(#1,2)arc(180:270:#3) (#1+#3,2-#3)--(#2-#3,2-#3) (#2-#3,2-#3) arc(270:360:#3);}
\nc{\uarc}[2]{\uarcx{#1}{#2}{.4}}
\nc{\stline}[2]{\draw(#1,2)--(#2,0);}
\numberwithin{equation}{section}
\newtheorem{thm}[equation]{Theorem}
\newtheorem{lemma}[equation]{Lemma}
\newtheorem{cor}[equation]{Corollary}
\newtheorem{prop}[equation]{Proposition}
\theoremstyle{definition}
\newtheorem{defn}[equation]{Definition}
\newtheorem{rem}[equation]{Remark}
\newtheorem{exa}[equation]{Example}
\newenvironment{thmenumerate}{
   \begin{enumerate}[label=\textup{(\roman*)}, widest=(5), leftmargin=10mm]}{
    \end{enumerate}}
\newenvironment{thmsubenumerate}{
   \begin{enumerate}[label=\textup{(\alph*)}, widest=(b), leftmargin=10mm]}{
    \end{enumerate}}
\newcounter{caseco}
\newcommand{\case}{\refstepcounter{caseco}\medskip\noindent\textbf{Case \thecaseco:}\ } 
\newcounter{subcaseco}
\newcommand{\subcase}{\refstepcounter{subcaseco}\medskip\noindent\textbf{Subcase \thecaseco.\thesubcaseco:}\ }    
\newcounter{stepco}
\newcounter{stageco}
\newcounter{RT}\renewcommand{\theRT}{RT\arabic{RT}}
\newenvironment{rowtype}{\medskip\par\noindent\refstepcounter{RT}\textbf{Row Type \theRT}.}{\par}
\newcounter{fR}\renewcommand{\thefR}{fR\arabic{fR}}
\definecolor{delcol}{HTML}{F4F3F4}
\definecolor{excepcol}{HTML}{D50B53}
\definecolor{mucol}{HTML}{B9C406}
\definecolor{Ncol}{HTML}{FAA565}
\definecolor{Rcol}{HTML}{A882C1}
\newcounter{ncols}
\newcounter{incols}
\newenvironment{partn}[1]{
  \setcounter{ncols}{#1} \setcounter{incols}{\thencols - 1}\setlength{\arraycolsep}{1pt}
  \Bigl( \hspace{-1.5truemm}\scriptsize \renewcommand*{\arraystretch}{1}
    \begin{array}{@{\hskip 3pt} c *{\theincols}{|c} @{\hskip 3pt}  }
}{
     \end{array}
     \normalsize \hspace{-1.5truemm}\Bigr)\setlength{\arraycolsep}{6pt}
}
\begin{document}

\title{\vspace{-1cm} Classification of congruences of twisted partition monoids}

\author{James East\footnote{Centre for Research in Mathematics and Data Science, Western Sydney University, Locked Bag 1797, Penrith NSW 2751, Australia. {\it Email:} {\tt j.east\,@\,westernsydney.edu.au}} 
\ and
Nik Ru\v{s}kuc\footnote{Mathematical Institute, School of Mathematics and Statistics, University of St Andrews, St Andrews, Fife KY16 9SS, UK. {\it Email:} {\tt nik.ruskuc\,@\,st-andrews.ac.uk}}}
\date{}

\maketitle

\vspace{-1cm}
\begin{abstract}
\noindent
The twisted partition monoid $\Ptw{n}$ is an infinite monoid obtained from the classical finite partition monoid $\P_n$ by taking into account the number of floating components when multiplying partitions.
The main result of this paper is a complete description of the congruences on $\Ptw{n}$.
The succinct encoding of a congruence, which we call a C-pair, consists of a sequence of $n+1$ congruences on the additive monoid $\N$ of natural numbers and a certain $(n+1)\times\N$ matrix.
We also give a description of the inclusion ordering of congruences in terms of a lexicographic-like ordering on C-pairs.
This is then used to classify congruences on the finite $d$-twisted partition monoids~$\Ptw{n,d}$, which are obtained by factoring out from $\Ptw{n}$ the ideal of all partitions with more than~$d$ floating components.
Further applications of our results, elucidating the structure and properties of the congruence lattices of the ($d$-)twisted partition monoids, will be the subject of a future article.
\medskip

\noindent
\emph{Keywords}: partition monoid, twisted partition monoid, congruence, congruence lattice.
\medskip

\noindent
MSC: 20M20, 08A30.

\end{abstract}

\setcounter{tocdepth}{1}
\tableofcontents

%\newpage

\section{Introduction}\label{sect:intro}

The partition algebras were independently discovered in the 1990s by Vaughan Jones \cite{Jones1994_2} and Paul Martin \cite{Martin1994}.  These algebras have bases consisting of certain set partitions, which are represented and composed diagrammatically, and they naturally contain classical structures such as Brauer and Temperley-Lieb algebras, as well as symmetric group algebras \cite{TL1971,Brauer1937,Schur1927}.  These `diagram algebras' have diverse origins and applications, including in theoretical physics, classical groups, topology, invariant theory and logic \cite{MR1998,Xi1999,LZ2017,Martin2015,Wenzl1988,LZ2015,Kauffman1987,Jones1994_a,Jones1987,HR2005,GL1996,Martin1996, Abramsky2008,Kauffman1990,Jones1983_2,BH2014, BH2019,Jones1994_2,LZ2012,Martin1994,MM2013}.  The representation theory of the algebras plays a crucial role in many of the above studies, and the need to understand the kernels of representations was highlighted by Lehrer and Zhang in their article~\cite{LZ2012}, which does precisely that for Brauer's original representation of the (now-named) Brauer algebra by invariants of the orthogonal group \cite{Brauer1937}.  This has recently been extended to partition algebras by Benkart and Halverson in \cite{BH2019}.  Kernels of representations can be equivalently viewed as ideals or as congruences.  Understanding congruences is the key motivation for the current article, and indeed for the broader program of which it is a part \cite{EMRT2018,ERtwisted2,ERPX,ER2020}.

A partition algebra can be constructed as a twisted semigroup algebra of an associated (finite) partition monoid, since the product in the algebra of two partitions $\al,\be$ is always a scalar multiple of another partition, denoted $\al\be$.  The scalar is always a power of a fixed element of the underlying field, and the power to which this element is raised is the number $\Float(\al,\be)$ of `floating components' when the partitions $\al,\be$  are connected.  (Formal definitions are given below.)  It is also possible to construct partition algebras via (ordinary) semigroup algebras of twisted partition monoids.  These are countably infinite monoids whose elements are pairs $(i,\al)$, consisting of a partition $\al$ and some natural number $i$ of floating components.  The product of pairs is given by $(i,\al)(j,\be)=(i+j+\Float(\al,\be),\al\be)$.  
By incorporating the $\Float$ parameters, the twisted partition monoids reflect more of the structure of the algebras than do the ordinary partition monoids.
The above connection with semigroup algebras was formalised by Wilcox \cite{Wilcox2007}, but the idea has its origins in the work of Jones~\cite{Jones1983_2} and Kauffman~\cite{Kauffman1990}; see also~\cite{HR2005}.  Partition monoids, and other diagram monoids, have been studied by many authors, as for example in \cite{MM2007,DEG2017,DEEFHHLM2019,DEEFHHL2015,KM2006,JEgrpm,Auinger2014, ADV2012_2, Maz2002,FL2011,EMRT2018,MM2013,AMM2015}; see \cite{EG2017} for many more references.  Studies of twisted diagram monoids include \cite{DE2017,DE2018,LF2006,BDP2002,ACHLV2015,CHKLV2019,KV2019,AV2020}.

The congruences of the partition monoid $\P_n$ were determined in \cite{EMRT2018}, which also treated several other diagram monoids such as the Brauer, Jones (a.k.a.~Temperley-Lieb) and Motzkin monoids.  The article \cite{EMRT2018} also developed general machinery for constructing congruences on arbitrary monoids, which has subsequently been applied to infinite partition monoids in \cite{ERPX}, and extended to categories and their ideals in~\cite{ER2020}.  The classification of congruences on $\P_n$ is stated below in Theorem~\ref{thm:CongPn}, and the lattice $\Cong(\P_n)$ of all congruences is shown in Figure~\ref{fig:CongPn}.
It can be seen from the figure that the lattice has a rather neat structure; apart from a small prism-shaped part at the bottom, the lattice is mostly a chain.  As explained in \cite{EMRT2018}, this is a consequence of several convenient structural properties of the monoid~$\P_n$, including the following:
\bit
\item The ideals of $\P_n$ form a chain, ${I_0\subset I_1\subset\cdots\subset I_n}$.
\item The maximal subgroups of $\P_n$ are symmetric groups $\S_q$ ($q=0,1,\ldots,n$), the normal subgroups of which also form chains.
\item The minimal ideal $I_0$ is a rectangular band.
\item The second-smallest ideal $I_1$ is retractable, in the sense that there is a surmorphism $I_1\to I_0$ fixing $I_0$, and no larger ideal is retractable.
\eit
In addition to these factors, a crucial role is also played by certain technical `separation properties', which were explored in more depth in \cite{ER2020}.  Roughly speaking, these properties ensure that pairs of partitions suitably `separated' by Green's relations \cite{Green1951} generate `large' principal congruences.  

The current article concerns the twisted partition monoid $\Ptw n$, which, as explained above, is obtained from $\P_n$ by taking into account the number of floating components formed when multiplying partitions.  We also study the finite $d$-twisted quotients $\Ptw{n,d}$, which are obtained by limiting the number of floating components to at most $d$, and collapsing all other elements to zero.  The main results are the classification of the congruences of $\Ptw n$ and~$\Ptw{n,d}$, and the characterisation of the inclusion order in the lattices $\Cong(\Ptw n)$ and $\Cong(\Ptw{n,d})$.

The congruences of $\Ptw n$ are far more complicated than those of $\P_n$.  This is of course to be expected, given the additional complexity in the structure of the twisted monoid.  For example,~$\Ptw n$ has (countably) infinitely many ideals, and these do not form a chain.  Moreover, there are infinite descending chains of ideals, and there is no minimal (non-empty) ideal.  Nevertheless, the ideals still have a reasonably simple description; the principal ones are denoted~$I_{qi}$ (and defined below), indexed by integers $0\leq q\leq n$ and $i\geq0$, and we have $I_{qi}\sub I_{rj}$ if and only if $q\leq r$ and~$i\geq j$.  This allows us to view $\Ptw n$ as an $(n+1)\times\omega$ `grid', and leads to a convenient encoding of congruences by certain matrices of the same dimensions, combined with a chain $\th_0\supseteq\th_1\supseteq\cdots\supseteq\th_n$ of congruences on the additive monoid of natural numbers.  We will see that each allowable matrix-chain pair leads to either one or two distinct congruences, depending on its nature.  
The inclusion ordering on congruences involves a lexicographic-like ordering on pairs, and some additional factors.  For the finite $d$-twisted monoids $\Ptw{n,d}$, congruences are determined by the matrices alone, which are now $(n+1)\times(d+1)$.  In the very special case when $d=0$, the $0$-twisted monoid $\Ptw{n,0}$ is in fact a chain of ideals, and its congruence lattice shares some similarities with that of $\P_n$ itself, as can be observed by comparing Figures~\ref{fig:CongPn} and~\ref{fig:CongPn0}.  The case of $d\geq1$ is much more complicated, even for small $n$ and $d$; for example, the lattice $\Cong(\Ptw{3,2})$ has size $329$, and is shown in Figure~\ref{fig:CongP32}.

The article is organised as follows.  We begin in Section \ref{sec:prelim} with preliminaries on (twisted) partition monoids.  Section \ref{sec:main} contains the main result, Theorem \ref{thm:main}, which completely classifies the congruences of $\Ptw n$; a number of examples are also considered, and some simple consequences are recorded in Corollaries \ref{co:countable} and \ref{co:fi}.  The proof of Theorem \ref{thm:main} occupies the next two sections.  Section \ref{sec:I} shows that the relations stated in the theorem are indeed congruences, and Section \ref{sec:II} shows, conversely, that every congruence has one of the stated forms.  In Section~\ref{sec:inclusion} we characterise the inclusion ordering on the lattice $\Cong(\Ptw n)$; see Theorem \ref{thm:comparisons}.  We then apply the above results to the finite $d$-twisted monoids in Section \ref{sec:dtwist}.  Theorems \ref{thm:finmain} and \ref{thm:fincomp} respectively classify the congruences of $\Ptw{n,d}$ and characterise the inclusion ordering in $\Cong(\Ptw{n,d})$.  Theorem~\ref{thm:CongPn0} shows how the classification simplifies in the special case of $0$-twisted monoids $\Ptw{n,0}$.  We also discuss visualisation techniques for the (finite) lattices; see Figures \ref{fig:CongPn0}--\ref{fig:CongP14}.  Finally, Section~\ref{sec:n1} discusses the somewhat degenerate cases where $n\leq1$.

In the forthcoming article \cite{ERtwisted2}, we give a detailed analysis of the algebraic and combinatorial/order-theoretic properties of the lattices $\Cong(\Ptw n)$ and $\Cong(\Ptw{n,d})$, proving results on (bounded) generation of congruences, (co)atoms, covers, (anti-)chains, distributivity, modularity and enumeration.

\subsection*{Acknowledgements}

The first author is supported by ARC Future Fellowship FT190100632.  The second author is supported by EPSRC grant EP/S020616/1.  We thank Volodymyr Mazorchuk for his suggestion to look at congruences of the $0$-twisted partition monoids $\Ptw{n,0}$, which (eventually) led to the current paper and \cite{ERtwisted2}.

%\newpage

\section{Preliminaries}\label{sec:prelim}

This section contains the necessary background material.  After reviewing some basic concepts on monoids and congruences in Subsection \ref{subsec:M}, we recall the definition of the partition monoids in Subsection \ref{subsec:P} and state the classification of their congruences from \cite{EMRT2018}.  In Subsection~\ref{subsec:Ptw} we define the twisted partition monoids, and prove some basic results concerning floating components and Green's relations.  We define the finite $d$-twisted monoids in Subsection \ref{subsec:finite}, and then prove further auxilliary results in Subsection \ref{subsec:auxPnFl}.

\subsection{Monoids and congruences}\label{subsec:M}

We briefly recall some basic facts on monoids; for more background, see for example \cite{Howie1995,RS2009}.

A \emph{congruence} on a monoid $M$ is an equivalence relation $\si$ on $M$ that is 
\emph{compatible} with the product, meaning that for all $(x,y)\in\si$ and $a\in M$ we have $(ax,ay),(xa,ya)\in\si$.  We will often write $a\cdot(x,y)$ for $(ax,ay)$, with similar meanings for $(x,y)\cdot a$ and $a\cdot(x,y)\cdot b$.

The set of all congruences on the monoid $M$, denoted $\Cong(M)$, is a lattice under inclusion.  The meet of two congruences $\si,\tau\in\Cong(M)$ is their intersection, $\si\cap\tau$, while the join $\si\vee\tau$ is the transitive closure of their union.  The top and bottom elements of $\Cong(M)$ are the universal and trivial congruences:
\[
\nab_M := M\times M \AND \De_M := \bigset{(x,x)}{x\in M}.
\]
We write $\Om^\sharp$ for the congruence generated by a set of pairs $\Om\sub M\times M$.  When $\Om=\big\{(x,y)\big\}$ contains a single pair, we write $(x,y)^\sharp=\Om^\sharp$ for the \emph{principal congruence} generated by the pair.

An important family of congruences come from ideals.  A subset $I$ of $M$ is an \emph{ideal} if ${MIM\sub I}$.  It will be convenient for us to consider the empty set to be an ideal.  For $x\in M$, the \emph{principal ideal} of $M$ generated by $x$ is $MxM$.
An ideal $I$ of $M$ gives rise to the \emph{Rees congruence} 
\[
R_I := \De_M \cup \nab_I = \bigset{(x,y)\in M\times M}{x=y\text{ or }x,y\in I}.
\]
In particular, we have $R_M=\nab_M$ and $R_\emptyset=\De_M$.  

\begin{defn}\label{defn:Isi}
Let $\si$ be a congruence on a monoid $M$.  Let $\I$ be the set of all ideals $I$ of $M$ such that $R_I\sub\si$, and define $I(\si) := \bigcup_{I\in\I}I$.
\end{defn}

It is easy to see that $I(\si)$ is the largest ideal $I$ of $M$ such that $R_I\sub\si$, but note that we might have $I(\si)=\emptyset$, even if $\si$ is non-trivial.

\emph{Green's equivalences} $\R$, $\L$, $\J$, $\H$ and $\D$ on the monoid $M$ are defined as follows.  For $x,y\in M$, we have
\[
x\rR y \iff xM=yM \COMMA x\rL y \iff Mx=My \COMMA x\rJ y \iff MxM = MyM.
\]
The remaining relations are defined by $\H = \R\cap\L$ and $\D = \R\vee\L$.  In any monoid we have $\D=\R\circ\L=\L\circ\R$.  When $M$ is finite, we have $\D=\J$.  
The set $M/\J=\set{J_x}{x\in M}$ of all $\J$-classes of $M$ has a partial order $\leq$ defined, for $x,y\in M$, by
\[
J_x\leq J_y \iff x\in MyM.
\]

In all that follows, an important role will be played by the additive monoid of natural numbers, $\N=\{0,1,2,\ldots\}$.  Let us recall the simple structure of congruences on $\N$.  For every such non-trivial congruence~$\th$ 
there exist unique $m\geq0$ and $d\geq 1$, such that
\[
\th = (m,m+d)^\sharp = \De_\N \cup \bigset{ (i,j)\in\N\times\N}{i,j\geq m,\ i\equiv j\!\!\!\!\pmod{d}}.
\]
The number $m$ will be called the \emph{minimum} of $\th$ and denoted $\min\th$; the number $d$ will be called the \emph{period} of $\th$ and denoted $\per\th$.  For the universal congruence we have $\nab_\N=(0,1)^\sharp$,  $\min\nab_\N=0$ and $\per\nab_\N=1$.  For the trivial congruence it is convenient to define $\min\De_\N=\per\De_\N=\infty$.  If~$\th_1$ and $\th_2$ are congruences on $\N$, then 
\begin{equation}
\label{eq:th1th2}
\th_1\sub\th_2 \IFF \min\th_1\geq\min\th_2 \ANd \per\th_2\mid\per\th_1.
\end{equation}
Here $\mid$ is the division relation on $\N\cup\{\infty\}$, with the understanding that every element of this set divides $\infty$.

\subsection{Partition monoids}\label{subsec:P}

For $n\geq1$, we write $\bn=\{1,\dots,n\}$ and $\bnz=\bn\cup\{0\}$, and let $\bn'=\{1',\ldots,n'\}$ and $\bn''=\{1'',\ldots,n''\}$ be two disjoint copies of $\bn$.  The elements of the \emph{partition monoid} $\P_n$ are the set partitions of $\bn\cup\bn'$.  Such a partition $\al\in\P_n$ is identified with any graph on vertex set $\bn\cup\bn'$ whose connected components are the blocks of~$\al$.  When drawing such a partition, vertices from $\bn$ are drawn on an upper line, with those from~$\bn'$ directly below.  See Figure \ref{fig:P6} for some examples.

Given two partitions $\al,\be\in\P_n$, the product $\al\be$ is defined as follows.  First, let $\al_\downarrow$ be the graph on vertex set $\bn\cup\bn''$ obtained by changing every lower vertex $x'$ of $\al$ to $x''$, and let $\be^\uparrow$ be the graph on vertex set $\bn''\cup\bn'$ obtained by changing every upper vertex $x$ of $\be$ to $x''$.  The \emph{product graph} of the pair $(\al,\be)$ is the graph $\Ga(\al,\be)$ on vertex set $\bn\cup\bn''\cup\bn'$ whose edge set is the union of the edge sets of $\al_\downarrow$ and $\be^\uparrow$.  We then define $\al\be$ to be the partition of $\bn\cup\bn'$ such that vertices $x,y\in\bn\cup\bn'$ belong to the same block of $\al\be$ if and only if $x,y$ belong to the same connected component of $\Ga(\al,\be)$.  An example product is given in Figure \ref{fig:P6}.

\begin{figure}[ht]
\begin{center}
\begin{tikzpicture}[scale=.5]

\begin{scope}[shift={(0,0)}]	
\uvs{1,...,6}
\lvs{1,...,6}
\uarcx14{.6}
\uarcx23{.3}
\uarcx56{.3}
\darc12
\darcx26{.6}
\darcx45{.3}
\stline34
\draw(0.6,1)node[left]{$\alpha=$};
\draw[->](7.5,-1)--(9.5,-1);
\end{scope}

\begin{scope}[shift={(0,-4)}]	
\uvs{1,...,6}
\lvs{1,...,6}
\uarc12
\uarc34
\darc45
\darc56
\darc23
\stline31
\stline55
\draw(0.6,1)node[left]{$\beta=$};
\end{scope}

\begin{scope}[shift={(10,-1)}]	
\uvs{1,...,6}
\lvs{1,...,6}
\uarcx14{.6}
\uarcx23{.3}
\uarcx56{.3}
\darc12
\darcx26{.6}
\darcx45{.3}
\stline34
\draw[->](7.5,0)--(9.5,0);
\end{scope}

\begin{scope}[shift={(10,-3)}]	
\uvs{1,...,6}
\lvs{1,...,6}
\uarc12
\uarc34
\darc45
\darc56
\stline31
\stline55
\darc23
\end{scope}

\begin{scope}[shift={(20,-2)}]	
\uvs{1,...,6}
\lvs{1,...,6}
\uarcx14{.6}
\uarcx23{.3}
\uarcx56{.3}
\darc14
\darc45
\darc56
\stline21
\darcx23{.2}
\draw(6.4,1)node[right]{$=\alpha\beta$};
\end{scope}

\end{tikzpicture}
\caption{Multiplication of two partitions in $\P_6$.}
\label{fig:P6}
\end{center}
\end{figure}
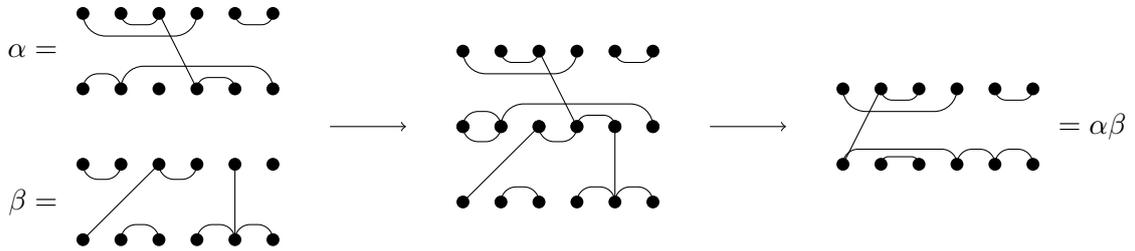

A block of a partition $\al\in\P_n$ is called a \emph{transversal} if it contains both dashed and un-dashed elements; any other block is either an \emph{upper non-transversal} (only un-dashed elements) or a \emph{lower non-transversal} (only dashed elements).  
%
%Green's relations on $\P_n$ may be described in terms of natural parameters whose definitions we now recall.
%
The \emph{(co)domain} and \emph{(co)kernel} of $\al$ are defined by:
\begin{align*}
\dom\al &:= \set{x\in\bn}{x \text{ belongs to a transversal of }\al}, \\
\codom\al &:= \set{x\in\bn}{x' \text{ belongs to a transversal of }\al}, \\
\ker\al &:= \set{(x,y)\in\bn\times\bn}{x\text{ and }y \text{ belong to the same block of }\al},\\
\coker\al &:= \set{(x,y)\in\bn\times\bn}{x'\text{ and }y' \text{ belong to the same block of }\al}.
\end{align*}
The \emph{rank} of $\al$, denoted $\rank\al$, is the number of transversals of $\al$.  
We will typically use the following result without explicit reference; for proofs see \cite{Wilcox2007,FL2011}.

\begin{lemma}\label{la:Green_Pn}
For $\al,\be\in\P_n$, we have
\ben
\item $\al \rR \be \iff \dom\al=\dom\be$ and $\ker\al=\ker\be$,
\item $\al \rL \be \iff \codom\al=\codom\be$ and $\coker\al=\coker\be$,
\item $\al \rD \be \iff \al \rJ \be \iff \rank\al=\rank\be$.  
\een
The $\D=\J$-classes and non-empty ideals of $\P_n$ are the sets
\[
D_q := \set{\al\in\P_n}{\rank\al=q} \AND I_q := \set{\al\in\P_n}{\rank\al\leq q}  \qquad\text{for $q\in\bnz$,}
\]
and these are ordered by $D_q \leq D_r \iff I_q \sub I_r \iff q\leq r$.  \epfres
\end{lemma}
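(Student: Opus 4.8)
The plan is to prove parts \textup{(i)}--\textup{(iii)} in order and then obtain the description of the $\D=\J$-classes, ideals and their ordering as formal consequences. The engine for \textup{(i)} and \textup{(iii)} is a pair of monotonicity properties of the product, both immediate from the definition of the product graph: for $\al,\ga\in\P_n$, every edge of $\al_\downarrow$ persists in $\Ga(\al,\ga)$, so $\ker\al\sub\ker(\al\ga)$; and tracing a path in $\Ga(\al,\ga)$ from an upper vertex down to a lower one shows that any $x\in\bn$ lying in a transversal of $\al\ga$ already lies in a transversal of $\al$, whence $\dom(\al\ga)\sub\dom\al$ and (counting the disjoint upper parts of transversals) $\rank(\al\ga)\le\rank\al$; dually $\rank(\ga\al)\le\rank\al$.

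For \textup{(i)}: if $\al\rR\be$, writing $\al=\be\ga$ and $\be=\al\de$ and applying the above to both products gives $\ker\al=\ker\be$ and $\dom\al=\dom\be$. For the converse I would exhibit an explicit witness: given $\dom\al=\dom\be$ and $\ker\al=\ker\be$, so that $\al$ and $\be$ have identical upper structure, let $\ga\in\P_n$ have upper blocks the $\coker\be$-classes (with the $\codom\be$-classes transversal), lower blocks the $\coker\al$-classes (with the $\codom\al$-classes transversal), and transversals routing the lower blocks of $\be$ to those of $\al$ according to the common upper structure; a direct check of $\Ga(\be,\ga)$ then gives $\be\ga=\al$, and symmetrically $\al\de=\be$ for a suitable $\de$, so $\al\rR\be$. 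Part \textup{(ii)} is the top--bottom mirror of \textup{(i)}, obtained by applying the anti-automorphism of $\P_n$ that reflects a diagram through the horizontal axis, which swaps $\R\leftrightarrow\L$, $\dom\leftrightarrow\codom$ and $\ker\leftrightarrow\coker$.

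For \textup{(iii)}: since $\D\sub\J$ always and rank cannot increase under multiplication, $\al\rD\be\implies\al\rJ\be\implies\rank\al=\rank\be$. Conversely, given $\rank\al=\rank\be=q$, let $\ga\in\P_n$ share the upper structure of $\al$ and the lower structure of $\be$, its $q$ transversals joining the $q$ upper transversal-tops of $\al$ to the $q$ lower transversal-bottoms of $\be$ by any bijection; this is a well-defined partition of rank $q$, and \textup{(i)} and \textup{(ii)} give $\al\rR\ga\rL\be$, hence $\al\rD\be$ using $\D=\R\circ\L$. Thus $\D=\J$, with the $\D=\J$-classes exactly the rank level sets $D_q$ ($q\in\bnz$), each non-empty because $\P_n$ contains a partition of every rank $0,\dots,n$. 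Rank-monotonicity gives $MI_qM\sub I_q$ and $\be\in M\al M\Rightarrow\rank\be\le\rank\al$; combined with ``rank-reducing'' multipliers (realising any partition of rank $p\le q$ as $\mu\al\nu$ for $\rank\al=q$, built by the same diagram-matching idea) this shows $MxM=I_{\rank x}$, so $D_q\le D_r\iff I_q\sub I_r\iff q\le r$, and the $I_q$ are exactly the non-empty ideals since any ideal is a down-set in the now totally ordered poset of $\J$-classes.

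The routine-but-delicate core of the argument is the explicit construction of the connecting partitions $\ga$ (and of the rank-reducing multipliers) in the converse halves of \textup{(i)} and \textup{(iii)}: one must check by inspecting the product graph that the prescribed domains, kernels and ranks are actually attained. Once those diagrams are in hand, everything else is formal bookkeeping.
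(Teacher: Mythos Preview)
Your sketch is correct and follows the standard route (monotonicity of $\dom$, $\ker$ and $\rank$ under multiplication for the forward implications, explicit ``matching'' partitions for the converses, and the involution for duality). Note, however, that the paper does not actually prove this lemma: it is stated without proof, with a citation to \cite{Wilcox2007,FL2011} for the details, so there is nothing substantive to compare against beyond observing that your argument is essentially what those references contain.
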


The above notation for the $\D$-classes and ideals of $\P_n$ will be fixed throughout the paper.

Given a partition $\al\in\P_n$, we write
\[
\al = \begin{partn}{6} A_1&\dots&A_q&C_1&\dots&C_s\\ \hhline{~|~|~|-|-|-} B_1&\dots&B_q&E_1&\dots&E_t\end{partn}
\]
to indicate that $\al$ has transversals $A_i\cup B_i'$ ($1\leq i\leq q$), upper non-transversals $C_i$ ($1\leq i\leq s$), and lower non-transversals $E_i'$ ($1\leq i\leq t$).  Here for any $A\sub\bn$ we write $A'=\set{a'}{a\in A}$, and we will also later refer to sets of the form $A''=\set{a''}{a\in A}$.  
%In this kind of tabular notation, we often adopt the convention of omitting singleton blocks of $\al$.  
Thus, with $\al\in\P_6$ as in Figure~\ref{fig:P6} we have $\al =  \begin{partn}{3} 2,3&1,4&5,6\\ \hhline{~|-|-} 4,5&1,2,6&3\end{partn}$.  The identity element of $\P_n$ is the partition
\[
\id := \begin{partn}{3}1&\cdots&n\\ \hhline{~|~|~} 1&\cdots&n\end{partn}.
\]

The congruences on the partition monoid $\P_n$ were determined in \cite{EMRT2018}, and the classification will play an important role in the current paper.  To state it, we first introduce some notation.  First, we have a map
\[
\P_n \to D_0 : \al=\begin{partn}{6} A_1&\dots&A_q&C_1&\dots&C_s\\ \hhline{~|~|~|-|-|-} B_1&\dots&B_q&E_1&\dots&E_t\end{partn} \mt \wh\al = \begin{partn}{6} A_1&\dots&A_q&C_1&\dots&C_s\\ \hhline{-|-|-|-|-|-} B_1&\dots&B_q&E_1&\dots&E_t\end{partn},
\]
whose effect is to break apart all transversals of $\al$ into their upper and lower parts.  Equivalently,~$\wh\al$ is the unique element of $D_0$ with the same kernel and cokernel as $\al$.  We will need the following basic result, which follows from \cite[Lemmas 3.3 and 5.2]{EMRT2018}:

\begin{lemma}\label{la:hat}
For any $\al\in I_1$ and $\eta_1,\eta_2\in\P_n$ we have $\wh{\eta_1\al\eta_2} = \eta_1\wh\al\eta_2$.  \epfres
\end{lemma}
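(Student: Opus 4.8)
The plan is to use the characterisation of the map $\gamma\mt\wh\gamma$ recalled in the excerpt — $\wh\gamma$ is the unique element of $D_0$ with $\ker\wh\gamma=\ker\gamma$ and $\coker\wh\gamma=\coker\gamma$ — together with two elementary observations about products. Since $I_0=D_0$ is an ideal and $\wh\al\in D_0$, the partition $\eta_1\wh\al\eta_2$ lies in $D_0$; and $\wh{\eta_1\al\eta_2}\in D_0$ by definition, and shares its kernel and cokernel with $\eta_1\al\eta_2$. An element of $D_0$ is completely determined by its kernel and cokernel, so it will suffice to prove
\[
\ker(\eta_1\al\eta_2)=\ker(\eta_1\wh\al\eta_2) \AND \coker(\eta_1\al\eta_2)=\coker(\eta_1\wh\al\eta_2).
\]

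For the first identity I would establish: \textbf{(F1)} for all $\be,\de\in\P_n$, the kernel $\ker(\be\de)$ depends only on $\be$ and $\ker\de$; and \textbf{(F2)} for all $\al\in I_1$ and $\de\in\P_n$, $\ker(\al\de)=\ker\al$. Statement (F1) holds because in the product graph $\Ga(\be,\de)$ a vertex of $\bn'$ is adjacent only to vertices lying in its own $\de$-block, so two vertices of $\bn$ are connected in $\Ga(\be,\de)$ if and only if they are connected using only the edges of $\be_\downarrow$ and the restriction of $\de^\uparrow$ to $\bn''$, and this restriction records precisely $\ker\de$. Statement (F2) is where the hypothesis $\al\in I_1$ enters: $\al$ has at most one transversal, so at most one block of $\al_\downarrow$ meets $\bn''$, whence the vertices of $\bn$ that can be joined to $\bn''$ in $\Ga(\al,\de)$ are exactly those of $\dom\al$; since these already form a single block of $\al_\downarrow$, passing to $\al\de$ neither merges nor splits any block of $\ker\al$, and so $\ker(\al\de)=\ker\al$ (the case $\rank\al=0$ being immediate). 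Granting (F1) and (F2), the first identity drops out: by (F1), $\ker(\eta_1\al)$ and $\ker(\eta_1\wh\al)$ are the same function of $\eta_1$ and of $\ker\al=\ker\wh\al$, hence equal; and since $\eta_1\al$ and $\eta_1\wh\al$ lie in the ideal $I_1$, (F2) gives $\ker(\eta_1\al\eta_2)=\ker(\eta_1\al)=\ker(\eta_1\wh\al)=\ker(\eta_1\wh\al\eta_2)$. The cokernel identity follows by the left–right dual argument, with $\Ga(\de,\al)$, $\al^\uparrow$, $\codom\al$ and $\coker\al=\coker\wh\al$ in the roles of $\Ga(\al,\de)$, $\al_\downarrow$, $\dom\al$ and $\ker\al=\ker\wh\al$.

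The one genuinely delicate point is (F2): it is exactly where $\al\in I_1$ cannot be weakened, since a partition with two or more transversals offers several independent channels between its top and bottom rows that a right factor can use to fuse distinct kernel classes, so a careful look at the product graph is needed to see that a \emph{single} transversal cannot produce any such fusion. The remainder is routine bookkeeping; alternatively, the result may be deduced from \cite[Lemmas 3.3 and 5.2]{EMRT2018}.
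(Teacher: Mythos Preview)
Your argument is correct. The two facts (F1) and (F2) are exactly what is needed, and your justification of (F2) correctly identifies the key point: with at most one transversal there is only one ``channel'' between the top row and $\bn''$, so right-multiplication cannot fuse distinct kernel classes. The assembly via $\ker\al=\ker\wh\al$, (F1) for the left factor, and (F2) for the right factor is clean, and the dual argument for cokernels goes through identically.

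As for comparison with the paper: the paper does not actually prove this lemma---it simply cites \cite[Lemmas~3.3 and~5.2]{EMRT2018} and moves on (note the \texttt{\textbackslash epfres} terminating the statement with no proof body). Your direct argument is therefore more than the paper provides, and since you also record the citation at the end, your write-up strictly subsumes the paper's treatment. The referenced lemmas in \cite{EMRT2018} establish essentially the same kernel/cokernel facts you prove here, so your approach is in the same spirit as the cited source, just made self-contained.
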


Next we have a family of relations on $D_q$ ($2\leq q\leq n$), denoted $\nu_N$, indexed by normal subgroups $N$ of the symmetric group $\S_q$.  To define these relations consider a pair $(\al,\be)$ of $\H$-related elements from $D_q$: 
\[
\al = \begin{partn}{6} A_1&\dots&A_q&C_1&\dots&C_s\\ \hhline{~|~|~|-|-|-} B_1&\dots&B_q&E_1&\dots&E_t\end{partn}
\AND
\be = \begin{partn}{6} A_1&\dots&A_q&C_1&\dots&C_s\\ \hhline{~|~|~|-|-|-} B_{1\pi}&\dots&B_{q\pi}&E_1&\dots&E_t\end{partn} \qquad\text{for some $\pi\in\S_q$.}
\]
We then define $\pd(\al,\be)=\pi$, which we think of as the \emph{permutational difference} of $\al$ and~$\be$.  Note that~$\pd(\al,\be)$ is only well-defined up to conjugacy in $\S_q$, as $\pi$ depends on the above ordering on the transversals of $\al$ and $\be$.  Nevertheless, for any normal subgroup $N\normal\S_q$, we have a well-defined equivalence relation (see~\cite[Lemmas 3.17 and 5.6]{EMRT2018}):
\[
\nu_N = \bigset{(\al,\be)\in\H\restr_{D_q}}{\pd(\al,\be)\in N}.
\]
As extreme cases, note that $\nu_{\S_q}=\H\restr_{D_q}$ and $\nu_{\{\id_q\}}=\De_{D_q}$.  
%The following is \cite[Theorem 5.4]{EMRT2018}.

\begin{thm}[{\cite[Theorem 5.4]{EMRT2018}}]
\label{thm:CongPn}
For $n\geq1$, the congruences on the partition monoid~$\P_n$ are precisely:
\begin{itemize}
\item
the Rees congruences $R_q:= R_{I_q} = \bigset{ (\alpha,\beta)\in\P_n\times\P_n}{ \alpha=\beta\text{ or } \rank\alpha,\rank\beta\leq q}$ for $q\in\{0,\dots,n\}$, including $\nab_{\P_n} = R_n$;
\item
the relations $R_N:=R_{q-1}\cup\nu_N$ for $q\in\{2,\dots,n\}$ and $\{\id_q\}\neq N\unlhd \S_q$;
\item
the relations 
\begin{align*}
\lam_q &:= \bigset{ (\alpha,\beta)\in I_q\times I_q}{ \widehat{\alpha}\rL\widehat{\beta}}\cup\Delta_{\P_n}, \\
\rho_q &:= \bigset{ (\alpha,\beta)\in I_q\times I_q}{ \widehat{\alpha}\rR\widehat{\beta}}\cup\Delta_{\P_n}, \\
\mu_q &:= \bigset{ (\alpha,\beta)\in I_q\times I_q}{ \widehat{\alpha}=\widehat{\beta}}\cup\Delta_{\P_n}, 
\end{align*}
for $q\in\{0,1\}$, including $\De_{\P_n} = \mu_0$, and the relations
\[
\lam_{\S_2}:=\lam_1\cup\nu_{\S_2} \COMMA \rho_{\S_2}:=\rho_1\cup\nu_{\S_2} \COMMA \mu_{\S_2}:=\mu_1\cup\nu_{\S_2}.
\]
\end{itemize}
The congruence lattice $\Cong(\P_n)$ is shown in Figure \ref{fig:CongPn}.  
 \epfres
\end{thm}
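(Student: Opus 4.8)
The plan is to prove the two inclusions separately: first that every relation listed is a congruence on $\P_n$, and then, conversely, that every congruence on $\P_n$ is one of these.

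For the first (and easier) direction, the Rees congruences $R_q=R_{I_q}$ are congruences simply because each $I_q$ is an ideal. For $R_N=R_{q-1}\cup\nu_N$ with $\{\id_q\}\ne N\normal\S_q$, I would show that for $(\al,\be)\in\nu_N$ and $\eta\in\P_n$ the product $\eta\al$ is either $\J$-below $D_q$ — in which case $\eta\al,\eta\be\in I_{q-1}$, so $(\eta\al,\eta\be)\in R_{q-1}$ — or again lies in $D_q$, in which case $\eta\al\rH\eta\be$ and $\pd(\eta\al,\eta\be)$ is a conjugate of $\pd(\al,\be)$, hence lies in $N$ by normality; right multiplication is symmetric. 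This uses the description of Green's relations in Lemma~\ref{la:Green_Pn} together with the fact that the maximal subgroups of $D_q$ are copies of $\S_q$. For $\mu_q,\lam_q,\rho_q$ with $q\in\{0,1\}$, compatibility is immediate from Lemma~\ref{la:hat}: since $\wh{\eta_1\al\eta_2}=\eta_1\wh\al\eta_2$ for $\al\in I_1$, each of the conditions $\wh\al=\wh\be$, $\wh\al\rL\wh\be$, $\wh\al\rR\wh\be$ is preserved under multiplication — in the $\L$ and $\R$ cases because $I_0$ is a rectangular band, on which $\L$ and $\R$ are two-sided congruences — while $I_1$ is an ideal. The enlargements $\mu_{\S_2},\lam_{\S_2},\rho_{\S_2}$ and $R_{\S_2}=R_1\cup\nu_{\S_2}$ then need only the extra check that an $\H$-related pair in $D_2$ whose product drops to rank $\le1$ lands in the appropriate $\mu_1/\lam_1/\rho_1$-class, again via Lemma~\ref{la:hat} and a short computation of permutational differences.

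For the converse, let $\si\in\Cong(\P_n)$ and set $I(\si)$ as in Definition~\ref{defn:Isi}; since the ideals of $\P_n$ form the chain $\emptyset\subset I_0\subset\cdots\subset I_n$, either $I(\si)=I_r$ for some $r$, or $I(\si)=\emptyset$. The heart of the proof is a family of \emph{separation lemmas} limiting how `far apart', in the sense of Green's relations, a non-diagonal pair $(\al,\be)\in\si$ lying outside $R_{I(\si)}$ can be: if $\al$ and $\be$ are too widely separated, then sandwiching the pair between suitable idempotents (available since $\P_n$ is regular) and other partitions yields, inside $\si$, a Rees congruence strictly containing $R_{I(\si)}$, contradicting the maximality of $I(\si)$. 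The argument then splits. If $I(\si)=I_r$ for some $r\ge 0$, the separation lemmas force all pairs of $\si$ outside $R_{I_r}$ to lie in the single $\D$-class $D_{r+1}$ and to be $\H$-related there; their permutational differences form a subgroup of $\S_{r+1}$ closed under conjugation, hence a normal subgroup $N$, and one checks that $\si$ equals $R_r$ (when no such pairs occur) or one of the $R_N$ with $\{\id\}\ne N\normal\S_{r+1}$ (possible only once $r\ge1$, since the maximal subgroup of $D_1$ is trivial) — precisely the Rees-type congruences in the list. If instead $I(\si)=\emptyset$, the retractability of $I_1$ via $\al\mapsto\wh\al$ takes over: the separation lemmas now confine every non-diagonal pair of $\si$ to $I_1$, with at most one $\H$-related `rotation' in $D_2$, and a finite case analysis using Lemma~\ref{la:hat} — according to whether $\si$ identifies $\wh\al$-equal, $\L$-hat-related or $\R$-hat-related pairs — identifies $\si$ as one of $\mu_q,\lam_q,\rho_q$ ($q\in\{0,1\}$) or their $\S_2$-enlargements. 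The degenerate cases $n\le1$ are dealt with separately.

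I expect the converse direction, and within it the separation lemmas, to be the main obstacle. Establishing them requires, for each $\D$-class and each `type' of Green-separation of a pair inside it, an explicit computation of the principal congruence $(\al,\be)^\sharp$ showing it already swallows a large Rees congruence — that is, a sequence of diagram manipulations realising every pair of a suitable lower $\D$-class as a product $\eta_1\al\eta_2$ up to the identifications generated so far. The bookkeeping near the bottom of the lattice, where $I(\si)=\emptyset$ and the hat map and the small symmetric group $\S_2$ interact, is the subtlest part, and ruling out congruences lying strictly between those listed takes the most care.
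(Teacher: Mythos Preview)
This theorem is not proved in the present paper: it is quoted verbatim as \cite[Theorem~5.4]{EMRT2018} and closed with the no-proof marker \verb|\epfres|. So there is no proof here for your proposal to be compared against; the authors use the result as background input.

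That said, your sketch is broadly faithful to the strategy of the original source. The forward direction is essentially as you describe, and the paper even supplies some of the ingredients you invoke (Lemma~\ref{la:hat} for the retraction, Lemma~\ref{la:Green_Pn} for Green's relations). For the converse, your reliance on ``separation lemmas'' is exactly right in spirit --- compare Lemma~\ref{la:sep} here, which the authors present as a strengthening of \cite[Lemma~6.2]{ER2020} --- and the dichotomy you set up between the $I(\si)\ne\emptyset$ and $I(\si)=\emptyset$ cases matches the structure of the argument in \cite{EMRT2018}. One small inaccuracy: in the case $I(\si)=I_r$ with $r\ge1$, the non-trivial pairs need not all lie in $D_{r+1}$ when $r=1$; the congruences $\lam_{\S_2},\rho_{\S_2},\mu_{\S_2}$ have $I(\si)=\emptyset$, not $I_1$, so your split between the two cases is slightly off at the boundary. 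The actual division in \cite{EMRT2018} is according to whether $\si$ contains some $R_q$ with $q\ge1$ (the ``top'' part of the lattice, a chain) versus $\si\subseteq R_{\S_2}$ (the ``bottom'' prism), and the $\S_2$-decorated congruences live in the latter.
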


The above notation for the congruences of $\P_n$ will be fixed and used throughout the paper.

\begin{rem}
As explained in \cite{EMRT2018,ER2020}, the $\al\mt\wh\al$ mapping is largely responsible for the additional complexity in the bottom part of $\Cong(\P_n)$, as compared to the top part.  
In the twisted partition monoid, to be defined shortly, the complexity increases hugely, and this mapping remains one among the key factors.
\end{rem}

\begin{figure}[ht]

\begin{center}

\scalebox{0.8}{
\begin{tikzpicture}[scale=0.7]
\begin{scope}[minimum size=8mm,inner sep=0.5pt, outer sep=1pt]

\node (m0) at (6,0) [draw=blue,fill=white,circle,line width=2pt] { $\mu_0$};
\node (R0) at (9,4) [draw=blue,fill=white,circle,line width=2pt] { $R_0$};
\node (R1) at (6,6) [draw=blue,fill=white,circle,line width=2pt] { $R_1$};
\node (R2) at (3,10) [draw=blue,fill=white,circle,line width=2pt] { $R_2$};
\node (R3) at (3,16) [draw=blue,fill=white,circle,line width=2pt] { $R_3$};
\node (Rn) at (3,20) [draw=blue,fill=white,circle,line width=2pt] { $R_n$};

\node (r0) at (9,2) [draw,circle] { $\rho_0$};
\node (r1) at (6,4) [draw,circle] { $\rho_1$};
\node (rS2) at (3,6) [draw,circle] { $\rho_{\S_2}$};

\node (l0) at (6,2) [draw,circle] { $\lam_0$};
\node (l1) at (3,4) [draw,circle] { $\lam_1$};
\node (lS2) at (0,6) [draw,circle] { $\lam_{\S_2}$};

\node (m1) at (3,2) [draw,circle] { $\mu_1$};
\node (mS2) at (0,4) [draw,circle] { $\mu_{\S_2}$};

\node (RS2) at (3,8) [draw,circle] { $R_{\S_2}$};
\node (RA3) at (3,12) [draw,circle] { $R_{\A_3}$};
\node (RS3) at (3,14) [draw,circle] { $R_{\S_3}$};

\node (dots) at (3,18) [draw,circle, white] { $\vdots$}; \node (dotss) at (3,18.15)  { $\vdots$};
%extra node on previous to get dots centred in white space

\node () at (7.6,0) { $=\De_{\P_n}$};
\node () at (4.6,20) { $=\nab_{\P_n}$};

\end{scope}

\draw 
(m0) -- (r0)
(m0) -- (l0)
(m0) -- (m1)
(m1) -- (mS2)
(m1) -- (l1)
(m1) -- (r1)
(l0) -- (l1)
(l0) -- (R0)
(r0) -- (r1)
(r0) -- (R0)
(mS2) -- (lS2)
(mS2) -- (rS2)
(l1) -- (lS2)
(l1) -- (R1)
(r1) -- (rS2)
(r1) -- (R1)
(R0) -- (R1)
(lS2) -- (RS2)
(rS2) -- (RS2)
(R1) -- (RS2)
(RS2) -- (R2)
(R2) -- (RA3)
(RA3) -- (RS3)
(RS3) -- (R3)
(R3) -- (dots)
(dots) -- (Rn)
;

\end{tikzpicture}
}
\caption{The Hasse diagram of $\Cong(\P_n)$; see Theorem \ref{thm:CongPn}.  Rees congruences are indicated in blue outline.}
\label{fig:CongPn}
\end{center}

\end{figure}
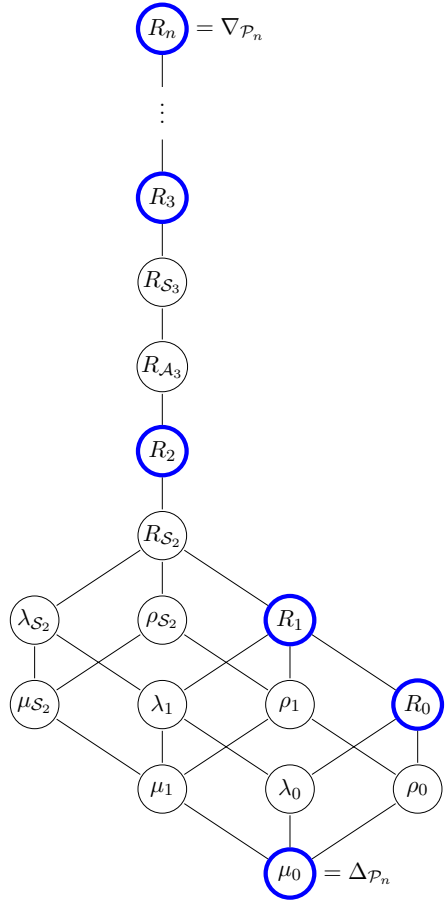

The partition monoid has an involution defined by
\[
\P_n \to \P_n : \al=\begin{partn}{6} A_1&\dots&A_q&C_1&\dots&C_s\\ \hhline{~|~|~|-|-|-} B_1&\dots&B_q&E_1&\dots&E_t\end{partn} \mt \al^* = \begin{partn}{6}B_1&\dots&B_q&E_1&\dots&E_t \\ \hhline{~|~|~|-|-|-} A_1&\dots&A_q&C_1&\dots&C_s\end{partn},
\]
satisfying $(\al\be)^*=\be^*\al^*$ and $\al=\al\al^*\al$, for all $\al,\be\in\P_n$;
so $\P_n$ is a regular $*$-semigroup in the sense of \cite{NS1978}.  Although we will not use this involution explicitly, it is responsible for a natural left-right symmetry/duality that will allow us to shorten many proofs.

\subsection{Twisted partition monoids}\label{subsec:Ptw}

Consider two partitions $\alpha,\beta\in\P_n$.  A connected component of the product graph $\Ga(\alpha,\beta)$ is said to be \emph{floating} if all its vertices come from the middle row, $\bn''$. Denote the number of floating components in $\Ga(\alpha,\beta)$ by $\Float(\alpha,\beta)$.  For example, with $\al,\be\in\P_6$ as in Figure \ref{fig:P6}, we have $\Float(\al,\be)=1$, as $\{1'',2'',6''\}$ is the unique floating component of $\Ga(\al,\be)$.  The next result is pivotal in all that follows, and will be used without explicit reference; for a proof, see \cite[Lemma~4.1]{FL2011}:

\begin{lemma}\label{la:Float}
For any $\al,\be,\ga\in\P_n$ we have $\Float(\al,\be) + \Float(\al\be,\ga) = \Float(\al,\be\ga) + \Float(\be,\ga)$. \epfres
\end{lemma}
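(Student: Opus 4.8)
The plan is to prove the identity $\Float(\al,\be) + \Float(\al\be,\ga) = \Float(\al,\be\ga) + \Float(\be,\ga)$ by analysing floating components in a suitable ``three-layer'' product graph built from $\al$, $\be$ and $\ga$ simultaneously. Concretely, form the graph $\Ga(\al,\be,\ga)$ on vertex set $\bn\cup\bn''\cup\bn'''\cup\bn'$, obtained by stacking $\al$ (with lower vertices relabelled to $\bn''$), $\be$ (with upper vertices relabelled to $\bn''$ and lower vertices to $\bn'''$), and $\ga$ (with upper vertices relabelled to $\bn'''$). The key structural observation is that the connected components of $\Ga(\al,\be,\ga)$ that meet $\bn\cup\bn'$ correspond to the transversals and non-transversals of the triple product $\al\be\ga$, and the components entirely contained in the ``interior'' $\bn''\cup\bn'''$ are precisely the floating components that get discarded when the product is formed. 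The identity should then fall out of counting the interior components in two different ways, according to which pair of partitions is multiplied first.

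The first step is to make precise the relationship between $\Ga(\al,\be,\ga)$ and the two-layer product graphs. I would show that $\Ga(\al\be,\ga)$ is obtained from $\Ga(\al,\be,\ga)$ by contracting the middle layer $\bn''$: more precisely, two vertices of $\bn\cup\bn'''\cup\bn'$ lie in the same component of $\Ga(\al\be,\ga)$ if and only if they lie in the same component of $\Ga(\al,\be,\ga)$, and conversely each component of $\Ga(\al,\be,\ga)$ that touches $\bn\cup\bn'''\cup\bn'$ restricts to a component of $\Ga(\al\be,\ga)$. The floating components of $\Ga(\al\be,\ga)$ — those contained in $\bn'''$ — are then exactly the components of $\Ga(\al,\be,\ga)$ contained in $\bn''\cup\bn'''$ that touch $\bn'''$. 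Meanwhile $\Float(\al,\be)$ counts the components of $\Ga(\al,\be,\ga)$ that are contained in $\bn''$ (these are unaffected by the presence of the $\ga$-layer, since $\ga$'s edges only touch $\bn'''$ and $\bn'$). Summing, $\Float(\al,\be)+\Float(\al\be,\ga)$ counts precisely the components of $\Ga(\al,\be,\ga)$ that lie in $\bn''\cup\bn'''$ and do not touch $\bn\cup\bn'$ — i.e. the ``interior'' components — once each: those inside $\bn''$ alone are counted by the first term, those that reach $\bn'''$ by the second term, and there is no double-counting because a single component cannot simultaneously be contained in $\bn''$ and touch $\bn'''$.

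By the symmetric argument, contracting the $\bn'''$-layer of $\Ga(\al,\be,\ga)$ recovers $\Ga(\al,\be\ga)$, whose floating components (inside $\bn''$) are the interior components of $\Ga(\al,\be,\ga)$ touching $\bn''$, while $\Float(\be,\ga)$ counts the interior components contained in $\bn'''$ alone; so $\Float(\al,\be\ga)+\Float(\be,\ga)$ also counts all interior components of $\Ga(\al,\be,\ga)$ exactly once. Since both sides equal the number of connected components of $\Ga(\al,\be,\ga)$ contained in $\bn''\cup\bn'''$, they are equal, which is the claimed identity. I expect the main obstacle to be setting up the contraction claim cleanly — verifying that collapsing a middle layer of a stacked graph induces exactly the right component partition on the surviving vertices, and that no interior component is lost or created in the process — since this is the combinatorial heart of the argument and requires care about which edges touch which layers; once that is in place, the two-way counting is straightforward bookkeeping. (Alternatively, one could cite \cite[Lemma~4.1]{FL2011} directly, but the point here is to supply a self-contained argument, and the three-layer picture is the natural way to do it.)
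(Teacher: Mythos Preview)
Your argument is correct: the three-layer graph $\Ga(\al,\be,\ga)$ is exactly the right object, and both sides of the identity count its interior components (those contained in the two middle rows) by partitioning them according to whether they touch $\bn'''$ or $\bn''$ respectively. The contraction claim you flag as the ``main obstacle'' is routine once stated carefully, and your sketch of it is accurate.

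Note that the paper does not actually give its own proof here --- it simply cites \cite[Lemma~4.1]{FL2011}. Your self-contained argument is the standard one and is essentially what that reference does, so there is nothing to compare approach-wise; you have supplied the proof the paper omits.
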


We will write $\Float(\al,\be,\ga)$ for the common value in Lemma \ref{la:Float}, and note that this is the number of floating components created when forming the product $(\al\be)\ga=\al(\be\ga)$.

The \emph{twisted partition monoid} $\Ptw n$ is defined by
\[
\Ptw{n}:=\N\times\P_n
\qquad\text{with product}\qquad
(i,\alpha)\multw(j,\beta):= \big(i+j+\Float(\alpha,\beta),\alpha\beta\big).
\]%
The operation featuring in the first component is the addition of natural numbers, and in the second composition of partitions.  Associativity  follows from 
Lemma \ref{la:Float}.
Geometrically, one can think of $(i,\al)\in\Ptw n$ as a diagram consisting of a graph representing $\al$ along with $i$ additional floating components, as explained in \cite{ACHLV2015,BDP2002}. 
In the formation of the product $(i,\alpha)(j,\beta)$, each factor contributes its existing floating components, and a further
$\Float(\alpha,\beta)$ new ones are created.

In order to describe Green's relations on $\Ptw n$, we first need some basic lemmas.
The first describes two situations when two multiplications are guaranteed to create the same number of floating components.

\begin{lemma}\label{la:A4}
Let $\al,\be\in\P_n$.
\ben
\item If $\al\rL\be$, then $\Float(\al,\eta)=\Float(\be,\eta)$ for all $\eta\in\P_n$.
\item If $\al\rR\be$, then $\Float(\eta,\al)=\Float(\eta,\be)$ for all $\eta\in\P_n$.
\een
\end{lemma}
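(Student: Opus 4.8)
The plan is to prove both statements by a direct analysis of the product graphs, using the left-right duality afforded by the involution ${}^*$ to deduce part (ii) from part (i). So the real work is in part (i): assuming $\al\rL\be$, show that $\Ga(\al,\eta)$ and $\Ga(\be,\eta)$ have the same number of floating components, for every $\eta\in\P_n$.

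First I would unpack what $\al\rL\be$ gives us via Lemma~\ref{la:Green_Pn}(ii): $\codom\al=\codom\be$ and $\coker\al=\coker\be$. Geometrically this says that $\al$ and $\be$, when restricted to the lower row $\bn'$, induce exactly the same partition of $\bn'$, and the same subset of $\bn'$ lies on transversals. Now recall how $\Ga(\al,\eta)$ is formed: we take $\al_\downarrow$ (the lower vertices of $\al$ relabelled into $\bn''$) together with $\eta^\uparrow$ (the upper vertices of $\eta$ relabelled into $\bn''$), on the common vertex set $\bn\cup\bn''\cup\bn'$. A floating component of $\Ga(\al,\eta)$ is a connected component all of whose vertices lie in $\bn''$. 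The key point is that a floating component touches no vertex of $\bn$ and no vertex of $\bn'$; in particular it uses \emph{no} edge of $\al_\downarrow$ that is incident to a vertex of $\bn$. So whether a given subset of $\bn''$ forms a floating component depends on $\al$ only through (a) which pairs of middle vertices are joined by a path lying entirely in the middle and lower rows inside $\al_\downarrow$ — i.e.\ through $\coker\al$ — and (b) which middle vertices are "anchored" to the bottom row $\bn'$ via a transversal of $\al$ — i.e.\ through $\codom\al$. Both of these data agree for $\al$ and $\be$.

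Making this precise, I would argue as follows. Let me write $G=\Ga(\al,\eta)$ and $G'=\Ga(\be,\eta)$. Consider the subgraph of $G$ induced on $\bn''\cup\bn'$: its connected components on $\bn''$, after deleting any component meeting $\bn'$, are determined by $\coker\al$ and $\codom\al$ (two middle vertices $x'',y''$ are in the same such component iff $(x,y)\in\coker\al$, and a middle vertex is "killed" — merged with the bottom row — iff $x\in\codom\al$). Call the resulting partition of the surviving middle vertices $\kappa(\al)$; then $\kappa(\al)=\kappa(\be)$ by the $\L$-hypothesis. Now collapse in $G$ each block of this partition to a single vertex and each merged-with-$\bn'$ vertex to a distinguished point; the number of floating components of $G$ equals the number of connected components of the resulting graph (built from $\eta^\uparrow$'s edges plus the collapsing) that avoid $\bn$ and the distinguished point. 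Since this reduced graph is built from data that is identical for $\al$ and $\be$ (namely $\kappa(\al)=\kappa(\be)$ and the edges of $\eta^\uparrow$), we get $\Float(\al,\eta)=\Float(\be,\eta)$. Finally, part (ii) follows by applying part (i) to $\al^*,\be^*$: if $\al\rR\be$ then $\al^*\rL\be^*$, and $\Float(\eta,\al)=\Float(\al^*,\eta^*)$ since reversing a product graph top-to-bottom preserves the set of floating components.

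The main obstacle, I expect, is setting up the "collapsing" argument cleanly enough that it is genuinely a proof rather than a picture — one has to be careful that a floating component of $G$ could a priori weave back and forth between using $\al_\downarrow$-edges and $\eta^\uparrow$-edges many times, so the claim that it is "determined by $\coker\al$, $\codom\al$ and $\eta$" needs the observation that such a component never uses an $\al_\downarrow$-edge incident to $\bn$, hence its trace inside $\al_\downarrow$ is a union of blocks of $\al$ restricted to $\bn''$, none of which is anchored to $\bn'$. Once that observation is stated carefully, the rest is a routine (if slightly fiddly) identification of connected components in the two quotient graphs. An alternative, possibly slicker route would be to prove it purely algebraically: write $\al\rL\be$ as $\al=\xi\be$ and $\be=\zeta\al$ for some $\xi,\zeta\in\P_n$ with $\xi\be\zeta\al=\al$ etc., and repeatedly apply Lemma~\ref{la:Float} to relate $\Float(\al,\eta)=\Float(\xi\be,\eta)$ to $\Float(\be,\eta)$ plus correction terms that cancel; but keeping track of the idempotent identities $\al\al^*\al=\al$ makes this bookkeeping comparable in length, so I would go with the direct graph-theoretic argument above.
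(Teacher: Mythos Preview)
Your approach is essentially the same as the paper's --- a direct graph-theoretic argument showing that the floating components of $\Ga(\al,\eta)$ and $\Ga(\be,\eta)$ coincide --- and the duality argument for part (ii) via the involution is fine. However, you have a systematic notational slip: in $\Ga(\al,\eta)$, the graph $\al_\downarrow$ lives on $\bn\cup\bn''$, so a transversal of $\al$ anchors a middle vertex to the \emph{top} row $\bn$, not to $\bn'$; your ``subgraph induced on $\bn''\cup\bn'$'' should be $\bn\cup\bn''$, and the components you delete are those meeting $\bn$. Once this is corrected, your argument goes through.

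That said, the paper's proof is much more direct: it simply observes that any floating component of $\Ga(\al,\eta)$ is a union $B_1''\cup\cdots\cup B_k''$ of lower \emph{non-transversal} blocks of $\al$ (since a transversal would touch $\bn$), joined by upper non-transversals of $\eta$; since $\al\rL\be$ means $\al$ and $\be$ have identical lower non-transversals, the floating components of $\Ga(\al,\eta)$ and $\Ga(\be,\eta)$ are literally the same sets. No collapsing or quotient construction is needed.
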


\pf
It suffices to prove the first statement, the second being dual.
A floating component in $\Ga(\al,\eta)$ has the form $F''=B_1''\cup\dots\cup B_k''$ for some collection $B_1',\ldots,B_k'$ of lower blocks of~$\al$, which are `brought together' by means of upper non-transversals of $\eta$.
 Since $\al\rL\be$, the $B_i'$ are also lower blocks of $\be$, and $F''$ is a floating component in $\Ga(\be,\eta)$ as well.  Thus, by symmetry, $\Ga(\al,\eta)$ and $\Ga(\be,\eta)$ have exactly the same floating components.
\epf

The next lemma will be of considerable importance throughout the paper, as it identifies situations when we can avoid creating any floating components in multiplication:

\begin{lemma}\label{la:0float}
\ben
\item \label{it:0f1} For any $\al,\be\in\P_n$, there exist $\al',\be'\in\P_n$ such that
\[
\al\be = \al'\be = \al\be' \AND \Float(\al',\be) = \Float(\al,\be') = 0.
\]
\item \label{it:0f2} For any $\al,\be,\ga\in\P_n$, there exist $\al',\ga'\in\P_n$ such that
\[
\al\be\ga = \al'\be\ga' \AND \Float(\al',\be,\ga') = 0.
\]
\een
\end{lemma}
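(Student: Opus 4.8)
The plan is to prove both parts by directly constructing the auxiliary partitions, choosing them so that the relevant product graphs simply have no components lying entirely in the middle row.

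\medskip
\noindent
\textbf{Part (i).} By left-right duality it suffices to produce $\al'$ with $\al\be=\al'\be$ and $\Float(\al',\be)=0$. The idea is to take $\al'$ to be a partition that is $\R$-related to $\al$ — so that multiplying on the right by $\be$ still yields $\al\be$ — but whose lower non-transversals have been ``spread out'' so that they cannot be amalgamated into floating components by the upper non-transversals of $\be$. Concretely, recall that a floating component of $\Ga(\al',\be)$ is a union $B_{i_1}''\cup\cdots\cup B_{i_k}''$ of lower blocks of $\al'$ that are connected together purely via upper non-transversals of $\be$; in particular each such $B_{i_j}'$ must be a lower \emph{non}-transversal of $\al'$. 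So I want $\al'$ to have $\dom\al'=\dom\al$ and $\ker\al'=\ker\al$ (to keep $\al'\rR\al$, whence $\al'\be=\al\be$ by the standard description of $\R$ and the product), but with $\codom\al'=\bn$, i.e. every lower vertex of $\al'$ lies on a transversal. The cleanest choice: if $\al=\begin{partn}{6} A_1&\dots&A_q&C_1&\dots&C_s\\ \hhline{~|~|~|-|-|-} B_1&\dots&B_q&E_1&\dots&E_t\end{partn}$, let $\al'$ have the same upper structure $A_1,\dots,A_q,C_1,\dots,C_s$ but with \emph{all} of $\bn'$ distributed among the transversal lower-blocks, say by dumping $E_1\cup\cdots\cup E_t$ into $B_1$ (assuming $q\ge 1$; if $q=0$ then $\al\in I_0$ and one can instead merge all lower non-transversals into a single lower non-transversal, which is never floating on its own only if... — actually if $q=0$ one takes $\al'=\al$ and notes that then $\al\be\in I_0$ has all of $\bn''$ split, and a more careful choice of $\al'$ with one giant lower block works; I will handle $q=0$ as a small separate case). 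Since $\al'$ has no lower non-transversals, $\Ga(\al',\be)$ has no floating component, so $\Float(\al',\be)=0$, and $\al'\rR\al$ gives $\al'\be=\al\be$. The dual choice gives $\be'$.

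\medskip
\noindent
\textbf{Part (ii).} This is essentially two applications of part (i) combined with the additivity Lemma~\ref{la:Float}. First apply part (i) to the pair $(\al,\be)$ to get $\al'$ with $\al\be=\al'\be$ and $\Float(\al',\be)=0$. Then apply (the dual of) part (i) to the pair $(\al'\be,\ga)$ — or better, directly to $(\be,\ga)$ — to get $\ga'$ with $\be\ga=\be\ga'$ and $\Float(\be,\ga')=0$; note also $\al'\be\ga=\al'\be\ga'$ and $\al\be\ga=\al'\be\ga=\al'\be\ga'$. It remains to see that $\Float(\al',\be,\ga')=0$. By Lemma~\ref{la:Float} applied to the triple $(\al',\be,\ga')$ we have
\[
\Float(\al',\be)+\Float(\al'\be,\ga') = \Float(\al',\be\ga')+\Float(\be,\ga').
\]
The left-hand side equals $\Float(\al'\be,\ga')$ since $\Float(\al',\be)=0$, and the right-hand side equals $\Float(\al',\be\ga')$ since $\Float(\be,\ga')=0$; and $\Float(\al',\be,\ga')$ is by definition this common value. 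So I still need one of these two quantities to vanish. The fix: choose $\ga'$ more carefully as in part (i) so that $\be\ga'$ has no lower non-transversals — then redo the construction of an $\al''$ (from $\al'$) via part (i) relative to the new partition $\be\ga'$, giving $\Float(\al'',\be\ga')=0$ and $\al''\cdot(\be\ga')=\al'\cdot(\be\ga')$. Setting the final triple to $(\al'',\be,\ga')$ and running the same Lemma~\ref{la:Float} computation now gives $\Float(\al'',\be,\ga')=\Float(\al'',\be\ga')=0$, while $\al''\be\ga'=\al'\be\ga'=\al\be\ga$.

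\medskip
\noindent
\textbf{Main obstacle.} The genuine content is the construction in part (i), and the subtle point is making sure the auxiliary partition both stays in the correct Green's class (so the product is unchanged) \emph{and} has enough lower vertices promoted to transversals to kill every floating component — with the degenerate low-rank cases ($q=0$, and keeping $\al'$ a legitimate partition of $\bn\cup\bn'$) needing a separate but easy argument. Once part (i) is clean, part (ii) is just bookkeeping with Lemma~\ref{la:Float}, with the only care being to apply (i) in the right order so that \emph{two} of the three pairwise $\Float$ values in the telescoping identity vanish, forcing the triple value to be $0$.
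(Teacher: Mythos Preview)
Your construction in part (i) has a genuine gap: the claim that $\al'\rR\al$ implies $\al'\be=\al\be$ is false.  Being $\R$-related only means $\al\P_n=\al'\P_n$; it does not force $\al\be=\al'\be$ for a specific $\be$.  Concretely, your $\al'$ merges \emph{all} lower non-transversals of $\al$ into the transversal $A_1\cup B_1'$, but a lower non-transversal $E_k'$ of $\al$ that is \emph{not} part of any floating component may well be connected (through an upper non-transversal of $\be$) to some other $B_j''$ in $\Ga(\al,\be)$.  After your merge, that connection now links $A_1$ to $A_j$ in $\Ga(\al',\be)$, collapsing two transversals of $\al\be$ into one.  For a small explicit failure in $\P_4$: take $\al$ with blocks $\{1,1'\},\{2,2'\},\{3\},\{4\},\{3'\},\{4'\}$ and $\be$ with blocks $\{1,1'\},\{2,3\},\{4\},\{2'\},\{3'\},\{4'\}$.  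Here only $\{4''\}$ is floating, but your $\al'$ also absorbs $\{3'\}$ into the first transversal, and since $\{2,3\}$ is an upper non-transversal of $\be$, the product $\al'\be$ has $1$ and $2$ in the same block, whereas $\al\be$ does not.

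The fix --- and this is exactly what the paper does --- is to merge \emph{only} those lower non-transversals of $\al$ whose images lie in floating components of $\Ga(\al,\be)$, attaching them to any block $A\cup B'$ of $\al$ with $A\neq\emptyset$ (this block always exists, even when $\rank\al=0$, since you may take an upper non-transversal with $B=\emptyset$; this also resolves your unfinished $q=0$ case).  The point is that precisely because these blocks form floating components, they touch nothing in $\bn\cup\bn'$ inside $\Ga(\al,\be)$, so re-attaching them cannot change the induced partition on $\bn\cup\bn'$.  Once part (i) is corrected in this way, your outline for part (ii) is fine, though note the paper simply says ``(ii) follows from (i)'' --- your two-stage application with Lemma~\ref{la:Float} is one way to cash that out.
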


\begin{proof}
We just prove the existence of $\al'$ in \ref{it:0f1}; the existence of $\be'$ is dual, and \ref{it:0f2} follows from~\ref{it:0f1}.  
Let the floating components in $\Ga(\al,\be)$ be $F_1'',\ldots,F_k''$, where $k=\Float(\al,\be)\geq0$.  For each $1\leq i\leq k$, we have $F_i=B_{i1}\cup\cdots\cup B_{im_k}$, where the $B_{ij}'$ are lower non-transversals of $\al$.  Fix any block $A\cup B'$ of $\al$ with $A\neq \emptyset$ (it does not matter if $B=\emptyset$).  We then take $\al'$ to be the partition obtained from $\al$ by replacing the blocks $A\cup B'$ and the $B_{ij}'$ ($1\leq i\leq k$, $1\leq j\leq m_k$) by the single block $A\cup B'\cup F_1'\cup\cdots\cup F_k'$.
\end{proof}

\begin{lemma}\label{la:Green_Ptwn}
If $\K$ is any of Green's relations, and if $\al,\be\in\P_n$ and $i,j\in\N$, then
\[
(i,\al) \rK (j,\be) \text{ in } \Ptw n \IFF i=j \ANd \al \rK \be \text{ in } \P_n.
\]
The $\D=\J$-classes and principal ideals of $\Ptw{n}$ are the sets
\[
D_{qi} := \{i\}\times D_q \AND I_{qi} := \{i,i+1,i+2,\dots\} \times I_q  \qquad\text{for $q\in\bnz$ and $i\in\N$,}
\]
and these are ordered by $D_{qi} \leq D_{rj} \iff I_{qi} \sub I_{rj} \iff q\leq r$ and $i\geq j$.
\end{lemma}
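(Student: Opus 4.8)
The plan is to prove the statement in three parts: first the characterisation of each of Green's relations on $\Ptw n$, then the identification of the $\D=\J$-classes and principal ideals, and finally the ordering. For the first part, note that in the product $(i,\al)\multw(j,\be) = (i+j+\Float(\al,\be),\al\be)$ the first coordinate can only ever increase (since $\Float\geq 0$ and $j\geq 0$). So if $(i,\al)\rR(j,\be)$, i.e.\ $(i,\al)\Ptw n = (j,\be)\Ptw n$, then on one hand $(i,\al) = (j,\be)(k,\ga)$ for some $(k,\ga)$, forcing $i\geq j$, and symmetrically $j\geq i$, whence $i=j$. The remaining content is then $\al\P_n = \be\P_n$ in $\P_n$, i.e.\ $\al\rR\be$: the forward direction is immediate by projecting to the second coordinate, and for the converse, if $\al\rR\be$ then $\al=\be\ga$ and $\be=\al\de$ for some $\ga,\de$, and by Lemma~\ref{la:0float}\ref{it:0f1} we may choose $\ga,\de$ so that $\Float(\be,\ga)=\Float(\al,\de)=0$, giving $(i,\al)=(i,\be)(0,\ga)$ and $(i,\be)=(i,\al)(0,\de)$. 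The $\L$ case is dual (using the right-hand statement of Lemma~\ref{la:0float}\ref{it:0f1}, or the involution on $\P_n$), and the $\H=\R\cap\L$, $\D=\R\vee\L$ cases follow formally. For $\J$: if $(i,\al)\rJ(j,\be)$ then each lies in the principal ideal of the other; writing $(i,\al)=(k,\ga)(j,\be)(l,\de)$ forces $i\geq j$, symmetrically $j\geq i$, so $i=j$, and projecting gives $\al\rJ\be$, hence $\al\rD\be$ by Lemma~\ref{la:Green_Pn}, hence (using the $\D$ case just proved) $(i,\al)\rD(i,\be)$; conversely $\D\sub\J$ always.

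For the $\D=\J$-class description, combine the above with Lemma~\ref{la:Green_Pn}: $(i,\al)\rD(j,\be)$ iff $i=j$ and $\rank\al=\rank\be$, which is exactly membership in a common set $D_{qi}=\{i\}\times D_q$. For the principal ideals, I would compute $\Ptw n\cdot(i,\al)\cdot\Ptw n$ directly. Its second coordinates form $\P_n\al\P_n = I_q$ where $q=\rank\al$ (by Lemma~\ref{la:Green_Pn}). Its first coordinates: any product $(k,\ga)(i,\al)(l,\de)$ has first coordinate $k+l+i+\Float(\ga,\al,\de)\geq i$, so all first coordinates are $\geq i$; and every value $\geq i$ is attained — take $\ga=\de=\id$ to get exactly $i$ (since $\Float(\id,\al,\id)=0$), and add to $k$ to get anything larger, while $\ga,\de$ can simultaneously be chosen (again via Lemma~\ref{la:0float}, or just by padding with floating components) to realise any partition of rank $\leq q$ in the second coordinate with any prescribed first coordinate $\geq i$. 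This gives $\Ptw n(i,\al)\Ptw n = \{i,i+1,\dots\}\times I_q = I_{qi}$, confirming both that these sets are the principal ideals and that they are indeed ideals.

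Finally, for the ordering: $D_{qi}\leq D_{rj}$ in $\Ptw n/\J$ means $(i,\al)\in\Ptw n(j,\be)\Ptw n = I_{rj}$ for $\al\in D_q$, $\be\in D_r$, which by the ideal description holds iff $i\geq j$ and $q\leq r$; and $I_{qi}\sub I_{rj}$ reads off the same conditions immediately from the explicit product form $\{i,i+1,\dots\}\times I_q\sub\{j,j+1,\dots\}\times I_r$ (using $I_q\sub I_r\iff q\leq r$ from Lemma~\ref{la:Green_Pn}). I expect the only mildly delicate point to be the careful use of Lemma~\ref{la:0float} to control the $\Float$ terms when exhibiting the factorisations witnessing $\rR$, $\rL$ and ideal membership — everything else is bookkeeping on the additive first coordinate, exploiting monotonicity of $\Float$ together with the fact that multiplying by $(0,\id)$ (or $(k,\id)$) adds no new floating components. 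None of the steps should be genuinely hard; the main obstacle is simply organising the three relations and the two coordinates cleanly so the argument does not become repetitive, for which the left--right duality from the involution on $\P_n$ is a useful shortcut.
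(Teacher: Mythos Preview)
Your proposal is correct and takes essentially the same approach as the paper: the paper proves only the $\R$ case (using monotonicity of the first coordinate to force $i=j$, projection to get $\al\rR\be$, and Lemma~\ref{la:0float} for the converse) and declares everything else analogous, while you supply the same argument and additionally spell out the details for $\J$, the principal ideals, and the ordering that the paper leaves implicit. The only place to tighten is the principal-ideal paragraph, where the cleanest route is to invoke Lemma~\ref{la:0float}\ref{it:0f2} directly: given any $\eta\in I_q$, write $\eta=\ga\al\de$ with $\Float(\ga,\al,\de)=0$, and then $(k-i,\ga)(i,\al)(0,\de)=(k,\eta)$ for any $k\geq i$.
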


\pf
We just prove the first statement for $\K=\R$, as everything else is analogous.  Suppose first that $(i,\al) \rR (j,\be)$, so that 
\[
(i,\al) = (j,\be)\multw(k,\ga) = (j+k+\Float(\be,\ga),\be\ga ) \ANd (j,\be) = (i,\al)\multw(l,\de) = (i+l+\Float(\al,\de),\al\de) 
\]
for some $\ga,\de\in\P_n$ and $k,l\in\N$.  The second coordinates immediately give $\al\rR\be$, and the first quickly lead to $i=j$.

Conversely, suppose $i=j$ and $\al\rR\be$.  Then $\al=\be\ga$ and $\be=\al\de$ for some $\ga,\de\in\P_n$.  By Lemma \ref{la:0float} there exist $\ga',\de'\in\P_n$ such that $\al=\be\ga'$ and $\be=\al\de'$, with $\Float(\be,\ga')=\Float(\al,\de')=0$.  It then follows that $(i,\al) = (i,\be)\multw(0,\ga')$ and $(i,\be) = (i,\al)\multw(0,\de')$, so ${(i,\al) \rR (j,\be)}$.
\epf

By the previous lemma the poset $(\Ptw n/\D,\leq)$ of $\J=\D$-classes is isomorphic to the direct product $(\bnz,\leq)\times(\N,\geq)$.  
Motivated by this, we will frequently view $\Ptw{n}$ as a rectangular grid of $\D$-classes indexed by~$\bnz\times\N$, as in Figure \ref{fig:gridid}. Thus, we will refer to \emph{columns} $\{i\}\times \P_n$ ($i\in\N$) and \emph{rows} $\N\times D_q$ ($q\in\bnz$) of $\Ptw{n}$. This grid structure will feed into our description of congruences on $\Ptw{n}$, in which certain $\bnz\times\N$ matrices will play a key part.

\begin{figure}[ht]
\begin{center}
$
\begin{array}{c|*{5}{>{$}m{6mm}<{$}|}c|}\hhline{~|-|-|-|-|-|-|}
\renewcommand{\arraystretch}{2}
\text{\scriptsize 4} &\cellcolor{delcol}D_{40} &\cellcolor{delcol} D_{41}& \cellcolor{delcol} D_{42}& \cellcolor{delcol}D_{43}&\cellcolor{delcol}D_{44}&\cellcolor{delcol}\cdots\\ 
\hhline{~|-|-|-|-|-|-|}
\renewcommand{\arraystretch}{2}
\text{\scriptsize 3} &\cellcolor{delcol} D_{30}&\cellcolor{delcol}D_{31} & \cellcolor{delcol}D_{32} & \cellcolor{Rcol}D_{33}&\cellcolor{Rcol}D_{34}&\cellcolor{Rcol}\cdots\\
\hhline{~|-|-|-|-|-|-|}
\renewcommand{\arraystretch}{2}
\text{\scriptsize 2} &\cellcolor{delcol}D_{20} &\cellcolor{delcol}D_{21} & \cellcolor{delcol}D_{22} & \cellcolor{Rcol}D_{23}&\cellcolor{Rcol}D_{24}&\cellcolor{Rcol}\cdots\\
\hhline{~|-|-|-|-|-|-|}
\renewcommand{\arraystretch}{2}
\text{\scriptsize 1} &\cellcolor{delcol}D_{10} &\cellcolor{delcol}D_{11} & \cellcolor{Rcol}D_{12} & \cellcolor{Rcol}D_{13}&\cellcolor{Rcol}D_{14}&\cellcolor{Rcol}\cdots\\
\hhline{~|-|-|-|-|-|-|}
\renewcommand{\arraystretch}{2}
\text{\scriptsize 0} &\cellcolor{Rcol}D_{00} &\cellcolor{Rcol}D_{01} & \cellcolor{Rcol} D_{02}& \cellcolor{Rcol}D_{03}&\cellcolor{Rcol}D_{04}&\cellcolor{Rcol}\cdots\\
\hhline{~------}
\multicolumn{1}{c}{\text{\scriptsize $q/i$}}&\multicolumn{1}{c}{\text{\scriptsize 0}}&\multicolumn{1}{c}{\text{\scriptsize 1}}&\multicolumn{1}{c}{\text{\scriptsize 2}}&\multicolumn{1}{c}{\text{\scriptsize 3}}&\multicolumn{1}{c}{\text{\scriptsize 4}}&\multicolumn{1}{c}{\cdots}
\end{array}
$
\\
\caption{$\Ptw{4}$ as a grid, and the ideal determined by $(0,0)$, $(1,2)$ and $(3,3)$.}
\label{fig:gridid}
\end{center}
\end{figure}

\subsection{\boldmath Finite $d$-twisted partition monoids}\label{subsec:finite}

In addition to the monoid $\Ptw{n}$, we will also be interested in certain finite quotients, where we limit the number of floating components that are allowed to appear.  
Specifically, for $d\in\N$, the \emph{$d$-twisted partition monoid} is defined to be the quotient
\[
\Ptw{n,d} := \Ptw n/R_{I_{n,d+1}}
\]
by the Rees congruence associated to the (principal) ideal $I_{n,d+1} = \{d+1,d+2,\dots\} \times \P_n$.  We can also think of $\Ptw{n,d}$ as $\Ptw{n}$ with all elements with more than $d$ floating components equated to a zero element $\zero$. Thus we may take $\Ptw{n,d}$ to be the set
\[
\Ptw{n,d}:=(\bdz\times\P_n)\cup\{\zero\},
\]
with multiplication
\begin{equation}\label{eq:multwk}
\ba\multwk{d}\bb:=
\begin{cases} 
\ba\multw\bb & \text{if } \ba=(i,\alpha),\ \bb=(j,\beta) \text{ and } i+j+\Float(\alpha,\beta)\leq d,\\
\zero & \text{otherwise}. 
\end{cases}
\end{equation}
In this interpretation, $\Ptw{n,d}$ consists of columns $0,1,\ldots,d$ of $\Ptw n$, plus the zero element $\zero$.

Clearly the product in $\Ptw{n}$ of two pairs $(i,\al)$ and $(j,\be)$ will be equal to their product in
all~$\Ptw{n,d}$ for sufficiently large $d$.
So $\Ptw n$ can be regarded as a limit of $\Ptw{n,d}$ as $d\rightarrow\infty$.
 One may wonder to what extent this is reflected on the level of congruences, 
and this will be discussed in more detail in Section \ref{sec:dtwist}, and further in \cite{ERtwisted2}.

For $d=0$, the  $0$-twisted partition monoid $\Ptw{n,0}$ is (isomorphic to) $\P_n\cup\{\zero\}$ with multiplication
\begin{equation}\label{eq:multw0}
\al\multwk{0}\be:=
\begin{cases} 
\al\be & \text{if $\al,\be\in\P_n$ and $\Float(\al,\be)=0$,}\\
\zero & \text{otherwise}. 
\end{cases}
\end{equation}
These monoids are closely related to the 0-partition algebras, which are important in representation theory; see for example \cite{DW2000}.

\subsection{Auxiliary results}
\label{subsec:auxPnFl}

We now gather some preliminary results concerning the multiplication of partitions and the floating components that can arise when forming such products; these results will be used extensively throughout the paper. 

In \cite{ER2020} it was shown that underpinning the classification of congruences on $\P_n$ (Theorem~\ref{thm:CongPn}) are  certain `separation properties' of multiplication.  In the current work, we need to extend these to also include information about floating components, and the following is a suitable strengthening of \cite[Lemma~6.2]{ER2020}.

\begin{lemma}
\label{la:sep}
Suppose $\alpha\in D_q$ and $\beta\in D_r$ with $q\geq r$.
\begin{thmenumerate}
\item \label{it:sep1} If $q>r$ and $q\geq 2$, then there exists $\ga\in \P_n$ such that $\ga\al\in D_{q-1}$, $\ga\be\in I_{q-1}\sm H_{\ga\al}$ and $\Float(\ga,\al)=0$.
\item \label{it:sep2} If $q=r\geq1$ and $(\al,\be)\not\in\H$, then there exists $\ga\in \P_n$ such that, swapping $\al,\be$ if necessary,
\[
\text{$[\al\ga\in D_q$, $\be\ga\in I_{q-1}$ and $\Float(\al,\ga)=0]$ \ \ \ or \ \ \ $[\ga\al\in D_q$, $\ga\be\in I_{q-1}$ and $\Float(\ga,\al)=0]$.}
\]
\item \label{it:sep3} If $q\geq2$ and $\be\in H_\al\sm\{\al\}$, then there exists $\ga\in \P_n$ such that $\ga\al\in D_{q-1}$, ${\ga\be\in I_{q-1}\sm H_{\ga\al}}$ and $\Float(\ga,\al)=0$.  
\end{thmenumerate}
\end{lemma}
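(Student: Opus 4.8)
<br>

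The plan is to prove the three parts by reducing each to the corresponding statement in \cite[Lemma~6.2]{ER2020}, and then upgrading it using Lemma~\ref{la:0float} to control floating components. Recall that \cite[Lemma~6.2]{ER2020} provides, in each of the three situations, an element $\ga_0\in\P_n$ with the stated $\D$- and $\H$-class behaviour of the products $\ga_0\al$, $\ga_0\be$ (or $\al\ga_0$, $\be\ga_0$ in one branch of \ref{it:sep2}), but says nothing about $\Float(\ga_0,\al)$. So the task is purely to replace $\ga_0$ by a new $\ga$ that does the same job while additionally killing the floating components.

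First I would handle \ref{it:sep1}. Take $\ga_0$ from \cite[Lemma~6.2]{ER2020}, so $\ga_0\al\in D_{q-1}$ and $\ga_0\be\in I_{q-1}\sm H_{\ga_0\al}$. Apply Lemma~\ref{la:0float}\ref{it:0f1} to the pair $(\ga_0,\al)$ to get $\ga\in\P_n$ with $\ga\al=\ga_0\al$ and $\Float(\ga,\al)=0$. The point is now that $\ga$ also behaves correctly on $\be$: since $\ga\al=\ga_0\al$ forces $\ga\rR\ga_0$ in $\P_n$ (both have the same image and kernel is determined by $\ga\al$'s being a right multiple), hence $\ga\mu=\ga_0\mu$ for every $\mu$ — in particular $\ga\be=\ga_0\be$. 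Actually the cleanest way is to note from the construction in Lemma~\ref{la:0float} that $\ga$ is obtained from $\ga_0$ by merging some lower non-transversals into a fixed block, which changes neither $\dom\ga_0$ nor $\ker\ga_0$, so $\ga\rR\ga_0$ and thus $\ga\be=\ga_0\be$ and $\ga\al=\ga_0\al$. Therefore $\ga$ satisfies all three required conditions. Part \ref{it:sep3} is identical, using the corresponding branch of \cite[Lemma~6.2]{ER2020} for $\be\in H_\al\sm\{\al\}$.

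For \ref{it:sep2}, \cite[Lemma~6.2]{ER2020} gives (after possibly swapping $\al,\be$) either a $\ga_0$ with $\al\ga_0\in D_q$, $\be\ga_0\in I_{q-1}$, or a $\ga_0$ with $\ga_0\al\in D_q$, $\ga_0\be\in I_{q-1}$. In the second case I proceed exactly as above: apply Lemma~\ref{la:0float}\ref{it:0f1} to $(\ga_0,\al)$ to obtain $\ga\rR\ga_0$ with $\Float(\ga,\al)=0$, and then $\ga\al=\ga_0\al\in D_q$, $\ga\be=\ga_0\be\in I_{q-1}$ as required. In the first case I use the \emph{dual} half of Lemma~\ref{la:0float}\ref{it:0f1}, applied to the pair $(\al,\ga_0)$, producing $\ga$ with $\ga\rL\ga_0$ (obtained from $\ga_0$ by merging upper non-transversals), hence $\al\ga=\al\ga_0\in D_q$, $\be\ga=\be\ga_0\in I_{q-1}$, and $\Float(\al,\ga)=0$. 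This yields the disjunction in the statement.

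The main obstacle — really the only thing that needs care — is verifying that the modified $\ga$ produced by Lemma~\ref{la:0float} does not disturb the products other than the one it was designed to zero-out; i.e.\ that $\ga\rR\ga_0$ (resp.\ $\ga\rL\ga_0$) so that $\ga\be=\ga_0\be$ and $\ga\al$ stays in the same $\H$-class. This requires looking inside the proof of Lemma~\ref{la:0float}: the construction there only touches lower non-transversals (for the left-multiplier version), merging them into one fixed block, which preserves $\dom$ and $\ker$, hence preserves the $\R$-class by Lemma~\ref{la:Green_Pn}. I would state this observation explicitly as the engine of the argument. Everything else is a routine transcription of \cite[Lemma~6.2]{ER2020} together with the left-right duality of $\P_n$ afforded by the involution $*$, which lets me collapse the two symmetric cases in \ref{it:sep2} (and handle \ref{it:sep1} and \ref{it:sep3} with a single argument each rather than checking both sides).
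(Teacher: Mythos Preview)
Your strategy has a genuine gap at its core: the implication ``$\ga\rR\ga_0$, hence $\ga\be=\ga_0\be$'' is false in $\P_n$ (and indeed in any nontrivial monoid). Two $\R$-related elements need not act the same way on the right; for instance, any two distinct permutations of $\bn$ are $\R$-related in $\P_n$ but give different right translates. Your ``cleanest'' argument correctly observes that the construction in Lemma~\ref{la:0float} merges some lower non-transversals of $\ga_0$ into a fixed block, so $\dom\ga=\dom\ga_0$ and $\ker\ga=\ker\ga_0$, whence $\ga\rR\ga_0$. But precisely because it is the \emph{lower} blocks of $\ga_0$ that have been altered, the product $\ga\be$ can differ from $\ga_0\be$: the extra merging can create new connections in the product graph $\Ga(\ga,\be)$, changing the cokernel, codomain, and even the rank of the product. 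In particular you have no control over whether $\ga\be$ lands in $H_{\ga\al}$ in parts~\ref{it:sep1} and~\ref{it:sep3}, nor over whether $\ga\be\in I_{q-1}$ in part~\ref{it:sep2} (where $\rank\ga\geq q$, so the rank bound $\rank(\ga\be)\leq\rank\ga$ is useless).

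The paper does not attempt to repair a $\ga_0$ from \cite{ER2020} after the fact. Instead it builds $\ga$ explicitly from the block structure of $\al$, incorporating from the outset the upper non-transversals $C=C_1\cup\cdots\cup C_s$ of $\al$ into a lower block of $\ga$; this is exactly what guarantees $\Float(\ga,\al)=0$ while simultaneously keeping the required control over $\ga\be$. Your reduction-to-\cite{ER2020}-plus-Lemma~\ref{la:0float} idea cannot be made to work without essentially redoing that explicit construction.
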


\pf
Throughout the proof, we write $\al = \begin{partn}{6} A_1&\dots&A_q&C_1&\dots&C_s\\ \hhline{~|~|~|-|-|-} B_1&\dots&B_q&E_1&\dots&E_t\end{partn}$, and we put $C=C_1\cup\dots\cup C_s$.  For each $1\leq i\leq q$, we fix some $a_i\in A_i$.  
To reduce notational clutter, we will sometimes omit the singleton blocks from our notation for partitions.

\ref{it:sep1}  If $\dom\al\not\sub\dom\be$, then we may assume without loss that $a_1\not\in\dom\be$.  If $\dom\al\sub\dom\be$, then by the pigeon-hole principle we may assume without loss that $(a_1,a_2)\in\ker\be$.  In either case, we take $\ga=\begin{partn}{4}a_1&\cdots&a_{q-2}&a_{q-1}\\ \hhline{~|~|~|~} a_1&\cdots&a_{q-2}&\{a_{q-1}\}\cup C\end{partn}$.  
Then $\dom(\ga\al)=\{a_1,\ldots,a_{q-1}\}$ and $\ker(\ga\al)$ is trivial, so that $\ga\al\in D_{q-1}$, and we have $\Float(\ga,\al)=0$.  
Note also that $\dom(\ga\be)\sub\dom\ga=\{a_1,\dots,a_{q-1}\}$.  In the $\dom\al\sub\dom\be$ case, we clearly have $(a_1,a_2)\in\ker(\ga\be)$.  In the $\dom\al\not\sub\dom\be$ case, we either have $a_1\not\in\dom(\ga\be)$ or else $(a_1,a_{q-1})\in\ker(\ga\be)$; to see this, consider the component of the product graph $\Ga(\ga,\be)$ containing $a_1$.  Thus, in both cases we have $\ga\be\in I_{q-2}\sub I_{q-1}\sm H_{\ga\al}$.

\ref{it:sep2}  We assume that $(\al,\be)\not\in\R$, the case of $(\al,\be)\not\in\L$ being dual.  So either $\dom\al\neq \dom\be$ or $\ker\al\neq \ker\be$.

\setcounter{caseco}{0}

\case   $\dom\al\neq \dom\be$.  Swapping $\al,\be$ if necessary, we may assume that $a_1\not\in\dom\be$.  We then take $\ga=\begin{partn}{4} a_1&\cdots&a_{q-1}&a_q\\ \hhline{~|~|~|~} a_1&\cdots&a_{q-1}&\{a_q\}\cup C\end{partn}$.  With similar reasoning to part \ref{it:sep1}, we have $\ga\al\in D_q$, $\ga\be\in I_{q-1}$ and $\Float(\ga,\al)=0$.

\case   $\dom\al=\dom\be$ but $\ker\al\neq \ker\be$.  Swapping $\al,\be$ if necessary, we may assume there exists $(x_1,x_2)\in\ker\be\sm\ker\al$.  Note then that $x_1$ and $x_2$ either both belong to $\dom\be=\dom\al$ or else both belong to $\bn\sm\dom\al$.

\setcounter{subcaseco}{0}

\subcase  $x_1,x_2\in\dom\al$.  Here we may assume that $x_1=a_1$ and $x_2=a_2$.  Again we take $\ga=\begin{partn}{4}a_1&\cdots&a_{q-1}&a_q\\ \hhline{~|~|~|~} a_1&\cdots&a_{q-1}&\{a_q\}\cup C\end{partn}$, and we have $\ga\al\in D_q$, $\ga\be\in I_{q-1}$ and $\Float(\ga,\al)=0$.

\subcase   $x_1,x_2\not\in\dom\al$.  We may also assume that $A_1,\ldots,A_r$ are the upper parts of the transversals of $\be$ (or otherwise we would be in the previous subcase).  Without loss we may assume that $x_1\in C_1$, and we write $E = C_2\cup\dots\cup C_s$, noting that $x_2\in E$.  This time we define $\ga=\begin{partn}{5} A_1&\dots&A_{q-1}&A_q\cup C_1&E \\ \hhline{~|~|~|~|-} A_1&\dots&A_{q-1}&C_1&A_q\cup E\end{partn}$, and we have $\ga\be\in D_q$, $\ga\al\in I_{q-1}$ and $\Float(\ga,\be)=0$.

\ref{it:sep3}  Here we have $\be = \begin{partn}{6} A_1&\dots&A_q&C_1&\dots&C_s\\ \hhline{~|~|~|-|-|-} B_{1\pi}&\dots&B_{q\pi}&E_1&\dots&E_t\end{partn}$ for some permutation $\pi\in\S_q$, and without loss we may assume that $1\pi=q$.  We then take $\ga=\begin{partn}{4}a_1&\cdots&a_{q-2}&a_{q-1}\\ \hhline{~|~|~|~} a_1&\cdots&a_{q-2}&\{a_{q-1}\}\cup C\end{partn}$, and the desired conditions are easily checked, noting that $B_q\sub\codom(\ga\be)\sm\codom(\ga\al)$, which gives $(\ga\al,\ga\be)\not\in\L$.
\epf

Note that in Lemma \ref{la:sep}\ref{it:sep3} we actually have $\Float(\ga,\al)=\Float(\ga,\be)=0$; indeed, this follows from the proof or by Lemma \ref{la:A4}.  We cannot similarly strengthen the other parts of Lemma~\ref{la:sep} in general, but the next result shows that part \ref{it:sep2} can be in certain special cases:

\begin{lemma}\label{la:A3}
\ben
\item If $\al,\be\in D_1$ and $\ker\al\neq \ker\be$, then there exists $\gamma\in\P_n$ such that $\rank(\gamma\alpha)\neq\rank(\gamma\beta)$ and ${\Float(\gamma,\alpha)=\Float(\gamma,\beta)=0}$.
\item If $\al,\be\in D_1$ and $\coker\al\neq \coker\be$, then there exists $\gamma\in\P_n$ such that $\rank(\alpha\gamma)\neq\rank(\beta\gamma)$ and ${\Float(\alpha,\gamma)=\Float(\beta,\gamma)=0}$.
\een
\end{lemma}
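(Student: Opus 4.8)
We prove part~(i); part~(ii) is its mirror image under the involution~$*$. Concretely, applying~(i) to $\al^*,\be^*$ --- which lie in $D_1$ and satisfy $\ker\al^*=\coker\al\neq\coker\be=\ker\be^*$ --- produces some $\ga$, and then $\ga^*$ does the job for~(ii): we have $\al\ga^*=(\ga\al^*)^*$ and $\be\ga^*=(\ga\be^*)^*$, the involution preserves rank, and $\Float(\de,\eta)=\Float(\eta^*,\de^*)$ for all $\de,\eta\in\P_n$ (reflect the product graph top-to-bottom). So from now on we concentrate on~(i), where the plan is to construct a single explicit $\ga$, manufactured from one carefully chosen block of $\ker\be$.

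First a normalisation. Two distinct equivalence relations never refine one another, and the statement is symmetric in $\al$ and $\be$, so we may assume that $\ker\al$ does not refine $\ker\be$. Then some block $M$ of $\ker\al$ contains two elements $p,q$ lying in distinct blocks $N_p\neq N_q$ of $\ker\be$; interchanging $p$ and $q$ if necessary, we may also assume $N_p\neq\dom\be$. Note $N_p\neq\bn$ --- otherwise $\ker\be=\{\bn\}$ and hence $\dom\be=\bn=N_p$ --- so $\bn\sm N_p$ is a non-empty proper subset of $\bn$. Now take $\ga$ to be the partition of $\bn\cup\bn'$ with exactly two blocks: the transversal $\bn\cup N_p'$, and the lower non-transversal $(\bn\sm N_p)'$.

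The verification hinges on an elementary fact about this $\ga$, which I would prove straight from the definitions of the product graph and of floating components. For any $\eta\in D_1$: in the middle row of $\Ga(\ga,\eta)$ the set $N_p''$, which is joined to all of $\bn$ through $\ga$'s transversal, is separated by $\ga$ from $(\bn\sm N_p)''$, and these two parts get reconnected precisely when some block of $\ker\eta$ meets both $N_p$ and $\bn\sm N_p$; moreover the only possible floating component of $\Ga(\ga,\eta)$ is the part containing $(\bn\sm N_p)''$. Consequently $\Ga(\ga,\eta)$ has a transversal if and only if $\dom\eta$ meets $N_p$, or some block of $\ker\eta$ meets both $N_p$ and $\bn\sm N_p$; and $\Ga(\ga,\eta)$ has a floating component if and only if $\dom\eta\sub N_p$ and no block of $\ker\eta$ meets both $N_p$ and $\bn\sm N_p$. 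Applying this with $\eta=\al$: the block $M$ meets $N_p$ (it contains $p$) and meets $\bn\sm N_p$ (it contains $q$), so $\ga\al$ has a transversal but no floating component, i.e.\ $\ga\al\in D_1$ and $\Float(\ga,\al)=0$. Applying it with $\eta=\be$: $N_p$ is itself a block of $\ker\be$, so no block of $\ker\be$ meets both $N_p$ and $\bn\sm N_p$; and $\dom\be$, being a non-empty block of $\ker\be$ other than $N_p$, is disjoint from $N_p$, so it neither meets $N_p$ nor is contained in $N_p$. Hence $\ga\be$ has neither a transversal nor a floating component, i.e.\ $\ga\be\in D_0$ and $\Float(\ga,\be)=0$. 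Therefore $\rank(\ga\al)=1\neq0=\rank(\ga\be)$ and $\Float(\ga,\al)=\Float(\ga,\be)=0$, as wanted.

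The step I expect to be the genuine obstacle --- before one hits on the construction --- is choosing $\ga$ in the first place. Insisting that no floating components appear for either $\al$ or $\be$ pushes towards a $\ga$ with few, or even no, lower non-transversals; but if $\ga$ has none then $\codom\ga=\bn$, every component of $\Ga(\ga,\eta)$ reaches $\bn$, and both products have rank~$1$, so their ranks cannot differ. Escaping this bind requires $\ga$ to carry a single lower non-transversal placed so that the middle row of the product graph falls apart for one of the two factors but not the other; recognising that $(\bn\sm N_p)'$ achieves exactly this --- because $N_p$ is a block of $\ker\be$ yet is straddled by a block of $\ker\al$ --- is the key insight.
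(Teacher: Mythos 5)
Your proof is correct and takes essentially the same approach as the paper's: both reduce to part (i) by duality, pick a witness pair lying in one block of $\ker\al$ but in two different blocks of $\ker\be$ with the distinguished block $N_p$ (the paper's $B$) chosen to avoid $\dom\be$, and then build an explicit $\ga$ whose single lower non-transversal is $(\bn\sm N_p)'$, so that the middle row of the product graph disconnects for $\be$ but not for $\al$ while producing no floating components. The only (cosmetic) difference is that the paper's $\ga$ keeps $\bn\sm B$ as a separate upper non-transversal rather than absorbing all of $\bn$ into the transversal.
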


\begin{proof}
Only the first assertion needs to be proved, the second being dual.  We may assume without loss
that there exists $(a,b)\in\ker\alpha\setminus\ker\beta$.
Since $\rank\beta=1$, at most one of $a,b$ belongs to $\dom\beta$.
Without loss suppose $b\not\in\dom\beta$ and let $B$ be the upper block of $\beta$ containing $b$.
Then it is straightforward to check the stated conditions for
$\gamma:=\begin{partn}{2} B&\bn\setminus B\\ \hhline{~|-} B&\bn\setminus B\end{partn}$.
\end{proof}

It will turn out later on that the behaviour of congruences on $\Ptw{n}$ on rows 0 and 1 is quite different from that on other rows. One of the main technical reasons behind this is contained in the following:

\begin{lemma}
\label{la:A2}
For all $\alpha\in I_1$ and $\eta\in \P_n$ we have
\[
\rank\alpha-\rank(\alpha\eta)=\Float(\widehat{\alpha},\eta)-\Float(\alpha,\eta)
\AND
\rank\alpha-\rank(\eta\alpha)=\Float(\eta,\widehat{\alpha})-\Float(\eta,\alpha).
\]
\end{lemma}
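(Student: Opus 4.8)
By the left-right duality afforded by the involution $*$ on $\P_n$, it suffices to prove the first identity, $\rank\alpha-\rank(\alpha\eta)=\Float(\wh\alpha,\eta)-\Float(\alpha,\eta)$; the second will follow by applying the first to $\alpha^*$ and $\eta^*$ and using $(\alpha\eta)^*=\eta^*\alpha^*$, $\rank\alpha^*=\rank\alpha$, $\wh{\alpha^*}=\wh\alpha^{\,*}$, and $\Float(\eta^*,\alpha^*)=\Float(\alpha,\eta)$ (which is immediate from the symmetry of the product-graph construction under reflecting top-to-bottom). So I will fix $\alpha\in I_1$ and $\eta\in\P_n$ and concentrate on comparing $\Ga(\alpha,\eta)$ with $\Ga(\wh\alpha,\eta)$.

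The key point is that $\wh\alpha$ is obtained from $\alpha$ by breaking each transversal into its upper and lower halves, and since $\alpha\in I_1$ there is at most one transversal. Write $\alpha=\begin{partn}{4} A&C_1&\dots&C_s\\ \hhline{~|-|-|-} B&E_1&\dots&E_t\end{partn}$ if $\rank\alpha=1$ (with transversal $A\cup B'$), or $\alpha=\wh\alpha$ with $\rank\alpha=0$; in the latter case both identities are trivial, so assume $\rank\alpha=1$. Then $\alpha_\downarrow$ and $\wh\alpha_\downarrow$ (the graphs on $\bn\cup\bn''$ used to form the products with $\eta$) differ only in that the single block $A\cup B''$ of $\alpha_\downarrow$ is split into $A$ and $B''$ in $\wh\alpha_\downarrow$. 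Consequently the product graphs $\Ga(\alpha,\eta)$ and $\Ga(\wh\alpha,\eta)$ have the same vertex set and the same edges, except that $\Ga(\alpha,\eta)$ has, in addition, all edges inside $A\cup B''$. Since $A\subseteq\bn$ and $B''\subseteq\bn''$, the effect of adding these edges is precisely to merge, into the single component containing $A$, every component of $\Ga(\wh\alpha,\eta)$ that meets $B''$.

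Now I will do the bookkeeping on transversals and floating components. Let $\mathcal{K}$ be the set of connected components of $\Ga(\wh\alpha,\eta)$ that meet $B''$. In passing from $\wh\alpha$ to $\alpha$ these components all get fused, together with the component $K_A$ containing $A$, into one component. I claim this has two separate effects. First, on the rank: the rank of $\alpha\eta$ versus $\wh\alpha\eta$ — here I note that the components $\mathcal K$ either contain vertices of $\bn'$ (lower vertices of $\eta$) or are floating. Let $f$ be the number of members of $\mathcal K$ that are floating in $\Ga(\wh\alpha,\eta)$ (i.e.\ use only $\bn''$-vertices), and let the remaining $|\mathcal K|-f$ members contain at least one $\bn'$-vertex. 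Second, observe $\rank(\wh\alpha\eta)$ equals the number of components of $\Ga(\wh\alpha,\eta)$ meeting both $\bn$ and $\bn'$; since $\wh\alpha$ has empty domain, in fact $\rank(\wh\alpha\eta)=0$, i.e.\ $\wh\alpha\eta\in D_0$. This is the crux: \emph{because} $\wh\alpha\in D_0$, we have $\wh\alpha\eta\in D_0$, so $\rank(\wh\alpha\eta)=0$, and therefore $\rank\alpha-\rank(\alpha\eta)=1-\rank(\alpha\eta)$, while on the right $\Float(\wh\alpha,\eta)-\Float(\alpha,\eta)$ must also equal $1-\rank(\alpha\eta)$. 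So the whole identity reduces to showing
\[
\rank(\alpha\eta)+\Float(\wh\alpha,\eta)-\Float(\alpha,\eta)=1.
\]
To see this, run the fusion: adding the edges on $A\cup B''$ fuses $K_A$ with all of $\mathcal K$; of the $\mathcal K$-components, the $f$ floating ones cease to be floating (each was counted in $\Float(\wh\alpha,\eta)$ but contributes nothing now), and the $|\mathcal K|-f$ non-floating ones each contained a $\bn'$-vertex, so after fusion with $K_A$ (which now also contains $A\subseteq\bn$) they form transversals of $\alpha\eta$ — but they have all been merged into one block, contributing exactly $1$ to $\rank(\alpha\eta)$ if $|\mathcal K|-f\ge 1$, and contributing $0$ (with $K_A$ remaining an upper non-transversal) if $|\mathcal K|-f=0$. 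A short case check: if $|\mathcal K|-f\ge1$ then $\rank(\alpha\eta)=1$ and $\Float(\wh\alpha,\eta)-\Float(\alpha,\eta)=f-f=0$ wait — here I must be careful that $\Float(\alpha,\eta)$ picks up no floating components other than those also floating for $\wh\alpha$ and disjoint from $B''$; since adding edges only removes or merges components, $\Float(\alpha,\eta)=\Float(\wh\alpha,\eta)-f$, giving the right side $=f-(\,\Float(\wh\alpha,\eta)-f\,)$... so I see the clean statement is $\Float(\wh\alpha,\eta)-\Float(\alpha,\eta)=f$, and then the identity becomes $\rank(\alpha\eta)+f=1$, which is exactly the dichotomy $|\mathcal K|-f\ge1\Rightarrow\rank(\alpha\eta)=1, f=0$ versus $|\mathcal K|-f=0\Rightarrow\rank(\alpha\eta)=0, f=1$ (the case $\mathcal K=\emptyset$, i.e.\ $B''$ meets no component — impossible since $B''\ne\emptyset$ when $\rank\alpha=1$ — does not arise, and if $B=\emptyset$ the transversal degenerates and one re-examines, but $B\ne\emptyset$ as $A\cup B'$ is a transversal).

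The main obstacle is the final case analysis: correctly tracking how a single fused block can absorb several former components, and verifying in each subcase that exactly one unit of "something" (either a surviving transversal, or an absorbed floating component) is produced — equivalently that $\rank(\alpha\eta)+\big(\Float(\wh\alpha,\eta)-\Float(\alpha,\eta)\big)=1$ always. I expect the cleanest writeup to introduce $f$ as the number of floating components of $\Ga(\wh\alpha,\eta)$ whose vertex set meets $B''$, prove $\Float(\alpha,\eta)=\Float(\wh\alpha,\eta)-f$ and $\rank(\alpha\eta)=1-f$ directly from the description of the two product graphs, and combine. The degenerate situations ($\rank\alpha=0$, or the transversal having $A=\emptyset$ or $B=\emptyset$, which cannot both happen) should be dispatched first as trivial or impossible.
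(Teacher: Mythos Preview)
Your approach is essentially the same as the paper's: reduce by duality, dispose of $\rank\alpha=0$, and for $\rank\alpha=1$ compare $\Ga(\alpha,\eta)$ with $\Ga(\wh\alpha,\eta)$, which differ only by the edges joining $A$ to $B''$. However, there is a genuine muddle in your execution that leaves the final step unjustified.

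The set $\mathcal K$ you introduce always has exactly one element. In $\wh\alpha$, the set $B'$ is a single lower non-transversal block, so in $\wh\alpha_\downarrow$ all of $B''$ is already connected; hence there is precisely one connected component of $\Ga(\wh\alpha,\eta)$ meeting $B''$. You never use this, and without it your case analysis breaks: in the case $|\mathcal K|-f\ge 1$ you correctly get $\rank(\alpha\eta)=1$, but your assertion ``$f=0$'' is unsupported (nothing you have said rules out, say, $|\mathcal K|=2$ with $f=1$, which would give $\rank(\alpha\eta)+f=2$). Once you observe $|\mathcal K|=1$, the whole analysis collapses to a single dichotomy on whether that one component $V$ is floating: if $V\subseteq\bn''$ then $f=1$ and the fused component $A\cup V\subseteq\bn\cup\bn''$ gives $\rank(\alpha\eta)=0$; otherwise $f=0$, $V$ meets $\bn'$, and $A\cup V$ is a transversal, giving $\rank(\alpha\eta)=1$. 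This is exactly what the paper does, and it removes the need for the variable $|\mathcal K|$, the parameter $f$ as a count, and the awkward ``wait'' passages. (Your worry about $B=\emptyset$ is also unnecessary: by definition a transversal contains both dashed and undashed vertices, so $A,B\neq\emptyset$.)
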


\begin{proof}
It is sufficient to prove the first statement; the second is dual.
When $\rank\alpha=0$ then $\rank(\alpha\eta)=0$ and $\alpha=\widehat{\alpha}$, so the equality is trivial.
So suppose $\rank\alpha=1$, and let
 $A\cup B'$ be its unique transversal.
Let the connected components in $\Ga(\al,\eta)$ and $\Ga(\wh\al,\eta)$ containing $B''$ be~$U$ and $V$, respectively.  So $V\sub\bn''\cup\bn'$ and $U=A\cup V$.  Then
\[
\text{$V$ is floating in $\Ga(\wh\al,\ga)$} \iff V\sub\bn'' \iff U\sub\bn\cup\bn'' \iff \rank(\al\eta)=0.
\]
With the possible exception of $V$, the graphs $\Ga(\al,\eta)$ and $\Ga(\wh\al,\eta)$ have exactly the same floating components, and the result follows.
\end{proof}

Our final preliminary lemma concerns the relation $\nu_N$:

\begin{lemma}\label{la:AA2}
Let $N\normal\S_q$ where $2\leq q\leq n$, and let $(\al,\be)\in\H\restr_{D_q}$ and $\ga\in\P_n$.  Then
\ben
\item $\al\ga\in D_q \iff \be\ga\in D_q$, in which case $(\al,\be)\in\nu_N \iff (\al\ga,\be\ga)\in\nu_N$,
\item $\ga\al\in D_q \iff \ga\be\in D_q$, in which case $(\al,\be)\in\nu_N \iff (\ga\al,\ga\be)\in\nu_N$.  
\een

\end{lemma}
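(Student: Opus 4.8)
The plan is to deduce statement (ii) from statement (i) using the involution $*$ on $\P_n$, which is a rank- and $\H$-preserving anti-automorphism with $(\ga\al)^*=\al^*\ga^*$; since $N\normal\S_q$ is closed under inversion, $*$ preserves $\nu_N$ on $D_q$ (the permutational difference of an $\H$-related pair being inverted), so applying (i) to the triple $(\al^*,\be^*,\ga^*)$ gives (ii) for $(\al,\be,\ga)$. It therefore suffices to prove (i), which I would do by a side-by-side analysis of the product graphs $\Ga(\al,\ga)$ and $\Ga(\be,\ga)$.

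Write
\[
\al=\begin{partn}{6} A_1&\dots&A_q&C_1&\dots&C_s\\ \hhline{~|~|~|-|-|-} B_1&\dots&B_q&E_1&\dots&E_t\end{partn}
\AND
\be=\begin{partn}{6} A_1&\dots&A_q&C_1&\dots&C_s\\ \hhline{~|~|~|-|-|-} B_{1\pi}&\dots&B_{q\pi}&E_1&\dots&E_t\end{partn},
\]
so that $\pd(\al,\be)=\pi$. The crucial observation is that, since $\al$ and $\be$ have the same cokernel, the graphs $\Ga(\al,\ga)$ and $\Ga(\be,\ga)$ induce the \emph{same} subgraph on $\bn''\cup\bn'$: in each case the edge set is that of $\ga^\uparrow$ together with a clique on each of $B_1'',\dots,B_q'',E_1'',\dots,E_t''$, the collection $\{B_1,\dots,B_q\}$ being merely permuted between $\al$ and $\be$. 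Let $R_1,\dots,R_q$ be the connected components of this common subgraph that contain $B_1'',\dots,B_q''$, respectively.

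I would then establish, by tracing connected components in $\Ga(\al,\ga)$, that: (1) $\rank(\al\ga)=q$ if and only if $R_1,\dots,R_q$ are pairwise distinct and each meets $\bn'$; and (2) when this holds, the connected component of $A_i$ in $\Ga(\al,\ga)$ is exactly $A_i\cup R_i$, so that $\al\ga$ has transversals $A_i\cup S_i'$ for $1\le i\le q$, where $S_i'=R_i\cap\bn'$. For (1): if some $R_i$ misses $\bn'$ then $A_i$ drops out of $\dom(\al\ga)$, and if $R_i=R_j$ with $i\ne j$ then $A_i$ and $A_j$ merge in $\ker(\al\ga)$; since $\rank(\al\ga)\le q$ always and every transversal-upper-part of $\al\ga$ is a union of $A_k$'s (no $C_k$ meeting $\bn''$), either situation forces the rank below $q$, and the converse direction is clear. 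For (2): the only $\bn$-vertices $R_i$ reaches through $\al_\downarrow$ lie in $A_i$, because no $C_k$ meets $\bn''$ and no $B_k''$ with $k\ne i$ lies in $R_i$. As $\{B_{1\pi}'',\dots,B_{q\pi}''\}=\{B_1'',\dots,B_q''\}$, fact (1) for $\be$ gives \emph{the same} condition, which proves $\al\ga\in D_q\iff\be\ga\in D_q$. When this holds, fact (2) for $\be$ — whose bridge at $A_i$ is $A_i\cup B_{i\pi}''\subseteq A_i\cup R_{i\pi}$ — shows that $\be\ga$ has transversals $A_i\cup S_{i\pi}'$ for $1\le i\le q$. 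Comparing the standard forms $\al\ga=\begin{partn}{3}A_1&\cdots&A_q\\ \hhline{~|~|~} S_1&\cdots&S_q\end{partn}$ and $\be\ga=\begin{partn}{3}A_1&\cdots&A_q\\ \hhline{~|~|~} S_{1\pi}&\cdots&S_{q\pi}\end{partn}$, and using that the $S_i$ are pairwise distinct, gives $\pd(\al\ga,\be\ga)=\pi=\pd(\al,\be)$; hence $(\al,\be)\in\nu_N\iff\pi\in N\iff(\al\ga,\be\ga)\in\nu_N$, which completes (i).

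The step I expect to be the main obstacle is the component bookkeeping behind (1) and (2): one must check carefully that, when the rank does not drop, the upper part of the $i$-th transversal of $\al\ga$ is \emph{exactly} $A_i$ — with no $C_k$ and no foreign bridge $B_k''$ ($k\ne i$) creeping in — and that its lower part is governed purely by the shared lower component $R_i$. Once this is pinned down the permutation comes out as $\pi$ on the nose, and the conjugacy ambiguity inherent in the definition of $\pd$ is harmless since $N$ is normal.
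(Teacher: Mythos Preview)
Your argument is correct, and in fact proves something slightly sharper than needed: you show $\pd(\al\ga,\be\ga)=\pd(\al,\be)$ on the nose (for a fixed ordering of transversals), whereas the lemma only asks for agreement modulo $N$.  However, your route is substantially different from the paper's.  The paper dispatches (i) in three lines: $(\al,\be)\in\H\subseteq\L$ and $\L$ being a right congruence give $\al\ga\rL\be\ga$, whence $\al\ga\in D_q\iff\be\ga\in D_q$; the forward implication $(\al,\be)\in\nu_N\Rightarrow(\al\ga,\be\ga)\in\nu_N$ is immediate from $R_N=R_{I_{q-1}}\cup\nu_N$ being a congruence (Theorem~\ref{thm:CongPn}); and the converse follows by Green's Lemma, which supplies $\de$ with $\al=(\al\ga)\de$ and $\be=(\be\ga)\de$.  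What your combinatorial approach buys is self-containment: you avoid invoking the full classification of $\Cong(\P_n)$ and Green's Lemma, at the cost of the component bookkeeping you yourself flagged.  The paper's argument is slicker precisely because those tools are already on the table, and in the context of the paper there is no reason to re-derive by hand what the existing machinery gives for free.
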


\pf
We just prove the first part, as the second is dual.  Since $\L$ is a right congruence, we have $(\al,\be)\in\H\sub\L \implies (\al\ga,\be\ga)\in\L\sub\D$, so certainly $\al\ga\in D_q \iff \be\ga\in D_q$.  For the second equivalence, the forwards implication follows immediately from the fact that $R_N=R_{I_{q-1}}\cup\nu_N$ is a congruence.  The converse follows similarly, since, by Green's Lemma \cite[Lemma 2.2.1]{Howie1995}, $\al=(\al\ga)\de$ and $\be=(\be\ga)\de$ for some~${\de\in\P_n}$.
\epf

\section{C-pairs and the statement of the main result}
\label{sec:main}

In this section we give the statement of the main result, Theorem \ref{thm:main} below, which classifies the congruences of the twisted partition monoid $\Ptw n$.  The classification involves what we will call C-pairs,
which consist of a descending chain $\th_0\supseteq\cdots\supseteq\th_n$ of congruences on the additive monoid $\N$, and a certain $\bnz\times\N$ matrix.  The precise definitions are given in Subsection \ref{subsec:Cpairs}, and the main result in Subsection~\ref{subsec:main}.  Since the definitions are somewhat technical, we will 
begin by looking at some motivating examples in 
 Subsection \ref{subsec:exa}.
En route we also discuss the projections of a congruence on $\Ptw{n}$ onto its `components' $\P_n$ and $\N$.

\subsection{Examples and projections}
\label{subsec:exa}

We begin with the simplest kind of congruences, the Rees congruences:

\begin{exa}\label{ex:Rees}
From the description of principal ideals in Lemma \ref{la:Green_Ptwn}, and the fact that every ideal is a union of principal ideals, we see that the ideals of $\Ptw{n}$ correspond to the downward closed subsets of the poset $(\bnz,\leq)\times(\N,\geq)$.
It is easy to see that in this poset there are no infinite strictly increasing sequences, or infinite antichains, and hence 
for every ideal $I$ of~$\Ptw{n}$ there exists a uniquely-determined finite collection of mutually incomparable elements
${(q_1,i_1),\dots,(q_k,i_k)\in \bnz\times\N}$  such that
\[
I = I_{q_1i_1}\cup\cdots\cup I_{q_ki_k} = \bigset{ (i,\alpha)\in \Ptw{n} }{i\geq i_t\text{ and } \rank\alpha\leq q_t\ 
(\exists t,\ 1\leq t\leq k)}.
\]
If $\Ptw{n}$ is visualised as a grid, as discussed in Subsection \ref{subsec:Ptw}, then an ideal looks like a SW--NE staircase;
see Figure \ref{fig:gridid} for an illustration.
To every ideal $I$ there corresponds the \emph{Rees congruence}
\[
R_I = \bigset{ (\ba,\bb)\in \Ptw{n}\times\Ptw{n}}{ \ba=\bb\text{ or } \ba,\bb\in I}.
\]
\end{exa}

To motivate the next family of congruences on $\Ptw{n}$, and for subsequent use, we make the following definition.

\begin{defn}[\bf The projection of a congruence]\label{defn:Proj}
Given a congruence $\si$ on $\Ptw n$, its \emph{projection} to $\P_n$ is the relation
\[
\Proj\si:=\bigset{ (\alpha,\beta)\in \P_n\times\P_n}{((i,\alpha),(j,\beta))\in\si \ (\exists i,j\in\N)}.
\]
\end{defn}

\begin{prop}
\label{prop:projcong}
The projection $\Proj\si$  of any congruence $\si\in \Cong(\Ptw{n})$ is a congruence on $\P_n$.
\end{prop}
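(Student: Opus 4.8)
The plan is to verify directly that $\Proj\si$ is reflexive, symmetric, transitive, and compatible with multiplication in $\P_n$. Reflexivity is immediate since $((0,\al),(0,\al))\in\si$ for every $\al\in\P_n$, and symmetry is inherited from the symmetry of $\si$.

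For transitivity, suppose $(\al,\be),(\be,\ga)\in\Proj\si$, so there exist $i,j,k,\ell\in\N$ with $((i,\al),(j,\be))\in\si$ and $((k,\be),(\ell,\ga))\in\si$. The obstacle here is that the witnessing second coordinates for $\be$ may differ ($j$ versus $k$), so I cannot immediately chain the two pairs. The fix is to multiply both relations by a suitable element of $\Ptw n$ to bring the $\be$-coordinates into agreement. Concretely, using compatibility, $((i+k,\al\cdot?),\dots)$ — more precisely, I would multiply $((i,\al),(j,\be))\in\si$ on the right by $(k,\id)$ and $((k,\be),(\ell,\ga))\in\si$ on the right by $(j,\id)$; wait, that changes the partitions too. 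Better: multiply the first pair on the left or right by an idempotent-like element, or simply note that $((i,\al),(j,\be))\in\si$ gives, after multiplying by $(k-j,\id)$ when $k\geq j$ (and symmetrically otherwise), a pair $((i+k-j,\al),(k,\be))\in\si$; combined with $((k,\be),(\ell,\ga))\in\si$ and transitivity of $\si$ we get $((i+k-j,\al),(\ell,\ga))\in\si$, hence $(\al,\ga)\in\Proj\si$. The case $k<j$ is handled symmetrically by adjusting the second pair instead. (One should note $\al\cdot(0,\id)=\al$ and $\Float(\al,\id)=0$, so multiplying by $(m,\id)$ simply adds $m$ to the first coordinate without touching the partition.)

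For compatibility, suppose $(\al,\be)\in\Proj\si$, witnessed by $((i,\al),(j,\be))\in\si$, and let $\eta\in\P_n$. Multiplying on the left by $(0,\eta)\in\Ptw n$ and using compatibility of $\si$ gives $((\Float(\eta,\al),\eta\al),(\Float(\eta,\be)+j,\eta\be))\in\si$ — here I use the product rule $(0,\eta)(i,\al)=(i+\Float(\eta,\al),\eta\al)$ — and therefore $(\eta\al,\eta\be)\in\Proj\si$. The right-multiplication case is entirely analogous using $(i,\al)(0,\eta)=(i+\Float(\al,\eta),\al\eta)$. This establishes that $\Proj\si$ is a congruence on $\P_n$.

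The main obstacle is the bookkeeping in the transitivity argument: one must be careful that multiplying a pair by $(m,\id)$ genuinely only shifts the first coordinate (which follows from $\id$ being the identity of $\P_n$ and $\Float(\cdot,\id)=\Float(\id,\cdot)=0$), and one must split into the two cases $k\geq j$ and $k<j$. Everything else is a routine application of the definition of a congruence together with the explicit product formula in $\Ptw n$.
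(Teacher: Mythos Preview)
Your argument is correct and follows essentially the same route as the paper's proof: the key step in transitivity is to multiply one of the witnessing pairs by $(|k-j|,\id)$ to align the $\be$-coordinates, exactly as the paper does (after assuming $j\leq k$ without loss of generality). One small typo: in the compatibility paragraph the first coordinate should read $i+\Float(\eta,\al)$ rather than $\Float(\eta,\al)$, but this does not affect the argument.
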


\begin{proof}
Reflexivity and symmetry are obvious, and compatibility follows from the fact that the second components multiply as in $\P_n$. For transitivity, suppose $(\alpha,\beta),(\beta,\gamma)\in\Proj\si$, with
$((i,\alpha),(j,\beta)),((k,\beta),(l,\gamma))\in\si$.
Without loss assume that $j\leq k$. Multiplying the first pair by $(k-j,\id)$ we deduce
$((i+k-j,\alpha),(k,\beta))\in\si$.
By transitivity of $\si$ we have $((i+k-j,\alpha),(l,\gamma))\in\si$, and hence $(\alpha,\gamma)\in\Proj\si$,
as required.
\end{proof}

It turns out that every congruence on $\P_n$ arises as the projection of a congruence on $\Ptw{n}$, via the following construction.

\begin{exa}\label{ex:tau}
For any $\tau\in\Cong(\P_n)$ the relation $\bigset{ ((i,\alpha),(j,\beta))}{ (\alpha,\beta)\in \tau,\ i,j\in\N}$ is a congruence on $\Ptw{n}$, and its projection is $\tau$.
\end{exa}

One may wonder whether, analogously, the projection of a congruence of $\Ptw{n}$ onto the \emph{first} component $\N$ is also a congruence.  This turns out not to be the case in general, as the following example demonstrates.  The example also highlights some of the unusual behaviour that occurs on rows $0$ and $1$.

\begin{exa}
\label{ex:weirdproj}
Consider the relation
\[
\si := \De_{\Ptw n} \cup \bigset{ ((i,\alpha),(j,\beta))}{ i,j\in\N,\ \alpha,\beta\in I_1,\ \widehat{\alpha}=\widehat{\beta},\ \rank\alpha-\rank\beta=i-j} .
\]
It relates all pairs $\ba,\bb\in I_{10}$
whose underlying partitions satisfy $\wh\al=\wh\be$, and which belong to a single $D_{1i}$, or one of them belongs to $D_{0i}$ and the other to $D_{1,i+1}$.
We show that it is a congruence on $\Ptw{n}$.  Indeed, symmetry and reflexivity are obvious, while transitivity follows quickly upon rewriting $\rank\alpha-\rank\beta=i-j$ as $\rank\alpha-i=\rank\beta-j$.  For compatibility, let $(\ba,\bb)\in\si$ and let $\bc\in\Ptw n$ be arbitrary.  We just show that $(\ba\bc,\bb\bc)\in\si$; the proof that $(\bc\ba,\bc\bb)\in\si$ is dual.  There is nothing to show if $\ba=\bb$, so suppose $\ba=(i,\al)$ and $\bb=(j,\be)$ where $\al,\be\in I_1$, $\wh\al=\wh\be$ and $\rank\alpha-\rank\beta=i-j$.  Also write $\bc=(k,\ga)$.  Then
\[
\ba\bc = (i+k+\Float(\alpha,\gamma),\alpha\gamma) \AND \bb\bc = (j+k+\Float(\beta,\gamma),\beta\gamma).
\]
Since $I_1$ is an ideal we have $\al\ga,\be\ga\in I_1$, and Lemma \ref{la:hat} gives $\widehat{\alpha\gamma}=\widehat{\alpha}\gamma=\widehat{\beta}\gamma=\widehat{\beta\gamma}$.
Also, using Lemma \ref{la:A2}, we have:
\begin{align*}
(i+k+\Float(\alpha,\gamma))&-(j+k+\Float(\beta,\gamma))
\\
=&(i-j)+
(\rank(\alpha\gamma)-\rank\alpha+\Float(\widehat{\alpha},\gamma))-
(\rank(\beta\gamma)-\rank\beta+\Float(\widehat{\beta},\gamma))
\\
=& 
(\rank(\alpha\gamma)-\rank(\beta\gamma))+(i-j)-(\rank\alpha-\rank\beta)+
(\Float(\widehat{\alpha},\gamma)-\Float(\widehat{\beta},\gamma))
\\
=&\rank(\alpha\gamma)-\rank(\beta\gamma).
\end{align*}
So $\si$ is indeed a congruence.
However, the projection of $\si$
to $\N$ is the relation
\[
\bigset{ (i,j)\in\N\times\N}{ |i-j|\leq 1},
\]
which is not transitive.
\end{exa}

On the other hand, given a congruence on $\N$ we can always construct a congruence on $\Ptw{n}$ with that projection.

\begin{exa}
\label{exa:del}
If $\th$ is a congruence on $\N$ then the relation
\[
\si:=\bigset{ ((i,\alpha),(j,\alpha))}{\al\in\P_n,\ (i,j)\in\th}
\]
is a congruence of $\Ptw{n}$.  Indeed, $\si$ is clearly an equivalence. For right compatibility (left is dual) suppose we have
$(\ba,\bb)=((i,\alpha),(j,\alpha))\in\si$ and $\bc=(k,\beta)\in\Ptw{n}$.
Then
\[
\ba\bc=(i+k+\Float(\alpha,\beta),\alpha\beta) \AND
\bb\bc=(j+k+\Float(\alpha,\beta),\alpha\beta).
\]
Since $(i,j)\in\th$, and since $\th$ is a congruence on $\N$, it follows that $(i+k+\Float(\alpha,\beta),j+k+\Float(\alpha,\beta))\in\th$, and so $(\ba\bc,\bb\bc)\in\si$.
In the special case that $\th=\nab_\N$, the congruence constructed here is $\si=\bigset{((i,\al),(j,\al))}{\al\in\P_n,\ i,j\in\N}$, the kernel of the natural epimorphism $\Ptw{n}\rightarrow \P_n$, $(i,\alpha)\mapsto \alpha$.
\end{exa}

In fact we can obtain more congruences by further developing the idea behind Example \ref{exa:del}.

\begin{exa}
\label{exa:del2}
Suppose $\th_0\supseteq\th_1\supseteq\dots\supseteq\th_n$ is a chain of congruences on $\N$, and define
\[
\si:= \bigcup_{q\in\bnz} \bigset{ ((i,\alpha),(j,\alpha))}{ \alpha\in D_q,\ (i,j)\in\th_q}.
\]
This is a congruence, with essentially the same proof as in the previous example, and recalling additionally that
$\rank(\alpha\beta),\rank(\beta\alpha)\leq\rank\alpha$.
Note that $\Proj\si = \De_{\P_n}$ for this congruence $\si$.  In what follows, it will transpire that every congruence on $\Ptw n$ with trivial projection onto $\P_n$ is of this form.
\end{exa}

\subsection{C-pairs and congruences}\label{subsec:Cpairs}

We will encode congruences on $\Ptw{n}$ by means of certain pairs $(\Th, M)$, which we will call \emph{C-pairs}.
Here $\Th$ will be a descending chain $\th_0\supseteq \dots\supseteq \th_n$ of congruences on $\N$;
and $M=(M_{qi})_{\bnz\times\N}$ will be an infinite matrix, whose entries are drawn from the following set of symbols:
\[
\{\Delta,\muup,\mudown,\mu,\lambda,\rho, R\}\cup\set{ N}{ \{\id_q\}\neq N\unlhd \S_q,\ 2\leq q\leq n}.
\] 
We will refer to the entries in the second set collectively as the \emph{$N$-symbols}.
The entry $M_{qi}$ of~$M$ can be thought of as corresponding to the $\D$-class $D_{qi}$ of $\Ptw{n}$.
Therefore, we will think of the matrix $M$ having its first entry $M_{00}$ in the bottom left corner to correspond to our visualisation of $\Ptw{n}$ as in Figure \ref{fig:gridid}.
In the first approximation, and not entirely accurately, one can think of the symbol $M_{qi}$ as a specification for the restriction of the intended congruence to the corresponding $\D$-class.

We now describe the allowable matrices $M$, given a fixed chain ${\Th=(\th_0\supseteq\cdots\supseteq\th_n)}$.
The description will be by row, with a total of ten allowable \emph{row types}, denoted \ref{RT1}--\ref{RT10}, and with two \emph{verticality conditions} \ref{V1} and \ref{V2} governing allowable combinations of rows.
The first seven types deal simultaneously with the two bottom rows.

\begin{rowtype}
\label{RT1}
Rows $0$ and $1$ may consist of $\Delta$s only:
\[
\begin{array}{r|c|c|c|c|}\hhline{~|-|-|-|-|}
\text{\scriptsize 1} &\cellcolor{delcol}\Delta & \cellcolor{delcol}\Delta & \cellcolor{delcol}\Delta& \cellcolor{delcol}\dots\\ \hhline{~|-|-|-|-|}
\text{\scriptsize 0}&\cellcolor{delcol}\Delta & \cellcolor{delcol}\Delta & \cellcolor{delcol}\Delta& \cellcolor{delcol}\dots\\ \hhline{~|-|-|-|-|}
\end{array}
\]
\end{rowtype}

\begin{rowtype}
\label{RT2}
If $\th_0=\th_1=\Delta_\N$, rows $0$ and $1$ may be:
\[
\begin{array}{r|c|c|c|c|c|c|c|}\hhline{~|-|-|-|-|-|-|-|}
\renewcommand{\arraystretch}{2}
\text{\scriptsize 1} &\cellcolor{delcol}\Delta &\cellcolor{delcol}\dots & \cellcolor{delcol}\Delta & \cellcolor{excepcol}\zeta&\cellcolor{mucol}\mu&\cellcolor{mucol}\mu& \cellcolor{mucol}\dots\\ \hhline{~|-|-|-|-|-|-|-|}
\text{\scriptsize 0}&\cellcolor{delcol}\Delta &\cellcolor{delcol}\dots &\cellcolor{delcol}\Delta & \cellcolor{mucol}\mu&\cellcolor{mucol}\mu&\cellcolor{mucol}\mu& \cellcolor{mucol}\dots\\ \hhline{~|-|-|-|-|-|-|-|}
\multicolumn{1}{c}{}&\multicolumn{1}{c}{}&\multicolumn{1}{c}{}&\multicolumn{1}{c}{}&\multicolumn{1}{c}{\text{\scriptsize $i$}}&\multicolumn{1}{c}{}&\multicolumn{1}{c}{}&\multicolumn{1}{c}{}
\end{array}
\]
Here $i\geq 0$.  The symbol $\zeta$ can be any of $\mu$, $\muup$, $\mudown$ or $\Delta$.
\end{rowtype}

\begin{rowtype}
\label{RT3}
If $\th_0=(m,m+1)^\sharp$, rows $0$ and $1$ may be:
\[
\begin{array}{r|c|c|c|c|c|c|c|}\hhline{~|-|-|-|-|-|-|-|}
\renewcommand{\arraystretch}{2}
\text{\scriptsize 1} &\cellcolor{delcol}\Delta &\cellcolor{delcol}\dots & \cellcolor{delcol}\Delta & \cellcolor{delcol}\Delta&\cellcolor{delcol}\Delta&\cellcolor{delcol}\Delta& \cellcolor{delcol}\dots\\ \hhline{~|-|-|-|-|-|-|-|}
\text{\scriptsize 0}&\cellcolor{delcol}\Delta &\cellcolor{delcol}\dots &\cellcolor{delcol}\Delta & \cellcolor{Rcol}\xi&\cellcolor{Rcol}\xi&\cellcolor{Rcol}\xi& \cellcolor{Rcol}\dots\\ \hhline{~|-|-|-|-|-|-|-|}
\multicolumn{1}{c}{}&\multicolumn{1}{c}{}&\multicolumn{1}{c}{}&\multicolumn{1}{c}{}&\multicolumn{1}{c}{\text{\scriptsize $m$}}&\multicolumn{1}{c}{}&\multicolumn{1}{c}{}&\multicolumn{1}{c}{}
\end{array}
\]
The symbol $\xi$ can be any of $\rho$, $\lambda$ or $R$.
\end{rowtype}

\begin{rowtype}
\label{RT4}
If $\th_0=\th_1=(m,m+d)^\sharp$, rows $0$ and $1$ may be:
\[
\begin{array}{r|c|c|c|c|c|c|c|}\hhline{~|-|-|-|-|-|-|}
\renewcommand{\arraystretch}{2}
\text{\scriptsize 1} &\cellcolor{delcol}\Delta &\cellcolor{delcol}\dots & \cellcolor{delcol}\Delta & \cellcolor{Rcol}\xi&\cellcolor{Rcol}\xi& \cellcolor{Rcol}\dots\\ \hhline{~|-|-|-|-|-|-|}
\text{\scriptsize 0}&\cellcolor{delcol}\Delta &\cellcolor{delcol}\dots &\cellcolor{delcol}\Delta & \cellcolor{Rcol}\xi&\cellcolor{Rcol}\xi& \cellcolor{Rcol}\dots\\ \hhline{~|-|-|-|-|-|-|}
\multicolumn{1}{c}{}&\multicolumn{1}{c}{}&\multicolumn{1}{c}{}&\multicolumn{1}{c}{}&\multicolumn{1}{c}{\text{\scriptsize $m$}}&\multicolumn{1}{c}{}&\multicolumn{1}{c}{}
\end{array}
\]
If $d=1$ the symbol $\xi$ can be any of
$\mu$, $\rho$, $\lambda$ or $R$; if $d>1$ then $\xi=\mu$.
\end{rowtype}

\begin{rowtype}
\label{RT5}
If $\th_0=(m,m+d)^\sharp$ and $\th_1=(m+1,m+1+d)^\sharp$, rows $0$ and $1$ may be:
\[
\begin{array}{r|c|c|c|c|c|c|c|c|c|c|c|c|}\hhline{~|-|-|-|-|-|-|-|-|-|-|-|}
\renewcommand{\arraystretch}{2}
\text{\scriptsize 1} &\cellcolor{delcol}\Delta &\cellcolor{delcol}\dots & \cellcolor{delcol}\Delta & 
\cellcolor{excepcol}\zeta & \cellcolor{mucol} \mu & \cellcolor{mucol} \dots &\cellcolor{mucol} \mu &\cellcolor{mucol} \mu &
\cellcolor{Rcol}\xi&\cellcolor{Rcol}\xi& \cellcolor{Rcol}\dots\\ \hhline{~|-|-|-|-|-|-|-|-|-|-|-|}
\text{\scriptsize 0}&\cellcolor{delcol}\Delta &\cellcolor{delcol}\dots &\cellcolor{delcol}\Delta & 
\cellcolor{mucol}\mu & \cellcolor{mucol} \mu & \cellcolor{mucol} \dots &\cellcolor{mucol} \mu &
\cellcolor{Rcol} \xi &\cellcolor{Rcol}\xi&\cellcolor{Rcol}\xi& \cellcolor{Rcol}\dots\\ \hhline{~|-|-|-|-|-|-|-|-|-|-|-|}
\multicolumn{1}{c}{}&\multicolumn{1}{c}{}&\multicolumn{1}{c}{}&\multicolumn{1}{c}{}&\multicolumn{1}{c}{\text{\scriptsize $i$}}&\multicolumn{1}{c}{}&\multicolumn{1}{c}{}&\multicolumn{1}{c}{}&\multicolumn{1}{c}{\text{\scriptsize $m$}}&\multicolumn{1}{c}{}&\multicolumn{1}{c}{}
\end{array}
\]
Here $0\leq i< m$.  If $d=1$ the symbol $\xi$ can be any of $\mu$, $\rho$, $\lambda$ or $R$; if $d>1$ then $\xi=\mu$.  The symbol $\zeta$ can be any of $\mu$, $\muup$, $\mudown$ or $\Delta$.
\end{rowtype}

\begin{rowtype}
\label{RT6}
If $\th_0=(m,m+d)^\sharp$ and $\th_1=(l,l+d)^\sharp$
with $l>m$, rows $0$ and $1$ may be:
\[
\begin{array}{r|c|c|c|c|c|c|c|c|c|}\hhline{~|-|-|-|-|-|-|-|-|-|}
\renewcommand{\arraystretch}{2}
\text{\scriptsize 1} &\cellcolor{delcol}\Delta &\cellcolor{delcol}\dots & \cellcolor{delcol}\Delta & 
\cellcolor{delcol}\Delta & \cellcolor{delcol}\dots & \cellcolor{delcol}\Delta &
\cellcolor{excepcol} \zeta & \cellcolor{Rcol}\xi& \cellcolor{Rcol}\dots
\\ \hhline{~|-|-|-|-|-|-|-|-|-|}
\text{\scriptsize 0}&\cellcolor{delcol}\Delta &\cellcolor{delcol}\dots &\cellcolor{delcol}\Delta & 
\cellcolor{Rcol} \xi &\cellcolor{Rcol}\dots&\cellcolor{Rcol}\xi&\cellcolor{Rcol}\xi&\cellcolor{Rcol}\xi& \cellcolor{Rcol}\dots
\\ \hhline{~|-|-|-|-|-|-|-|-|-|}
\multicolumn{1}{c}{}&\multicolumn{1}{c}{}&\multicolumn{1}{c}{}&\multicolumn{1}{c}{}&\multicolumn{1}{c}{\text{\scriptsize $m$}}&\multicolumn{1}{c}{}&\multicolumn{1}{c}{}&\multicolumn{1}{c}{}&\multicolumn{1}{c}{\text{\scriptsize $l$}}&\multicolumn{1}{c}{}
\end{array}
\]
If $d=1$ the symbol $\xi$ can be any of
$\mu$, $\rho$, $\lambda$ or $R$; if $d>1$ then $\xi=\mu$.
The symbol $\zeta$ can be any of
$\mu$, $\muup$, $\mudown$ or $\Delta$.
\end{rowtype}

\begin{rowtype}
\label{RT7}
If $\th_0=(m,m+d)^\sharp$ and $\th_1=(l,l+d)^\sharp$
with $l-1>m>0$ and ${l-1\equiv m\pmod{d}}$, rows $0$ and $1$ may be:
\[
\begin{array}{r|c|c|c|c|c|c|c|c|c|c|}\hhline{~|-|-|-|-|-|-|-|-|-|-|}
\renewcommand{\arraystretch}{2}
\text{\scriptsize 1} &\cellcolor{delcol}\Delta &\cellcolor{delcol}\dots & \cellcolor{delcol}\Delta  & \cellcolor{delcol}\Delta &
\cellcolor{delcol}\Delta & \cellcolor{delcol}\dots  & \cellcolor{delcol}\Delta &
\cellcolor{mucol} \mu & \cellcolor{Rcol}\xi& \cellcolor{Rcol}\dots
\\ \hhline{~|-|-|-|-|-|-|-|-|-|-|}
\text{\scriptsize 0}&\cellcolor{delcol}\Delta &\cellcolor{delcol}\dots &\cellcolor{delcol}\Delta  
&\cellcolor{mucol}\mu&
\cellcolor{Rcol} \xi &\cellcolor{Rcol}\dots&\cellcolor{Rcol}\xi&\cellcolor{Rcol}\xi&\cellcolor{Rcol}\xi& \cellcolor{Rcol}\dots
\\ \hhline{~|-|-|-|-|-|-|-|-|-|-|}
\multicolumn{1}{c}{}&\multicolumn{1}{c}{}&\multicolumn{1}{c}{}&\multicolumn{1}{c}{}&\multicolumn{1}{c}{}&\multicolumn{1}{c}{\text{\scriptsize $m$}}&\multicolumn{1}{c}{}&\multicolumn{1}{c}{}&\multicolumn{1}{c}{}&\multicolumn{1}{c}{\text{\scriptsize $l$}}
\end{array}
\]
If $d=1$ the symbol $\xi$ can be any of
$\mu$, $\rho$, $\lambda$ or $R$; if $d>1$ then $\xi=\mu$.
\end{rowtype}

\bigskip

In the above, note that $\per\th_0=\per\th_1$ in almost all cases, the possible exceptions being only in types \ref{RT1} and \ref{RT3}.  Also note that the only symbols that can appear before $\min\th_0$ or $\min\th_1$ are $\De$, $\mu$, $\muup$ and $\mudown$; the only entries that can appear after (or at) $\min\th_0$ or $\min\th_1$ are $\De$,~$\mu$,~$\lam$,~$\rho$ or $R$.

The remaining three types \ref{RT8}--\ref{RT10} specify an arbitrary row $q$ with $q\geq 2$.

\begin{rowtype}
\label{RT8}
Row $q\geq2$ may consist of $\Delta$s only:
\[
\begin{array}{r|c|c|c|c|}\hhline{~|-|-|-|-|}
\text{\scriptsize $q$} &\cellcolor{delcol}\Delta & \cellcolor{delcol}\Delta & \cellcolor{delcol}\Delta& \cellcolor{delcol}\dots\\ \hhline{~|-|-|-|-|}
\end{array}
\]
\end{rowtype}

\begin{rowtype}
\label{RT9}
Row $q\geq 2$ may be:
\[
\begin{array}{r|c|c|c|c|c|c|c|c|c|c|}\hhline{~|-|-|-|-|-|-|-|-|-|-|}
\renewcommand{\arraystretch}{2}
\text{\scriptsize $q$} &\cellcolor{delcol}\Delta &\cellcolor{delcol}\dots & \cellcolor{delcol}\Delta  & 
\cellcolor{Ncol} N_i &\cellcolor{Ncol} N_{i+1} & \cellcolor{Ncol}\dots & \cellcolor{Ncol} N_{k-1} &
\cellcolor{Ncol} N & \cellcolor{Ncol} N& \cellcolor{Ncol}\dots
\\ \hhline{~|-|-|-|-|-|-|-|-|-|-|}
\multicolumn{1}{c}{}&\multicolumn{1}{c}{}&\multicolumn{1}{c}{}&\multicolumn{1}{c}{}&\multicolumn{1}{c}{\text{\scriptsize $i$}}&\multicolumn{1}{c}{}&\multicolumn{1}{c}{}&\multicolumn{1}{c}{}&\multicolumn{1}{c}{\text{\scriptsize $k$}}&\multicolumn{1}{c}{}&\multicolumn{1}{c}{}
\end{array}
\]
Here $0\leq i \leq k \leq \min\theta_q$,   and $\{\id_q\}\not=N_i\leq \dots \leq N_{k-1}\leq N$ are non-trivial normal subgroups of $\S_q$.
\end{rowtype}

\begin{rowtype}
\label{RT10}
If $q\geq 2$ and $\th_q=(m,m+1)^\sharp$, row $q$ may be:
\[
\begin{array}{r|c|c|c|c|c|c|c|c|c|c|}\hhline{~|-|-|-|-|-|-|-|-|-|-|}
\renewcommand{\arraystretch}{2}
\text{\scriptsize $q$} &\cellcolor{delcol}\Delta &\cellcolor{delcol}\dots & \cellcolor{delcol}\Delta  & 
\cellcolor{Ncol} N_i &\cellcolor{Ncol} N_{i+1} & \cellcolor{Ncol}\dots & \cellcolor{Ncol} N_{m-1} &
\cellcolor{Rcol} R & \cellcolor{Rcol} R& \cellcolor{Rcol}\dots
\\ \hhline{~|-|-|-|-|-|-|-|-|-|-|}
\multicolumn{1}{c}{}&\multicolumn{1}{c}{}&\multicolumn{1}{c}{}&\multicolumn{1}{c}{}&\multicolumn{1}{c}{\text{\scriptsize $i$}}&\multicolumn{1}{c}{}&\multicolumn{1}{c}{}&\multicolumn{1}{c}{}&\multicolumn{1}{c}{\text{\scriptsize $m$}}&\multicolumn{1}{c}{}&\multicolumn{1}{c}{}
\end{array}
\]
Here $0\leq i \leq m $,   and $\{\id_q\}\neq N_i\leq \dots \leq N_{m-1}$ are non-trivial normal subgroups of $\S_q$.
\end{rowtype}

\bigskip

Having specified the possible rows in $M$, the way they can be put together is governed by the following \emph{verticality conditions}:

\begin{enumerate}[label=\textsf{(V\arabic*)}, widest=(V2), leftmargin=10mm]
\item
\label{V1}
An $N$-symbol cannot be immediately above $\Delta$, $\muup$, $\mudown$, or another $N$-symbol.
\item
\label{V2}
Every entry equal to $R$ in row $q\geq 2$ must be directly above an $R$ entry from row~$q-1$.  (The same automatically holds for $R$s in row $q=1$ by examining types \ref{RT1}--\ref{RT7}.)
\end{enumerate}

\begin{defn}[\bf C-pair]
\label{de:Cpair}
A \emph{C-pair} $(\Th,M)$ consists of
a descending chain $\Th=(\th_0,\ldots,\th_n)$ of congruences on $\N$, and 
a matrix $M=(M_{qi})_{\bnz\times\N}$,  in which rows $0$ and $1$ are of one of the types \ref{RT1}--\ref{RT7}, 
each of the remaining rows is of one of the types \ref{RT8}--\ref{RT10}, and 
 the verticality conditions \ref{V1} and \ref{V2} are satisfied.
We refer to $\Theta$ as a \emph{C-chain}, and to $M$ as a \emph{C-matrix}.
With a slight abuse of terminology, we will say that $M$ is of type \ref{RT1}--\ref{RT7}, as appropriate, according to the type of rows $0$ and $1$.
\end{defn}

\begin{rem}
\label{re:se}
The specifications of row types and the verticality conditions impose severe restrictions about the content of a C-matrix:
\ben
\item \label{se1}  For any $q\in\bnz$, and for any $(i,j)\in\th_q$, we have $M_{qi}=M_{qj}$.  Thus, if $m:=\min\th_q\neq \infty$, then $M_{qi}=M_{qm}$ for all $i\geq m$.
\item \label{se2}  If $M_{1i}\neq \De$ for some $i\in\N$, then $M_{0j}=M_{1,j+1}$ for all $j\geq i$.
\item \label{se3}  If $M_{1i}=\xi\neq \De$ for some $i\geq\min\th_1$, then $M_{1j}=M_{0k}=\xi$ for all $j\geq\min\th_1$ and all $k\geq\min\th_0$.
\item \label{se4}  Symbols $\Delta$ and $R$ can appear in any row;  $N$-symbols can appear in rows $q\geq 2$; $\mu$,~$\rho$ and~$\lambda$ can appear in rows $0$ and $1$; $\muup$ and $\mudown$ can appear only in row $1$, and~$M$ has at most one entry from $\{\muup,\mudown\}$.
\item \label{se5}  Given an entry $M_{qi}$, only certain entries can occur directly to the right or below it; they are given in Table 
\ref{tab:se}.
\item \label{se6}  At most one row can be of type \ref{RT9}, and any rows above such a row consist entirely of~$\De$s.
\een
\end{rem}

\begin{table}[ht]
\begin{center}
$
\begin{array}{c| *{6}{|c}}
\text{\boldmath{$M_{qi}$}} & \Delta & \muup , \mudown , \mu & \rho &\lambda & N & R\\ \hline\hline
\text{\boldmath{$M_{q-1,i}$}}&\text{any}  &\mu,\rho,\lambda,R &\rho&\lambda&
\mu,\rho,\lambda,R & R\\ \hline
\text{\boldmath{$M_{q,i+1}$}}&\text{any} &\mu,\rho,\lambda,R &\rho&\lambda& N'(\geq N),R&R
\end{array}
$

\caption{Allowed entries below and to the right of an entry in a C-matrix.}
\label{tab:se}

\end{center}

\end{table}

The next definition gives a detailed specification for the congruence corresponding to a C-pair. That this indeed is a congruence will be proved in Section \ref{sec:I}.  The definition involves the $\pd$ operator, defined just before Theorem \ref{thm:CongPn}.

\begin{defn}[\bf Congruence corresponding to a C-pair]
\label{de:cg}
The congruence associated with a C-pair $(\Th,M)$ is the relation $\cg(\Th,M)$ on $\Ptw{n}$ consisting of all pairs $((i,\alpha),(j,\beta))\in \Ptw{n}\times \Ptw{n}$ such that one of the following holds, writing $q=\rank\alpha$ and $r=\rank\beta$:
\begin{enumerate}[label=\textsf{(C\arabic*)}, widest=(C8), leftmargin=10mm]
\item
\label{C1}
$M_{qi}=M_{rj}=\De$, $(i,j)\in\th_q$ and $\alpha=\beta$;
\item
\label{C2}
$M_{qi}=M_{rj}=R$;
\item
\label{C3}
$M_{qi}=M_{rj}=N$, $(i,j)\in \th_q$, $\alpha\rH\beta$ and $\pd(\alpha,\beta)\in N$;
\item
\label{C4}
$M_{qi}=M_{rj}=\lambda$ and $\widehat{\alpha}\rL\widehat{\beta}$;
\item
\label{C5}
$M_{qi}=M_{rj}=\rho$ and $\widehat{\alpha}\rR\widehat{\beta}$;
\item
\label{C6}
$M_{qi}=M_{rj}=\mudown$,  $\widehat{\alpha}=\widehat{\beta}$ and $\alpha \rL\beta$;
\item
\label{C7}
$M_{qi}=M_{rj}=\muup$,  $\widehat{\alpha}=\widehat{\beta}$ and $\alpha \rR\beta$;
\item
\label{C8}
$M_{qi}=M_{rj}=\mu$,  $\widehat{\alpha}=\widehat{\beta}$ and one of the following holds:
\bit
\item $q=r$ and $(i,j)\in\th_q$, or
\item $q\neq r$, $(i+r,j+q)\in\th_0$, $i<\min\th_q$ and $j<\min\th_r$, or
\item $q\neq r$, $(i+r,j+q)\in\th_0$, $i\geq\min\th_q$ and $j\geq\min\th_r$.
\eit
\end{enumerate}
\end{defn}

Note that in \ref{C1} and \ref{C3} we necessarily have $q=r$.  Similarly, in \ref{C4}, \ref{C5} and~\ref{C8} we have $q,r\in\{0,1\}$; in \ref{C6} and \ref{C7} we have $q=r=1$ and $i=j$.
The comparatively complex rule in \ref{C8} is to do with the 
interactions between the $\al\mt\wh\al$ map and the $\Float$ parameters, as already gleaned  
 in Lemma \ref{la:A2} and Example \ref{ex:weirdproj}.

\begin{rem}\label{rem:C1'}
It will often be convenient to replace \ref{C1} in the above definition by:
\begin{enumerate}[label=\textsf{(C\arabic*$'$)}, widest=(C8), leftmargin=10mm]
\item \label{C1'} $(i,j)\in\th_q$ and $\alpha=\beta$.
\end{enumerate}
While \ref{C1} of course implies \ref{C1'}, the converse is not true.  Nevertheless, \ref{C1'} implies that one of \ref{C1}--\ref{C8} holds, as is easily checked, keeping in mind that $\al=\be$ implies $q=r$ and then $(i,j)\in\th_q$ implies $M_{qi}=M_{rj}$ by Remark \ref{re:se}\ref{se1}.
\end{rem}

\begin{rem}
\label{rem:C8help}
If $(i,\alpha)$ and $(j,\beta)$ are related via \ref{C8} then $i<\min\theta_q$ if and only if $ j<\min\theta_r$.
\end{rem}

\begin{rem}\label{rem:nuN}
It will sometimes be convenient to treat $\Delta$ in row $q\geq2$ as an $N$-symbol, by allowing the trivial subgroup
$\{\id_q\}$ among the latter, and then identifying $\Delta$ with it.
In this way, \ref{C1} is contained in \ref{C3} for $q\geq2$, as $\al\rH\al$ and $\pd(\al,\al)=\id_q$ for all $\al\in D_q$.
This convention will be particularly useful in the treatment of exceptional C-pairs (see Definition \ref{def:exc} below) where the exceptional row is $q=2$ and we have $\{\id_2\}=\A_2$.
\end{rem}

It turns out that `most' congruences on $\Ptw n$ are of the form $\cg(\Th,M)$.  Only one other family of congruences arises, and this only for a very specific kind of C-pair:

\begin{defn}[\bf Exceptional C-pair]
\label{def:exc}
A C-pair $(\Th,M)$ is \emph{exceptional} if there exists $q\geq 2$ such that:
\begin{itemize}[label=\textbullet, leftmargin=5mm]
\item
$\th_q=(m,m+2d)^\sharp$ for some $m\geq0$ and $d\geq1$;
\item
$M_{qm}=\A_q$ if $q>2$;
\item
If $q=2$ then $M_{2m}=\Delta$, $M_{1m}\in\{\mu,\rho,\lam,R\}$ and $(m,m+d)^\sharp\subseteq\theta_1$.
\end{itemize}
\end{defn}

This $q$ is necessarily unique (Remark \ref{re:se}\ref{se6}), and we call row $q$ the \emph{exceptional row}, and write~$q=:\hgt(M)$.
If $\theta_q=(m,m+2d)^\sharp$, we let $\thx:=(m,m+d)^\sharp$.
Thus the final condition in the last bullet point above states that if $\hgt(M)=2$ then $\thx_2\subseteq \theta_1$.
In fact, $\thx_q\subseteq \theta_{q-1}$ for any value of $q=\hgt(M)$.
 Indeed, for $q\geq3$, condition \ref{V1} ensures that the entry below $M_{qm}=\A_q$ is $R$, and then we have $\per\th_{q-1}=1$; we also have $\min\th_{q-1}\leq\min\th_q=m$, as $\th_q\sub\th_{q-1}$.

\begin{defn}[\bf Exceptional congruence]
To the exceptional C-pair $(\Th,M)$, in addition to $\cg(\Th,M)$, we also associate the \emph{exceptional congruence} $\cgx(\Th,M)$ consisting of all pairs $((i,\al),(j,\be))$ such that one of \ref{C1}--\ref{C8} holds, or else:
\begin{enumerate}[label=\textsf{(C\arabic*)}, widest=(C8), leftmargin=10mm]
\addtocounter{enumi}{8}
\item \label{C9} $(i,j)\in\thx_q\sm\th_q$, $\al\rH\be$ and $\pd(\alpha,\beta)\in \S_q\setminus \A_q$. 
\end{enumerate}
\end{defn}

Intuitively we can think about the extra pairs in \ref{C9} as follows.  Keeping the above notation, the partition $\S_q=\A_q\,\dot{\cup}\, (\S_q\setminus A_q)$ induces a partition of an arbitrary $\H$-class $H$ contained in~$D_q$, say $H=A\,\dot{\cup}\, B$, using the $\pd$ operator.
Rule \ref{C3} implies in particular that for $i,j\geq m$ with ${i\equiv j\pmod{2d}}$,
the elements of $\{i\}\times A$ and $\{j\}\times A$ are all related to each other, and similarly with $\{i\} \times B$ and $\{j\}\times B$.
 What rule \ref{C9} does is introduce additional `in-between' pairs, which `twist around' $A$ and $B$, in the sense that for $i,j\geq m$ with $i\equiv j+d \pmod{2d}$, the elements of $\{i\}\times A$ and $\{j\}\times B$ are all related to each other, and similarly with $\{i\} \times B$ and $\{j\}\times A$.

\subsection{The main result}\label{subsec:main}

We are now ready to state the main result of the paper.

\begin{thm}
\label{thm:main}
For $n\geq1$, the congruences on the twisted partition monoid $\Ptw{n}$ are precisely:
\begin{itemize}[label=\textbullet, leftmargin=5mm]
\item
$\cg(\Th,M)$ where $(\Th,M)$ is any C-pair;
\item
$\cgx(\Th,M)$ where $(\Th,M)$ is any exceptional C-pair.
\end{itemize}
\end{thm}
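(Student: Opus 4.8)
The proof consists of the two implications of the classification.  For the forward direction---that every $\cg(\Th,M)$ and every $\cgx(\Th,M)$ is a congruence---the plan is to verify reflexivity, symmetry and transitivity (which reduce to the corresponding properties of the constituent relations: $\De$, $\L$, $\R$, $\H$, the $\nu_N$, the Rees relations on $\Ptw n$, and the congruences $\th_q$ on $\N$, together with the rank/floating-component bookkeeping built into rule \ref{C8}), and then to establish compatibility.  Compatibility is the substantive part: given a pair $((i,\al),(j,\be))$ satisfying one of \ref{C1}--\ref{C9} and an arbitrary $(k,\ga)\in\Ptw n$, one shows that $((i,\al),(j,\be))\cdot(k,\ga)$ and $(k,\ga)\cdot((i,\al),(j,\be))$ again satisfy one of the rules.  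I would organise this by the rule holding for the original pair, using Lemma \ref{la:hat} to carry $\wh\al=\wh\be$ through products inside $I_1$, Lemma \ref{la:A4} to control the creation of floating components when factors are $\L$- or $\R$-related, Lemma \ref{la:A2} to trade the change in the number of floating components for a change in rank when the moving factor lies in $I_1$ (precisely the identity making rule \ref{C8} and row types \ref{RT2}--\ref{RT7} internally consistent---compare Example \ref{ex:weirdproj}), and Lemma \ref{la:AA2} for the $N$-symbol case.  The row-type constraints and the verticality conditions \ref{V1}, \ref{V2} guarantee that the symbol $M_{q'i'}$ governing the image $\D$-class is compatible with $M_{qi}$; the tabulated implications of Remark \ref{re:se} and Table \ref{tab:se} make these checks finite.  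For the exceptional rule \ref{C9} one additionally uses that in the exceptional row $\per\th_q=2d$ while $\per\th_{q-1}$ divides $d$, so that the `twist' interchanging the two $\nu$-cosets of an $\H$-class survives multiplication.

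For the converse, let $\si\in\Cong(\Ptw n)$ be arbitrary; the task is to produce a (possibly exceptional) C-pair $(\Th,M)$ with $\si=\cg(\Th,M)$ or $\si=\cgx(\Th,M)$.  The chain is read off from the `diagonal': for each $q\in\bnz$ put $\th_q:=\set{(i,j)\in\N\times\N}{((i,\al),(j,\al))\in\si}$ for a fixed $\al\in D_q$; compatibility and Lemma \ref{la:Green_Ptwn} show that this is independent of $\al$, is a congruence on $\N$, and that $\th_0\supseteq\cdots\supseteq\th_n$ (a collapse inside $D_q$ transports down to any $D_r$ with $r\le q$ by multiplying on the left and right by suitable partitions, using $\rank(\ga\al\de)\le\rank\al$ and Lemma \ref{la:0float} to avoid spurious floating components).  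The matrix is read off locally: to each $(q,i)$ one attaches the symbol $M_{qi}$ recording the restriction of $\si$ to $D_{qi}$ together with the way $\si$ connects $D_{qi}$ to the $\D$-classes immediately below it and to its left, and an analysis paralleling the $\P_n$ case of \cite{EMRT2018} shows this is one of $\De$, $\mu$, $\muup$, $\mudown$, $\lam$, $\rho$, an $N$-symbol, or $R$; in particular the $R$-entries of $M$ fill out exactly the ideal $I(\si)$ of Definition \ref{defn:Isi}.  The projection $\Proj\si$ is a congruence on $\P_n$ by Proposition \ref{prop:projcong}, and comparison with Theorem \ref{thm:CongPn} already constrains which symbols can occur and in what pattern.

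The core of the converse is to show that the data $(\Th,M)$ so assembled is genuinely a C-pair---that each row has one of the shapes \ref{RT1}--\ref{RT10} and that \ref{V1}, \ref{V2} hold---and that $\si$ then coincides with $\cg(\Th,M)$ or its exceptional extension.  Monotonicity along a row (more collapsing in $D_{qi}$ forces at least as much in $D_{q,i+1}$, via multiplication by $(1,\id)$) yields the staircase shape, and monotonicity down a column together with the separation Lemmas \ref{la:sep} and \ref{la:A3} forces the exact transitions recorded in Table \ref{tab:se}: separating $\al,\be$ by Green's relations without creating floating components---exactly what those lemmas supply---produces a $\si$-related pair in a strictly lower $\D$-class that is no longer $\H$-related, pinning down the symbol immediately below.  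The delicate case is rows $0$ and $1$, where the map $\al\mapsto\wh\al$ and Lemma \ref{la:A2} couple rank with the number of floating components, so that the position at which the row `switches on' is dictated by $\min\th_0$, $\min\th_1$ and their mutual relationship; one must here work through the combinatorial possibilities giving \ref{RT2}--\ref{RT7}.  I expect this analysis, and in particular recognising exactly when the twisted pairs of rule \ref{C9} are present---an $\H$-class in a row $q\ge2$ of period $2d$ collapsed only `halfway', leaving a residual $\S_q/\A_q$ twist---together with checking that this forces the defining conditions of an exceptional C-pair on $M_{qm}$, $M_{1m}$ and $\thx_q$, to be the main obstacle.  Once the C-pair is in hand, $\si\sub\cg(\Th,M)$ (resp.\ $\cgx(\Th,M)$) follows by checking each $\si$-related pair satisfies one of \ref{C1}--\ref{C9}, and the reverse inclusion follows because each pair listed in \ref{C1}--\ref{C9} has the form $\ba\cdot(x,y)\cdot\bb$ for one of finitely many `generating' pairs $(x,y)$ already known to lie in $\si$, so that $\cg(\Th,M)\sub\si$ by compatibility.
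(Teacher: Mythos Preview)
Your outline is correct and follows essentially the same two-part strategy as the paper: Proposition~\ref{pro:arecongs} verifies that each $\cg(\Th,M)$ and $\cgx(\Th,M)$ is a congruence via exactly the case analysis on \ref{C1}--\ref{C9} you describe (using Lemmas~\ref{la:hat}, \ref{la:A4}, \ref{la:A2}, \ref{la:AA2} as you indicate), while Section~\ref{sec:II} constructs $(\Th,M)$ from an arbitrary $\si$ just as you propose (Definitions~\ref{defn:Th} and~\ref{defn:M}) and uses the separation lemmas to force the row types and verticality conditions. The one minor organisational difference is in the final step: instead of your generating-pairs argument for $\cg(\Th,M)\sub\si$, the paper establishes the biconditional $(\ba,\bb)\in\si\iff(\ba,\bb)\in\cg(\Th,M)$ directly by classifying all $\si$-relationships between pairs of $\D$-classes (Lemmas~\ref{la:DD}--\ref{la:DD''} and Proposition~\ref{pr:siissipi}).
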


\begin{proof}[\bf Outline of proof.]
The proof naturally splits into two parts:  we show in Section \ref{sec:I} that each relation listed in the theorem is indeed a congruence, and in Section~\ref{sec:II} that any congruence on~$\Ptw n$ has one of the listed forms.  
\end{proof}

Before we proceed with the proof it is worth returning to the example congruences from Subsection \ref{subsec:exa}, and finding their associated C-pairs.  
We will adopt the notation for $C$-pairs where we write the matrix as usual and write each congruence $\theta_q$ to the right of row $q$.

\begin{exa}
\label{ex:ReesC}
Regarding Rees congruences, consider an ideal $I$ of $\Ptw n$.  Then $R_I=\cg(\Th,M)$, where the C-pair $(\Th,M)$ is defined as follows.  First, for any $q\in\bnz$ and $i\in\N$, we have 
\[
M_{qi} = \begin{cases}
R &\text{if $D_{qi}\sub I$}\\
\De &\text{otherwise.}
\end{cases}
\]
For any $q\in\bnz$, we have $\th_q=\De_\N$ if row $q$ of $M$ consists entirely of $\De$s; otherwise, $\th_q=(m_q,m_q+1)^\sharp$ where $m_q=\min\set{i\in\N}{M_{qi}=R}$.  For example, if $I$ is the ideal of $\Ptw4$ pictured in Figure \ref{fig:gridid}, then $R_I=\cg(\Pi)$, where $\Pi=(\Theta,M)$ is
\[
 \Cmatsetup\begin{array}{|c|c|c|c|c|c|cc}
\hhline{|-|-|-|-|-|-|~~}
\cellcolor{delcol}\Delta & \cellcolor{delcol}\Delta & \cellcolor{delcol}\Delta & \cellcolor{delcol}\Delta & \cellcolor{delcol}\Delta & \cellcolor{delcol}\cdots &\hspace{2mm}& \Delta_\N \\ \hhline{|-|-|-|-|-|-|~~}
\cellcolor{delcol}\Delta & \cellcolor{delcol}\Delta & \cellcolor{delcol}\Delta & \cellcolor{Rcol}R & \cellcolor{Rcol}R & \cellcolor{Rcol}\cdots && (3,4)^\sharp \\ \hhline{|-|-|-|-|-|-|~~}
\cellcolor{delcol}\Delta & \cellcolor{delcol}\Delta & \cellcolor{delcol}\Delta & \cellcolor{Rcol}R & \cellcolor{Rcol}R & \cellcolor{Rcol}\cdots &&  (3,4)^\sharp \\ \hhline{|-|-|-|-|-|-|~~}
\cellcolor{delcol}\Delta & \cellcolor{delcol}\Delta & \cellcolor{Rcol}R & \cellcolor{Rcol}R & \cellcolor{Rcol}R & \cellcolor{Rcol}\cdots &&  (2,3)^\sharp\\ \hhline{|-|-|-|-|-|-|~~}
\cellcolor{Rcol}R & \cellcolor{Rcol}R & \cellcolor{Rcol}R & \cellcolor{Rcol}R & \cellcolor{Rcol}R & \cellcolor{Rcol}\cdots
&& \nabla_\N \\ \hhline{|-|-|-|-|-|-|~~}
\end{array}
\ .
\]
\end{exa}

\begin{exa}
Next, let $\tau\in\Cong(\P_n)$, and let $\si$ be the congruence on $\Ptw n$ defined in Example~\ref{ex:tau}.  This time $\si=\cg(\Th,M)$, where $\Th=(\nab_\N,\ldots,\nab_\N)$, and where the form of $M$ depends on the congruence $\tau$ (as per Theorem \ref{thm:CongPn}).  For example with $n=4$, and taking $\tau$ to be $\lam_0$, $\rho_1$, $\mu_{\S_2}$ or $R_{\A_3}$, respectively, $\sigma=\cg(\Pi)$, where $\Pi$ is:
\[
\Cmatsetup
\begin{array}{|c|c|c|c|cc}
\hhline{|-|-|-|-|~~}
\cellcolor{delcol}\Delta & \cellcolor{delcol}\Delta & \cellcolor{delcol}\Delta & \cellcolor{delcol}\cdots&\hspace{2mm}&\nabla_\N \\ \hhline{|-|-|-|-|~~}
\cellcolor{delcol}\Delta & \cellcolor{delcol}\Delta & \cellcolor{delcol}\Delta & \cellcolor{delcol}\cdots&&\nabla_\N \\ \hhline{|-|-|-|-|~~}
\cellcolor{delcol}\Delta & \cellcolor{delcol}\Delta & \cellcolor{delcol}\Delta & \cellcolor{delcol}\cdots&&\nabla_\N \\ \hhline{|-|-|-|-|~~}
\cellcolor{delcol}\Delta & \cellcolor{delcol}\Delta & \cellcolor{delcol}\Delta & \cellcolor{delcol}\cdots&&\nabla_\N \\ \hhline{|-|-|-|-|~~}
\cellcolor{Rcol}\lam & \cellcolor{Rcol}\lam & \cellcolor{Rcol}\lam & \cellcolor{Rcol}\cdots&&\nabla_\N \\ \hhline{|-|-|-|-|~~}
\end{array}
\ \COMMA
\begin{array}{|c|c|c|c|cc}
\hhline{|-|-|-|-|~~}
\renewcommand{\arraystretch}{2}
\cellcolor{delcol}\Delta & \cellcolor{delcol}\Delta & \cellcolor{delcol}\Delta & \cellcolor{delcol}\cdots&\hspace{2mm}&\nabla_\N \\ \hhline{|-|-|-|-|~~}
\cellcolor{delcol}\Delta & \cellcolor{delcol}\Delta & \cellcolor{delcol}\Delta & \cellcolor{delcol}\cdots&&\nabla_\N \\ \hhline{|-|-|-|-|~~}
\cellcolor{delcol}\Delta & \cellcolor{delcol}\Delta & \cellcolor{delcol}\Delta & \cellcolor{delcol}\cdots&&\nabla_\N \\ \hhline{|-|-|-|-|~~}
\cellcolor{Rcol}\rho & \cellcolor{Rcol}\rho & \cellcolor{Rcol}\rho & \cellcolor{Rcol}\cdots &&\nabla_\N\\ \hhline{|-|-|-|-|~~}
\cellcolor{Rcol}\rho & \cellcolor{Rcol}\rho & \cellcolor{Rcol}\rho & \cellcolor{Rcol}\cdots &&\nabla_\N\\ \hhline{|-|-|-|-|~~}
\end{array}
\ \COMMA
\begin{array}{|c|c|c|c|cc}
\hhline{|-|-|-|-|~~}
\renewcommand{\arraystretch}{2}
\cellcolor{delcol}\Delta & \cellcolor{delcol}\Delta & \cellcolor{delcol}\Delta & \cellcolor{delcol}\cdots&\hspace{2mm}&\nabla_\N \\ \hhline{|-|-|-|-|~~}
\cellcolor{delcol}\Delta & \cellcolor{delcol}\Delta & \cellcolor{delcol}\Delta & \cellcolor{delcol}\cdots&&\nabla_\N \\ \hhline{|-|-|-|-|~~}
\cellcolor{Ncol}\S_2 & \cellcolor{Ncol}\S_2 & \cellcolor{Ncol}\S_2 & \cellcolor{Ncol}\cdots&&\nabla_\N \\ \hhline{|-|-|-|-|~~}
\cellcolor{Rcol}\mu & \cellcolor{Rcol}\mu & \cellcolor{Rcol}\mu & \cellcolor{Rcol}\cdots&&\nabla_\N \\ \hhline{|-|-|-|-|~~}
\cellcolor{Rcol}\mu & \cellcolor{Rcol}\mu & \cellcolor{Rcol}\mu & \cellcolor{Rcol}\cdots&&\nabla_\N \\ \hhline{|-|-|-|-|~~}
\end{array}
\qquad\text{or}\qquad
\begin{array}{|c|c|c|c|cc}
\hhline{|-|-|-|-|~~}
\renewcommand{\arraystretch}{2}
\cellcolor{delcol}\Delta & \cellcolor{delcol}\Delta & \cellcolor{delcol}\Delta & \cellcolor{delcol}\cdots&\hspace{2mm}&\nabla_\N \\ \hhline{|-|-|-|-|~~}
\cellcolor{Ncol}\A_3 & \cellcolor{Ncol}\A_3 & \cellcolor{Ncol}\A_3 & \cellcolor{Ncol}\cdots&&\nabla_\N \\ \hhline{|-|-|-|-|~~}
\cellcolor{Rcol}R & \cellcolor{Rcol}R & \cellcolor{Rcol}R & \cellcolor{Rcol}\cdots&&\nabla_\N \\ \hhline{|-|-|-|-|~~}
\cellcolor{Rcol}R & \cellcolor{Rcol}R & \cellcolor{Rcol}R & \cellcolor{Rcol}\cdots&&\nabla_\N \\ \hhline{|-|-|-|-|~~}
\cellcolor{Rcol}R & \cellcolor{Rcol}R & \cellcolor{Rcol}R & \cellcolor{Rcol}\cdots&&\nabla_\N \\ \hhline{|-|-|-|-|~~}
\end{array}
\ .
\]
\end{exa}

\begin{exa}
The relatively unusual congruence from Example \ref{ex:weirdproj} has the following C-pair: 
\[
\Cmatsetup
 \begin{array}{|c|c|c|c|cc}
\hhline{|-|-|-|-|~~}
\renewcommand{\arraystretch}{2}
\cellcolor{delcol}\Delta & \cellcolor{delcol}\Delta & \cellcolor{delcol}\Delta & \cellcolor{delcol}\cdots&\hspace{2mm}&\Delta_\N \\ \hhline{|-|-|-|-|~~}
\cellcolor{delcol}\vvdots & \cellcolor{delcol}\vvdots & \cellcolor{delcol}\vvdots & \cellcolor{delcol}\vvdots&&\vvdots \\ \hhline{|-|-|-|-|~~}
\cellcolor{delcol}\Delta & \cellcolor{delcol}\Delta & \cellcolor{delcol}\Delta & \cellcolor{delcol}\cdots&&\Delta_\N \\ \hhline{|-|-|-|-|~~}
\cellcolor{excepcol}\mu&\cellcolor{mucol}\mu&\cellcolor{mucol}\mu& \cellcolor{mucol}\cdots&&\Delta_\N\\ \hhline{|-|-|-|-|~~}
\cellcolor{mucol}\mu&\cellcolor{mucol}\mu&\cellcolor{mucol}\mu& \cellcolor{mucol}\cdots&&\Delta_\N\\ \hhline{|-|-|-|-|~~}
\end{array}
\ .
\]
\end{exa}

\begin{exa}
Finally, the congruences in Examples \ref{exa:del} and \ref{exa:del2} both have $M=(\De)_{\bnz\times\N}$.
\end{exa}

Note that none of the above congruences are exceptional.

\begin{exa}
\label{ex:excep}
The following are three examples of exceptional C-pairs with $n=4$,  and with the exceptional row at $q=4$, $3$ and $2$ respectively (in the first, $\mathcal K_4$ denotes the Klein 4-group):
\begin{gather*}
\Cmatsetup
 \begin{array}{|c|c|c|c|c|c|c|c|c|c|c|c|cc}
\hhline{|-|-|-|-|-|-|-|-|-|-|-|-|~~}
\cellcolor{delcol}\Delta & \cellcolor{delcol}\Delta &\cellcolor{delcol}\Delta &\cellcolor{delcol}\Delta &\cellcolor{delcol}\Delta &\cellcolor{delcol}\Delta &\cellcolor{Ncol} \mathcal K_4 &\cellcolor{Ncol}\mathcal K_4 &\cellcolor{Ncol}\A_4 &\cellcolor{Ncol}\A_4 &\cellcolor{Ncol}\A_4&\cellcolor{Ncol}\cdots &\hspace{2mm}&(9,11)^\sharp \\ \hhline{|-|-|-|-|-|-|-|-|-|-|-|-|~~}
\cellcolor{delcol}\Delta & \cellcolor{delcol}\Delta &\cellcolor{delcol}\Delta &\cellcolor{delcol}\Delta &\cellcolor{Ncol}\A_3 &\cellcolor{Ncol}\S_3 &\cellcolor{Rcol} R &\cellcolor{Rcol}R &\cellcolor{Rcol}R &\cellcolor{Rcol}R &\cellcolor{Rcol}R&\cellcolor{Rcol}\cdots &&(6,7)^\sharp \\ \hhline{|-|-|-|-|-|-|-|-|-|-|-|-|~~}
\cellcolor{delcol}\Delta & \cellcolor{Ncol}\S_2 &\cellcolor{Ncol}\S_2 &\cellcolor{Rcol}R &\cellcolor{Rcol}R &\cellcolor{Rcol}R &\cellcolor{Rcol} R &\cellcolor{Rcol}R &\cellcolor{Rcol}R &\cellcolor{Rcol}R &\cellcolor{Rcol}R&\cellcolor{Rcol}\cdots &&(3,4)^\sharp \\ \hhline{|-|-|-|-|-|-|-|-|-|-|-|-|~~}
\cellcolor{delcol}\Delta & \cellcolor{excepcol}\mu &\cellcolor{Rcol}R&\cellcolor{Rcol}R &\cellcolor{Rcol}R &\cellcolor{Rcol}R &\cellcolor{Rcol} R &\cellcolor{Rcol}R &\cellcolor{Rcol}R &\cellcolor{Rcol}R &\cellcolor{Rcol}R&\cellcolor{Rcol}\cdots &&(2,3)^\sharp \\ \hhline{|-|-|-|-|-|-|-|-|-|-|-|-|~~}
\cellcolor{Rcol}R & \cellcolor{Rcol}R &\cellcolor{Rcol}R&\cellcolor{Rcol}R &\cellcolor{Rcol}R &\cellcolor{Rcol}R &\cellcolor{Rcol} R &\cellcolor{Rcol}R &\cellcolor{Rcol}R &\cellcolor{Rcol}R &\cellcolor{Rcol}R&\cellcolor{Rcol}\cdots &&\nabla_\N \\ \hhline{|-|-|-|-|-|-|-|-|-|-|-|-|~~}
\end{array}
\ \COMMA
\Cmatsetup
 \begin{array}{|c|c|c|c|c|c|c|c|c|c|c|cc}
\hhline{|-|-|-|-|-|-|-|-|-|-|-|~~}
\cellcolor{delcol}\Delta & \cellcolor{delcol}\Delta &\cellcolor{delcol}\Delta &\cellcolor{delcol}\Delta &\cellcolor{delcol}\Delta &\cellcolor{delcol}\Delta &\cellcolor{delcol}\Delta &\cellcolor{delcol}\Delta &\cellcolor{delcol}\Delta &\cellcolor{delcol}\Delta &\cellcolor{delcol}\cdots &\hspace{2mm}&(7,23)^\sharp \\ \hhline{|-|-|-|-|-|-|-|-|-|-|-|~~}
\cellcolor{delcol}\Delta & \cellcolor{delcol}\Delta &\cellcolor{delcol}\Delta &\cellcolor{delcol}\Delta &\cellcolor{delcol}\Delta &\cellcolor{Ncol}\A_3&\cellcolor{Ncol}\A_3 &\cellcolor{Ncol}\A_3 &\cellcolor{Ncol}\A_3 &\cellcolor{Ncol}\A_3 &\cellcolor{Ncol}\cdots && (7,15)^\sharp \\ \hhline{|-|-|-|-|-|-|-|-|-|-|-|~~}
\cellcolor{delcol}\Delta & \cellcolor{delcol}\Delta &\cellcolor{delcol}\Delta &\cellcolor{delcol}\Delta &\cellcolor{Rcol}R &\cellcolor{Rcol}R&\cellcolor{Rcol} R &\cellcolor{Rcol} R &\cellcolor{Rcol} R &\cellcolor{Rcol} R &\cellcolor{Rcol}\cdots && (4,5)^\sharp \\ \hhline{|-|-|-|-|-|-|-|-|-|-|-|~~}
\cellcolor{delcol}\Delta & \cellcolor{delcol}\Delta &\cellcolor{delcol}\Delta &\cellcolor{mucol}\mu &\cellcolor{Rcol}R &\cellcolor{Rcol}R&\cellcolor{Rcol} R &\cellcolor{Rcol} R &\cellcolor{Rcol} R &\cellcolor{Rcol} R &\cellcolor{Rcol}\cdots && (4,5)^\sharp \\ \hhline{|-|-|-|-|-|-|-|-|-|-|-|~~}
\cellcolor{mucol}\mu & \cellcolor{Rcol}R &\cellcolor{Rcol}R &\cellcolor{Rcol}R &\cellcolor{Rcol}R &\cellcolor{Rcol}R&\cellcolor{Rcol} R &\cellcolor{Rcol} R &\cellcolor{Rcol} R &\cellcolor{Rcol} R &\cellcolor{Rcol}\cdots && (1,2)^\sharp \\ \hhline{|-|-|-|-|-|-|-|-|-|-|-|~~}
\end{array}
\ ,
\displaybreak[2]
\\[5mm]
\Cmatsetup
 \begin{array}{|c|c|c|c|c|c|c|c|c|c|c|cc}
\hhline{|-|-|-|-|-|-|-|-|-|-|-|~~}
\cellcolor{delcol}\Delta & \cellcolor{delcol}\Delta &\cellcolor{delcol}\Delta &\cellcolor{delcol}\Delta &\cellcolor{delcol}\Delta &\cellcolor{delcol}\Delta &\cellcolor{delcol}\Delta &\cellcolor{delcol}\Delta &\cellcolor{delcol}\Delta &\cellcolor{delcol}\Delta &\cellcolor{delcol}\cdots &\hspace{2mm}&\Delta_\N \\ \hhline{|-|-|-|-|-|-|-|-|-|-|-|~~}
\cellcolor{delcol}\Delta & \cellcolor{delcol}\Delta &\cellcolor{delcol}\Delta &\cellcolor{delcol}\Delta &\cellcolor{delcol}\Delta &\cellcolor{delcol}\Delta &\cellcolor{delcol}\Delta &\cellcolor{delcol}\Delta &\cellcolor{delcol}\Delta &\cellcolor{delcol}\Delta &\cellcolor{delcol}\cdots && (9,13)^\sharp \\ \hhline{|-|-|-|-|-|-|-|-|-|-|-|~~}
\cellcolor{delcol}\Delta & \cellcolor{delcol}\Delta &\cellcolor{delcol}\Delta &\cellcolor{delcol}\Delta &\cellcolor{delcol}\Delta &\cellcolor{delcol}\Delta &\cellcolor{delcol}\Delta &\cellcolor{delcol}\Delta &\cellcolor{delcol}\Delta &\cellcolor{delcol}\Delta &\cellcolor{delcol}\cdots && (8,12)^\sharp \\ \hhline{|-|-|-|-|-|-|-|-|-|-|-|~~}
\cellcolor{delcol}\Delta & \cellcolor{excepcol}\mudown &\cellcolor{mucol}\mu &\cellcolor{mucol}\mu &\cellcolor{mucol}\mu &\cellcolor{mucol}\mu &\cellcolor{Rcol}\mu &\cellcolor{Rcol}\mu &\cellcolor{Rcol}\mu &\cellcolor{Rcol}\mu &\cellcolor{Rcol}\cdots && (6,8)^\sharp \\ \hhline{|-|-|-|-|-|-|-|-|-|-|-|~~}
\cellcolor{delcol}\Delta & \cellcolor{mucol}\mu &\cellcolor{mucol}\mu &\cellcolor{mucol}\mu &\cellcolor{mucol}\mu &\cellcolor{Rcol}\mu &\cellcolor{Rcol}\mu &\cellcolor{Rcol}\mu &\cellcolor{Rcol}\mu &\cellcolor{Rcol}\mu &\cellcolor{Rcol}\cdots && (5,7)^\sharp \\\hhline{|-|-|-|-|-|-|-|-|-|-|-|~~}
\end{array}
\ .
\end{gather*}
\end{exa}

We conclude this section by recording some simple consequences of Theorem \ref{thm:main}.  The first concerns the number of congruences.  
Note that a semigroup $S$ can have as many as $|{\operatorname{Eq}(S)}|$ congruences, 
where $\operatorname{Eq}(S)$ is the set of all equivalence relations on $S$, and that $|{\operatorname{Eq}(S)}|=2^{|S|}$ when $S$ is infinite.

\begin{cor}\label{co:countable}
The twisted partition monoid $\Ptw{n}$ has only countably many congruences.
\end{cor}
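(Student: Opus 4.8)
The plan is to read this off directly from the classification in Theorem~\ref{thm:main}: every congruence on $\Ptw n$ is $\cg(\Th,M)$ for some C-pair $(\Th,M)$, or $\cgx(\Th,M)$ for some exceptional C-pair, so it is enough to show that there are only countably many C-pairs. Since each exceptional C-pair carries exactly one extra congruence $\cgx(\Th,M)$, the total number of congruences on $\Ptw n$ is at most twice the number of C-pairs, so countability of the latter suffices.

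First I would count the C-chains. A C-chain is a tuple $(\th_0,\ldots,\th_n)$ of congruences on $\N$, and $\Cong(\N)$ is countable, since by Subsection~\ref{subsec:M} its elements are $\De_\N$, $\nab_\N$, and the congruences $(m,m+d)^\sharp$ with $m\geq0$ and $d\geq1$. Hence the set of C-chains, being a subset of $\Cong(\N)^{n+1}$, is countable. Next, for a fixed C-chain $\Th$, I would bound the number of compatible C-matrices $M$. The key structural input is Remark~\ref{re:se}\ref{se1}: each row of $M$ is eventually constant, stabilising at the entry $M_{q,\min\th_q}$. Running through the ten row types~\ref{RT1}--\ref{RT10}, one checks that each row is pinned down by a finite amount of data drawn from countable sets --- the finitely many positions at which the row changes value, together with the symbols involved: a choice of $\xi\in\{\mu,\rho,\lambda,R\}$ or $\zeta\in\{\mu,\muup,\mudown,\De\}$ in rows $0$ and $1$, and finitely many normal subgroups of $\S_q$ (of which there are only finitely many to begin with) in a row $q\geq2$. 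So each row admits only countably many possibilities, and, there being just $n+1$ rows, only countably many C-matrices are compatible with a given $\Th$. Combining, the number of C-pairs is countable, and therefore so is the set of congruences on $\Ptw n$.

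The argument is essentially a bookkeeping exercise once Theorem~\ref{thm:main} is in hand, and the only point requiring a little care is the per-row-type verification. The mildly delicate cases are those where $\min\th_q=\infty$ (so $\th_q=\De_\N$ and the row need not stabilise before any a priori fixed position): a row of type~\ref{RT2},~\ref{RT5} or~\ref{RT9} is then still determined by finitely many natural-number parameters and symbol choices, so it still contributes only countably --- not uncountably --- many options. I expect this case check to be the main, and fairly minor, obstacle; everything else is immediate.
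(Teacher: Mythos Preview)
Your proposal is correct and follows essentially the same approach as the paper's own proof: count C-chains (countably many, since $\Cong(\N)$ is countable), observe that each row of a C-matrix is eventually constant and hence determined by finitely many parameters from countable sets, and conclude that there are countably many C-pairs, each yielding at most two congruences. The paper's proof is more terse, simply asserting that each row is eventually constant without the row-type case analysis you sketch, but the argument is the same.
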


\begin{proof}
There are only countably many congruences on $\N$, and hence only countably many C-chains.
The number of C-matrices is also countable, because each C-matrix has $n+1$ rows, and each row is eventually constant. Hence there are only countably many C-pairs, and each yields at most two congruences.
\end{proof}

We can also characterise congruences of finite index:

\begin{cor}
\label{co:fi}
Let $\sigma$ be a congruence of $\Ptw{n}$, and let $(\Theta,M)$ be the associated C-pair.
Then the quotient $\Ptw{n}/\sigma$ is finite if and only if $\theta_n\neq \Delta_\N$.
\end{cor}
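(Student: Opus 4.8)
The plan is to read everything off the C-pair $(\Theta,M)$ attached to $\sigma$, proving the two implications separately and disposing of the exceptional case at the end; recall that when $(\Th,M)$ is exceptional, $\cgx(\Th,M)\supseteq\cg(\Th,M)$ differ only by the pairs in \ref{C9}, which can only merge classes.

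\emph{If $\theta_n=\Delta_\N$, the quotient is infinite.} I would exhibit infinitely many $\sigma$-classes using the elements $(i,\id)$, $i\in\N$, where $\id\in\P_n$ is the identity partition, of rank $n$. Running through which of \ref{C1}--\ref{C9} can relate $(i,\id)$ to $(j,\id)$: \ref{C1} and \ref{C3} require $(i,j)\in\theta_n=\Delta_\N$, hence $i=j$; \ref{C6} and \ref{C7} force $i=j$ by the remark following Definition \ref{de:cg}; \ref{C9} is inapplicable, as $\pd(\id,\id)=\id_n\in\A_n$; \ref{C4}, \ref{C5} and \ref{C8} only relate elements of rank $\le1$, so they are vacuous when $n\ge2$, and when $n=1$ either do not apply ($\lambda,\rho$ cannot occur in a row whose associated $\N$-congruence is trivial, as one sees from \ref{RT1}--\ref{RT7}) or force $i=j$ via the first clause of \ref{C8} since $\theta_1=\theta_n=\Delta_\N$; and \ref{C2} requires $M_{ni}=M_{nj}=R$. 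But $\theta_n=\Delta_\N$ forces row $n$ of $M$ to be of type \ref{RT1}/\ref{RT8} or \ref{RT9} (type \ref{RT10} would require $\theta_n=(m,m+1)^\sharp$), and neither type contains the symbol $R$. Hence $(i,\id)\rsi(j,\id)$ only if $i=j$, so the $(i,\id)$ lie in pairwise distinct $\sigma$-classes.

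\emph{If $\theta_n\neq\Delta_\N$, the quotient is finite.} Then $\theta_q\supseteq\theta_n\neq\Delta_\N$ for every $q$, so each $m_q:=\min\theta_q$ is finite, and $m_q\le m_n$ by \eqref{eq:th1th2}. Put $K:=m_n$; by Remark \ref{re:se}\ref{se1} the entry $M_{qi}$ equals a constant symbol $s_q$ for all $i\ge K$, and $s_q\notin\{\muup,\mudown\}$ since such a symbol occurs at most once in $M$ (Remark \ref{re:se}\ref{se4}) whereas $M_{qi}=s_q$ for infinitely many $i$. Write $\Ptw n=F\cup T$ with $F=\{0,\dots,K-1\}\times\P_n$ finite and $T=\{K,K+1,\dots\}\times\P_n$; since every $\sigma$-class meets $F$ or $T$, it suffices to show that $T$ meets only finitely many $\sigma$-classes. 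Setting $T_q=\{K,K+1,\dots\}\times D_q$, I would inspect the possible values of $s_q$: if $s_q=R$, then \ref{C2} collapses $\bigcup_{s_q=R}T_q$ to a single class; if $s_q=\Delta$ or $s_q$ is an $N$-symbol, then on $T_q$ the relation is governed by \ref{C1} or \ref{C3}, giving at most $|\N/\theta_q|\cdot|D_q|<\infty$ classes; if $s_q\in\{\lambda,\rho\}$ (so $q\le1$), then \ref{C4}/\ref{C5} identifies $(i,\alpha)$ with $(j,\beta)$ precisely when $\wh\alpha\rL\wh\beta$ (resp.\ $\wh\alpha\rR\wh\beta$), independently of $i,j$, leaving at most $|D_0/{\L}|$ (resp.\ $|D_0/{\R}|$) classes; and if $s_q=\mu$ (again $q\le1$), then, as $i,j\ge K\ge\min\theta_q$, only the third clause of \ref{C8} applies, and a short count using that $\theta_0$ has finite index and $D_0,D_1$ are finite bounds the number of classes. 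Adding these finite contributions shows that $T$ meets only finitely many $\sigma$-classes, so $\Ptw n/\sigma$ is finite.

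For the exceptional case, if $\sigma=\cgx(\Th,M)$ with $(\Th,M)$ exceptional: when $\theta_n\neq\Delta_\N$, the monoid $\Ptw n/\sigma$ is a quotient of the finite monoid $\Ptw n/\cg(\Th,M)$, hence finite; and when $\theta_n=\Delta_\N$ the argument of the second paragraph already allows for \ref{C9} and still shows $(i,\id)\rsi(j,\id)$ implies $i=j$. The main obstacle I anticipate is the catalogue check in the third paragraph — verifying that for each row type \ref{RT1}--\ref{RT10}, the corresponding clauses of Definition \ref{de:cg} yield only finitely many classes on the tail $T$, with particular care for \ref{C8}, where the $\al\mapsto\wh\al$ map interacts with the index shifts $i+r$ and $j+q$, and confirming that no cross-row identification produces an uncounted class.
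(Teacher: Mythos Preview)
Your proof is correct, but the second implication takes a longer route than necessary. For the direction $\theta_n\neq\Delta_\N\Rightarrow$ finite, the paper uses \ref{C1'} (Remark \ref{rem:C1'}) in one line: if $\theta_n=(m,m+d)^\sharp$, then for every $q$ and every $i\geq m$ there is some $i'$ with $m\leq i'<m+d$ and $(i,i')\in\theta_n\subseteq\theta_q$, so $(i,\alpha)\rsi(i',\alpha)$; thus every element is $\sigma$-related to one in the finite set $\{0,\ldots,m+d-1\}\times\P_n$. Your case-by-case analysis of the tail $T$ according to the terminal symbol $s_q$ gets to the same conclusion but with extra bookkeeping, and the ``short count'' for $s_q=\mu$ is left a bit vague. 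For the first implication your argument is essentially the paper's (showing row $n$ splits into infinitely many classes), though one small imprecision: when $n=1$ and $\theta_1=\Delta_\N$, rows $0$ and $1$ can also have type \ref{RT2} or \ref{RT3}, not only \ref{RT1}; your conclusion that row $1$ contains no $R$ still holds in those types, so the argument survives.
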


\begin{proof}
If $\theta_n=\Delta_\N$ then row $n$ is of type \ref{RT8} or \ref{RT9}, and is not exceptional (though a lower row might be).  It follows from \ref{C1} or \ref{C3} that for any $i\in\N$, elements of $D_{ni}$ can only be $\si$-related to elements of $D_{ni}$, and so $\sigma$ has infinitely many classes. 

Conversely, if $\theta_n=(m,m+d)^\sharp$, then since every other~$\th_q$ contains $\th_n$, it follows from \ref{C1'} that each element of $\Ptw n$ is $\si$-related to an element from columns $0,1,\dots,m+d-1$. Since the columns themselves are finite, $\sigma$ has only finitely many classes.
\end{proof}

\begin{rem}
Although infinite, a C-pair $(\Th,M)$ can be finitely encoded.  Indeed, the C-chain $\Th=(\th_0,\ldots,\th_n)$ is determined by the numbers ${\min\th_q,\per\th_q\in\N\cup\{\infty\}}$, for each $q\in\bnz$; and each row of the C-matrix $M$, being eventually constant, is determined by the symbols that appear in that row, and the first position of each such symbol.  
\end{rem}

\begin{rem}
A reader can spot some similarities between the twisted partition monoid $\Ptw{n}$ and the direct product $\N\times\P_n$ of the additive monoid of natural numbers and the partition monoid $\P_n$.
Perhaps the similarities are most striking in the rectangular description of $\D$-classes, as illustrated in Figure \ref{fig:gridid}. The problem of finding congruences of a direct product in general is a difficult one,
but recent work of Ara\'{u}jo, Bentz and Gomes \cite{Araujo18} treats it in the special case of transformation and matrix semigroups.
There are certain formal similarities between their description and ours, and a careful examination of these may be a useful pointer for future investigations.
\end{rem}

\section{C-pair relations are congruences}
\label{sec:I}

This section is entirely devoted to proving the following:

\begin{prop}
\label{pro:arecongs}
For any C-pair $\Pair=(\Th,M)$ the relation
$\cg(\Pair)$ is a congruence, and, if $\Pair$ is exceptional, the relation $\cgx(\Pair)$ is also a congruence.
\end{prop}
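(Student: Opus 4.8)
The plan is to verify directly that each of the relations $\cg(\Pair)$ and $\cgx(\Pair)$ is reflexive, symmetric, transitive and compatible with multiplication. Reflexivity and symmetry are immediate from the shape of conditions \ref{C1}--\ref{C9}: every diagonal pair $((i,\al),(i,\al))$ satisfies \ref{C1'} (hence one of \ref{C1}--\ref{C8}), and each condition is manifestly symmetric in the two pairs, since the relations $\al=\be$, $\al\rH\be$, $\wh\al\rL\wh\be$, $\wh\al\rR\wh\be$, $\wh\al=\wh\be$, $\al\rL\be$, $\al\rR\be$ are symmetric, the congruences $\th_q$ and $\thx_q$ are symmetric, and $\pd(\al,\be)\in N$ (respectively $\pd(\al,\be)\notin\A_q$) is a conjugacy-class condition unaffected by swapping $\al$ and $\be$. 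So the substantive work is transitivity and compatibility, and I would organise it as follows.

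\emph{Transitivity.} Suppose $((i,\al),(j,\be))$ and $((j,\be),(k,\ga))$ both lie in $\cg(\Pair)$, witnessed by conditions $\textsf{C}x$ and $\textsf{C}y$ respectively. The first observation is that, reading off the ``symbol'' data, we have $M_{\rank\al,i}=M_{\rank\be,j}=M_{\rank\ga,k}$ (with the conventions of Remark \ref{rem:nuN} when a row $\geq2$ carries $\De$, treated as $\{\id\}$), so the two witnessing conditions involve the \emph{same} symbol; this reduces transitivity to a case analysis on that common symbol. For $R$ (condition \ref{C2}) there is nothing to check. For $\lam,\rho$ (conditions \ref{C4},\ref{C5}) transitivity follows from transitivity of $\L$, $\R$ on $\P_n$. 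For $\mudown,\muup$ (conditions \ref{C6},\ref{C7}) one uses transitivity of $\L$ (resp.\ $\R$) together with $\wh\al=\wh\be=\wh\ga$ and $i=j=k$. For an $N$-symbol (condition \ref{C3}) one uses that $\H\restr_{D_q}$ is transitive, that $\th_q$ is transitive, and that $\pd$ is ``additive along $\H$'', i.e. $\pd(\al,\ga)$ is conjugate to $\pd(\al,\be)\pd(\be,\ga)$, so $N$-membership is preserved; this is exactly the content that makes $\nu_N$ an equivalence in \cite{EMRT2018}. The only genuinely delicate symbol is $\mu$ (condition \ref{C8}), because its defining clause mixes the three sub-cases according to how $i,j,k$ compare with $\min\th_q,\min\th_r,\min\th_s$ (where $q,r,s$ are the ranks). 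Here I would set $u:=i+r$-type ``shifted'' coordinates—more precisely, track the quantities $i$ and $j$ relative to their thresholds, use Remark \ref{rem:C8help} to see that ``below threshold'' is a property shared by all three of $i,j,k$ or by none, and then in each of the two resulting regimes reduce the three sub-clauses of \ref{C8} to a single statement about membership in $\th_0$ of an appropriate combination of $i,j,k,q,r,s$, where transitivity of $\th_0$ finishes the job. For $\cgx(\Pair)$ the new condition \ref{C9} must be folded in: a composite of two pairs where one or both steps use \ref{C9} must again satisfy \ref{C3} or \ref{C9}; this comes down to the observation that $\thx_q/\th_q$ has order two (as $\thx_q=(m,m+d)^\sharp$ and $\th_q=(m,m+2d)^\sharp$) together with the ``parity'' bookkeeping on $\pd$ relative to $\A_q$—composing two ``twisting'' steps untwists, composing a twist with an $\A_q$-step twists, exactly as in the intuitive description following Definition of $\cgx$.

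\emph{Compatibility.} By symmetry (the involution ${}^*$, or simply by repeating the dual argument) it suffices to prove right compatibility: if $((i,\al),(j,\be))\in\cg(\Pair)$ and $\bc=(k,\de)\in\Ptw n$, then $((i,\al)\bc,(j,\be)\bc)\in\cg(\Pair)$. Write the products as $(i+k+\Float(\al,\de),\al\de)$ and $(j+k+\Float(\be,\de),\be\de)$, with new ranks $q':=\rank(\al\de)\leq q$ and $r':=\rank(\be\de)\leq r$. I would again proceed symbol by symbol on the witnessing condition for the original pair, and in each case the argument has two halves: first, show that the \emph{combinatorial} relation between $\al\de$ and $\be\de$ is of the required kind (this is where the auxiliary lemmas do their work—Lemma \ref{la:A4} for preservation of $\Float$ along $\L$/$\R$, Lemma \ref{la:hat} for $\wh{\al\de}=\wh\al\de$, Lemma \ref{la:AA2} for preservation of $\nu_N$ under multiplication, Lemma \ref{la:Green_Pn} for how $\L$, $\R$, $\H$ behave); second, show that the \emph{numerical} data still fits, i.e. that $M_{q'i'}=M_{r'j'}$ for the new coordinates $i':=i+k+\Float(\al,\de)$, $j':=j+k+\Float(\be,\de)$, and that the relevant $\th$-membership survives. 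For the $\De,R,\lam,\rho$ symbols this is straightforward from Remark \ref{re:se} (the content of each row, and the ``staircase'' shape of where a given symbol sits, is preserved under moving down-and-right in the grid, which is exactly what right multiplication does, since $q'\le q$ and $i'\ge i$). The $N$-symbol case uses Lemma \ref{la:AA2} for the $\pd\in N$ part and verticality condition \ref{V1}/\ref{V2} plus Remark \ref{re:se}\ref{se5} to locate $M_{q'i'}$. The hard case is once more $\mu$ (condition \ref{C8}), and specifically when the rank drops, $q'<q$: here the relation $i'-j'$ vs. $q-r$ changes, and one must invoke Lemma \ref{la:A2}, $\rank\al-\rank(\al\de)=\Float(\wh\al,\de)-\Float(\al,\de)$ (valid because $\al\in I_1$ in the $\mu$-regime), to rewrite $i+k+\Float(\al,\de)$ in terms of $q-q'$ and $\Float(\wh\al,\de)$, and symmetrically for $\be$; subtracting, the $\Float(\wh\al,\de)-\Float(\wh\be,\de)$ terms must be controlled, which follows from $\wh\al=\wh\be$ (so those two $\Float$-values are literally equal). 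This is precisely the computation already carried out in Example \ref{ex:weirdproj}, and the same calculation, reorganised, handles the general $\mu$ case; the three sub-clauses of \ref{C8} are then matched up by checking where $i'$ and $j'$ sit relative to $\min\th_{q'}$ and $\min\th_{r'}$, using that $\th_{q'}\supseteq\th_q$ and the eventual-constancy of rows (Remark \ref{re:se}\ref{se1}). Finally, for $\cgx$ one checks right-compatibility of \ref{C9}: multiplying a \ref{C9}-pair on the right either keeps it in $D_q$ (and then Lemma \ref{la:AA2} preserves the $\pd\notin\A_q$ condition, and $\thx_q$-membership survives just as $\th_q$-membership does) or drops the rank, in which case $\al\de\rL\be\de$ forces $\wh{\al\de}=\wh{\be\de}$ and one lands in a $\mu$- or $\lam$-type cell of a lower row—here the hypothesis in Definition \ref{def:exc} that $\thx_q\subseteq\th_{q-1}$ (more generally $\thx_q\subseteq\theta_{q-1}$, as noted after that definition) is exactly what guarantees the pair is absorbed by one of \ref{C1}--\ref{C8} in the lower row.

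\emph{Main obstacle.} The bulk of the difficulty, and essentially the only place requiring real care rather than bookkeeping, is condition \ref{C8} (the $\mu$-symbol): it is the only condition whose definition is sensitive to the precise numerical values $i,j$, it interacts non-trivially with the $\al\mapsto\wh\al$ map via Lemma \ref{la:A2}, and it has a three-clause structure that must be kept consistent under both transitivity and (rank-lowering) multiplication. I would therefore isolate the $\mu$-case as a standalone lemma—stating that the ``$\mu$-relation'' $\bigset{((i,\al),(j,\be))}{\wh\al=\wh\be,\ \al,\be\in I_1,\ \text{the \ref{C8} clause holds}}$ restricted to the rows and columns where $\mu$ appears is a congruence on the relevant subsemigroup—prove that once and cleanly (generalising Example \ref{ex:weirdproj}), and then treat every occurrence of $\mu$ in the main case analysis by appeal to it. Everything else is a finite, if lengthy, verification driven by the auxiliary lemmas of Section \ref{subsec:auxPnFl} and the structural constraints recorded in Remark \ref{re:se}.
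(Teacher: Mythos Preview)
Your proposal is correct and follows essentially the same approach as the paper: direct verification of reflexivity, symmetry, transitivity and compatibility via a case analysis on the witnessing condition \ref{C1}--\ref{C9}, with \ref{C8} identified as the delicate case requiring Lemma~\ref{la:A2} and careful tracking of the $\min\th_q$ thresholds. The paper organises the argument identically, invoking the same auxiliary lemmas (\ref{la:A4}, \ref{la:hat}, \ref{la:AA2}, \ref{la:A2}) at the same points; your suggestion to isolate the $\mu$-relation as a standalone lemma is a reasonable reorganisation that the paper does not adopt, handling it inline instead, but this is purely presentational.
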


\begin{proof}
For the first statement, 
write $\si:=\cg(\Pair)$.  First we check that $\si$ is an equivalence.  Indeed, symmetry and reflexivity follow immediately by checking each of \ref{C1}--\ref{C8}; for \ref{C3}, note that this says $(i,j)\in\th_q$ and $(\al,\be)$ belong to the equivalence $\nu_N$ defined just before Theorem~\ref{thm:CongPn}.  For transitivity, suppose $(\ba,\bb),(\bb,\bc)\in\si$, where $\ba=(i,\al)$, $\bb=(j,\be)$ and $\bc=(k,\ga)$.  We then identify which of the conditions \ref{C1}--\ref{C8} is `responsible' for the pair $(\ba,\bb)$ belonging to $\si$.  But then that the same condition applies to $(\bb,\bc)$ because of the associated matrix entries.  
It is now easy to verify directly in each case \ref{C1}--\ref{C8} that $(\ba,\bb)\in \sigma$; when dealing with \ref{C8} 
an appeal to Remark \ref{rem:C8help} deals with the conditions concerning $\min\theta_q$ and $\min\theta_r$.

For compatibility, fix $(\ba,\bb)\in \si$ and $\bc\in \Ptw{n}$.  We must show that $(\ba\bc,\bb\bc),(\bc\ba,\bc\bb)\in\si$.
Write $\ba=(i,\alpha)\in D_{qi}$, $\bb=(j,\beta)\in D_{rj}$, $\bc=(k,\gamma)$, and
\begin{align*}
\ba\bc&=(i+k+\Float(\alpha,\gamma),\alpha\gamma) \in D_{q_1i_1}, & \bc\ba&=(k+i+\Float(\gamma,\alpha),\gamma\alpha)\in D_{q_2i_2},\\
\bb\bc&=(j+k+\Float(\beta,\gamma),\beta\gamma)\in D_{r_1j_1},& \bc\bb&=(k+j+\Float(\gamma,\beta),\gamma\beta)\in D_{r_2j_2}.
\end{align*}

For $t\in\{1,2\}$, note that $q_t\leq q$, $r_t\leq r$, $i_t\geq i$ and $j_t\geq j$.
We now split the proof into cases, depending on which of \ref{C1}--\ref{C8} is responsible for 
the pair $(\ba,\bb)$ belonging to $\si$.
In each case we will check that $(\ba\bc,\bb\bc)\in\si$.
With the exception of~\ref{C6}, the proof that $(\bc\ba,\bc\bb)\in\si$
is dual, and is omitted without further comment.
\smallskip

\ref{C1}
From $\alpha=\beta$ and $(i,j)\in\th_q$ it follows that $\alpha\gamma=\beta\gamma$ and 
$(i_1,j_1)\in\th_q\sub\th_{q_1}$ since $\th_q$ is a congruence on $\N$.
Thus, $(\ba\bc,\bb\bc)\in\si$ via \ref{C1'} (from Remark \ref{rem:C1'}).
\smallskip

\ref{C2}
The entry $M_{q_1i_1}$ is to the right and below of $M_{qi}$ (possibly not strictly), and hence $M_{q_1i_1}=R$ by Table \ref{tab:se}. Analogously
$M_{r_1j_1}=R$, and hence
$(\ba\bc,\bb\bc)\in\si$ via \ref{C2}.
\smallskip

\ref{C3}
Since $(\al,\be)\in\H\sub\L\implies{(\al\ga,\be\ga)\in\L\sub\J}$, we have $q_1=r_1$.  
By Lemma \ref{la:A4} we have $\Float(\alpha,\gamma)=\Float(\beta,\gamma)$, and so $(i_1,j_1)\in\th_q\subseteq \th_{q_1}$.
Remark~\ref{re:se}\ref{se1} then gives $M_{q_1i_1}=M_{r_1j_1}$.  

\setcounter{caseco}{0}

\case $q_1=q$.  Here Lemma \ref{la:AA2} gives $(\alpha\gamma,\beta\gamma)\in\nu_N$.  By Table \ref{tab:se}, $M_{qi_1}=M_{qj_1}$ must be either~$R$ or else some $N'\geq N$, so it follows that $(\ba\bc,\bb\bc)\in\si$ via \ref{C2} or \ref{C3}, depending on the actual value of $M_{qi_1}$.

\case $q_1<q$.
If $q>2$ then $M_{q_1i_1}=M_{r_1j_1}=R$ by Table \ref{tab:se}, and hence $(\ba\bc,\bb\bc)\in\si$ via~\ref{C2}.  So now suppose $q=2$.  By Table \ref{tab:se}, we have $M_{q_1i_1}=M_{r_1j_1}\in \{R,\lambda,\rho,\mu\}$.  Since $(\al,\be)$ belongs to the congruence $\mu_{\S_2}$ (on $\P_n$), so too does $(\al\ga,\be\ga)$, so it follows that $\widehat{\alpha\gamma}=\widehat{\beta\gamma}$.  Hence $(\ba\bc,\bb\bc)\in\si$ via one of \ref{C2}, \ref{C4}, \ref{C5} or \ref{C8}, as appropriate.
\smallskip

\ref{C4}
Here we have $M_{q_1i_1}=M_{r_1j_1}=\lambda$ by Table \ref{tab:se}.
Since $\widehat{\alpha}\rL\widehat{\beta}$ means that $(\alpha,\beta)\in\lambda_1$,
a congruence on $\P_n$, it follows that $(\alpha\gamma,\beta\gamma)\in\lambda_1$, whence $\wh{\al\ga}\rL\wh{\be\ga}$, so
$(\ba\bc,\bb\bc) \in\si$ via~\ref{C4}.
\smallskip

\ref{C5} This is dual to \ref{C4}.
\smallskip

\ref{C6}
Here we must have $q=r=1$, $i=j$, $\wh\al=\wh\be$ and $\alpha\rL\beta$.  By Lemma \ref{la:A4} we have ${\Float(\al,\ga)=\Float(\be,\ga)}$, and so $i_1=j_1$.  As with \ref{C3}, we also have $\al\ga\rL\be\ga$ and so $q_1=r_1$.  In particular, we have $M_{q_1i_1}=M_{r_1j_1}\in\{ R,\lambda,\rho,\mu,\mudown\}$ by Table \ref{tab:se}.  From $\widehat{\alpha}=\widehat{\beta}$, Lemma \ref{la:hat} gives $\widehat{\alpha\gamma}=\widehat{\beta\gamma}$.  It follows that $(\ba\bc,\bb\bc)\in\si$ via \ref{C2}, \ref{C4}, \ref{C5}, \ref{C6} or \ref{C8}.

In this case we do need to also verify that $(\bc\ba,\bc\bb)\in\si$.  We still have $\wh{\ga\al}=\wh{\ga\be}$, but we might not have $q_2=r_2$ or $i_2=j_2$.  Writing $f:=\Float(\ga,\wh\al)=\Float(\ga,\wh\be)$, Lemma \ref{la:A2} gives
\begin{equation}\label{eq:Fga}
\Float(\ga,\al) = f-1+q_2 \AND \Float(\ga,\be) = f-1+r_2.
\end{equation}
Swapping $\al$ and $\be$ if necessary, we may assume that $q_2\geq r_2$.

\setcounter{caseco}{0}

\case $q_2=r_2=1$.  Here it follows quickly from \eqref{eq:Fga} that $i_2=j_2$, so again we have $M_{q_2i_2}=M_{r_2j_2}\in\{ R,\lambda,\rho,\mu,\mudown\}$.  
Using $\rank(\ga\al)=q_2=1=\rank(\alpha)$ it is easy to see that
$\codom(\gamma\alpha)=\codom(\alpha)$ and $\coker(\gamma\alpha)=\coker(\alpha)$,
i.e.~$\ga\al\rL\al$.
Similarly, $\ga\be\rL\be$, and so $\ga\al\rL\ga\be$.  It again follows that $(\bc\ba,\bc\bb)\in\si$ via \ref{C2}, \ref{C4}, \ref{C5}, \ref{C6} or \ref{C8}.

\case $q_2=r_2=0$.  Again $i_2=j_2$, but now $\ga\al=\wh{\ga\al}=\wh{\ga\be}=\ga\be$, so~$(\bc\ba,\bc\bb)\in\si$ via \ref{C1'}.

\case $q_2=1$ and $r_2=0$.  This time, \eqref{eq:Fga} gives $i_2=j_2+1$, and so $i_2>j_2\geq j=i$.  It follows from Remark \ref{re:se}\ref{se2} and \ref{se4} and Table \ref{tab:se} that ${M_{q_2i_2}=M_{1,j_2+1}=M_{0j_2}=M_{r_2j_2}\in\{R,\lam,\rho,\mu\}}$, and so $(\bc\ba,\bc\bb)\in\si$ via \ref{C2}, \ref{C4}, \ref{C5} or \ref{C8}.  In the $\mu$ case, we use the second or third option in \ref{C8}; since the presence of the $\mudown$ symbol implies type \ref{RT2}, \ref{RT5} or \ref{RT6}, the conditions on $\min\th_0$ and $\min\th_1$ are fulfilled because $i_2=j_2+1$.
\smallskip

\ref{C7} This is dual to \ref{C6}.
\smallskip

\ref{C8}
This time Table \ref{tab:se} gives $M_{q_1i_1},M_{r_1j_1}\in\{ R,\lambda,\rho,\mu\}$.  Also $\widehat{\alpha\gamma}=\widehat{\beta\gamma}$, as above.  This time we write $f:=\Float(\wh\al,\ga)=\Float(\wh\be,\ga)$, and Lemma \ref{la:A2} gives
\begin{equation}\label{eq:Fag}
\Float(\al,\ga) = f-q+q_1 \AND \Float(\be,\ga) = f-r+r_1.
\end{equation}
For the rest of the proof we write $m_0:=\min\th_0$ and $m_1:=\min\th_1$.

\setcounter{caseco}{0}

\case $q=r$ and $(i,j)\in\th_q$, i.e.~the first option from \ref{C8} holds.  Here it is convenient to consider subcases,
depending on whether or not $q_1=r_1$.

\setcounter{subcaseco}{0}

\subcase $q_1=r_1$.  It follows from \eqref{eq:Fag} that $(i_1,j_1)\in\th_q\sub\th_{q_1}$, so~$M_{q_1i_1}=M_{r_1j_1}$ by Remark~\ref{re:se}\ref{se1}.  But then $(\ba\bc,\bb\bc)\in\si$ via \ref{C2}, \ref{C4}, \ref{C5} or \ref{C8}, as appropriate.  

\subcase $q_1\neq r_1$.  Without loss, we assume that $q_1=1$ and $r_1=0$.  Since $q_1\leq q$, it follows that in fact $q=r=1$.  It also follows from \eqref{eq:Fag} that
\[
(i_1+r_1,j_1+q_1) = (i_1,j_1+1) = (i+k+f,j+k+f) \in \th_q\sub\th_0.
\]
If we can show that $M_{q_1i_1}=M_{r_1j_1}$, then it will again follow that $(\ba\bc,\bb\bc)\in\si$ via \ref{C2}, \ref{C4}, \ref{C5} or \ref{C8}; alongside we will also verify the condition on $m_0$ and $m_1$ required in the \ref{C8} case.  

If $i=j$, then in fact $i_1=j_1+1$, so Remark \ref{re:se}\ref{se2} gives $M_{q_1i_1} = M_{1,j_1+1} = M_{0j_1} = M_{r_1j_1}$.  If $i=j\geq m_1$, then $i_1\geq i\geq m_1$ and $j_1\geq j\geq m_1\geq m_0$.  If $i=j<m_1$, then the presence of $M_{1i}=\mu$ to the left of $m_1$ implies we are in one of types \ref{RT2}, \ref{RT5}, \ref{RT6} or \ref{RT7}; in each of these cases, and combined with $i_1=j_1+1$, it is easy to check that either $i_1<m_1$ and $j_1<m_0$, or else $i_1\geq m_1$ and $j_1\geq m_1$.

If $i\neq j$, then $i,j\geq m_1$ (as $(i,j)\in\th_q=\th_1$), and so $i_1\geq m_1$ and $j_1\geq m_0$ as above; Remark~\ref{re:se}\ref{se3} then gives $M_{q_1i_1} = M_{1i_1} = \mu = M_{0j_1} = M_{r_1j_1}$, as required.

\case $q\neq r$ and $(i+r,j+q)\in\th_0$.  Here we are in the second or third option under \ref{C8}, but we do not need to distinguish these until later in the proof.  Without loss we assume that $q=1$ and $r=0$, so $(i,j+1)\in\th_0$.  
%Since $r=0$ we have $\be=\wh\be=\wh\al$.  
Note also that from $r_1\leq r$ and $q_1\leq q$, we have $r_1=0$ and $q_1\in\{0,1\}$.  Using~\eqref{eq:Fag}, we also have 
\begin{equation}\label{eq:i1j1}
i_1 = i + (k+f-1+q_1) \AND j_1 = (j+1) + (k+f-1).
\end{equation}

\setcounter{subcaseco}{0}

\subcase $q_1=0$.  From \eqref{eq:i1j1} and $i_1\geq i$ we obtain $k+f-1\geq0$.  Using \eqref{eq:i1j1} again, and $(i,j+1)\in\th_0$, it follows that $(i_1,j_1) \in \th_0$.  Since $q_1=r_1=0$, we also have $\al\ga=\wh{\al\ga}=\wh{\be\ga}=\be\ga$, so $(\ba\bc,\bb\bc)\in\si$ via \ref{C1'}.

\subcase $q_1=1$.  This time \eqref{eq:i1j1} gives
\begin{equation}\label{eq:i1j2_2}
(i_1+r_1,j_1+q_1)=(i_1,j_1+1)=(i+(k+f),j+1+(k+f))\in\th_0.
\end{equation}
Since $q_1\neq r_1$ and $\wh{\al\ga}=\wh{\be\ga}$, as shown above, it remains as usual to show that $M_{q_1i_1}=M_{r_1j_1}$, but we must also check the conditions on $m_0$ and $m_1$ required when applying~\ref{C8}.

If $i\geq m_1$ and $j\geq m_0$, then $i_1\geq i\geq m_1$ and $j_1\geq j\geq m_0$, and it then also follows from Remark \ref{re:se}\ref{se3} that $M_{q_1i_1}=M_{r_1j_1}$.  We assume now that $i<m_1$ and $j<m_0$.  The presence of $M_{0j}=\mu$ to the left of $m_0$ implies that we are in one of types \ref{RT2}, \ref{RT5} or \ref{RT7}.  
In \ref{RT2}, $m_0=m_1=\infty$, and $M_{1i_1}=M_{0j_1}=\mu$, completing the proof in this case.

Next consider \ref{RT5}.  First we claim that $i=j+1$.  Indeed, if $j+1<m_0$, then this follows from $(i,j+1)\in\th_0$.  If $j+1=m_0$ (the only other option, as $j<m_0$), then from $(i,j+1)\in\th_0$ it follows that $i\geq m_0=m_1-1$ (as we are in \ref{RT5}); together with $i<m_1$ it follows that $i=m_1-1=m_0=j+1$, as required.  Now that the claim is proved, it follows from \eqref{eq:i1j2_2} that $i_1=j_1+1$.  Checking the matrix in \ref{RT5}, it follows from this that $M_{1i_1}=M_{0j_1}$, and that the conditions on $m_0$ and $m_1$ also hold.

Finally, consider \ref{RT7}.  By the form of the matrix, we must have $i=m_1-1$ and $j=m_0-1$.  From \eqref{eq:i1j1} we have $(i_1,j_1)=(i+(k+f),j+(k+f))$, and the required conditions again quickly follow.

\medskip

Now that we have proved the first assertion of the proposition, we move on to the second.  For this, suppose $\Pair$ is exceptional, and write $\tau:=\cgx(\Pair)$.  We keep the notation of Definition~\ref{def:exc}, including the exceptional row $q=\hgt(M)\geq2$ and the congruence $\thx_q$.  Again $\tau$ is clearly symmetric and reflexive.  For transitivity, suppose $(\ba,\bb),(\bb,\bc)\in\tau$.  It suffices to assume that \ref{C9} is responsible for $(\ba,\bb)\in\tau$.  Since the entries of $M$ at the positions determined by the $\D$-classes containing $\ba$ and $\bb$ are $\A_q$ (keeping Remark \ref{rem:nuN} in mind for $q=2$), and since $(\bb,\bc)\in\tau$, it quickly follows that one of \ref{C3} or \ref{C9} is responsible for $(\bb,\bc)\in\tau$.  But then $(\ba,\bc)\in\tau$ via \ref{C9} or \ref{C3}, respectively.

For compatibility, let $(\ba,\bb)\in\tau$ and $\bc\in\Ptw n$.  It suffices to assume that \ref{C9} is responsible for $(\ba,\bb)\in\tau$, and by symmetry we just need to show that $(\ba\bc,\bb\bc)\in\tau$.  Writing $\ba=(i,\al)$, $\bb=(j,\be)$ and $\bc=(k,\ga)$, we have $(i,j)\in\thx_q\sm\th_q$ and $(\al,\be)\in\nu_{\S_q}\sm\nu_{\A_q}$.  Also write $\ba\bc=(i_1,\al\ga)$ and $\bb\bc=(j_1,\be\ga)$.  By Lemma \ref{la:A4} we have $\Float(\al,\ga)=\Float(\be,\ga)$, and it quickly follows that $(i_1,j_1)\in\thx_q\sm\th_q$.  If $\al\ga\in D_q$, then it follows from Lemma \ref{la:AA2} that $(\al\ga,\be\ga)\in\nu_{\S_q}\sm\nu_{\A_q}$, which shows that $(\ba\bc,\bb\bc)\in\tau$ via \ref{C9}.  So suppose instead that $\al\ga\in D_r$ with $r<q$; as usual, $(\al,\be)\in\H$ implies $\be\ga\in D_r$ as well.  
If $q>2$ then using Table \ref{tab:se} we see that $M_{ri_1}=M_{rj_1}=R$.
If $q=2$ then $\thx_2\subseteq\theta_1\subseteq \theta_r$, Definition~\ref{def:exc} and Table \ref{tab:se} together give
$M_{ri_1}=M_{rj_1}\in\{R,\rho,\lambda,\mu\}$.
The proof that $(\ba\bc,\bb\bc)\in\tau$ now concludes as in the second case of the \ref{C3} above.
\end{proof}

\section{Every congruence is a C-pair congruence}\label{sec:II}

We now turn to the second stage of the proof of our main theorem:
we fix an arbitrary congruence~$\sigma$ on $\Ptw{n}$ and work towards proving that it arises from a C-pair $(\Theta,M)$.
  We begin in Subsection \ref{subsec:bgp} with some basic general properties of $\si$, and then in Subsection \ref{subsec:Cchain} construct the C-chain $\Th$.  Subsections \ref{subsec:I00}--\ref{subsec:Iq0} establish further auxiliary results concerning~$\si$, focussing on its restrictions to individual $\D$-classes.  
  This is then used in Subsection~\ref{subsec:CPAC} to construct the C-matrix $M$.  Subsection \ref{subsec:further} contains yet further technical lemmas concerning restrictions to pairs of $\D$-classes.  Finally, in Subsection \ref{subsec:CongCpair} we complete the proof of the theorem by showing that $\si$ is either the congruence or exceptional congruence associated to the C-pair $(\Th,M)$.

\subsection{Basic general properties of congruences}
\label{subsec:bgp}

In this subsection we prove three basic lemmas that establish certain `translational properties' of the (fixed) congruence $\si$ on $\Ptw n$.  These lemmas, and many results of the subsections to come, will be concerned with the restrictions of $\si$ to the $\D$-classes of $\Ptw n$: $\si\restr_{D_{qi}} = \si\cap (D_{qi}\times D_{qi})$.
Such a restriction can be naturally interpreted as an equivalence on the associated $\D$-class $D_q$ of $\P_n$, by `forgetting' the entries from $\N$.  Formally, we define
\[
\si_{qi} := \bigset{(\al,\be)\in D_q\times D_q}{((i,\al),(i,\be))\in\si} \qquad\text{for $q\in \bnz$ and $i\in\N$.}
\]
%In the coming proofs, recall the $(\ba,\bb)\cdot\bc=(\ba\bc,\bb\bc)$ notation (and similar).

\begin{lemma}
\label{la:Ap1}
If $((i,\alpha),(j,\beta))\in\si$ then $((i+k,\alpha),(j+k,\beta))\in\si$ for all $k\in\N$.
\end{lemma}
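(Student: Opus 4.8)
The plan is to obtain the claim as a single application of the compatibility of $\si$, multiplying on the left by a carefully chosen element of $\Ptw n$. The key preliminary observation is that for every $\al\in\P_n$ we have $\id\al=\al$ and $\Float(\id,\al)=0$: in the product graph $\Ga(\id,\al)$ each vertex $x''\in\bn''$ is joined to $x\in\bn$ by the edge of $\id_\downarrow$, so no connected component lies entirely in the middle row, whence there are no floating components. Consequently, in $\Ptw n$ we have the identity $(k,\id)\multw(i,\al)=(k+i+\Float(\id,\al),\id\al)=(k+i,\al)$ for all $k,i\in\N$ and $\al\in\P_n$.

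Granting this, I would fix $k\in\N$ and simply apply left compatibility of $\si$ to the pair $((i,\al),(j,\be))\in\si$ with the element $(k,\id)\in\Ptw n$, giving $\big((k,\id)\multw(i,\al),\,(k,\id)\multw(j,\be)\big)\in\si$. By the identity just noted, this pair equals $\big((i+k,\al),(j+k,\be)\big)$, which is exactly what is to be shown. (One could equally multiply on the right by $(k,\id)$, using $\al\id=\al$ and $\Float(\al,\id)=0$.) There is no real obstacle here; the only point worth recording explicitly is that $(k,\id)$ acts on first coordinates purely by adding $k$, precisely because the identity partition never creates floating components. This lemma will then serve as the basic "vertical translation" tool for the finer analysis of $\si$ on individual $\D$-classes in the subsequent subsections.
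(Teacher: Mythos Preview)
Your proof is correct and essentially identical to the paper's: the paper multiplies on the right by $(k,\id)$, writing $((i+k,\alpha),(j+k,\beta))=((i,\alpha),(j,\beta))\cdot(k,\id)$, which is exactly the dual of your left-multiplication argument (and you even note this alternative yourself).
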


\begin{proof}
We have $((i+k,\alpha),(j+k,\beta))=((i,\alpha),(j,\beta))\cdot(k,\id)$.
\end{proof}

\begin{lemma}
\label{la:Ap3}
For any $q\in \bnz$ and $i\in\N$, we have ${\si}_{qi}\subseteq{\si}_{q,i+1}$.
\end{lemma}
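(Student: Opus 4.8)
The plan is to reduce this immediately to the translational property established in Lemma \ref{la:Ap1}. Take any $(\al,\be)\in\si_{qi}$; by definition this means $\al,\be\in D_q$ and $((i,\al),(i,\be))\in\si$. Applying Lemma \ref{la:Ap1} with $j=i$ and $k=1$ — equivalently, multiplying the pair on the right by $(1,\id)\in\Ptw n$ and using compatibility of $\si$ — yields $((i+1,\al),(i+1,\be))\in\si$. Here one uses that $\al\,\id=\al$, $\be\,\id=\be$, and $\Float(\al,\id)=\Float(\be,\id)=0$, so that $(i,\al)\cdot(1,\id)=(i+1,\al)$ and $(i,\be)\cdot(1,\id)=(i+1,\be)$; the vanishing of these $\Float$ values is immediate from the definition, since $\id$ consists entirely of transversals and hence no floating component can be formed in $\Ga(\al,\id)$ or $\Ga(\be,\id)$.

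Finally, since multiplication by $(1,\id)$ leaves the partition component — and therefore its rank — unchanged, $\al$ and $\be$ still lie in $D_q$. Combined with $((i+1,\al),(i+1,\be))\in\si$, this gives $(\al,\be)\in\si_{q,i+1}$, as required.

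There is no genuine obstacle here: the statement is essentially a reformulation of Lemma \ref{la:Ap1}, once one observes that the element $(1,\id)$ acts on the right by incrementing the $\N$-coordinate while fixing the partition (and hence its $\D$-class). The only thing worth spelling out is the claim $\Float(\al,\id)=0$, which is what makes the column shift work cleanly.
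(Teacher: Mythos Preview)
Your proof is correct and takes essentially the same approach as the paper: the paper simply says ``This is a direct consequence of Lemma~\ref{la:Ap1},'' and you have spelled out exactly that deduction (including the underlying reason $\Float(\al,\id)=0$).
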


\begin{proof}
This is a direct consequence of Lemma \ref{la:Ap1}.
\end{proof}

\begin{lemma}
\label{la:Ap2}
Suppose $((i,\alpha),(j,\alpha))\in\si$ for some $i,j\in\N$ and $\alpha\in D_q$.
Then:
\begin{thmenumerate}
\item
\label{it:Ap2.1}
$\bigset{ ((i,\gamma),(j,\gamma))}{ \gamma\in D_q}\subseteq\si$;
\item
\label{it:Ap2.2}
${\si}_{qi}={\si}_{qj}$.
\end{thmenumerate}
\end{lemma}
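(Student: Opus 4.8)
The plan is to derive both parts from a single use of compatibility, the one delicate point being to leave the first coordinates undisturbed by refusing to create any floating components.

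For part~\ref{it:Ap2.1}: fix $\gamma\in D_q$. Since $\rank\alpha=\rank\gamma=q$, Lemma~\ref{la:Green_Pn} gives $\alpha\rJ\gamma$ in $\P_n$, so $\gamma\in\P_n\alpha\P_n$; write $\gamma=\eta_0\alpha\zeta_0$ for some $\eta_0,\zeta_0\in\P_n$. Applying Lemma~\ref{la:0float}\ref{it:0f2} to the triple $(\eta_0,\alpha,\zeta_0)$ produces $\eta,\zeta\in\P_n$ with $\eta\alpha\zeta=\eta_0\alpha\zeta_0=\gamma$ and $\Float(\eta,\alpha,\zeta)=0$, the latter forcing $\Float(\eta,\alpha)=\Float(\eta\alpha,\zeta)=0$. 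A direct computation of the product in $\Ptw n$ then gives
\[
(0,\eta)\cdot\big((i,\alpha),(j,\alpha)\big)\cdot(0,\zeta)=\big((i,\gamma),(j,\gamma)\big),
\]
because the floating contributions of the two outer factors are the same for both pairs — they depend only on $\eta$, $\zeta$ and the common middle partition $\alpha$ — and together they equal $\Float(\eta,\alpha,\zeta)=0$. As $\si$ is a congruence containing $\big((i,\alpha),(j,\alpha)\big)$, it contains the displayed pair, which is what part~\ref{it:Ap2.1} asserts.

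For part~\ref{it:Ap2.2} I would simply chain pairs. By part~\ref{it:Ap2.1} we have $\big((i,\delta),(j,\delta)\big)\in\si$ for every $\delta\in D_q$. If $(\beta,\gamma)\in\si_{qi}$, i.e.~$\big((i,\beta),(i,\gamma)\big)\in\si$, then applying symmetry and transitivity of $\si$ to the three pairs $\big((j,\beta),(i,\beta)\big)$, $\big((i,\beta),(i,\gamma)\big)$, $\big((i,\gamma),(j,\gamma)\big)$, all in $\si$, yields $\big((j,\beta),(j,\gamma)\big)\in\si$, i.e.~$(\beta,\gamma)\in\si_{qj}$; thus $\si_{qi}\subseteq\si_{qj}$, and interchanging the roles of $i$ and $j$ gives the reverse inclusion, hence equality.

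The only genuine obstacle is the floating-component bookkeeping in part~\ref{it:Ap2.1}: conjugating $\big((i,\alpha),(j,\alpha)\big)$ by naively chosen $\eta,\zeta$ with $\eta\alpha\zeta=\gamma$ would replace the partition correctly but translate both first coordinates by the common amount $\Float(\eta,\alpha,\zeta)$, giving only $\big((i+c,\gamma),(j+c,\gamma)\big)\in\si$ with possibly $c>0$; it is precisely Lemma~\ref{la:0float}\ref{it:0f2} that allows us to force $c=0$. Once that is arranged, the rest is routine.
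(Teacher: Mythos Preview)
Your proof is correct and follows essentially the same approach as the paper: for part~\ref{it:Ap2.1} both use Lemma~\ref{la:0float}\ref{it:0f2} to rewrite $\gamma=\eta\alpha\zeta$ with $\Float(\eta,\alpha,\zeta)=0$ and then conjugate by $(0,\eta)$ and $(0,\zeta)$, and for part~\ref{it:Ap2.2} both deduce the equality from part~\ref{it:Ap2.1} via transitivity. Your additional commentary on why the floating-component bookkeeping is the crux is accurate but not needed.
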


\begin{proof}
\ref{it:Ap2.1}  Let $\ga\in D_q$.  Since $\ga\rJ\al$, we have $\ga=\eta_1\al\eta_2$ for some $\eta_1,\eta_2\in\P_n$; by Lemma \ref{la:0float} we may assume that $\Float(\eta_1,\al,\eta_2)=0$.  Then $((i,\gamma),(j,\gamma)) = (0,\eta_1) \cdot ((i,\alpha),(j,\alpha)) \cdot (0,\eta_2) \in \si$.

\ref{it:Ap2.2}  This follows from \ref{it:Ap2.1} and transitivity.  
Indeed, if $\ga,\de\in D_q$, then since $\si$ contains both $((i,\ga),(j,\ga))$ and $((i,\de),(j,\de))$, we have $((i,\ga),(i,\de))\in\si \iff ((j,\ga),(j,\de))\in\si$.
\end{proof}

\subsection{The C-chain associated to a congruence}\label{subsec:Cchain}

Recall that we wish to associate a C-pair $(\Th,M)$ to the congruence $\si$ on $\Ptw n$.  We can already define the C-chain $\Th$.

\begin{defn}[\bf The C-chain associated to a congruence]\label{defn:Th}
Given a congruence $\si$ on $\Ptw n$, we define the tuple $\Th= (\th_0,\ldots,\th_n)$, where for each~$q\in\bnz$:
\begin{align*}
\th_q &:= \bigset{ (i,j)\in\N\times\N}{ ((i,\alpha),(j,\alpha))\in\si \text{ for some } \alpha\in D_q}\\
 &\phantom{:}=\bigset{ (i,j)\in\N\times\N}{ ((i,\alpha),(j,\alpha))\in\si \text{ for all } \alpha\in D_q}.
\end{align*}
\end{defn}

The equality of the two relations in the above definition is an immediate consequence of Lemma \ref{la:Ap2}\ref{it:Ap2.1}.

\begin{lemma}
\label{la:siqcong}
The tuple $\Th=(\th_0,\ldots,\th_n)$ in Definition \ref{defn:Th} is a C-chain.
%Each $\th_q$ $(q=0,\dots,n)$ is a congruence on $\N$, and we have $\th_0\supseteq\dots\supseteq \th_n$.
\end{lemma}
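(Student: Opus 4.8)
The plan is to verify the three defining requirements of a C-chain (Definition \ref{de:Cpair}): that each $\th_q$ is a congruence on $\N$, that the sequence is descending, $\th_0\supseteq\th_1\supseteq\cdots\supseteq\th_n$, and — implicitly — that these are the only constraints on the $\th_q$ in isolation (the row-type and verticality conditions involve $M$ and will be dealt with later). So the real content here is only the first two points.

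First I would show each $\th_q$ is an equivalence relation on $\N$. Reflexivity and symmetry are immediate from the definition. For transitivity, suppose $(i,j),(j,k)\in\th_q$; by Definition \ref{defn:Th} (using the ``for all'' form) we have $((i,\al),(j,\al))\in\si$ and $((j,\al),(k,\al))\in\si$ for any fixed $\al\in D_q$, so $((i,\al),(k,\al))\in\si$ by transitivity of $\si$, giving $(i,k)\in\th_q$. Next, compatibility with addition: if $(i,j)\in\th_q$ then $((i,\al),(j,\al))\in\si$, and Lemma \ref{la:Ap1} gives $((i+k,\al),(j+k,\al))\in\si$ for all $k\in\N$, i.e.\ $(i+k,j+k)\in\th_q$. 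Since $\N$ is generated as a monoid by $1$, and a reflexive relation compatible with adding $1$ is compatible with adding any element, this suffices to make $\th_q$ a congruence on $\N$.

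For the descending chain, I would fix $q\in\{1,\dots,n\}$ and show $\th_q\subseteq\th_{q-1}$. Suppose $(i,j)\in\th_q$, so $((i,\al),(j,\al))\in\si$ for every $\al\in D_q$. Pick any $\be\in D_{q-1}$. Since $D_{q-1}$ lies in the principal ideal generated by any element of $D_q$ (indeed $q-1\le q$ and the $\D$-class order of Lemma \ref{la:Green_Ptwn}), we have $\be=\eta_1\al\eta_2$ for suitable $\al\in D_q$ and $\eta_1,\eta_2\in\P_n$; by Lemma \ref{la:0float} we may choose these with $\Float(\eta_1,\al,\eta_2)=0$, so that $(0,\eta_1)(i,\al)(0,\eta_2)=(i,\be)$ and likewise $(0,\eta_1)(j,\al)(0,\eta_2)=(j,\be)$. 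Applying $\si$-compatibility to $((i,\al),(j,\al))\in\si$ yields $((i,\be),(j,\be))\in\si$, hence $(i,j)\in\th_{q-1}$. (This is essentially the argument already packaged in Lemma \ref{la:Ap2}\ref{it:Ap2.1}, just applied across adjacent rows rather than within one, so I would invoke that lemma to shorten the write-up.)

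I do not expect any genuine obstacle; the only mild subtlety is making sure one uses the ``for all $\al\in D_q$'' formulation of $\th_q$ in Definition \ref{defn:Th} — valid by Lemma \ref{la:Ap2}\ref{it:Ap2.1} — so that in the transitivity and containment arguments the same representative $\al$ can be used for both pairs, and so that the $\J$-class descent argument has a representative to push down from. Everything else is a routine unwinding of definitions together with Lemmas \ref{la:Ap1}, \ref{la:Ap2} and \ref{la:0float}.
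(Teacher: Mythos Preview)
Your proposal is correct and follows essentially the same approach as the paper's proof: equivalence is immediate, compatibility comes from Lemma~\ref{la:Ap1}, and the containment $\th_q\subseteq\th_{q-1}$ is obtained by multiplying a pair $((i,\al),(j,\al))\in\si$ (with $\al\in D_q$) by elements of $\P_n$ to land in $D_{q-1}$ without creating floating components via Lemma~\ref{la:0float}. The only difference is cosmetic --- the paper multiplies on one side only (taking $\al\eta\in D_{q-1}$) rather than your two-sided version, but both work equally well.
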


\begin{proof}
Clearly each $\th_q$ is an equivalence on $\N$; compatibility follows from Lemma~\ref{la:Ap1}.
Now, suppose $q>0$, and let $(i,j)\in\th_q$. Let $\alpha\in D_q$ and $\eta\in\P_n$ be such that $\al\eta\in D_{q-1}$; by Lemma~\ref{la:0float} we may assume that $\Float(\al,\eta)=0$.  Then $((i,\al\eta),(j,\al\eta))=((i,\al),(j,\al))\cdot(0,\eta)\in\si$, and so $(i,j)\in \th_{q-1}$, proving that $\th_q\subseteq \th_{q-1}$.
\end{proof}

\subsection{The restrictions in row 0}
\label{subsec:I00}

This and the next two
subsections explore consequences of $\si$ containing certain types of pairs.  The guiding principle is that we are aiming to understand the possible restrictions ${\si}_{qi}$.

We begin with $q=0$, proving some results concerning the behaviour of $\si$ on the ideal~$I_{00}$.  
We will make frequent use of the partition $\omega:=\begin{partn}{1} {\ \! \bn\ \!}\\ {\ \! \bn\ \!} \end{partn}$, which has the single block $\bn\cup\bn'$.  Note that for any $\al\in D_1$ and $\be\in D_0$, we have
\[
\al\om\al = \al \COMMA
\om\al\om = \om \COMMA
\be\om\be = \be \COMMA
\om\be\om = \wh\om = \begin{partn}{1} {\ \! \bn\ \!}\\ \hhline{-} {\ \! \bn\ \!}\end{partn}.
\]
Further, for any $\ga\in\P_n$ we have $\Float(\om,\ga)=\Float(\ga,\om)=\Float(\om,\ga,\om)=0$.  We will typically use these facts without explicit comment.

\begin{lemma}
\label{la:C1}
If $\si\cap (D_{0i}\times D_{0j})\neq\emptyset$ then $(i,j)\in\th_0$.
\end{lemma}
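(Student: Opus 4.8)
The plan is to exploit the degenerate partition $\om$ introduced just before the lemma, using the two facts recorded there: $\Float(\om,\ga)=\Float(\ga,\om)=0$ for every $\ga\in\P_n$, and $\om\de\om=\wh\om$ for every $\de\in D_0$. Assume $\si\cap(D_{0i}\times D_{0j})\neq\emptyset$, and pick a witnessing pair $\big((i,\al),(j,\be)\big)\in\si$ with $\al,\be\in D_0$. Since $\si$ is a congruence, we may sandwich this pair between copies of $(0,\om)$:
\[
(0,\om)\cdot\big((i,\al),(j,\be)\big)\cdot(0,\om)\in\si .
\]

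It remains to evaluate the two coordinates of the sandwiched pair. Because $\Float(\om,\al)=\Float(\om\al,\om)=0$ (and likewise with $\be$ in place of $\al$), no floating components are created in either of the two multiplications, so the first coordinates $i$ and $j$ are unchanged; and because $\al,\be\in D_0$, the second coordinates both become $\om\al\om=\wh\om=\om\be\om$. Hence $\big((i,\wh\om),(j,\wh\om)\big)\in\si$, and since $\wh\om\in D_0$, Definition~\ref{defn:Th} immediately yields $(i,j)\in\th_0$, as desired. (The cases $i=j$ and $i\neq j$ are handled uniformly, the former being trivial anyway by reflexivity of $\th_0$.)

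I do not anticipate any genuine obstacle here: the only point requiring care is verifying that both $\Float$ terms really vanish, so that the column indices $i$ and $j$ are preserved through the sandwich, and this is precisely the $\Float$-neutrality of $\om$ noted in the displayed identities preceding the lemma. Conceptually, the argument merely records that conjugation by $(0,\om)$ collapses the entire row $D_{0i}$ of the rank-$0$ class onto the single element $(i,\wh\om)$, which is possible because $D_0$ is a rectangular band and $\wh\om$ occupies an extreme (``corner'') position in it.
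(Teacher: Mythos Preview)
Your proof is correct and essentially identical to the paper's: both sandwich the witnessing pair by $(0,\om)$ on each side and use the identities $\om\de\om=\wh\om$ for $\de\in D_0$ together with the $\Float$-neutrality of $\om$ to obtain $\big((i,\wh\om),(j,\wh\om)\big)\in\si$, whence $(i,j)\in\th_0$.
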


\begin{proof}
For any $(\ba,\bb)\in \si\cap (D_{0i}\times D_{0j})$ we have $((i,\widehat{\omega}),(j,\widehat{\omega})) = (0,\om) \cdot (\ba,\bb) \cdot (0,\om) \in\si$.
\end{proof}

\begin{lemma}
\label{la:C2}
If ${\si}_{0i}\neq \Delta_{D_0}$ then $(i,i+1)\in\th_0$.
\end{lemma}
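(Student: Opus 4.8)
The plan is to manufacture, from the hypothesis, a pair of the form $\big((i,\wh\om),(i+1,\wh\om)\big)\in\si$. Since $\wh\om\in D_0$, Definition~\ref{defn:Th} then at once yields $(i,i+1)\in\th_0$. So suppose $\si_{0i}\neq\Delta_{D_0}$ and fix $\al\neq\be\in D_0$ with $\big((i,\al),(i,\be)\big)\in\si$. An element of $D_0$ is determined by its kernel and cokernel, so $\ker\al\neq\ker\be$ or $\coker\al\neq\coker\be$; appealing to left--right duality if necessary (the relevant anti-automorphism of $\Ptw n$ preserves each $\th_q$), we may assume the former, and we fix $(x,y)\in\ker\al\sm\ker\be$ with $x\neq y$; in particular $n\geq2$. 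In what follows we repeatedly replace $\big((i,\al),(i,\be)\big)$ by $(0,\eta_1)\cdot\big((i,\al),(i,\be)\big)\cdot(0,\eta_2)$ for suitable $\eta_1,\eta_2\in\P_n$; whenever $\Float(\eta_1,\al,\eta_2)=\Float(\eta_1,\be,\eta_2)=0$ this stays inside $\si$ and keeps both first coordinates equal to $i$.

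First I would collapse the cokernels: multiplying on the right by $(0,\om)$ and using $\Float(\al,\om)=\Float(\be,\om)=0$, replace $\al,\be$ by $\al\om,\be\om$. As $\al,\be$ have no transversals, $\ker(\al\om)=\ker\al$ and $\ker(\be\om)=\ker\be$, while $\om$ forces both cokernels to be $\bn\times\bn$; so after renaming we have $\coker\al=\coker\be=\bn\times\bn$ and still $(x,y)\in\ker\al\sm\ker\be$. Next I would normalise the kernels. Let $V$ be the $\ker\be$-class of $y$ and $W:=\bn\sm V$, so $x\in W$ and both are non-empty, and let $\xi\in\P_n$ be the partition with exactly two transversals, whose upper parts are $\{x\}$ and $\{y\}$ and whose lower parts are $W'$ and $V'$ respectively, all other points of $\bn$ being upper non-transversal singletons. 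Since $W\cup V=\bn$, every middle-row vertex of $\Ga(\xi,\al)$ and $\Ga(\xi,\be)$ is joined to the top row, so $\Float(\xi,\al)=\Float(\xi,\be)=0$, and $\xi\al,\xi\be$ again have cokernel $\bn\times\bn$. Examining the product graphs, $x$ and $y$ lie in a common block of $\xi\al$ (some $\ker\al$-class meets both $W\ni x$ and $V\ni y$) but in distinct blocks of $\xi\be$ (every $\ker\be$-class lies in $W$ or in $V$), while all other points of $\bn$ stay singletons in both. Multiplying on the left by $(0,\xi)$ and renaming, we may thus assume
\[
\ker\al=\big\{\{x,y\}\big\}\cup\bigset{\{w\}}{w\in\bn\sm\{x,y\}} \COMMA \ker\be=\bigset{\{w\}}{w\in\bn},
\]
still with full cokernels. (When $n=2$ this $\xi$ is the identity and $\al,\be$ already have this shape.)

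Finally, let $\gamma\in\P_n$ be the partition with a single transversal whose upper part is all of $\bn$ and whose lower part is $(\bn\sm\{y\})'$, together with the lower non-transversal $\{y\}'$. Since the transversal of $\gamma$ contains every point of $\bn$ on top and $\al,\be$ have full cokernel, $\gamma\al=\gamma\be=\wh\om$. The only possible floating component of $\gamma\al$, or of $\gamma\be$, is $\{y''\}$, and it is floating exactly when $\{y\}$ is a block of the corresponding kernel; by the previous step this fails for $\al$ (there $y$ lies in the block $\{x,y\}$) and holds for $\be$. Hence $\Float(\gamma,\al)=0$ and $\Float(\gamma,\be)=1$, giving
\[
(0,\gamma)\cdot\big((i,\al),(i,\be)\big)=\big((i,\wh\om),(i+1,\wh\om)\big)\in\si,
\]
whence $(i,i+1)\in\th_0$, as required.

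I expect the kernel-normalisation step to be the main obstacle: the naive one-sided multiplications tend either to erase the discrepancy between $\ker\al$ and $\ker\be$ or to spawn unwanted floating components, and the partition $\xi$ above — usable only after the cokernels have first been collapsed — is chosen precisely so as to isolate the single kernel-difference at $\{x,y\}$ while keeping every product float-free; once that is done, the last step is a short direct computation.
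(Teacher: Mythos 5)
Your proof is correct and rests on the same key idea as the paper's: left-multiply the $\si$-related pair by a partition $\gamma$ engineered so that $\Float(\gamma,\al)=0$ while $\Float(\gamma,\be)=1$, which lands a pair $((i,\de),(i+1,\de))$ with $\de\in D_0$ in $\si$. The paper reaches this in a single step by taking $\gamma$ to have transversal $\bn\cup(\bn\sm A)'$ and lower non-transversal $A'$ for an upper block $A$ of $\be$ containing no upper block of $\al$, whereas you first normalise the pair via $\om$ and $\xi$ so that the kernels differ only at $\{x,y\}$; your extra normalisation is sound but not needed.
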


\begin{proof}
Suppose $(\al,\be)\in\si_{0i}$ with $\al\neq \be$.
%$((i,\alpha),(i,\beta))\in\si\setminus \Delta_{D_{0i}}$.
Without loss we may assume that $\be$ has an upper block $A$ that does not contain (and is not equal to) any upper blocks of $\al$.
Let $\eta:=\begin{partn}{2} \bn & \\ \hhline{~|-} \bn\setminus A & A\end{partn}$.
Then $\Float(\eta,\alpha)=0$ and $\Float(\eta,\beta)=1$. Hence $((i,\eta\alpha),(i+1,\eta\beta)) = (0,\eta) \cdot ((i,\alpha),(i,\beta)) \in\si$, and the result follows by Lemma \ref{la:C1}.
\end{proof}

We now bring the projection $\Proj\si$ of $\si$ to $\P_n$ into play; see Definition \ref{defn:Proj}.  We also recall the congruences on $\P_n$, as listed in Theorem \ref{thm:CongPn} and depicted in Figure \ref{fig:CongPn}.  Note that $\Proj\si\cap R_0$ is one of $\De_{\P_n}$, $\lam_0$, $\rho_0$ or $R_0$.

\begin{lemma}
\label{la:C3}
If $\Proj\si\cap R_0\neq \De_{\P_n}$, then $\th_0=(m,m+1)^\sharp$ for some $m\in\N$, and we have
\[
\bigset{ ((i,\al),(j,\be))}{ i,j\geq m,\ (\al,\be)\in\Proj\si\restr_{D_0}}\subseteq \si.
\]
\end{lemma}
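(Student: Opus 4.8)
The statement has two parts: that $\th_0$ is of the form $(m,m+1)^\sharp$, and that all the pairs $((i,\al),(j,\be))$ with $i,j\ge m$ and $(\al,\be)\in\Proj\si\restr_{D_0}$ lie in $\si$. The plan is to extract a single concrete pair witnessing $\Proj\si\cap R_0\ne\De_{\P_n}$ and then push it around using the translational lemmas from Subsection~\ref{subsec:bgp} together with the idempotent $\om$ and its relatives $\wh\om$, $\om\al\om=\om$, etc.

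First I would observe that by hypothesis there is a pair $(\al,\be)\in\Proj\si\restr_{D_0}$ with $\al\ne\be$, so $((i_0,\al),(j_0,\be))\in\si$ for some $i_0,j_0\in\N$. Multiplying on both sides by $(0,\om)$ and using $\wh\om=\om\al\om=\om\be\om$ (all elements of $D_0$ with the same kernel/cokernel, hence equal, since $\al,\be\in D_0$ already have image $\wh\om$ after sandwiching) actually collapses the partition part, so instead I would keep $\al\ne\be$ and argue as follows. Applying Lemma~\ref{la:C2}-type reasoning: since $(\al,\be)\in\si_{0,i}$ for suitable $i$ after using Lemma~\ref{la:Ap1} to equalise the $\N$-coordinates — more precisely, if $i_0\le j_0$ then $((i_0+(j_0-i_0),\al),(j_0,\be))=((j_0,\al),(j_0,\be))\in\si$ only if $\al$ and $\be$ sit in the same column, which they do after this shift — so $\si_{0,j_0}\ne\De_{D_0}$, and Lemma~\ref{la:C2} gives $(j_0,j_0+1)\in\th_0$. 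Hence $\th_0\ne\De_\N$, so $\th_0=(m,m+d)^\sharp$ for some $m\ge0$, $d\ge1$; and $(j_0,j_0+1)\in\th_0$ forces $d=1$. This establishes $\th_0=(m,m+1)^\sharp$.

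For the inclusion, fix $i,j\ge m$ and $(\al,\be)\in\Proj\si\restr_{D_0}$. Since $\th_0=(m,m+1)^\sharp$ we have $(i,k)\in\th_0$ for every $k\ge m$; in particular, for each $\ga\in D_0$, $((i,\ga),(j,\ga))\in\si$ by Definition~\ref{defn:Th}. So it suffices to show $((k,\al),(k,\be))\in\si$ for a single $k\ge m$, i.e.\ $(\al,\be)\in\si_{0k}$, and then compose with the "column-shift" pairs just mentioned to move $\al$ from column $k$ to column $i$ and $\be$ from column $k$ to column $j$. To get $(\al,\be)\in\si_{0k}$: start from a witness $((i',\al),(j',\be))\in\si$ and sandwich by $\om$ on both sides — but this would collapse $\al,\be$ to $\wh\om$, which is not what we want. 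Instead I would use that $\Proj\si\cap R_0$ is one of $\De_{\P_n},\lam_0,\rho_0,R_0$ (as noted just before the lemma), take $(\al,\be)$ in this restricted projection, and run the standard $\P_n$-congruence argument: pick a witness pair, left- and right-multiply by suitable idempotents of $D_0$ to adjust kernels/cokernels into those of $\al$ and $\be$ (this is exactly the mechanism in the proof of Theorem~\ref{thm:CongPn}), keeping track that all the adjusting partitions can be chosen with no floating components by Lemma~\ref{la:0float}. Once one pair $((k,\al),(k,\be))$ with $k$ large enough (at least $m$, which we can arrange by Lemma~\ref{la:Ap1}) is in $\si$, Lemma~\ref{la:Ap2}\ref{it:Ap2.2} and the $\th_0$-shifts finish the job.

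**Main obstacle.** The delicate point is the last step: producing $((k,\al),(k,\be))\in\si$ for prescribed $\al,\be$ in the (left/right/two-sided) orbit of the witness, \emph{while controlling the $\N$-coordinate}. In $\P_n$ this is routine idempotent surgery, but here every multiplication risks creating floating components that would shift the column and, combined with a non-trivial $\th_0$, could move us out of the range $k\ge m$ or desynchronise the two coordinates. The fix is systematic use of Lemma~\ref{la:0float} to realise every required product with zero floating components, plus Lemma~\ref{la:Ap1} to pad both coordinates up past $m$ at the end; but verifying that the idempotents one needs for the $\lam_0$, $\rho_0$ and $R_0$ cases can all be taken float-free is where the real work sits.
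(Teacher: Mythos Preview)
Your first part has a genuine gap: Lemma~\ref{la:Ap1} shifts \emph{both} coordinates by the same amount, so from $((i_0,\al),(j_0,\be))\in\si$ it only yields $((i_0+k,\al),(j_0+k,\be))\in\si$, never $((j_0,\al),(j_0,\be))$. The right tool is Lemma~\ref{la:C1}: since $\al,\be\in D_0$, that lemma gives $(i_0,j_0)\in\th_0$; then $(i_0,\be)\rsi(j_0,\be)\rsi(i_0,\al)$ by definition of $\th_0$ and transitivity, so $(\al,\be)\in\si_{0,i_0}$, and \emph{now} Lemma~\ref{la:C2} applies to give $(i_0,i_0+1)\in\th_0$, hence $\per\th_0=1$.

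Your second part is over-engineered, and the ``main obstacle'' you identify is a red herring. No idempotent surgery is needed to manufacture $((k,\al),(k,\be))\in\si$: for \emph{each} pair $(\al,\be)\in\Proj\si\restr_{D_0}$ with $\al\ne\be$, the definition of the projection already hands you a witness $((g,\al),(h,\be))\in\si$ with exactly the right partition components. Repeating the part-one argument on this witness shows $(g,g+1)\in\th_0$, hence $g\ge m$; similarly $h\ge m$. Since $\th_0=(m,m+1)^\sharp$ and $i,j,g,h\ge m$, you have $(i,g),(h,j)\in\th_0$, so
\[
(i,\al)\rsi(g,\al)\rsi(h,\be)\rsi(j,\be),
\]
and you are done. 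This is precisely the paper's proof; it involves no float-tracking and no adjustment of kernels or cokernels.
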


\begin{proof}
Let $(\ga,\de)\in\Proj\si\restr_{D_0}$ with $\ga\neq \de$.  By definition of~$\Proj\si$, we have ${((k,\ga),(l,\de))\in\si}$ for some $k,l\in\N$, and Lemma \ref{la:C1} then gives $(k,l)\in\th_0$.  Thus, $(k,\de)\rsi(l,\de)\rsi(k,\ga)$.  Since $\ga\neq \de$, it follows from Lemma \ref{la:C2} that $(k,k+1)\in\th_0$.  But this means that $\per\th_0=1$, and so $\th_0=(m,m+1)^\sharp$ for some $m\in\N$.

Now let $i,j\geq m$ and $(\al,\be)\in\Proj\si\restr_{D_0}$ be arbitrary, so that $((g,\al),(h,\be))\in\si$ for some $g,h\in\N$.  If $\al=\be$, then from $(i,j)\in\th_0$ we have $(i,\al)\rsi(j,\al)=(j,\be)$.  Now suppose $\al\neq \be$.  As above, we have $(g,g+1)\in\th_0$ so that $g\geq m$, and similarly $h\geq m$.  But then $(i,g),(h,j)\in\th_0$, and so $(i,\al)\rsi(g,\al)\rsi(h,\be)\rsi(j,\be)$.
\end{proof}

\begin{lemma}
\label{la:C4a'}
\ben
%\item \label{C4a'1} If $\Proj\si\sub\mu_{\S_2}$, then $\si\restr_{I_{00}}=\De_{I_{00}}$.
\item \label{C4a'1} If $\Proj\si\sub\mu_{\S_2}$, then $\si_{0i}=\De_{D_0}$ for all $i\in\N$.
\item \label{C4a'2} If $\Proj\si\not\sub\mu_{\S_2}$, then $\th_0=(m,m+1)^\sharp$ for some $m\in\N$, and
\[
\si_{0i} = \begin{cases}
\De_{D_0} &\text{if $i<m$}\\
\Proj\si\restr_{D_0} &\text{if $i\geq m$.}
\end{cases}
\]
\een
\end{lemma}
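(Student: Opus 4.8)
The two parts form a dichotomy governed by where $\Proj\si$ lies in the lattice $\Cong(\P_n)$ relative to $\mu_{\S_2}$, and both will follow from two elementary observations together with Lemmas \ref{la:C2} and \ref{la:C3}. The first observation is that $\si_{0i}\sub\Proj\si\restr_{D_0}$ for every $i\in\N$, which is immediate from the definitions of $\si_{0i}$ and $\Proj\si$. The second is that $\mu_{\S_2}\restr_{D_0}=\De_{D_0}$: indeed $\nu_{\S_2}\sub D_2\times D_2$, while on $D_0$ the relation $\mu_1$ only identifies partitions with equal hats, hence --- since $\wh\al=\al$ for $\al\in D_0$ --- only equal partitions. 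With these in hand, part \ref{C4a'1} is immediate: if $\Proj\si\sub\mu_{\S_2}$ then $\si_{0i}\sub\Proj\si\restr_{D_0}\sub\mu_{\S_2}\restr_{D_0}=\De_{D_0}$, so $\si_{0i}=\De_{D_0}$.

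For part \ref{C4a'2}, the plan is first to rephrase the hypothesis into a form that lets us apply Lemma \ref{la:C3}, namely: $\Proj\si\not\sub\mu_{\S_2}$ if and only if $\Proj\si\cap R_0\neq\De_{\P_n}$. One direction uses $\mu_{\S_2}\cap R_0=\De_{\P_n}$, a consequence of $\mu_{\S_2}\restr_{D_0}=\De_{D_0}$ together with the fact (recorded just before Lemma \ref{la:C3}) that $\Proj\si\cap R_0\in\{\De_{\P_n},\lam_0,\rho_0,R_0\}$. For the other direction I would inspect the lattice $\Cong(\P_n)$ via Theorem \ref{thm:CongPn} (Figure \ref{fig:CongPn}): every congruence on $\P_n$ not contained in $\mu_{\S_2}$ contains $\lam_0$ or $\rho_0$, and each of these identifies two distinct elements of $D_0=I_0$, so has non-trivial intersection with $R_0$. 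Having reformulated the hypothesis, Lemma \ref{la:C3} applies and yields $\th_0=(m,m+1)^\sharp$ for some $m\in\N$ together with
\[
\bigset{((i,\al),(j,\be))}{i,j\geq m,\ (\al,\be)\in\Proj\si\restr_{D_0}}\sub\si.
\]

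Finally I would read off $\si_{0i}$ in the two ranges. For $i\geq m$, the displayed inclusion with $j=i$ gives $\Proj\si\restr_{D_0}\sub\si_{0i}$, and the general inclusion $\si_{0i}\sub\Proj\si\restr_{D_0}$ then forces equality. For $i<m$, if $\si_{0i}$ were non-trivial then Lemma \ref{la:C2} would give $(i,i+1)\in\th_0=(m,m+1)^\sharp$, which is impossible for $i<m$ since $i\neq i+1$; hence $\si_{0i}=\De_{D_0}$. I expect the only step requiring any thought to be the lattice inspection behind the equivalence $\Proj\si\not\sub\mu_{\S_2}\Leftrightarrow\Proj\si\cap R_0\neq\De_{\P_n}$ --- but this is a routine check against the explicit classification in Theorem \ref{thm:CongPn}, and once it is established the rest is pure bookkeeping with Lemmas \ref{la:C2} and \ref{la:C3}.
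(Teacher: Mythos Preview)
Your proposal is correct and follows essentially the same approach as the paper: both parts rest on $\mu_{\S_2}\restr_{D_0}=\De_{D_0}$, the lattice inspection via Theorem~\ref{thm:CongPn} to convert $\Proj\si\not\sub\mu_{\S_2}$ into $\Proj\si\cap R_0\neq\De_{\P_n}$, and then Lemmas~\ref{la:C2} and~\ref{la:C3} exactly as you describe. Your write-up simply fills in more detail (including the unnecessary converse of the equivalence) than the paper's terse version.
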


\pf
\ref{C4a'1}  This follows immediately from the fact that $\mu_{\S_2}\restr_{D_0} = \De_{D_0}$.

\ref{C4a'2}  Using Theorem \ref{thm:CongPn} and Figure \ref{fig:CongPn}, the condition $\Proj\si\not\sub\mu_{\S_2}$ tells us that $\Proj\si\cap R_0\neq \De_{\P_n}$.  Lemma~\ref{la:C3} gives $\th_0=(m,m+1)^\sharp$ for some $m\in\N$.  It remains to check the assertion regarding the $\si_{0i}$.  The $i<m$ case follows immediately from Lemma~\ref{la:C2}.  For $i\geq m$, Lemma \ref{la:C3} gives $\Proj\si\restr_{D_0}\sub\si_{0i}$, and the reverse inclusion follows quickly from the definitions.
\epf

We conclude this subsection by listing the possible restrictions of $\si$ to the $\D$-classes in the bottom row of $\Ptw{n}$.

\begin{lemma}
\label{la:C5}
For a congruence $\si$ on $\Ptw{n}$, and for any $i\in\N$, the relation
${\si}_{0i}$ is one of $\Delta_{D_0}$, $\lambda_0\restr_{D_0}$, $\rho_0\restr_{D_0}$ or $\nabla_{D_0}$.
\end{lemma}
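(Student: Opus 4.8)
The plan is to reduce this to Lemma~\ref{la:C4a'}, which already carries the substantive content, together with a glance at the bottom of the lattice $\Cong(\P_n)$. Recall that $\si_{0i}$ is, by definition, an equivalence on $D_0$, and that $D_0=I_0$, so that $\wh\al=\al$ for every $\al\in D_0$.

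First I would split according to Lemma~\ref{la:C4a'}. If $\Proj\si\sub\mu_{\S_2}$, then part~\ref{C4a'1} gives $\si_{0i}=\De_{D_0}$ for all $i\in\N$, which is one of the four stated relations. Otherwise, part~\ref{C4a'2} gives $\th_0=(m,m+1)^\sharp$ for some $m\in\N$, with $\si_{0i}=\De_{D_0}$ when $i<m$ and $\si_{0i}=\Proj\si\restr_{D_0}$ when $i\geq m$. So it remains only to identify the possible values of $\Proj\si\restr_{D_0}$.

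For this I would invoke Proposition~\ref{prop:projcong}: $\Proj\si$ is a congruence on $\P_n$, hence one of the relations listed in Theorem~\ref{thm:CongPn}. Since $\nab_{D_0}\sub R_0$, we have $\Proj\si\restr_{D_0}=(\Proj\si\cap R_0)\restr_{D_0}$, and $\Proj\si\cap R_0$ is a congruence on $\P_n$ contained in $R_0$. Reading off Figure~\ref{fig:CongPn}, the only congruences of $\P_n$ below $R_0$ are $\mu_0=\De_{\P_n}$, $\lam_0$, $\rho_0$ and $R_0$; restricting these to $D_0$ --- using $\wh\al=\al$ on $D_0$ and $R_0=\De_{\P_n}\cup\nab_{D_0}$ --- yields $\De_{D_0}$, $\lam_0\restr_{D_0}$, $\rho_0\restr_{D_0}$ and $\nab_{D_0}$ respectively. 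Combined with the previous paragraph, this shows $\si_{0i}$ is in all cases one of $\De_{D_0}$, $\lam_0\restr_{D_0}$, $\rho_0\restr_{D_0}$ or $\nab_{D_0}$, as required.

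I do not expect a genuine obstacle here: essentially everything is already packaged in Lemma~\ref{la:C4a'} (and, behind it, Lemmas~\ref{la:C2} and~\ref{la:C3}). The only point needing a moment's care is the identity $\Proj\si\restr_{D_0}=(\Proj\si\cap R_0)\restr_{D_0}$ together with the resulting enumeration of the four restrictions; this is immediate from the structure of $\Cong(\P_n)$ near its bottom, or alternatively by running directly through the list in Theorem~\ref{thm:CongPn} and checking that each congruence restricts on $D_0$ to one of these four equivalences.
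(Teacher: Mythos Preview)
Your proposal is correct and follows essentially the same approach as the paper, which simply remarks that the result follows directly from Lemma~\ref{la:C4a'} and Theorem~\ref{thm:CongPn}. You have merely unpacked the details of that deduction, including the observation that $\Proj\si\cap R_0$ must be one of $\mu_0$, $\lam_0$, $\rho_0$ or $R_0$.
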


\begin{proof}
This follows directly from Lemma \ref{la:C4a'} and Theorem \ref{thm:CongPn}.
\end{proof}

\subsection{The restrictions in row 1}
\label{subsec:I10}

As we glimpsed in Example \ref{ex:weirdproj}, and saw in more detail in \ref{C8}, the behaviour of a congruence on the ideal $I_{10}$ can be rather complex.
It will be one of the recurring motifs in this paper that rows~$0$ and~$1$ of $\Ptw{n}$ and their related pairs need to receive special treatment.
This subsection establishes technical tools for doing this.  We continue to use the notation $\omega=\begin{partn}{1} {\ \! \bn\ \!}\\ {\ \! \bn\ \!} \end{partn}$.  

We begin with a simple general fact that will be used often.

\begin{lemma}\label{la:(i+1,j)}
If $\si\cap (D_{0i}\times D_{rj})\neq\emptyset$ for some $r\geq1$, then $(i+1,j)\in\th_0$.
\end{lemma}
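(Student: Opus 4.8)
The plan is to start from a pair $\big((i,\al),(j,\be)\big)\in\si$ witnessing the hypothesis, so that $\al\in D_0$ and $\be\in D_r$ with $r\ge1$, and to translate it by suitable partitions so that both second coordinates become $\wh\om$ while the floating-component bookkeeping records exactly one extra unit on the $\al$-side. Concretely, I would multiply on the left by $(0,\om)$ and on the right by $(0,\wh\om)$ and claim that
\[
(0,\om)\cdot\big((i,\al),(j,\be)\big)\cdot(0,\wh\om)=\big((i+1,\wh\om),(j,\wh\om)\big)\in\si .
\]
Since $\wh\om\in D_0$, Definition \ref{defn:Th} then yields $(i+1,j)\in\th_0$, as required.

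To verify the displayed identity I would proceed in two stages. First, as $\Float(\om,\ga)=0$ for every $\ga\in\P_n$, left multiplication by $(0,\om)$ just replaces $(i,\al),(j,\be)$ by $(i,\om\al),(j,\om\be)$. Here one checks that $\om\al\in D_0$ — its upper block is $\bn$ and its lower non-transversals are those of $\al$ — whereas $\om\be\in D_1$, with transversal $\bn\cup(\codom\be)'$; this last step is where $r\ge1$ is used, to guarantee $\codom\be\neq\emptyset$. Second, I would compute the right multiplication by $(0,\wh\om)$ through the product graphs: in $\Ga(\om\al,\wh\om)$ the middle row $\bn''$ forms a component of its own, hence is \emph{floating}, so $\om\al\,\wh\om=\wh\om$ and $\Float(\om\al,\wh\om)=1$; in $\Ga(\om\be,\wh\om)$ the transversal of $\om\be$ fuses $\bn''$ onto the top row $\bn$, so no floating component appears, giving $\om\be\,\wh\om=\wh\om$ and $\Float(\om\be,\wh\om)=0$. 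Adding up (via Lemma \ref{la:Float}) gives first coordinates $i+0+1=i+1$ and $j+0+0=j$, which is exactly the displayed pair; compatibility of $\si$ then delivers it into $\si$.

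The only genuinely load-bearing point — and the step I would flag as the heart of the argument — is the asymmetry in these two floating counts: right multiplication by $\wh\om$ liberates the middle row exactly when the left factor has rank $0$, and anchors it when the left factor has positive rank. This is precisely the mechanism that converts the rank gap between $\al$ (row $0$) and $\be$ (row $\ge1$) into the shift from $j$ to $i+1$. Everything else is a short, direct inspection of products involving $\om$ and $\wh\om$ and the floating components they create, entirely in the spirit of the computations already carried out in Subsection \ref{subsec:I00} (compare the proofs of Lemmas \ref{la:C1}--\ref{la:C3}).
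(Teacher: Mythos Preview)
Your proof is correct and uses the same core mechanism as the paper: right-multiply by $(0,\wh\om)$ so that the rank-$0$ side picks up one floating component while the rank-$\ge1$ side does not. The paper's version is slightly more economical: it multiplies only on the right by $(0,\wh\om)$, obtaining $\big((i+1,\al\wh\om),(j,\be\wh\om)\big)\in\si$ with $\al\wh\om,\be\wh\om\in D_0$, and then invokes Lemma~\ref{la:C1} to conclude $(i+1,j)\in\th_0$. Your extra left-multiplication by $(0,\om)$ is harmless but unnecessary --- it just normalises both products to $\wh\om$ so you can read off the conclusion directly from Definition~\ref{defn:Th} rather than via Lemma~\ref{la:C1}.
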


\pf
If $((i,\alpha),(j,\beta))\in \si\cap (D_{0i}\times D_{rj})$, then $((i+1,\al\wh\om),(j,\be\wh\om)) = ((i,\al),(j,\be))\cdot(0,\wh\om)\in\si$, with $\al\wh\om,\be\wh\om\in D_0$, so that $(i+1,j)\in\th_0$ by Lemma \ref{la:C1}.
\epf

Lemma \ref{la:C1} showed that any $\si$-relationship between $\D$-classes $D_{0i}$ and $D_{0j}$ implies relationships between all elements of these $\D$-classes with equal underlying partitions.  The next lemma does the same for relationships within row $1$, and the following one gives the analogous result for relationships between rows $0$ and $1$.

\begin{lemma}
\label{la:C1'}
If $\si\cap (D_{1i}\times D_{1j})\neq\emptyset$ then $(i,j)\in\th_1$.
\end{lemma}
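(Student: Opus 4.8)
The plan is to mimic the proof of Lemma~\ref{la:C1}, using the partition $\om=\begin{partn}{1} {\ \! \bn\ \!}\\ {\ \! \bn\ \!} \end{partn}$ as a two-sided ``sandwiching'' element for the $\D$-class $D_1$, in place of the role played by $\wh\om\in D_0$ in Lemma~\ref{la:C1}. The key point is that $\om$ lies in $D_1$ and absorbs any rank-$1$ partition on both sides without creating floating components.

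First I would take an arbitrary pair $\big((i,\al),(j,\be)\big)\in\si$ with $\al,\be\in D_1$, which exists by hypothesis since $\si\cap(D_{1i}\times D_{1j})\neq\emptyset$. Conjugating this pair by $(0,\om)$ on both sides, and using that $\si$ is a congruence, gives
\[
(0,\om)\cdot\big((i,\al),(j,\be)\big)\cdot(0,\om) = \big((i+\Float(\om,\al,\om),\om\al\om),\,(j+\Float(\om,\be,\om),\om\be\om)\big)\in\si.
\]
Now I invoke the identities recorded at the start of Subsection~\ref{subsec:I10}: since $\al,\be\in D_1$ we have $\om\al\om=\om\be\om=\om$, and moreover $\Float(\om,\al,\om)=\Float(\om,\be,\om)=0$. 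Hence the displayed pair is simply $\big((i,\om),(j,\om)\big)\in\si$. Since $\om\in D_1$, Definition~\ref{defn:Th} then immediately yields $(i,j)\in\th_1$, as required.

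I do not expect a genuine obstacle here: the argument is a single sandwiching computation, directly parallel to Lemma~\ref{la:C1}. The only facts that need to be in place are that $\om$ belongs to $D_1$ (clear, as it has the unique block $\bn\cup\bn'$, hence one transversal) and that $\om\al\om=\om$ with $\Float(\om,\al,\om)=0$ for all $\al\in D_1$ — both of which are among the standing facts about $\om$ listed in Subsection~\ref{subsec:I10}.
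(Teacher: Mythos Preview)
Your proof is correct and essentially identical to the paper's: both sandwich the given pair by $(0,\om)$ on each side and use $\om\al\om=\om$ together with $\Float(\om,\ga,\om)=0$ to obtain $((i,\om),(j,\om))\in\si$. The only minor slip is the reference: the $\om$-identities you invoke are recorded at the start of Subsection~\ref{subsec:I00}, not~\ref{subsec:I10}.
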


\begin{proof}
If $(\ba,\bb)\in \si\cap (D_{1i}\times D_{1j})$ then $((i,\omega),(j,\omega)) = (0,\om) \cdot (\ba,\bb) \cdot (0,\om) \in\si$.
\end{proof}

\begin{lemma}
\label{la:D1}
If $\si\cap (D_{0i}\times D_{1j})\neq\emptyset$ then 
\[
\bigset{ ((i,\widehat{\gamma}),(j,\gamma))}{ \gamma\in D_1}\cup
\bigset{ ((j,\gamma),(j,\delta))}{ \gamma,\delta\in D_1,\ \widehat{\gamma}=\widehat{\delta}}\subseteq\si.
\]
\end{lemma}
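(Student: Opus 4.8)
The plan is to extract from the hypothesis one ``canonical'' related pair and then to propagate it over all of $D_1$ by left and right multiplication. Throughout I use the partition $\om$ together with the identities recorded just before the lemma: $\om\be\om=\wh\om$ for $\be\in D_0$, $\om\al\om=\om$ for $\al\in D_1$, and $\Float(\om,\ga)=\Float(\ga,\om)=\Float(\om,\ga,\om)=0$ for all $\ga\in\P_n$. The reason for passing through $\om$ is that the sandwich $(0,\om)\cdot(\cdot)\cdot(0,\om)$ ``forgets'' which element of $D_0$ sits in the first coordinate, so the argument does not depend on $\al$ at all.

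\textbf{Step 1: a canonical pair.} Suppose $\bigl((i,\al),(j,\be)\bigr)\in\si$ with $\al\in D_0$ and $\be\in D_1$. Multiplying on the left and right by $(0,\om)$ gives $\bigl((i,\om\al\om),(j,\om\be\om)\bigr)=\bigl((i,\wh\om),(j,\om)\bigr)\in\si$, since the relevant floating-component counts all vanish.

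\textbf{Step 2: spreading over $D_1$.} Fix $\ga\in D_1$. Since $\ga\rJ\om$ in $\P_n$, write $\ga=\eta_1\om\eta_2$; by Lemma \ref{la:0float}\ref{it:0f2} I may assume $\Float(\eta_1,\om,\eta_2)=0$. As $\rank\ga=1$ we have $\rank\eta_1,\rank\eta_2\ge1$, hence $\rank(\eta_1\om)=1$, and of course $\rank(\eta_1\om\eta_2)=1$. Now Lemma \ref{la:hat} (applied with $\om\in I_1$, and then with $\eta_1\om\in I_1$) gives $\eta_1\wh\om=\wh{\eta_1\om}$ and $\eta_1\wh\om\eta_2=\wh{\eta_1\om\eta_2}=\wh\ga$, while Lemma \ref{la:A2} turns the rank identities into $\Float(\eta_1,\wh\om)=\rank\om-\rank(\eta_1\om)=0$ and $\Float(\eta_1\wh\om,\eta_2)=\bigl(\rank(\eta_1\om)-\rank(\eta_1\om\eta_2)\bigr)+\Float(\eta_1\om,\eta_2)=0$; thus $\Float(\eta_1,\wh\om,\eta_2)=0$ by Lemma \ref{la:Float}. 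Consequently
\[
(0,\eta_1)\cdot\bigl((i,\wh\om),(j,\om)\bigr)\cdot(0,\eta_2)=\bigl((i,\eta_1\wh\om\eta_2),(j,\eta_1\om\eta_2)\bigr)=\bigl((i,\wh\ga),(j,\ga)\bigr)\in\si,
\]
which establishes the first of the two displayed families.

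\textbf{Step 3 and the main obstacle.} The second family is then immediate from transitivity: for $\ga,\de\in D_1$ with $\wh\ga=\wh\de$, Step 2 gives $(j,\ga)\rsi(i,\wh\ga)=(i,\wh\de)\rsi(j,\de)$, so $\bigl((j,\ga),(j,\de)\bigr)\in\si$. The only genuinely delicate point is the floating-component bookkeeping in Step 2: one must be sure that substituting $\wh\om$ for $\om$ inside the sandwich $\eta_1(\cdot)\eta_2$ creates no floating components, and this is exactly where the rank constraint $\rank\eta_1\ge1$ (forced by $\ga\in D_1$) is used, through Lemma \ref{la:A2}. If one prefers to sidestep that lemma, one can instead make the explicit choice $\eta_1\in D_1$ with $\dom\eta_1=\dom\ga$, $\ker\eta_1=\ker\ga$, $\codom\eta_1=\bn$, $\coker\eta_1=\bn\times\bn$, and $\eta_2\in D_1$ the left--right mirror choice determined by $\codom\ga$ and $\coker\ga$, and then verify $\eta_1\om\eta_2=\ga$, $\eta_1\wh\om\eta_2=\wh\ga$, and the vanishing of all the floating-component counts by direct inspection of the product graphs.
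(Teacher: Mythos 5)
Your proof is correct and follows essentially the same strategy as the paper's: translate the given pair by suitable elements of $\P_n$ to obtain $((i,\wh\ga),(j,\ga))$ for every $\ga\in D_1$, then conclude by transitivity. The paper does this in a single step by sandwiching the original pair with $(0,\ga\om)$ and $(0,\om\ga)$, checking $(\ga\om)\al(\om\ga)=\wh\ga$, $(\ga\om)\be(\om\ga)=\ga$ and the vanishing of the floating counts directly, whereas you first normalise to the canonical pair $((i,\wh\om),(j,\om))$ and then invoke Lemmas \ref{la:0float}, \ref{la:hat} and \ref{la:A2} for the bookkeeping --- a slightly longer but equally valid route (indeed the paper's choice is the instance $\eta_1=\ga\om$, $\eta_2=\om\ga$ of your decomposition).
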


\begin{proof}
Fix some $((i,\alpha),(j,\beta))\in \si\cap (D_{0i}\times D_{1j})$, and let $\ga\in D_1$ be arbitrary.  Noting that $\wh\ga = (\ga\om)\al(\om\ga)$ and ${\ga = (\ga\om)\be(\om\ga)}$, with $\Float(\ga\om,\al,\om\ga)=\Float(\ga\om,\be,\om\ga)=0$, we have
\[
((i,\wh\ga),(j,\ga)) = (0,\ga\om)\cdot((i,\alpha),(j,\beta))\cdot(0,\om\ga)\in\si.
\]
This shows the inclusion in $\si$ of the first set in the left-hand side union, and the second follows by transitivity.
\end{proof}

The next lemma refers to the congruences $\lam_1$ and $\rho_1$ on $\P_n$.

\begin{lemma}
\label{la:D3}
\ben
\item If $\si_{1i}\not\sub\rho_1$, then $\lam_1\restr_{D_1}\subseteq{\si}_{1i}$.
\item If $\si_{1i}\not\sub\lam_1$, then $\rho_1\restr_{D_1}\subseteq{\si}_{1i}$.
\een
\end{lemma}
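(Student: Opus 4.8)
The plan is to prove part (i) and obtain part (ii) from it by applying part (i) to the congruence obtained from $\si$ through the $*$-involution on $\Ptw n$, which interchanges $\R$ with $\L$, hence $\ker$ with $\coker$ and $\rho_1$ with $\lam_1$. So assume $\si_{1i}\not\sub\rho_1$. Since the hat map preserves kernels and $\dom\wh\ga=\emptyset$ for every $\ga\in D_1$, we have $\rho_1\restr_{D_1}=\bigset{(\xi,\zeta)\in D_1\times D_1}{\ker\xi=\ker\zeta}$, so there is a pair $(\al,\be)\in\si_{1i}$ with $\ker\al\neq\ker\be$; equivalently, $\wh\al$ and $\wh\be$ are not $\R$-related.

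The first step is to manufacture a $\si$-link between rows $0$ and $1$ within column $i$. Because $\al,\be\in D_1$ and $\ker\al\neq\ker\be$, the first part of Lemma~\ref{la:A3} yields $\ga\in\P_n$ with $\rank(\ga\al)\neq\rank(\ga\be)$ and $\Float(\ga,\al)=\Float(\ga,\be)=0$. As the two ranks lie in $\{0,1\}$, one of $\ga\al,\ga\be$ lies in $D_0$ and the other in $D_1$, and the vanishing of the two float counts gives $(0,\ga)\cdot\big((i,\al),(i,\be)\big)=\big((i,\ga\al),(i,\ga\be)\big)\in\si$, so $\si\cap(D_{0i}\times D_{1i})\neq\emptyset$. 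Lemma~\ref{la:D1}, applied with $j=i$, now supplies $((i,\wh\de),(i,\de))\in\si$ for every $\de\in D_1$.

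The rest is bookkeeping. Applying the last relation to $\al$ and to $\be$ and combining with $(\al,\be)\in\si_{1i}$, transitivity gives $(i,\wh\al)\rsi(i,\al)\rsi(i,\be)\rsi(i,\wh\be)$, so $(\wh\al,\wh\be)\in\si_{0i}$; since $\wh\al,\wh\be\in D_0$ are not $\R$-related, Lemma~\ref{la:C5} forces $\si_{0i}$ to be either $\lam_0\restr_{D_0}$ or $\nabla_{D_0}$, and in either case $\lam_0\restr_{D_0}\sub\si_{0i}$. Finally, for arbitrary $\ga',\de'\in D_1$ with $\coker\ga'=\coker\de'$ — which is precisely the condition defining $\lam_1\restr_{D_1}$, and is equivalent to $\wh{\ga'}\rL\wh{\de'}$ — we get $(\wh{\ga'},\wh{\de'})\in\lam_0\restr_{D_0}\sub\si_{0i}$, and then $(i,\ga')\rsi(i,\wh{\ga'})\rsi(i,\wh{\de'})\rsi(i,\de')$ shows $((i,\ga'),(i,\de'))\in\si$. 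Hence $\lam_1\restr_{D_1}\sub\si_{1i}$, completing part (i).

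I expect the one genuinely delicate point to be the first step — getting $\si\cap(D_{0i}\times D_{1i})\neq\emptyset$ \emph{at the same column index $i$}. This is exactly where the sharpened separation result of Lemma~\ref{la:A3}, guaranteeing $\Float(\ga,\al)=\Float(\ga,\be)=0$, is indispensable: the cruder Lemma~\ref{la:sep} would only let us descend to row $0$ at a possibly larger column, losing control of which $\si_{0i}$ we are analysing. After that, reducing $\si_{1i}$-membership to $\si_{0i}$-membership through the hat map (Lemma~\ref{la:D1}) and reading $\si_{0i}$ off the four-element list in Lemma~\ref{la:C5} is routine; the only other thing to watch is the small-$n$ degeneracy (for $n=1$, $D_1$ is a singleton and both hypotheses are vacuous), which causes no difficulty.
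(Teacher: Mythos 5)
Your proof is correct and follows essentially the same route as the paper's: Lemma~\ref{la:A3} to produce a $\si$-link between $D_{0i}$ and $D_{1i}$ in the same column, Lemma~\ref{la:D1} to relate each $(i,\ga)$ to $(i,\wh\ga)$, Lemma~\ref{la:C5} to force $\lam_0\restr_{D_0}\subseteq\si_{0i}$, and then transfer back up to row~1; part~(ii) is obtained by the same duality. No gaps.
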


\begin{proof}
Only the first statement needs to be proved, as the second is dual.  To do so, fix some ${(\al,\be)\in\si_{1i}\sm\rho_1}$, so $\al,\be\in D_1$ and $\ker\al\neq \ker\be$.  It follows from Lemma \ref{la:A3} that ${\si\cap(D_{0i}\times D_{1i})\neq \emptyset}$, and then from Lemma~\ref{la:D1} that $\bigset{ ((i,\ga),(i,\widehat{\ga}))}{ \ga\in D_1}\subseteq \si$.
But then $(i,\wh\al)\rsi(i,\al)\rsi(i,\be)\rsi(i,\wh\be)$ with $\ker\wh\al\neq \ker\wh\be$, and hence
$\lambda_0\restr_{D_0}\subseteq{\si}_{0i}$ by Lemma \ref{la:C5}.
Now for any $(\gamma,\delta)\in\lambda_1\restr_{D_1}$ we have $\widehat{\gamma}\rL\widehat{\delta}$, i.e.~$(\widehat{\gamma},\widehat{\delta})\in\lambda_0\restr_{D_0}$, and hence
$(i,\gamma)\rsi (i,\widehat{\gamma})\rsi (i,\widehat{\delta})\rsi (i,\delta)$, completing the proof.
\end{proof}

One way in which $\si\restr_{I_{10}}$ may be unusual is that the relations ${\si}_{1i}$ are not necessarily restrictions of congruences of $\P_n$ to $D_1$. Two additional relations that may occur will play an important role in what follows:

\begin{defn}
\label{def:muud}
The relations $\muup$ and $\mudown$ on $D_1$ are defined by:
\[
\muup:=\bigset{ (\alpha,\beta)\in D_1\times D_1}{ \widehat{\alpha}=\widehat{\beta},\ \alpha\rR\beta}
\ANd
\mudown:=\bigset{ (\alpha,\beta)\in D_1\times D_1}{ \widehat{\alpha}=\widehat{\beta},\ \alpha\rL\beta}.
\]
\end{defn}

As the notation suggests, these two relations are closely tied to their counterpart labels in C-matrices; see Subsection \ref{subsec:Cpairs}.

\begin{lemma}
\label{la:D2}
\ben
\item If $\si_{1i}\not\sub\muup$, then $\mudown\subseteq {\si}_{1i}$.
\item If $\si_{1i}\not\sub\mudown$, then $\muup\subseteq {\si}_{1i}$.
\een
\end{lemma}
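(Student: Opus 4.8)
The plan is as follows. By the natural left--right duality of $\Ptw n$ --- the involution $*$ sends a congruence to a congruence, swaps $\R$ with $\L$, fixes the map $\ga\mapsto\wh\ga$, and hence interchanges $\muup$ with $\mudown$ --- it will suffice to prove the first statement, so I assume $\si_{1i}\not\sub\muup$ and aim to show $\mudown\sub\si_{1i}$. Fix a pair $(\al,\be)\in\si_{1i}$ with $(\al,\be)\notin\muup$; since $\De_{D_1}\sub\muup$ this forces $\al\neq\be$. I would split into two cases according to whether or not $\wh\al=\wh\be$.

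The first case, $\wh\al\neq\wh\be$, is quick. Since every $\H$-class of $D_0$ is a singleton, $\wh\al\rH\wh\be$ would give $\wh\al=\wh\be$; so $(\al,\be)$ lies outside $\lam_1$ or outside $\rho_1$, and the first or second part of Lemma~\ref{la:D3} then yields $\lam_1\restr_{D_1}\sub\si_{1i}$ or $\rho_1\restr_{D_1}\sub\si_{1i}$. As $\wh\ga=\wh\de$ trivially implies both $\wh\ga\rL\wh\de$ and $\wh\ga\rR\wh\de$, we have $\mudown\sub\lam_1\restr_{D_1}\cap\rho_1\restr_{D_1}$, and the case is done.

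So suppose $\wh\al=\wh\be$, whence $\ker\al=\ker\be$; if we had $\dom\al=\dom\be$ then $\al\rR\be$ and so $(\al,\be)\in\muup$, a contradiction, so in fact $\dom\al\neq\dom\be$. This is the substantive case: here $\si_{1i}$ need not be the restriction of a congruence of $\P_n$, so Theorem~\ref{thm:CongPn} is of no help. The strategy is to generate all of $\mudown$ inside $\si_{1i}$ by multiplying $(\al,\be)$ by carefully chosen partitions, always arranging that \emph{no floating components are created}, so that every derived pair again lies in row $i$ (if a product created floating components the pair would only land in some $\si_{1j}$ with $j>i$, which Lemma~\ref{la:Ap3} does not let us pull back down). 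The steps would be: (a) right-multiply by $\omega$ --- this creates no floating components --- to reduce to the case where $\al,\be$ have trivial cokernel and codomain $\bn$, so they are the elements of $D_1$ with transversals $A_1\cup\bn'$ and $A_2\cup\bn'$, where $A_1=\dom\al\neq\dom\be=A_2$ are classes of $K:=\ker\al$; (b) left-multiply by a partition of rank $|K|$ whose transversals are $\{t_C\}\cup C'$ for $C\in K$, with the $t_C$ distinct --- again no floating components --- which sends $(\al,\be)$ to the pair of elements of $D_1$ with trivial kernel and cokernel, codomain $\bn$ and domains $\{t_{A_1}\},\{t_{A_2}\}$; letting the $t_C$ vary and using transitivity, all elements of $D_1$ with trivial kernel, trivial cokernel and codomain $\bn$ end up in one $\si_{1i}$-class; (c) left-multiply these pairs by partitions whose transversal upper parts range over the blocks of an arbitrary partition $K'$ of $\bn$ (no floating components), obtaining for every such $K'$ and all distinct blocks $A,A'$ the pair of elements of $D_1$ with kernel $K'$, trivial cokernel, codomain $\bn$ and domains $A,A'$; (d) finally, right-multiply by a rank-$1$ partition with transversal $\bn\cup B'$ for $B$ a class of an arbitrary partition $L$ of $\bn$ (no floating components), which produces, for all $K',L,B$ and distinct $A,A'$, the pair of elements of $D_1$ with kernel $K'$, cokernel $L$, codomain $B$ and domains $A,A'$. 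Since every non-diagonal pair of $\mudown$ is of exactly this form, this gives $\mudown\sub\si_{1i}$.

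The hardest part, and the only one needing real care, is (b)--(d): one must verify that each auxiliary product has the stated form and creates no floating components, which is a routine but fiddly analysis of the product graphs $\Ga(\cdot,\cdot)$ (using in particular that right-multiplying by a rank-$1$ partition whose transversal has lower part $\bn'$ never produces floating components, and controlling which transversals of the left multiplier survive). Everything else reduces to Lemma~\ref{la:D3}, Lemma~\ref{la:Ap3}, and the fact that elements of $D_0$ are determined by their kernel and cokernel.
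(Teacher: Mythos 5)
Your proposal is correct and its skeleton matches the paper's: duality reduces everything to the first statement; the case $\wh\al\neq\wh\be$ is exactly the paper's reduction via Lemma~\ref{la:D3} (the paper phrases it as first assuming $\si_{1i}\sub\rho_1\cap\lam_1=\mu_1$, which forces $\wh\al=\wh\be$ for the witness); and the main case is settled by normalising the witness pair with $\om$ and then generating all of $\mudown$ by multiplications that create no floating components. The difference is in how the generation is organised. The paper produces an arbitrary pair $(\ga,\de)\in\mudown$ from the normalised witness in a \emph{single} sandwich $(0,\eta_1)\cdot\big((i,\al),(i,\be)\big)\cdot(0,\eta_2)$, where $\eta_1$ has exactly two transversals whose lower parts are $(\dom\al)'$ and the union of all the remaining kernel classes of $\al$ (so the lower parts of its transversals partition $\bn'$ and no middle vertex can float, while $\dom\be$ is hit by the second lower part and $\dom\al$ by the first), and $\eta_2$ has the single transversal $\bn\cup C_1'$ installing the target cokernel and codomain. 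Your three-stage chain reaches the same conclusion at the cost of extra verifications, and the one place where the deferred ``routine'' check genuinely bites is step~(c): if the left multiplier there is taken with its transversals' lower parts as singletons $\{y_j\}'$ and the remaining lower elements as non-transversals, then every middle vertex $z''$ with $z\notin\{y_j\}_j\cup\{x\}$ becomes a floating singleton, since its upper block in the singleton-domain element is a non-transversal singleton. You must therefore choose the lower parts of that multiplier's transversals to partition $\bn'$ (with $x'$ attached to $A$ and $y'$ attached to $A'$), exactly as the paper's $\eta_1$ does; with that amendment each of your stages is float-free and the argument goes through.
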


\begin{proof}
Again, only the first statement needs proof.  If $\si_{1i}$ is not contained in one of $\rho_1$ or $\lam_1$, then by Lemma \ref{la:D3}, $\si_{1i}$ contains one of $\lam_1\restr_{D_1}$ or $\rho_1\restr_{D_1}$, both of which contain $\mudown$.  Thus, we now assume that $\si_{1i}\sub\rho_1\cap\lam_1=\mu_1$.
Fix some $(\al,\be)\in\si_{1i}\sm\muup$.  Noting then that ${(\al,\be)\in\mu_1\sm\muup}$, we have $\wh\al=\wh\be$ and $(\al,\be)\not\in\R$; consequently, $\ker\alpha=\ker\beta$ and ${\dom\alpha\neq\dom\beta}$.
By post-multiplying $((i,\alpha),(i,\beta))$ by $(0,\om)$,
we may assume that
${\alpha=\begin{partn}{4} A_1 & A_2&\dots & A_k\\ \hhline{~|-|-|-} \bn & \multicolumn{3}{c}{}\end{partn}}$ and
${\beta=\begin{partn}{4} A_k & A_1&\dots & A_{k-1}\\ \hhline{~|-|-|-} \bn & \multicolumn{3}{c}{}\end{partn}}$.
Now let $(\gamma,\delta)\in\mudown$ be arbitrary.  So $\ga,\de\in D_1$, $\gamma\rL\delta$ and $\widehat{\gamma}=\widehat{\delta}$, and we need to show that $((i,\gamma),(i,\delta))\in\si$.
If $\gamma=\delta$ there is nothing to prove, so suppose $\gamma\neq \delta$.
We may then write
$\gamma=\begin{partn}{4} B_1 & B_2&\dots & B_l\\ \hhline{~|-|-|-} C_1&C_2&\dots&C_m \end{partn}$ and
$\delta=\begin{partn}{4} B_l & B_1&\dots & B_{l-1}\\ \hhline{~|-|-|-} C_1&C_2&\dots&C_m \end{partn}$.
Then with
$\eta_1:=\begin{partn}{5} B_1 & B_l & B_2&\dots& B_{l-1}\\ \hhline{~|~|-|-|-} A_1 & A_2\cup\dots \cup A_k&\multicolumn{3}{c}{}\end{partn}$ and
$\eta_2:=\begin{partn}{4} \bn & \multicolumn{3}{c}{} \\ \hhline{~|-|-|-} C_1&C_2&\dots&C_m\end{partn}$, we have
$((i,\gamma),(i,\delta)) = (0,\eta_1) \cdot ((i,\alpha),(i,\beta)) \cdot (0,\eta_2) \in\si$, as required.
\end{proof}

We can now describe all possible restrictions of $\si$ to $\D$-classes in row 1.

\begin{lemma}
\label{la:D4}
For a congruence $\si$ on $\Ptw{n}$, and for any $i\in\N$, the relation
$\si_{1i}$ is one of  $\Delta_{D_1}$, $\muup$, $\mudown$, $\mu_1\restr_{D_1}$, $\rho_1\restr_{D_1}$, $\lambda_1\restr_{D_1}$ or $\nabla_{D_1}$.
\end{lemma}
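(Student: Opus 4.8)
The plan is to carry out a four-way case analysis on which of the $\P_n$-congruences $\lam_1$ and $\rho_1$ contain $\si_{1i}$, with the final case subdivided using $\muup$ and $\mudown$. It is convenient to record first the relevant meets and joins of these relations, restricted to $D_1$. Since the maximal subgroups of $\P_n$ in $D_0$ and $D_1$ are trivial, Lemma~\ref{la:Green_Pn} gives $\H\restr_{D_0}=\De_{D_0}$ and $\H\restr_{D_1}=\De_{D_1}$; hence a pair with $\wh\al\rL\wh\be$ and $\wh\al\rR\wh\be$ satisfies $\wh\al=\wh\be$, so that $\lam_1\cap\rho_1=\mu_1$, and likewise $\muup\cap\mudown=\De_{D_1}$. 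For the joins I claim $(\lam_1\vee\rho_1)\restr_{D_1}=\nab_{D_1}$ and $\muup\vee\mudown=\mu_1\restr_{D_1}$; each follows from the same device. Given $\al,\be\in D_1$ (in the second case additionally with $\wh\al=\wh\be$, whence $\ker\al=\ker\be$ and $\coker\al=\coker\be$), let $\ga\in D_1$ be the rank-$1$ partition with $\dom\ga=\dom\al$, $\ker\ga=\ker\al$, $\codom\ga=\codom\be$ and $\coker\ga=\coker\be$. By Lemma~\ref{la:Green_Pn} we have $\ga\rR\al$ and $\ga\rL\be$, and $\wh\ga$ has the same kernel and cokernel as $\ga$; so $(\al,\ga)\in\rho_1$ and $(\ga,\be)\in\lam_1$ in general, while $(\al,\ga)\in\muup$ and $(\ga,\be)\in\mudown$ in the case $\wh\al=\wh\be$ (since then $\wh\ga=\wh\al=\wh\be$). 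This exhibits the required chain of length two.

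With these facts in place the argument is short. If $\si_{1i}\not\sub\lam_1$ and $\si_{1i}\not\sub\rho_1$, then Lemma~\ref{la:D3} gives $\lam_1\restr_{D_1},\rho_1\restr_{D_1}\sub\si_{1i}$, so by transitivity $\si_{1i}\supseteq(\lam_1\vee\rho_1)\restr_{D_1}=\nab_{D_1}$ and hence $\si_{1i}=\nab_{D_1}$. If $\si_{1i}\sub\lam_1$ but $\si_{1i}\not\sub\rho_1$, then Lemma~\ref{la:D3} gives $\lam_1\restr_{D_1}\sub\si_{1i}\sub\lam_1\restr_{D_1}$, so $\si_{1i}=\lam_1\restr_{D_1}$; dually, $\si_{1i}\sub\rho_1$ and $\si_{1i}\not\sub\lam_1$ give $\si_{1i}=\rho_1\restr_{D_1}$. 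In the remaining case $\si_{1i}\sub\lam_1\cap\rho_1=\mu_1\restr_{D_1}$, and I subdivide via Lemma~\ref{la:D2}: if $\si_{1i}\not\sub\muup$ and $\si_{1i}\not\sub\mudown$ then $\muup,\mudown\sub\si_{1i}$, so $\si_{1i}\supseteq\muup\vee\mudown=\mu_1\restr_{D_1}$ and hence $\si_{1i}=\mu_1\restr_{D_1}$; if $\si_{1i}\sub\muup$ but $\si_{1i}\not\sub\mudown$ then $\muup\sub\si_{1i}\sub\muup$, so $\si_{1i}=\muup$, and dually $\si_{1i}=\mudown$; and if $\si_{1i}\sub\muup$ and $\si_{1i}\sub\mudown$ then $\si_{1i}\sub\muup\cap\mudown=\De_{D_1}$, so $\si_{1i}=\De_{D_1}$. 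These cases are exhaustive and produce precisely the seven relations listed.

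The one step needing genuine care --- the ``hard part'' --- is the verification of the two join identities, in particular checking that the intermediate partition $\ga$ can always be chosen inside $D_1$ with the prescribed domain, kernel, codomain and cokernel (and, in the $\mu$-case, with $\wh\ga$ equal to the common value of $\wh\al$ and $\wh\be$). This rests on the description of $\mu_1$, $\lam_1$, $\rho_1$ from Theorem~\ref{thm:CongPn} together with the characterisation of $\R$ and $\L$ on $\P_n$ in Lemma~\ref{la:Green_Pn}; everything else amounts to two applications each of Lemmas~\ref{la:D3} and~\ref{la:D2}.
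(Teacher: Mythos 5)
Your proposal is correct and follows essentially the same route as the paper: the identical four-way case split on containment in $\lam_1$ and $\rho_1$ (with the final case subdivided via $\muup$ and $\mudown$), driven by Lemmas~\ref{la:D3} and~\ref{la:D2} and the meet/join identities $\lam_1\cap\rho_1=\mu_1$, $\lam_1\vee\rho_1\supseteq\nab_{D_1}$, $\muup\cap\mudown=\De_{D_1}$, $\muup\vee\mudown=\mu_1\restr_{D_1}$. The only difference is that you verify the two join identities explicitly via the intermediate rank-one partition $\ga$, whereas the paper takes them as read; your verification is a correct and worthwhile addition.
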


\pf
To simplify the proof, we write $\tau=\si_{1i}$, $\lam=\lam_1\restr_{D_1}$, $\rho=\rho_1\restr_{D_1}$, $\mu=\mu_1\restr_{D_1}$, $\De=\De_{D_1}$ and $\nab=\nab_{D_1}$.  
The following argument is structured around the inclusion diagram of these relations:
\begin{center}
\begin{tikzpicture}[scale=.9]
\node (Delta) at (0,0) {$\De$};
\node (R) at (0,4) {$\nab$};
\node (mudown) at (-1,1) {$\mudown$};
\node (muup) at (1,1) {$\muup$};
\node (mu) at (0,2) {$\mu$};
\node (lambda) at (-1,3) {$\lambda$};
\node (rho) at (1,3) {$\rho$};
\draw
(Delta)--(mudown)--(mu)--(lambda)--(R)
(Delta)--(muup)--(mu)--(rho)--(R)
;
\end{tikzpicture}
\end{center}
\setcounter{caseco}{0}
\case $\tau\not\sub\lam$ and $\tau\not\sub\rho$.  Using Lemma~\ref{la:D3}, these respectively give $\rho\sub\tau$ and $\lam\sub\tau$.  It then follows that $\nab=\lam\vee\rho\sub\tau$, so $\tau=\nab$.

\case $\tau\not\sub\lam$ and $\tau\sub\rho$.  From the former, Lemma \ref{la:D3} gives $\rho\sub\tau$, so $\tau=\rho$.

\case $\tau\sub\lam$ and $\tau\not\sub\rho$.  By symmetry, this time we have $\tau=\lam$.

\case $\tau\sub\lam$ and $\tau\sub\rho$.  Here we have $\tau\sub\lam\cap\rho=\mu$.  As above, we now consider subcases according to whether $\tau$ is contained in $\muup$ and/or~$\mudown$.  We use Lemma \ref{la:D2} in place of Lemma~\ref{la:D3}, and also $\muup\vee\mudown=\mu$ and $\muup\cap\mudown=\De$, to deduce that $\tau$ is one of $\mu$, $\muup$, $\mudown$ or $\De$.
\epf

We conclude this subsection with the following important corollary:

\begin{lemma}
\label{la:D5}
If ${\si}_{1i}\neq\Delta_{D_1}$ then
$\bigset{((j,\widehat{\gamma}),(j+1,\gamma))}{ \gamma\in D_1}\subseteq \si$ for all $j\geq i$.
\end{lemma}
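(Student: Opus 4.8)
The plan is to reduce the statement to a single existence claim. By Lemma~\ref{la:D1}, the condition $\si\cap(D_{0j}\times D_{1,j+1})\neq\emptyset$ already implies $\bigset{((j,\wh\gamma),(j+1,\gamma))}{\gamma\in D_1}\sub\si$. So it suffices to prove $\si\cap(D_{0j}\times D_{1,j+1})\neq\emptyset$ for every $j\ge i$, and for this it is enough to treat $j=i$: multiplying any pair $((i,\wh\gamma),(i+1,\gamma))$ on the right by $(j-i,\id)$ and invoking Lemma~\ref{la:Ap1} then covers all larger $j$. (If $n=1$ then $|D_1|=1$, so $\si_{1i}=\De_{D_1}$ always and there is nothing to prove; hence assume $n\ge2$.)

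For $j=i$ I would next apply Lemma~\ref{la:D4}: since $\si_{1i}\neq\De_{D_1}$, it equals one of $\muup$, $\mudown$, $\mu_1\restr_{D_1}$, $\rho_1\restr_{D_1}$, $\lam_1\restr_{D_1}$ or $\nab_{D_1}$, and each of these contains $\muup$ or $\mudown$ (immediate once one notes that $\wh\al=\wh\be$ forces $\ker\al=\ker\be$ and $\coker\al=\coker\be$, so that $\muup\sub\mu_1\restr_{D_1}\sub\rho_1\restr_{D_1}$ and $\mudown\sub\mu_1\restr_{D_1}\sub\lam_1\restr_{D_1}$). By the left--right duality of $\Ptw n$, which swaps $\muup$ and $\mudown$ while fixing the relation appearing in the conclusion, we may assume $\mudown\sub\si_{1i}$.

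The core of the proof is then one explicit computation. Let $\al,\be\in D_1$ be the two distinct rank-one partitions with universal cokernel (i.e.\ single lower block $\bn'$) and upper blocks $\{1\}$ and $\{2,\dots,n\}$, the transversal being $\{1\}\cup\bn'$ for $\al$ and $\{2,\dots,n\}\cup\bn'$ for $\be$; then $(\al,\be)\in\mudown$, so $((i,\al),(i,\be))\in\si$. Let $\eta\in\P_n$ have transversal $\bn\cup\{2',\dots,n'\}$ and lower non-transversal $\{1'\}$. Inspecting the product graphs: in $\Ga(\eta,\al)$ the vertex $1''$ is joined through $\al$ to $\bn'$, so $\eta\al=\wh\om$ and $\Float(\eta,\al)=0$; in $\Ga(\eta,\be)$ the vertex $1''$ is isolated (an upper non-transversal of $\be$ meeting the lower non-transversal of $\eta$), hence a floating component, $\eta\be=\om$, and $\Float(\eta,\be)=1$. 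Therefore $(0,\eta)\cdot((i,\al),(i,\be))=((i,\wh\om),(i+1,\om))\in\si$, which is the desired member of $\si\cap(D_{0i}\times D_{1,i+1})$, and the reduction in the first paragraph completes the argument.

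The only genuinely delicate point is the choice of the one-sided multiplier $\eta$: it must collapse $\al$ to rank $0$ without producing any floating component, yet simultaneously detach a block of $\be$ as a floating component, and this asymmetry under a single left multiplication is available precisely because the transversals of $\al$ and $\be$ have different upper parts --- which is exactly what nontriviality of $\mudown$ supplies. Everything else is routine bookkeeping with Lemmas~\ref{la:D1}, \ref{la:D4} and~\ref{la:Ap1} together with the basic properties of $\om$ and $\wh\om$.
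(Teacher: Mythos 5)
Your proof is correct and is essentially the paper's own argument up to left–right duality: the paper reduces via Lemma \ref{la:D4} to $\muup\subseteq\si_{1i}$, takes the dual witness pair (universal kernel, transversals $\bn\cup\{1'\}$ and $\bn\cup\{n'\}$), post-multiplies by $\bigl(j-i,\begin{partn}{2} 1& 2,\dots,n\\ \hhline{~|-} \bn & \end{partn}\bigr)$ to obtain $((j+1,\om),(j,\wh\om))\in\si$, and finishes with Lemma \ref{la:D1}. Your version (reducing to $\mudown$, left-multiplying by the dual $\eta$, and shifting by Lemma \ref{la:Ap1} at the end) is the mirror image of the same computation, and all the details check out.
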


\begin{proof}
By Lemma \ref{la:D4} we may assume without loss that $\muup\subseteq {\si}_{1i}$.  Hence,
\[
\left(\left(i,\begin{partn}{4} \bn &\multicolumn{3}{c}{} \\ \hhline{~|-|-|-} 1&2&\dots&n\end{partn}\right),
\left(i,\begin{partn}{4} \bn &\multicolumn{3}{c}{} \\ \hhline{~|-|-|-} n&1&\dots&n-1\end{partn}\right)\right)\in\si .
\]
Post-multiplying this pair by
$\left(j-i,\begin{partn}{2} 1& 2,\dots,n\\ \hhline{~|-} \bn & \end{partn}\right)$ we obtain
$((j+1,\om),(j,\wh\om))\in\si$.
The result then follows by Lemma \ref{la:D1}.
\end{proof}

\subsection[The restrictions in rows $q\geq2$]{The restrictions in rows \boldmath{$q\geq2$}}
\label{subsec:Iq0}

Next we examine the behaviour of $\si$ on rows $q\geq 2$.  The following sequence of lemmas can be viewed as working the `separation' Lemma \ref{la:sep} into the context of $\Ptw{n}$.  In the next lemma we do not assume that $q\geq2$.

\begin{lemma}
\label{la:B3}
If $\si\cap (D_{qi}\times D_{rj})\neq\emptyset$, then for every $\gamma \in I_q$ there exist 
$\delta\in I_r$  and $l\geq j$ such that $((i,\gamma),(l,\delta))\in\si$.
\end{lemma}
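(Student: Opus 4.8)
The plan is to combine the description of the $\J$-order on $\P_n$ (Lemma~\ref{la:Green_Pn}) with the `no floating components' result Lemma~\ref{la:0float}. First I would fix a witnessing pair $((i,\alpha),(j,\beta))\in\si\cap(D_{qi}\times D_{rj})$, so that $\rank\alpha=q$ and $\rank\beta=r$. Given an arbitrary $\gamma\in I_q$, we have $\rank\gamma\leq q=\rank\alpha$, and hence by Lemma~\ref{la:Green_Pn} the element $\gamma$ lies in the principal ideal $\P_n\alpha\P_n$ of $\P_n$ generated by $\alpha$. Since $\P_n$ is a monoid, this means $\gamma=\eta_1\alpha\eta_2$ for suitable $\eta_1,\eta_2\in\P_n$.

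The next step is to tidy up the floating components contributed by this factorisation. Applying Lemma~\ref{la:0float}\ref{it:0f2} to the triple $(\eta_1,\alpha,\eta_2)$ yields $\eta_1',\eta_2'\in\P_n$ with $\eta_1'\alpha\eta_2'=\eta_1\alpha\eta_2=\gamma$ and $\Float(\eta_1',\alpha,\eta_2')=0$. Now I would multiply the witnessing pair by $(0,\eta_1')$ on the left and $(0,\eta_2')$ on the right: since $\si$ is a congruence, $(0,\eta_1')\cdot((i,\alpha),(j,\beta))\cdot(0,\eta_2')\in\si$.

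It then remains to read off the two components. On the left, the first coordinate is $i+\Float(\eta_1',\alpha)+\Float(\eta_1'\alpha,\eta_2')=i+\Float(\eta_1',\alpha,\eta_2')=i$, and the second coordinate is $\eta_1'\alpha\eta_2'=\gamma$; so the left-hand element is exactly $(i,\gamma)$. On the right, write $\delta:=\eta_1'\beta\eta_2'$ and $l:=j+\Float(\eta_1',\beta)+\Float(\eta_1'\beta,\eta_2')=j+\Float(\eta_1',\beta,\eta_2')\geq j$; since $\rank\delta\leq\rank\beta=r$ we have $\delta\in I_r$, and $((i,\gamma),(l,\delta))\in\si$ is precisely the required conclusion.

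There is no real obstacle here; the only point needing care is the application of Lemma~\ref{la:0float}\ref{it:0f2}, where one must keep the \emph{middle} factor $\alpha$ unchanged (so that the left component remains $\gamma$) while annihilating the floating components of the triple product. One should also note that the same $\eta_1',\eta_2'$ need not kill the floating components of $\eta_1'\beta\eta_2'$, but this is harmless since the statement only asks for $l\geq j$, not $l=j$.
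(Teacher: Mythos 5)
Your proof is correct and follows essentially the same route as the paper: factor $\gamma=\eta_1\alpha\eta_2$, use Lemma~\ref{la:0float}\ref{it:0f2} to arrange $\Float(\eta_1,\alpha,\eta_2)=0$, and translate the witnessing pair by $(0,\eta_1)$ and $(0,\eta_2)$. The only difference is that you spell out the factorisation step (via the ideal structure in Lemma~\ref{la:Green_Pn}) and the bookkeeping of the first coordinates, which the paper leaves implicit.
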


\begin{proof}
Let $((i,\alpha),(j,\beta))\in \si\cap (D_{qi}\times D_{rj})$.
Using Lemma \ref{la:0float}, let $\eta_1,\eta_2\in\P_n$ be such that $\eta_1\alpha\eta_2=\gamma$ with
$\Float(\eta_1,\alpha,\eta_2)=0$, and then let $\delta:=\eta_1\beta\eta_2\in I_r$ and $l:=j+\Float(\eta_1,\beta,\eta_2)\geq j$.
Then $((i,\ga),(l,\de)) = (0,\eta_1) \cdot ((i,\al),(j,\be)) \cdot (0,\eta_2)\in \si$.
%Pre- and post-multiplying $((i,\al),(j,\be))$ by $(0,\eta_1)$ and $(0,\eta_2)$ gives $((i,\ga),(k,\de))\in\si$.
\end{proof}

\begin{lemma}
\label{la:B4}
If $\si\cap (D_{qi}\times D_{rj})\neq\emptyset$, where $q>r$ and $q\geq 2$, then for every $\gamma\in I_q\setminus D_0$ there exist $l\geq j$ and $\delta\in\P_n$ such that
$\rank\delta<\rank\gamma$ and $((i,\gamma),(l,\delta))\in\si$.
\end{lemma}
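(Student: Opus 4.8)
The plan is to split into two cases according to whether $p:=\rank\gamma$ exceeds $r$ or not. Fix $((i,\alpha),(j,\beta))\in\si$ with $\alpha\in D_q$ and $\beta\in D_r$. If $p>r$, then applying Lemma \ref{la:B3} directly to $\gamma\in I_q$ yields $\delta\in I_r$ and $l\ge j$ with $((i,\gamma),(l,\delta))\in\si$; here $\rank\delta\le r<p$, so we are done immediately. So the substantive case is $1\le p\le r<q$, in which in particular $q\ge p+1\ge 2$.

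In that case the idea is first to produce a single auxiliary pair $((i,\phi),(l^*,\psi))\in\si$ with $\phi\in D_p$, $\rank\psi\le p-1$ and $l^*\ge j$; once this is available, Lemma \ref{la:B3} applied to that pair (with $D_{pi}$ in the role of ``$D_{qi}$'', and using $\gamma\in I_p$) returns $\delta\in I_{\rank\psi}$ and $l\ge l^*\ge j$ with $((i,\gamma),(l,\delta))\in\si$ and $\rank\delta\le\rank\psi<p$, exactly as required. To build $((i,\phi),(l^*,\psi))$ I would iterate Lemma \ref{la:sep}\ref{it:sep1}. Starting from $\phi_0:=\alpha$, $\psi_0:=\beta$, $l_0:=j$, suppose inductively that $((i,\phi_k),(l_k,\psi_k))\in\si$ with $\phi_k\in D_{q-k}$, $\rank\psi_k\le q-k-1$ and $l_k\ge j$, for some $k<q-p$. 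Since $\rank\phi_k=q-k\ge p+1\ge 2$ and $\rank\phi_k>\rank\psi_k$, Lemma \ref{la:sep}\ref{it:sep1} supplies $\zeta\in\P_n$ with $\zeta\phi_k\in D_{q-k-1}$, $\zeta\psi_k\in I_{q-k-2}$ (note that the proof of that lemma in fact lands $\zeta\psi_k$ in $I_{q-2}$, not merely in $I_{q-1}\setminus H_{\zeta\phi_k}$) and $\Float(\zeta,\phi_k)=0$. Setting $\phi_{k+1}:=\zeta\phi_k$, $\psi_{k+1}:=\zeta\psi_k$ and $l_{k+1}:=l_k+\Float(\zeta,\psi_k)\ge l_k$, left-multiplication by $(0,\zeta)$ together with $\Float(\zeta,\phi_k)=0$ keeps the first coordinate of the $\phi$-side equal to $i$, so $((i,\phi_{k+1}),(l_{k+1},\psi_{k+1}))\in\si$ and the inductive hypotheses persist. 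After $q-p$ steps we reach $\phi:=\phi_{q-p}\in D_p$, $\psi:=\psi_{q-p}$ with $\rank\psi\le p-1$, and $l^*:=l_{q-p}\ge j$, which is the pair we wanted.

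The point that needs the most care is the bookkeeping of the first coordinate: the conclusion demands a pair with $\gamma$ sitting in column $i$, and every left- or right-multiplication shifts the column by the number of floating components that are created. This is exactly why the descent is run entirely on the higher-rank side $\phi$ — whose first coordinate stays at $i$, because the relevant $\Float$ in Lemma \ref{la:sep}\ref{it:sep1} vanishes there — and why it matters that each application of Lemma \ref{la:sep}\ref{it:sep1} pushes the rank of the lower side strictly below the new rank of $\phi$. That strengthened bound (readable off the proof of Lemma \ref{la:sep}\ref{it:sep1}) is what lets us avoid ever invoking Lemma \ref{la:sep}\ref{it:sep2}, whose conclusion may force a swap of the two partitions and thereby disturb column $i$. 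I expect this coordinate control, rather than any combinatorial difficulty with partitions, to be the only real obstacle.
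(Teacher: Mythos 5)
Your proof is correct, and it reaches the conclusion by a genuinely different decomposition than the paper's. The paper handles the case $s:=\rank\gamma<q$ in a single multiplication: it builds one partition $\eta$ with exactly $s$ transversals (an $s$-transversal variant of the witness used in the proof of Lemma \ref{la:sep}\ref{it:sep1}) which simultaneously sends $\alpha$ into $D_s$ with $\Float(\eta,\alpha)=0$ and sends $\beta$ into $I_{s-1}$, and then invokes Lemma \ref{la:B3} once. You instead iterate Lemma \ref{la:sep}\ref{it:sep1} itself $q-p$ times, lowering the high-rank side by one at each step, and you correctly isolate the two points that make either version work: the vanishing of $\Float$ on the high-rank side keeps that element in column $i$, and the low-rank side must stay \emph{strictly} below after each step so that part \ref{it:sep1} (rather than part \ref{it:sep2}, which may swap the two elements) remains applicable. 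Your reliance on the strengthened conclusion $\ga\be\in I_{q-2}$ is the only delicate point, since the statement of Lemma \ref{la:sep}\ref{it:sep1} only promises $I_{q-1}\sm H_{\ga\al}$; but that stronger containment is explicitly established in the proof of that lemma, so your induction is sound. Your treatment of the case $\rank\gamma>r$ by a direct appeal to Lemma \ref{la:B3} is also slightly more economical than the paper's, which only shortcuts the case $\rank\gamma=q$. The trade-off is that the paper's one-shot construction avoids the induction and the appeal to a fact buried in another proof, at the cost of writing down a new witness partition; your version reuses existing machinery at the cost of the iteration.
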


\begin{proof}
Let $((i,\alpha),(j,\beta))\in\si\cap (D_{qi}\times D_{rj})$, and write $s:=\rank\gamma$. If $s=q$ the assertion follows from Lemma \ref{la:B3}.
Now suppose $s<q$.
Write 
$\alpha=\begin{partn}{6} A_1&\dots&A_q&C_1&\dots&C_u\\ \hhline{~|~|~|-|-|-} B_1&\dots&B_q&E_1&\dots&E_v\end{partn}$,
and pick $a_t\in A_t$ ($t=1,\dots,q$).
Since $q\geq 2$ and $q>r$, reordering the transversals of $\alpha$ if necessary, we may assume that one of the following holds:
$a_1\not\in\dom\beta$, or else ${a_1,a_2\in\dom\beta}$ and $(a_1,a_2)\in\ker\beta$.
In either case let $\eta:=\begin{partn}{4} a_1&\dots&a_{s-1}&a_s\\ a_1&\dots&a_{s-1}&\{a_s\}\cup C_1\cup\dots\cup C_u\end{partn}$, again with unlisted elements being singletons. Now for $\gamma':=\eta\alpha$, $\delta':=\eta\beta$ and $l':=j+\Float(\eta,\be)\geq j$, we have $\rank\gamma'=s>\rank\delta'$ and $\Float(\eta,\alpha)=0$, so that $((i,\gamma'),(l',\delta')) = (0,\eta) \cdot ((i,\al),(j,\be))\in\si$.
Since $\rank\gamma=\rank\gamma'$, another application of Lemma \ref{la:B3} now implies the existence of $\delta$ and $l$ as specified.
\end{proof}

\begin{lemma}
\label{la:Z1}
If $\si\cap (D_{qi}\times D_{rj})\neq\emptyset$, where $q>r$ and $q\geq 2$, then $\si\cap (D_{qi}\times D_{sk})\neq\emptyset$ for some $s\leq r$ and $k\geq i$.
\end{lemma}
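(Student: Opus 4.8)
Write $((i,\al),(j,\be))\in\si\cap(D_{qi}\times D_{rj})$, so $\rank\al=q\geq 2$, $\rank\be=r<q$. If $j\geq i$ there is nothing to do: take $s=r$ and $k=j$. So assume $j<i$. The aim is to produce a pair $((i,\ga),(k,\de))\in\si$ with $\ga\in D_q$, $\rank\de\leq r$ and $k\geq i$; since $\al$ itself lies in $D_{qi}$ we only need to move the ``low-rank side'' of the relation up to column $\geq i$ without disturbing the ``high-rank side''. I will prove this by induction on $i-j$, the step being: from $((i,\alpha),(j,\beta))\in\si$ with $\rank\alpha=q>\rank\beta$ and $j<i$, produce $((i,\alpha'),(j',\beta'))\in\si$ with $\rank\alpha'=q$, $\rank\beta'\leq\rank\beta$ and $j'\geq j+1$; iterating at most $i-j$ times then lands the second coordinate at $\geq i$.

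For the inductive step, first normalise using Lemma \ref{la:B3}: applying it with a convenient $\ga\in D_q$ having \emph{no non-transversal blocks at all} (so that $\Float(\ga,-)=\Float(-,\ga)=0$ for every partition, because a floating component of a product graph is always built from a non-transversal block of each factor), I may assume $\alpha$ is this ``inert'' idempotent $e\in D_q$, while $\beta$ is replaced by some $\de\in I_r$ at a column $l_0\geq j$; if already $l_0\geq i$ we are done, so assume $l_0<i$. Since $\rank\de\leq r<q\leq n$, the partition $\de$ has a lower non-transversal, say $\hat E'$ with $\emptyset\neq\hat E\subseteq\bn\setminus\codom\de$. Now right-multiply the pair $((i,e),(l_0,\de))\in\si$ by $(0,\zeta)$ for a suitably chosen $\zeta$: because $e$ is inert, the left-hand column stays at $i$ no matter what $\zeta$ is, while choosing $\zeta$ to have an upper non-transversal matching $\hat E$ creates a floating component in $\Ga(\de,\zeta)$, so the right-hand column rises by at least one, and $\rank(\de\zeta)\leq\rank\de$. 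Provided $\zeta$ also preserves the $q$ transversals of $e$ (so that $e\zeta\in D_q$) and itself has no lower non-transversal blocks (so that $e\zeta$ is again inert, ready for the next round), the step is complete.

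The technical heart — and the main obstacle — is exactly this choice of $\zeta$: the transversal bottoms of $e$ partition all of $\bn'$, so an upper non-transversal of $\zeta$ unavoidably meets one of them, and one must arrange that it meets only one so that no transversal of $e$ is merged; this is handled by choosing $e$ (and the $\mathrm{B3}$-representation, which governs the cokernel of $\de$) compatibly, e.g.\ so that $\de$ has singleton lower non-transversals situated inside the ``big'' transversal bottom of $e$. The other obstacle is the degenerate case $q=n$: then $D_q$ consists of permutations, which have no non-transversal blocks and therefore resist creating floating components by multiplication, so the boost above is unavailable. In that case one instead pre-multiplies the relation by $(0,\om)$ (with $\om$ the one-block partition), using $\om\al\in D_1$ and $\om\be\in D_{\leq 1}$ and $\Float(\om,-)=0$, to drop into rows $\leq 1$, and then invokes the row-$0$/$1$ lemmas of Subsections \ref{subsec:I00}--\ref{subsec:I10} (principally Lemmas \ref{la:C1}, \ref{la:(i+1,j)} and \ref{la:D1}, together with the descending-chain property of Lemma \ref{la:siqcong}) to see that the ``crossing'' $j<i$ forces enough of $\si$ — concretely $R_{I_{qi}}\subseteq\si$ — that $(i,\alpha)$ is already $\si$-related to a rank-$r$ element in column $i$, whence $s=r$, $k=i$ works.
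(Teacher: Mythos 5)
Your strategy — pin $(i,\alpha)$ in $D_{qi}$ and repeatedly boost the column of the low‑rank partner by manufacturing floating components — founders exactly where you say it does, and the gap is genuine, not cosmetic. After Lemma \ref{la:B3} you get to choose the inert $e\in D_q$ but you do \emph{not} get to choose $\delta$: the lemma produces $\delta=\eta_1\beta\eta_2$ for $\eta_1,\eta_2$ dictated by the requirement $\eta_1\alpha\eta_2=e$, and nothing forces the lower non-transversals of $\delta$ to refine the transversal bottoms $A_1,\dots,A_q$ of $e$, let alone to be singletons sitting inside the "big" one. A lower non-transversal $E'$ of $\delta$ may straddle several $A_t$, and may even contain an entire $A_t$; in that case any $\zeta$ whose upper non-transversals cover $E$ (as is needed to float $E''$ in $\Gamma(\delta,\zeta)$) disconnects the transversal of $e$ with top $A_t$ from $\bn'$, so $e\zeta\notin D_q$. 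So "choose $e$ and the B3-representation compatibly" is not a proof step — it is precisely the unproved claim, and the same difficulty recurs at every iteration since $\delta\zeta$ again has uncontrolled lower non-transversals.

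The $q=n$ branch has a second, independent gap. The claim that the crossing $j<i$ forces $R_{I_{ni}}\subseteq\si$ is essentially Lemma \ref{la:B5'}, which the paper deduces \emph{from} Lemmas \ref{la:Z1} and \ref{la:Z2}, so invoking it here is circular; and the lemmas you actually cite do not deliver it. Pre-multiplying by $\om$ only yields $((i,\om),(j,\om))\in\si$, hence $(i,j)\in\th_1$; since the $\th_q$ decrease in $q$ this says nothing about $\th_r$ for $r\geq2$, and it produces no pair of \emph{distinct} $\si$-related partitions, which is what Lemma \ref{la:C3} needs before it can output a Rees congruence. The paper sidesteps all of this by reversing which side is held fixed: it builds from the lower blocks of $\be$ a partition $\eta$ of rank exactly $r$ with $\be\eta=\be$ and $\Float(\be,\eta)=0$, and multiplies the \emph{reversed} pair $((j,\be),(i,\al))$ by $(0,\eta)$. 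This leaves $(j,\be)$ untouched while sending $(i,\al)$ to $(i+\Float(\al,\eta),\al\eta)$ with $\rank(\al\eta)\leq\rank\eta=r$ and column $\geq i$; transitivity with the original pair finishes in one step, with no rank preservation on the high side, no iteration, and no special case for $q=n$. I'd recommend abandoning the boosting scheme in favour of this "collapse the high-rank side with a right identity for $\be$" idea.
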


\pf
If $j\geq i$ we are already done, so suppose $j<i$, and fix $((i,\al),(j,\be))\in\si\cap(D_{qi}\times D_{rj})$.  Write $\be = \begin{partn}{6} A_1&\dots&A_r&C_1&\dots&C_u\\ \hhline{~|~|~|-|-|-} B_1&\dots&B_r&E_1&\dots&E_v\end{partn}$, and set $\eta := \begin{partn}{7} B_1&\dots&B_{r-1}&B_r\cup E_1\cup\cdots\cup E_v&\multicolumn{3}{c}{}\\ \hhline{~|~|~|~|-|-|-} B_1&\dots&B_{r-1}&B_r& E_1&\cdots& E_v\end{partn}$.  Then $\be=\be\eta$ and $\Float(\be,\eta)=0$.  Then with $k:=i+\Float(\al,\eta)$, we have $((j,\be),(k,\al\eta))=((j,\be),(i,\al))\cdot(0,\eta)\in\si$, and so $((i,\al),(k,\al\eta))\in\si$ by transitivity.  But $k\geq i$, and $s:=\rank(\al\eta)\leq\rank\eta=r$.
\epf

\begin{lemma}
\label{la:Z2}
If $\si\cap (D_{qi}\times D_{rj})\neq\emptyset$, where $q>r$ and $q\geq 2$, then $\si\cap (D_{qi}\times D_{0k})\neq\emptyset$ for some $k\in\N$.
\end{lemma}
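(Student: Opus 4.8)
The plan is to iterate Lemma~\ref{la:Z1} until the rank drops all the way down to $0$. Concretely, suppose $\si\cap(D_{qi}\times D_{rj})\neq\emptyset$ with $q>r$ and $q\geq2$. If $r=0$ we are already done (take $k=j$), so assume $r\geq1$. The idea is to produce, by downward induction on the target rank, a nonempty intersection $\si\cap(D_{qi}\times D_{sk})$ with ever smaller $s$, keeping the source $\D$-class $D_{qi}$ fixed; once $s=0$ we are finished.

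First I would set up the induction carefully. The statement to prove by induction on $r$ (for fixed $q\geq2$ with $q>r$) is: \emph{if $\si\cap(D_{qi}\times D_{rj})\neq\emptyset$, then $\si\cap(D_{qi}\times D_{0k})\neq\emptyset$ for some $k\in\N$.} The base case $r=0$ is trivial. For the inductive step, suppose the claim holds for all ranks $<r$, and that $\si\cap(D_{qi}\times D_{rj})\neq\emptyset$ with $q>r\geq1$. Since $q>r\geq1$ and $q\geq2$, Lemma~\ref{la:Z1} applies and yields $s\leq r$ and $k\geq i$ with $\si\cap(D_{qi}\times D_{sk})\neq\emptyset$. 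If $s<r$, then since $q>r>s$ we may invoke the inductive hypothesis (with source $D_{qi}$ still, target rank $s$) to conclude $\si\cap(D_{qi}\times D_{0k'})\neq\emptyset$ for some $k'$, and we are done. The one issue is that Lemma~\ref{la:Z1} only guarantees $s\leq r$, not $s<r$; if $s=r$ we have not decreased the rank. Here I would observe that Lemma~\ref{la:Z1} does give us something extra: repeating it doesn't obviously help since we could cycle at rank $r$, so instead I would combine it with Lemma~\ref{la:B4}. Indeed, if $s=r$ then $\si\cap(D_{qi}\times D_{rk})\neq\emptyset$ with $q>r$ and $q\geq2$, so Lemma~\ref{la:B4} (applied with $\gamma$ any element of $I_q\setminus D_0$ of rank exactly $r$, e.g.\ taken from $D_{qi}$ itself so that the source stays $(i,\cdot)$) produces $l\geq k\geq i$ and $\delta$ with $\rank\delta<r$ and $((i,\gamma),(l,\delta))\in\si$; since $\gamma\in D_{qi}$ this shows $\si\cap(D_{qi}\times D_{s'l})\neq\emptyset$ for $s'=\rank\delta<r$, and now the inductive hypothesis finishes the job.

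The main obstacle I anticipate is exactly this bookkeeping: Lemma~\ref{la:Z1} moves us to rank $\leq r$ and a column $\geq i$ but may not strictly decrease the rank, while Lemma~\ref{la:B4} strictly decreases the rank but a priori needs a pair of the form $((i,\gamma),(l,\delta))$ whose first coordinate column is exactly $i$ — which is fine because $D_{qi}$ is a single $\D$-class and any $\gamma\in D_q$ at column $i$ works, so after Lemma~\ref{la:Z1} has (if necessary) corrected the column to be $\geq i$, one more application of Lemma~\ref{la:B4} (using Lemma~\ref{la:Ap1} to bump the first coordinate back down to $i$ if the column strictly increased, noting $D_{qi}$ is unchanged as a set of partitions while the first coordinate is literally $i$) drops the rank. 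So the clean write-up is: given $\si\cap(D_{qi}\times D_{rj})\neq\emptyset$ with $q>r\geq1$ and $q\geq2$, first apply Lemma~\ref{la:B4} with $\gamma$ chosen in $D_q$ to get a strict rank drop while keeping the first coordinate $i$, reaching $\si\cap(D_{qi}\times D_{r'l})\neq\emptyset$ with $r'<r$; then induct on $r$. The base case $r'=0$ is immediate, and when $q>r'\geq1$ we still have $q\geq2$, so the inductive hypothesis applies. This avoids any reference to Lemma~\ref{la:Z1} altogether and keeps the argument short.
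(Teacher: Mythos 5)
There is a genuine gap, and it sits exactly where you sensed trouble. Lemma \ref{la:B4} produces a pair of the form $((i,\gamma),(l,\delta))$ with $\rank\delta<\rank\gamma$, so you face a dichotomy that you cannot escape by choosing $\gamma$ cleverly: if you take $\gamma\in D_q$ (so that the source of the new pair lies in $D_{qi}$), the conclusion is only $\rank\delta<q$, not $\rank\delta<r$, so the target rank need not drop below $r$ and your induction on $r$ does not advance (iterating does not help either, since the target rank could sit at $q-1$ forever); if instead you take $\gamma$ of rank exactly $r$ (so that $\rank\delta<r$), then $(i,\gamma)\in D_{ri}$, and the pair you obtain witnesses $\si\cap(D_{ri}\times D_{s'l})\neq\emptyset$, not $\si\cap(D_{qi}\times D_{s'l})\neq\emptyset$. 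Your parenthetical ``$\gamma$ \dots of rank exactly $r$, e.g.\ taken from $D_{qi}$ itself'' asks for both at once, which is impossible because every element of $D_{qi}$ has rank $q>r$. Consequently your final ``clean write-up'', which applies Lemma \ref{la:B4} with $\gamma\in D_q$ and claims a target rank $r'<r$, asserts something the lemma does not give.

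The missing ingredient is a transitivity and column-alignment step, and it is also the reason Lemma \ref{la:Z1} cannot be dispensed with. The paper first uses Lemma \ref{la:Z1} to assume $j\geq i$, fixes $((i,\al),(j,\be))\in\si$, and applies Lemma \ref{la:B4} with $\gamma=\be$ (which has rank $r$) to get $((i,\be),(l,\de))\in\si$ with $\rank\de<r$. This new pair has its source in row $r$, but Lemma \ref{la:Ap1} allows one to add $j-i\geq0$ to both first coordinates, giving $((j,\be),(l+j-i,\de))\in\si$; transitivity with the original pair $((i,\al),(j,\be))$ then yields $((i,\al),(l+j-i,\de))\in\si$, i.e.\ a pair in $D_{qi}\times D_{s,l+j-i}$ with $s<r$, and the induction on $r$ closes. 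If you drop Lemma \ref{la:Z1} you lose the inequality $j\geq i$ and cannot perform this shift, since Lemma \ref{la:Ap1} only moves columns to the right.
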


\pf
If $r=0$ then there is nothing to show, so suppose instead that $0<r<q$.  By induction, it suffices to show that $\si\cap (D_{qi}\times D_{sj'})\neq\emptyset$ for some $j'\in\N$ and some $s<r$.  
By Lemma \ref{la:Z1} we may assume that $j\geq i$, and we fix some $((i,\al),(j,\be))\in\si\cap(D_{qi}\times D_{rj})$.  
Now, Lemma~\ref{la:B4} (with~$\ga=\be$) gives $((i,\be),(l,\de))\in\si$ for some $l\geq j$ and $\de\in I_{r-1}$.  Since $j\geq i$, it then follows from Lemma \ref{la:Ap1} that $((j,\be),(l+j-i,\de))\in\si$, and then by transitivity that $((i,\al),(l+j-i,\de))\in\si$, as required.
\epf

The next two statements refer to the (possibly empty) ideal $I(\si)$ of $\Ptw n$ from Definition \ref{defn:Isi}.

\begin{lemma}
\label{la:B5'}
If $\si\cap (D_{qi}\times D_{rj})\neq\emptyset$, where $q>r$ and $q\geq 2$, then $I_{qi}\cup I_{rj}\subseteq I(\si)$.
\end{lemma}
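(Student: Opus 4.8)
The plan is to prove the stronger assertion that $R_{I_{qi}\cup I_{rj}}\subseteq\si$; since $I_{qi}\cup I_{rj}$ is an ideal (a union of principal ideals), this is equivalent to $I_{qi}\cup I_{rj}\subseteq I(\si)$. First I would reduce to showing $R_{I_{qi}}\subseteq\si$. Fix $((i,\al),(j,\be))\in\si\cap(D_{qi}\times D_{rj})$, and use Lemma \ref{la:Z1} to assume $j\geq i$. For any $(l,\gamma)\in I_{rj}$ we have $l\geq j$ and $\rank\gamma\leq r=\rank\be$, so by Lemma \ref{la:0float} we may write $\gamma=\xi\be\eta$ with $\Float(\xi,\be,\eta)=0$; multiplying the pair $((i,\al),(j,\be))$ on the left by $(l-j,\xi)$ and on the right by $(0,\eta)$ then relates $(l,\gamma)$ to an element of $I_{qi}$ (its column is $\geq i$ and its rank is $\leq q$). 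Since $(i,\al)\in I_{qi}$, $(j,\be)\in I_{rj}$ and $(i,\al)\rsi(j,\be)$, it follows that once $R_{I_{qi}}\subseteq\si$ is established, transitivity collapses all of $I_{qi}\cup I_{rj}$ into a single $\si$-class.

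To prove $R_{I_{qi}}\subseteq\si$, I would apply Lemma \ref{la:Z2} — whose proof in fact keeps the column of the rank-$0$ element at least $i$ — to obtain $((i,\al'),(k,\be'))\in\si$ with $\al'\in D_q$, $\be'\in D_0$ and $k\geq i$. The crucial step is then to show that the bottom row collapses from column $i$ onward, that is, $R_{I_{0i}}\subseteq\si$. For this I would work with the projection $\Proj\si$, which is a congruence on $\P_n$ by Proposition \ref{prop:projcong}. We have $(\al',\be')\in\Proj\si$ with $\rank\al'=q\geq2$ and $\rank\be'=0$; on the other hand, every non-diagonal pair belonging to $\lam_{\S_2}$ or to $\rho_{\S_2}$ consists of two partitions that either both lie in $I_1$ or both lie in a single $\H$-class of $D_2$, so $(\al',\be')\notin\lam_{\S_2}\cup\rho_{\S_2}$. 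Hence $\Proj\si\not\sub\lam_{\S_2}$ and $\Proj\si\not\sub\rho_{\S_2}$, and consulting Theorem \ref{thm:CongPn} (or Figure \ref{fig:CongPn}) one sees that the only congruences of $\P_n$ with this property are the $R_q$ and the $R_N$, all of which contain $R_0$; therefore $\Proj\si\supseteq R_0$, so $\Proj\si\restr_{D_0}=\nabla_{D_0}$. In particular $\Proj\si\not\sub\mu_{\S_2}$, so Lemma \ref{la:C4a'} gives $\th_0=(m,m+1)^\sharp$ with $\si_{0l}=\nabla_{D_0}$ for all $l\geq m$; and Lemma \ref{la:(i+1,j)}, applied to $\si\cap(D_{0k}\times D_{qi})\neq\emptyset$, yields $(k+1,i)\in\th_0$, whence $m=\min\th_0\leq i$. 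Combined with $(l,l+1)\in\th_0$ for all $l\geq m$, this shows that all of $I_{0i}=\{l:l\geq i\}\times D_0$ forms one $\si$-class, i.e.\ $R_{I_{0i}}\subseteq\si$.

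Finally I would climb back up from the bottom row. Given any $(l,\gamma)\in I_{qi}$ (so $l\geq i$ and $\rank\gamma\leq q$), write $\gamma=\xi\al'\eta$ with $\Float(\xi,\al',\eta)=0$ (Lemma \ref{la:0float}), and multiply $((i,\al'),(k,\be'))$ on the left by $(l-i,\xi)$ and on the right by $(0,\eta)$: this relates $(l,\gamma)$ to $\big(l-i+k+\Float(\xi,\be',\eta),\,\xi\be'\eta\big)$, which lies in $I_{0i}$ since its rank is $0$ and its column is $\geq l-i+k\geq i$. Thus every element of $I_{qi}$ is $\si$-related to an element of $I_{0i}$; since $R_{I_{0i}}\subseteq\si$ makes $I_{0i}$ a single $\si$-class, transitivity forces $I_{qi}$ to be a single $\si$-class as well, giving $R_{I_{qi}}\subseteq\si$ and hence the lemma. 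The step needing the most care is the collapse of the bottom row: one must invoke the classification of $\Cong(\P_n)$ correctly to leap from ``$\si$ identifies a rank-$\geq2$ partition with a rank-$0$ one'' to ``$\Proj\si\supseteq R_0$'', and then keep careful track of the columns $k$, $m$ and $i$ so that it is precisely $R_{I_{0i}}$ — and not $R_{I_{0i'}}$ for some $i'>i$ — that emerges.
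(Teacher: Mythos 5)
Your proof is correct and follows essentially the same route as the paper's: reduce to collapsing $I_{qi}$, descend to row $0$ at a column $k\geq i$ via Lemmas \ref{la:Z2} and \ref{la:Z1}, invoke the classification of $\Cong(\P_n)$ together with Lemma \ref{la:(i+1,j)} to collapse row $0$ from column $\min\th_0\leq i$ onwards, and transfer this back to $I_{qi}$ and $I_{rj}$. The only cosmetic differences are that you carry out the transfers by explicit multiplication where the paper appeals to $I(\si)$ being a single $\si$-class, and that the bound $k\geq i$ is cleaner to obtain by applying Lemma \ref{la:Z1} after Lemma \ref{la:Z2} (as the paper does) rather than by inspecting the internals of the latter's proof, whose base case does not by itself control the column.
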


\pf
Since the ideal $I(\si)$ is a $\si$-class, it suffices to show that $I_{qi}\sub I(\si)$, since then also $I_{rj}\sub I(\si)$.  
By Lemma~\ref{la:Z2} we have $\si\cap (D_{qi}\times D_{0k})\neq\emptyset$ for some $k\in\N$, and by Lemma \ref{la:Z1} we may assume that $k\geq i$.  
Again, it suffices to show that $I_{0k}\sub I(\si)$.  

By Lemma \ref{la:(i+1,j)}, we have $(i,k+1)\in\th_0$; since $i<k+1$ (as $i\leq k$), it follows that ${i\geq m:=\min \th_0}$.  Since $\si\cap (D_{qi}\times D_{0k})\neq\emptyset$, it follows that $\Proj\si\cap (D_q\times D_0)\neq\emptyset$, so Theorem \ref{thm:CongPn} (see~Figure \ref{fig:CongPn}) gives $\Proj\si\supseteq R_0$, and so $\Proj\si\restr_{D_0}=\nab_{D_0}$.  It then follows from Lemma~\ref{la:C3} that $R_{I_{0m}}\sub\si$, i.e.~$I_{0m}\subseteq I(\si)$, and we are done since $k\geq i\geq m$.
\epf

\begin{lemma}
\label{la:B5}
If $((i,\alpha), (j,\beta))\in \si\cap (D_{qi}\times D_{rj})$, where $q\geq r$, $q\geq 2$ and $(\alpha,\beta)\not\in\H$, then $I_{qi}\cup I_{rj}\subseteq I(\si)$.
\end{lemma}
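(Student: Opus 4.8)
The plan is to reduce everything to the already-proved Lemma~\ref{la:B5'}. Observe first that if $q>r$ then $\al$ and $\be$ lie in distinct $\D$-classes of $\P_n$, so the hypothesis $(\al,\be)\notin\H$ is automatic and the conclusion is \emph{exactly} Lemma~\ref{la:B5'}. Hence the only substantive case is $q=r\geq2$, which I would treat as follows.

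Since $(\al,\be)\notin\H$ and $q\geq1$, Lemma~\ref{la:sep}\ref{it:sep2} furnishes a partition $\ga\in\P_n$ such that, after possibly interchanging $\al$ and $\be$ (which is harmless: $\si$ is symmetric, so we may instead work with $((j,\be),(i,\al))\in\si$, and the target set $I_{qi}\cup I_{qj}$ is symmetric in $i$ and $j$), one of the following holds: $[\al\ga\in D_q,\ \be\ga\in I_{q-1},\ \Float(\al,\ga)=0]$ or $[\ga\al\in D_q,\ \ga\be\in I_{q-1},\ \Float(\ga,\al)=0]$. Multiplying $((i,\al),(j,\be))\in\si$ on the appropriate side by $(0,\ga)$ then yields a pair in $\si$ whose first entry lies in $D_{qi}$ (the point is that $\ga$ creates no floating components against the $\al$-side, so that $\N$-coordinate is unchanged) and whose second entry lies in $D_{sk}$ for some $s\leq q-1<q$ and some $k\in\N$. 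Applying Lemma~\ref{la:B5'} to this pair gives at once $I_{qi}\subseteq I(\si)$ (and incidentally $I(\si)\neq\emptyset$).

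It remains to upgrade this to $I_{rj}=I_{qj}\subseteq I(\si)$, and this transport from column $i$ to column $j$ is the step I expect to require the most care. Here I would argue: since $R_{I(\si)}\subseteq\si$, the ideal $I(\si)$ is contained in a single $\si$-class $C$; and since $(i,\al)\in D_{qi}\subseteq I_{qi}\subseteq I(\si)$ while $((i,\al),(j,\be))\in\si$, the element $(j,\be)$ also lies in $C$. By Lemma~\ref{la:Green_Ptwn}, $I_{qj}$ is the principal ideal of $\Ptw n$ generated by $(j,\be)$ (as $\be\in D_q$), so every $\bd\in I_{qj}$ has the form $\bd=\ba\cdot(j,\be)\cdot\bb$ for some $\ba,\bb\in\Ptw n$; compatibility of $\si$ then gives $\bd\rsi\ba\cdot(i,\al)\cdot\bb$, and the latter lies in the principal ideal generated by $(i,\al)$, namely $I_{qi}\subseteq I(\si)\subseteq C$, whence $\bd\in C$. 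Thus $I(\si)\cup I_{qj}$ is an ideal contained in the single $\si$-class $C$, so $R_{I(\si)\cup I_{qj}}\subseteq\si$, and maximality of $I(\si)$ forces $I_{qj}\subseteq I(\si)$. Together with $I_{qi}\subseteq I(\si)$ this is the desired conclusion.

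The main obstacle, as indicated, is precisely this last bootstrap: Lemma~\ref{la:B5'} delivers only the ``$i$-half'' $I_{qi}$ directly from a nonempty intersection $\si\cap(D_{qi}\times D_{sk})$, and one must then separately exploit that $I(\si)$ is a full $\si$-class, together with two-sided compatibility and the grid description of principal ideals in Lemma~\ref{la:Green_Ptwn}, to carry the conclusion over to $I_{qj}$. Everything else is routine manipulation of floating-component counts.
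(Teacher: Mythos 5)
Your proposal is correct and follows essentially the same route as the paper: reduce to $q=r$ via Lemma~\ref{la:B5'}, use Lemma~\ref{la:sep}\ref{it:sep2} (with the zero-float condition preserving the $\N$-coordinate on the surviving side) to produce a $\si$-pair straddling two rows, apply Lemma~\ref{la:B5'} again, and then transport the conclusion from one column to the other. The only difference is that the paper compresses your final bootstrap into the observation that $I(\si)$ is a single $\si$-class, whereas you prove that sufficiency explicitly via compatibility and the description of principal ideals — a correct unpacking of the same idea.
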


\pf
In light of Lemma \ref{la:B5'}, it suffices to consider the case in which $q=r$, and again it suffices to show that $I(\si)$ contains either $I_{qi}$ or $I_{qj}$.  By Lemma \ref{la:sep}\ref{it:sep2}, one of $\si\cap (D_{qi}\times D_{sk})$ or $\si\cap (D_{qj}\times D_{sk})$ is non-empty for some $s<q$ and $k\in\N$.  The result then follows from another application of Lemma \ref{la:B5'}.
\epf

The next two statements refer to the relations $\nu_N$ defined just before Theorem \ref{thm:CongPn}; recall in particular that $\nu_{\{\id_q\}}=\De_{D_q}$.

\begin{lemma}
\label{la:B14}
If ${\si}_{qi}\subseteq\H$ where $q\geq2$, then ${\si}_{qi}=\nu_N$ for some $N\unlhd \S_q$.
\end{lemma}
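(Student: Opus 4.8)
The plan is to determine $\si_{qi}$ by first understanding its restriction to a single group $\H$-class of $D_q$ and then spreading that information over the whole $\D$-class. Fix a maximal subgroup $H$ of $D_q$, that is, an $\H$-class containing an idempotent $e$, so that $H\cong\S_q$. The first claim is that $\si_{qi}\restr_H$ is a congruence of the group $H$. Granting this, congruences of a group correspond to normal subgroups, so $\si_{qi}\restr_H=\{(x,y)\in H\times H:xy^{-1}\in N\}$ for a unique $N\unlhd H\cong\S_q$ (with $N=\{\id_q\}$ if $\si_{qi}\restr_H$ is trivial); comparing with the definition of $\pd$, and using that $N$ is closed under inversion and conjugacy, this says exactly $\si_{qi}\restr_H=\nu_N\restr_H$.

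To prove the first claim one cannot simply multiply a pair $((i,x),(i,y))\in\si$ with $x,y\in H$ by $(0,g)$ for $g\in H$, since this shifts the $\N$-coordinate by $\Float(g,x)=\Float(g,y)$, which need not be $0$ (it equals $n-q$ for the obvious rank-$q$ idempotent). The fix is to multiply instead by a partition $\de$ chosen, via Lemma \ref{la:0float}\ref{it:0f1}, so that $e\de=g$ and $\Float(e,\de)=0$. Since $x=xe$ and $x\rL e$ for every $x\in H$, we then get $x\de=x(e\de)=xg$ and, by Lemma \ref{la:A4}, $\Float(x,\de)=\Float(e,\de)=0$; multiplying $((i,x),(i,y))$ on the right by $(0,\de)$ therefore stays in column $i$ and gives $((i,xg),(i,yg))\in\si$, so $\si_{qi}\restr_H$ is right-invariant under $H$. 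Left-invariance is dual, so $\si_{qi}\restr_H$ is a group congruence, as claimed.

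It remains to transport this to an arbitrary $\H$-class $H'$ of $D_q$. Let $(\al,\be)\in\si_{qi}$; by hypothesis $\al\rH\be$, both lying in a common $H'$. As $H'$ and $H$ lie in the same $\D$-class, we may write $u\al v\in H$ for suitable $u,v\in\P_n$, and by Lemma \ref{la:0float}\ref{it:0f2} we may choose $u,v$ so that $\Float(u,\al,v)=0$. Rank is then preserved throughout; a short direct check shows that a rank-preserving left (resp.\ right) multiplication within $D_q$ does not change the $\L$-class (resp.\ $\R$-class), so $u\al\rH u\be$ and $u\al v\rH u\be v$, and hence, via Lemma \ref{la:A4}, $\Float(u,\be,v)=0$ as well. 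Thus $((i,u\al v),(i,u\be v))=(0,u)\cdot((i,\al),(i,\be))\cdot(0,v)\in\si$ with $u\al v,u\be v\in H$, so $(u\al v,u\be v)\in\si_{qi}\restr_H=\nu_N\restr_H$, and two applications of Lemma \ref{la:AA2} give $(\al,\be)\in\nu_N$; this proves $\si_{qi}\subseteq\nu_N$. The reverse inclusion $\nu_N\subseteq\si_{qi}$ is symmetric, moving a pair of $\nu_N$ into $H$ and pulling it back along the inverse translation supplied by Green's Lemma (again with floating components controlled via Lemmas \ref{la:0float} and \ref{la:A4}). Finally, since $\si_{qi}\subseteq\H$ has no pairs joining distinct $\H$-classes, we conclude $\si_{qi}=\nu_N$.

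The technical heart, and the step I expect to be the main obstacle, is the floating-component bookkeeping in both stages: ensuring the translating partitions can be chosen so that no new floating components appear, so that all relevant pairs remain in the single column $i$. The tools for this are Lemma \ref{la:0float}, the $\L$- and $\R$-invariance of $\Float$ from Lemma \ref{la:A4}, and the observation that rank-preserving translations fix $\L$- and $\R$-classes; by contrast, the group-theoretic input (congruences of $\S_q$ correspond to normal subgroups) and the invariance of $\nu_N$ under multiplication within $D_q$ (Lemma \ref{la:AA2}) are routine.
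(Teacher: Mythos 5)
Your proof is correct, but it takes a genuinely different route from the paper's. The paper's proof is a two-line reduction: it shows that adjoining $R_{I_{q-1}}$ to $\si_{qi}$ yields a congruence on $\P_n$ (for $(\al,\be)\in\si_{qi}$ and $\ga\in\P_n$, either both products fall into $I_{q-1}$, or one replaces $\ga$ by a $\ga'$ with $\al\ga=\al\ga'$ and $\Float(\al,\ga')=0$, whence $\Float(\be,\ga')=0$ by Lemma \ref{la:A4} and the pair stays in column $i$), and then simply reads off from the classification in Theorem \ref{thm:CongPn} that the only restrictions to $D_q$ of congruences on $\P_n$ contained in $\H$ are the $\nu_N$. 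You instead localise to a group $\H$-class, identify $N$ via the correspondence between group congruences and normal subgroups, and transport in both directions by Green's Lemma translations; your floating-component bookkeeping (choosing translating elements via Lemma \ref{la:0float} and propagating $\Float=0$ across an $\H$-class via Lemma \ref{la:A4}, together with stability to see that rank-preserving translations preserve $\L$- and $\R$-classes) is exactly the right mechanism and all the steps check out, including the mildly glossed reverse inclusion, where one also needs that replacing a translating element by its zero-float version does not change its action on the $\H$-related partner (this follows from $u\al v\rH u\be v$ by the same $\L$/$\R$ argument). The trade-off: the paper's argument is much shorter because it leans on the full strength of Theorem \ref{thm:CongPn}, while yours is self-contained at the level of the single $\D$-class, needing only the group-congruence/normal-subgroup correspondence and the well-definedness of $\nu_N$, at the cost of two transport steps and both inclusions being verified separately.
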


\begin{proof}
Bearing in mind the classification of congruences on $\P_n$ from Theorem \ref{thm:CongPn}, it is sufficient to show that
${\si}_{qi}$ is the restriction to $D_q$ of a congruence on $\P_n$.
To prove this, it is in turn sufficient to prove that for $(\alpha,\beta)\in {\si}_{qi}$ and $\gamma\in\P_n$
either $(\alpha\gamma,\beta\gamma)\in{\si}_{qi}$ or $\alpha\gamma,\beta\gamma\in I_{q-1}$.
Since $(\al,\be)\in\H$, it follows as usual that either $\alpha\gamma,\beta\gamma\in D_q$ or $\alpha\gamma,\beta\gamma\in I_{q-1}$, and in the latter case we are done.
So suppose $\alpha\gamma,\beta\gamma\in D_q$.
By Lemma \ref{la:0float}, there exists $\gamma'\in\P_n$ such that $\alpha\gamma=\alpha\gamma'$ and
$\Float(\alpha,\gamma')=0$.
From $\alpha\rL\beta$ it follows that $\beta\gamma=\beta\gamma'$, and also that $\Float(\beta,\gamma')=0$
by Lemma~\ref{la:A4}.
So $((i,\alpha\gamma),(i,\beta\gamma)) = ((i,\alpha),(i,\beta)) \cdot (0,\gamma') \in\si$, and hence $(\alpha\gamma,\beta\gamma)\in{\si}_{qi}$.
\end{proof}

We can now describe all possible restrictions of $\si$ to $\D$-classes in rows $q\geq2$.

\begin{lemma}
\label{la:B8}
For a congruence $\si$ on $\Ptw{n}$, and for any $q\in\{2,\ldots,n\}$ and $i\in\N$, the relation~${\si}_{qi}$ is either $\nabla_{D_q}$ or else $\nu_N$ for some $N\unlhd \S_q$.  
Furthermore, if $\si_{qi}=\nab_{D_q}$ and $q\neq n$, then $D_{qi}\subseteq I(\si)$.
\end{lemma}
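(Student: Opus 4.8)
The plan is to obtain this as a straightforward assembly of Lemmas \ref{la:B14} and \ref{la:B5}, which carry the real content; the present statement is essentially a case analysis. First I would record that $\si_{qi}$ is an equivalence on $D_q$, and split into two cases according to whether or not $\si_{qi}\subseteq\H\restr_{D_q}$.

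In the case $\si_{qi}\subseteq\H\restr_{D_q}$, Lemma \ref{la:B14} applies directly --- this is where the hypothesis $q\geq2$ is used --- and yields $\si_{qi}=\nu_N$ for some $N\normal\S_q$, as required. In the opposite case, I would pick a pair $(\al,\be)\in\si_{qi}$ with $(\al,\be)\notin\H$, so that $((i,\al),(i,\be))\in\si\cap(D_{qi}\times D_{qi})$; applying Lemma \ref{la:B5} with $r=q$ then gives $I_{qi}\subseteq I(\si)$. Since, by Definition \ref{defn:Isi} and the remark following it, $I(\si)$ is the largest ideal $I$ of $\Ptw n$ with $R_I\subseteq\si$, it follows that $R_{I_{qi}}\subseteq R_{I(\si)}\subseteq\si$, whence $D_{qi}\times D_{qi}\subseteq\si$, i.e.\ $\si_{qi}=\nab_{D_q}$; note that this case has simultaneously established $D_{qi}\subseteq I_{qi}\subseteq I(\si)$.

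For the final assertion, suppose $\si_{qi}=\nab_{D_q}$ and $q\neq n$, so $2\leq q<n$. Then $D_q$ contains more than one $\H$-class, so $\H\restr_{D_q}\subsetneq\nab_{D_q}=\si_{qi}$, and hence $\si_{qi}$ contains a pair not in $\H$; the second case above then applies and yields $D_{qi}\subseteq I(\si)$. I do not expect a serious obstacle here. The one point that genuinely requires the hypothesis $q\neq n$ --- and hence the thing to be careful about --- is that $D_n$ is a single $\H$-class (the symmetric group $\S_n$ of units), so for $q=n$ one has $\nab_{D_n}=\H\restr_{D_n}=\nu_{\S_n}$, which contains no non-$\H$ pair, and the final implication would fail.
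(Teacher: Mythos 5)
Your proposal is correct and follows essentially the same route as the paper, which proves this lemma by combining Lemmas \ref{la:B14} and \ref{la:B5} and noting that $q=n$ is the only value for which $D_q$ is a single $\H$-class. Your write-up simply makes the case split and the deduction $I_{qi}\subseteq I(\si)\Rightarrow\si_{qi}=\nab_{D_q}$ explicit.
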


\begin{proof}
This follows by combining Lemmas \ref{la:B5} and \ref{la:B14}, keeping in mind that the only $q$ for which $D_q$ is an $\H$-class is $q=n$.
\end{proof}

\subsection{The C-pair associated to a congruence}
\label{subsec:CPAC}

We are now ready to define the C-matrix $M$ associated with $\sigma$, and then prove that $\Pair=(\Theta,M)$ is a C-pair.

To define $M$ we proceed as follows.  For each $\D$-class $D_{qi}$, we refer back to Lemmas~\ref{la:C5},~\ref{la:D4} and~\ref{la:B8}, which list all possible restrictions $\si_{qi}$; in almost all cases, this is enough to uniquely determine the entry $M_{qi}$ in the obvious way, with two ambiguities that need to be resolved:
\bit
\item If $\si_{0i}=\De_{D_0}=\mu_0\restr_{D_0}$, then $M_{0i}$ is either $\mu$ or $\De$, depending  on whether there are $\si$-relationships between elements of $D_{0i}$ and those of some $D_{1j}$.
\item If $\si_{ni}=\nab_{D_n}=\nu_{\S_n}$, then $M_{ni}$ is either $R$ or the $N$-symbol $\S_n$, depending  on whether $D_{ni}\sub I(\si)$.
\eit
More formally:

\begin{defn}[\bf The C-matrix associated to a congruence]\label{defn:M}
Given a congruence $\si$ on~$\Ptw n$, we define the matrix $M=(M_{qi})_{\bnz\times\N}$ according to the rules given in Table \ref{tab:M}.
\end{defn}

\begin{table}[ht]
\begin{center}
$
\begin{array}{|c|c|c|c|} 
\hline
\text{\boldmath{$q$}} & \text{\boldmath{$M_{qi}$}}& \text{\boldmath{$\si_{qi}$}}& \text{\bf Ambiguity resolution}
\\  \hhline{|=|=|=|=|}
\multirow{3}{*}{$q\geq2$} & \De & \De_{D_q} & 
\\  \hhline{~|-|-|-|}
 & N & \nu_N & D_{qi}\not\subseteq I(\si)
\\ \hhline{~|-|-|-|}
 & R & \nab_{D_q} & D_{qi}\subseteq I(\si) 
\\  \hhline{|=|=|=|=|}
\multirow{7}{*}{$q=1$} & \De & \De_{D_1} & 
\\  \hhline{~|-|-|-|}
 & \muup & \muup & 
\\  \hhline{~|-|-|-|}
 & \mudown & \mudown & 
\\  \hhline{~|-|-|-|}
 & \mu & \mu_1\restr_{D_1} & 
\\  \hhline{~|-|-|-|}
 & \lam &\lam_1\restr_{D_1} & 
\\  \hhline{~|-|-|-|}
 & \rho & \rho_1\restr_{D_1} & 
\\ \hhline{~|-|-|-|}
 & R & \nab_{D_q} & 
\\  \hhline{|=|=|=|=|}
\multirow{5}{*}{$q=0$} & \De & \De_{D_0} & \text{$\si\cap(D_{0i}\times D_{1j})=\emptyset$ ($\forall j\in\N$})
\\  \hhline{~|-|-|-|}
 & \mu & \De_{D_0} & \text{$\si\cap(D_{0i}\times D_{1j})\neq\emptyset$ ($\exists j\in\N$})
\\  \hhline{~|-|-|-|}
 & \lam &\lam_0\restr_{D_0} & 
\\  \hhline{~|-|-|-|}
 & \rho & \rho_0\restr_{D_0} & 
\\ \hhline{~|-|-|-|}
 & R & \nab_{D_q} & 
\\ \hline
\end{array}
$
\caption{The specification of the C-matrix $M=(M_{qi})_{\bnz\times\N}$ associated to the congruence $\si$ on $\Ptw n$.}
\label{tab:M}
\end{center}
\end{table}

We remark that when $M_{qi}=N$, the clause $D_{qi}\nsubseteq I(\si)$ in Table \ref{tab:M} always follows from ${\si}_{qi}=\nu_N$ except when $N=\S_n$ as discussed above; similarly, $D_{qi}\sub I(\si)$ is only needed for $M_{qi}=R$ when~$q=n$.

The rest of this subsection is devoted to showing that $\Pair:=(\Th,M)$ is indeed a C-pair, which will be accomplished in Lemma~\ref{la:rowq}.  
To get there, we proceed with a host of auxiliary results about $M$.  They are mostly concerned with what entries in $M$ can occur below and to the right of an entry, and with the interplay between $M$ and $\Th$.  

\begin{lemma}
\label{la:E1}
For any $q\in \bnz$, all entries $M_{qi}$ with $i\geq\min\th_q$ are equal.
\end{lemma}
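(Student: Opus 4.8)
The plan is to first show the restrictions $\si_{qi}$ stabilise for $i\geq\min\th_q$, and then to dispose of the two places where $M_{qi}$ is not literally a function of $\si_{qi}$. If $\th_q=\De_\N$ then $\min\th_q=\infty$ and the statement is vacuous, so assume $\th_q=(m,m+d)^\sharp$ with $m:=\min\th_q<\infty$ and $d:=\per\th_q\geq1$, so that $(i,i+td)\in\th_q$ for all $i\geq m$ and $t\geq0$. By Lemma~\ref{la:Ap3} the sequence $(\si_{qi})_{i\geq m}$ is non-decreasing under inclusion, while fixing $\al\in D_q$ the relation $((m,\al),(m+td,\al))\in\si$ (witnessing $(m,m+td)\in\th_q$) together with Lemma~\ref{la:Ap2}\ref{it:Ap2.2} gives $\si_{qm}=\si_{q,m+td}$ for every $t$. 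Given $i\geq m$, choosing $t$ with $m+td\geq i$ sandwiches $\si_{qi}$ between $\si_{qm}$ and $\si_{q,m+td}=\si_{qm}$, whence $\si_{qi}=\si_{qm}$.

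Comparing Table~\ref{tab:M} with the lists of possible restrictions in Lemmas~\ref{la:C5}, \ref{la:D4} and~\ref{la:B8} (and using the remark following Table~\ref{tab:M}), the value of $M_{qi}$ is uniquely determined by $\si_{qi}$ except in two cases: (i) $q=0$ with $\si_{0i}=\De_{D_0}$, where $M_{0i}$ is $\mu$ or $\De$ according to whether $\si\cap(D_{0i}\times D_{1j})\neq\emptyset$ for some $j$; and (ii) $q=n\geq2$ with $\si_{ni}=\nab_{D_n}$, where $M_{ni}$ is $R$ or $\S_n$ according to whether $D_{ni}\subseteq I(\si)$. Outside these two cases, the first paragraph already gives $M_{qi}=M_{qm}$ for $i\geq m$; so it remains to show, in (i) and (ii), that the governing side condition holds at every column $\geq m$, or at none.

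For (ii): suppose $D_{ni_0}\subseteq I(\si)$ for some $i_0\geq m$. Since elements of $D_n$ have maximal rank, the principal ideal of any element of $D_{ni_0}$ is $I_{ni_0}=\{i_0,i_0+1,\dots\}\times\P_n$ (Lemma~\ref{la:Green_Ptwn}), so $I_{ni_0}\subseteq I(\si)$ as $I(\si)$ is an ideal. For any $j\geq m$ and $\gamma\in\P_n$, pick $t$ with $j+td\geq i_0$; since $(j,j+td)\in\th_n\subseteq\th_{\rank\gamma}$ by Lemma~\ref{la:siqcong}, we get $((j,\gamma),(j+td,\gamma))\in\si$ with $(j+td,\gamma)\in I_{ni_0}$. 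As $I(\si)$ is a single $\si$-class, it follows that any two elements of $I_{nm}$ are $\si$-related, so $R_{I_{nm}}\subseteq\si$ and hence $I_{nm}\subseteq I(\si)$ by maximality; in particular $D_{nj}\subseteq I(\si)$ for all $j\geq m$, giving $M_{nj}=R$ throughout. If instead no such $i_0$ exists, then $M_{ni}=\S_n$ for all $i\geq m$.

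For (i): suppose $\si\cap(D_{0i_0}\times D_{1j_0})\neq\emptyset$ for some $i_0\geq m$; by Lemma~\ref{la:D1} (with $\om$ the full partition of this section, $\wh\om\in D_0$, $\om\in D_1$) we have $((i_0,\wh\om),(j_0,\om))\in\si$. For $j\geq m$, pick $t$ with $j+td\geq i_0$: then $(j,j+td)\in\th_0$ yields $((j,\wh\om),(j+td,\wh\om))\in\si$, and Lemma~\ref{la:Ap1} applied to $((i_0,\wh\om),(j_0,\om))$ yields $((j+td,\wh\om),(j_0+j+td-i_0,\om))\in\si$; transitivity gives a pair in $\si\cap(D_{0j}\times D_{1l})$ for some $l$, so $M_{0j}=\mu$ (and $M_{0j}=\De$ for all $j\geq m$ if no such $i_0$ exists). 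The substantive step is (ii): one must exclude a ``staircase step'' of $I(\si)$ inside row $n$ between columns $m$ and $i_0$, and the only mechanism for lifting elements of low columns into $I(\si)$ is the periodicity carried by $\th_n$ — precisely why the hypothesis $i\geq\min\th_q$ is essential. Case (i) runs the same manoeuvre for the $\mu$/$\De$ ambiguity, with Lemma~\ref{la:D1} supplying the bridge between rows $0$ and $1$.
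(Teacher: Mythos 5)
Your proof is correct and follows essentially the same route as the paper's: both first use Lemma~\ref{la:Ap3} together with the periodicity of $\th_q$ to sandwich the restrictions and conclude $\si_{qi}=\si_{qm}$ for $i\geq m$, and both then show that the extra data resolving the two ambiguous entries is constant along the row by pushing a witness forward by a multiple of the period. The only difference is organizational: the paper packages both ambiguities as a single equivalence about cross-row $\si$-relationships, whereas you treat the $R$/$\S_n$ case directly via $I(\si)$ being an ideal and a single $\si$-class, which amounts to the same argument.
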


\begin{proof}
If $\th_q=\Delta_\N$ the statement is vacuous, so suppose $d:=\per\th_q<\infty$.  We aim to prove that $M_{qi}=M_{q,i+1}$ for $i\geq\min\th_q$.  Note that a matrix entry is entirely determined by the restriction of $\si$ to the corresponding $\D$-class and (in some cases) the presence or absence of $\si$-relationships between that $\D$-class and another one in a different row.  By Lemmas \ref{la:Ap3} and \ref{la:Ap2} we have ${\si}_{qi}\subseteq{\si}_{q,i+1}\subseteq\dots\subseteq{\si}_{q,i+d}={\si}_{qi}$, and so ${\si}_{qi}={\si}_{q,i+1}$.  To complete the proof we must show that the following are equivalent:
\ben
\item \label{qirjsk1} $\si\cap(D_{qi}\times D_{rj})\neq\emptyset$ for some $j\in\N$ and $r\neq q$,
\item \label{qirjsk2} $\si\cap(D_{q,i+1}\times D_{rj})\neq \emptyset$ for some $j\in\N$ and $r\neq q$.
\een
For \ref{qirjsk1}$\implies$\ref{qirjsk2}, we use Lemma \ref{la:Ap1}.  For \ref{qirjsk2}$\implies$\ref{qirjsk1}, fix ${((i+1,\al),(j,\be))\in\si\cap(D_{q,i+1}\times D_{rj})}$, where $r\neq q$.  Then from $(i,i+d)\in\th_q$ and Lemma \ref{la:Ap1} we have $(i,\al) \rsi (i+d,\al) \rsi (j+d-1,\be)$.
\end{proof}

Now we look at the entries equal to $\De$, $R$ and $N\unlhd \S_q$:

\begin{lemma}
\label{la:E3}
If $M_{qi}=R$ then $i\geq\min\th_q$, $\per\th_q=1$, and $M_{rj}=R$ whenever $r\leq q$ and $j\geq i$.
\end{lemma}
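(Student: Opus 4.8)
The plan is to prove the three assertions in the order listed, splitting into the regimes $q\ge 2$, $q=1$, $q=0$. In all three cases the heart of the matter is to establish that $(i,i+1)\in\th_q$; once this is known, $\th_q$ is a non-trivial congruence on $\N$, hence of the form $(m,m+d)^\sharp$ with $m=\min\th_q$, $d=\per\th_q$, and $(i,i+1)\in\th_q$ forces $i\ge m$ and $d\mid 1$, i.e.\ $\min\th_q\le i$ and $\per\th_q=1$ (equivalently, apply \eqref{eq:th1th2} to $\th_q\subseteq(i,i+1)^\sharp$). Throughout we assume $n\ge 2$; when $n\le 1$ all $\D$-classes are singletons and the statement is vacuous or absorbed into Section~\ref{sec:n1}.

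For $q\ge 2$: by Table~\ref{tab:M}, $M_{qi}=R$ means $D_{qi}\subseteq I(\si)$, hence $I_{qi}\subseteq I(\si)$ and $R_{I_{qi}}\subseteq\si$. Picking any $\al\in D_q$, the elements $(i,\al)$ and $(i+1,\al)$ both lie in $I_{qi}=\{i,i+1,\dots\}\times I_q$, so they are $\si$-related and $(i,i+1)\in\th_q$. For the propagation, let $r\le q$ and $j\ge i$; by Lemma~\ref{la:Green_Ptwn}, $I_{rj}\subseteq I_{qi}\subseteq I(\si)$, so $R_{I_{rj}}\subseteq\si$, which forces $\si_{rj}=\nab_{D_r}$ and $D_{rj}\subseteq I(\si)$; reading off Table~\ref{tab:M} (separately for $r\ge 2$, $r=1$, $r=0$) gives $M_{rj}=R$.

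For $q=1$: now $M_{1i}=R$ only says $\si_{1i}=\nab_{D_1}$. Starting from a pair $(\al,\be)\in\nab_{D_1}$ with $\ker\al\ne\ker\be$ (which exists since $n\ge 2$) and applying Lemma~\ref{la:A3}, one produces a $\si$-related pair whose underlying partitions have different ranks in column $i$, so $\si\cap(D_{0i}\times D_{1i})\ne\emptyset$. Then Lemma~\ref{la:D1} gives $((i,\wh\ga),(i,\ga))\in\si$ for every $\ga\in D_1$, while Lemma~\ref{la:D5} (applicable as $\si_{1i}\ne\De_{D_1}$) gives $((i,\wh\ga),(i+1,\ga))\in\si$; transitivity yields $((i,\ga),(i+1,\ga))\in\si$, so $(i,i+1)\in\th_1$. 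Moreover, since $\si_{1i}=\nab_{D_1}$ relates all the $(i,\ga)$ with $\ga\in D_1$, and since every element of $D_0$ equals $\wh\ga$ for some $\ga\in D_1$, the relations $((i,\wh\ga),(i,\ga))$ also force $\si_{0i}=\nab_{D_0}$. Hence, for $j\ge i$, $\si_{1j}\supseteq\si_{1i}=\nab_{D_1}$ and $\si_{0j}\supseteq\si_{0i}=\nab_{D_0}$ by Lemma~\ref{la:Ap3}, so $M_{1j}=M_{0j}=R$ by Table~\ref{tab:M}. Finally, for $q=0$: $M_{0i}=R$ means $\si_{0i}=\nab_{D_0}\ne\De_{D_0}$, so $(i,i+1)\in\th_0$ by Lemma~\ref{la:C2}, and $M_{0j}=R$ for $j\ge i$ since $\si_{0j}\supseteq\si_{0i}$ by Lemma~\ref{la:Ap3}.

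The main obstacle is the case $q=1$: the symbol $R$ in row $1$ carries no a priori information about $I(\si)$ or about row $0$, so one must first manufacture the cross-row relationship in column $i$ and then push $\nab$ both downward onto row $0$ and forward in the first coordinate. The only non-formal input needed there is the combinatorial observation that $\{\wh\ga:\ga\in D_1\}=D_0$, i.e.\ every rank-zero partition is $\wh\ga$ for a suitable rank-one $\ga$ (merge any upper block with any lower block), which is what makes the downward propagation to $\si_{0i}=\nab_{D_0}$ go through.
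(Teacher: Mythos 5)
Your proof is correct, and for $q\ge 2$ it coincides with the paper's argument: $M_{qi}=R$ gives $D_{qi}\subseteq I(\si)$ by Table~\ref{tab:M}, hence $I_{qi}\subseteq I(\si)$, which yields $(i,i+1)\in\th_q$ and propagates $R$ down and to the right. Where you diverge is in rows $0$ and $1$. The paper's proof is a one-liner that asserts ``$M_{qi}=R$ means $D_{qi}\subseteq I(\si)$'' uniformly in $q$; but for $q\le 1$ Table~\ref{tab:M} only records $\si_{qi}=\nab_{D_q}$, so that assertion is really an implicit appeal to the remark following the table (that the $I(\si)$ clause is only \emph{needed} when $q=n$). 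You instead derive the $q=1$ case directly: Lemma~\ref{la:A3} manufactures a cross-row pair in column $i$, Lemmas~\ref{la:D1} and~\ref{la:D5} then give $((i,\ga),(i+1,\ga))\in\si$ and hence $(i,i+1)\in\th_1$, and the surjectivity of $\ga\mt\wh\ga$ from $D_1$ onto $D_0$ pushes $\nab$ down to row $0$; the $q=0$ case is Lemma~\ref{la:C2} plus Lemma~\ref{la:Ap3}. This is longer but has the virtue of being justified purely from the literal content of Table~\ref{tab:M}, and in effect it supplies the verification that $\si_{qi}=\nab_{D_q}$ forces $D_{qi}\subseteq I(\si)$ for $q\le 1<n$, which the paper leaves tacit. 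Your restriction to $n\ge 2$ is a reasonable way to sidestep the degeneracy of $D_0$ and $D_1$ when $n=1$ (where the table's case distinctions collapse); the paper defers that situation to Section~\ref{sec:n1} as well.
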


\begin{proof}
$M_{qi}=R$ means $D_{qi}\subseteq I(\si)$, so that $D_{rj}\subseteq I(\si)$ whenever $r\leq q$ and $j\geq i$;
in particular $((i,\alpha),(i+1,\alpha))\in\si$ for any $\alpha\in D_q$, and all three statements follow.
\end{proof}

\begin{lemma}
\label{la:E4}
If $M_{qi}=N$ where $\{\id_q\}\neq N\unlhd \S_q$, then 
\ben
\item \label{E4.1} $M_{q-1,i}$ is one of $R$, $\mu$, $\rho$ or $\lambda$,
\item \label{E4.2} $M_{q,i+1}$ is either $R$ or some $N'\normal\S_q$ with $N\leq N'$.
\een
\end{lemma}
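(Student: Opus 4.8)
The plan is to treat the two parts separately, beginning with \ref{E4.2}. Throughout, observe that $M_{qi}$ being an $N$-symbol forces $q\geq 2$ and $\si_{qi}=\nu_N$ (Table~\ref{tab:M}), with $\nu_N\supsetneq\De_{D_q}$ since $N\neq\{\id_q\}$. For \ref{E4.2}, I would use monotonicity down the row: by Lemma~\ref{la:Ap3}, $\nu_N=\si_{qi}\sub\si_{q,i+1}$, and by Lemma~\ref{la:B8} the relation $\si_{q,i+1}$ is either $\nab_{D_q}$ or $\nu_{N'}$ for some $N'\unlhd\S_q$. In the first case Lemma~\ref{la:B8} and Table~\ref{tab:M} give $M_{q,i+1}=R$ when $q<n$, while for $q=n$ we have $\nab_{D_n}=\nu_{\S_n}$ and $M_{n,i+1}\in\{R,\S_n\}$ with $\S_n\geq N$. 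In the second case $\nu_N\sub\nu_{N'}$ forces $N\sub N'$ (apply the inclusion to an $\H$-related pair whose permutational difference is a fixed non-identity element of $N$, and use that $N'$ is normal), so $N'\neq\{\id_q\}$ and $M_{q,i+1}=N'\geq N$. Either way \ref{E4.2} holds.

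For \ref{E4.1}, since $\nu_N\neq\De_{D_q}$ I would fix $\al,\be\in D_q$ with $\al\rH\be$, $\al\neq\be$ and $\pd(\al,\be)\in N$, so that $((i,\al),(i,\be))\in\si$, and then invoke the separation Lemma~\ref{la:sep}\ref{it:sep3} together with the remark following it: there is $\ga\in\P_n$ with $\Float(\ga,\al)=\Float(\ga,\be)=0$, hence $((i,\ga\al),(i,\ga\be))\in\si$, and with $\ga\al\in D_{q-1}$ and $\ga\be\in I_{q-1}\sm H_{\ga\al}$. If $q\geq 3$, then either $\rank(\ga\be)<q-1$, and Lemma~\ref{la:B5'} gives $I_{q-1,i}\sub I(\si)$, or else $\rank(\ga\be)=q-1$ and $(\ga\al,\ga\be)\notin\H$, and Lemma~\ref{la:B5} gives the same; as $q-1<n$, Table~\ref{tab:M} then forces $M_{q-1,i}=R$, which is among the permitted symbols.

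It remains to treat $q=2$, where $N=\S_2$ and row $q-1$ is row $1$. By the list of possibilities in Lemma~\ref{la:D4}, it is enough to show that $\si_{1i}\neq\De_{D_1}$, $\si_{1i}\not\sub\muup$ and $\si_{1i}\not\sub\mudown$, since this leaves only $\mu_1\restr_{D_1}$, $\rho_1\restr_{D_1}$, $\lam_1\restr_{D_1}$ and $\nab_{D_1}$, i.e.\ $M_{1i}\in\{\mu,\rho,\lam,R\}$. From the $\ga$ produced above: if $\ga\be\in D_0$, then $\si\cap(D_{0i}\times D_{1i})\neq\emptyset$, so Lemma~\ref{la:D1} gives $\mu_1\restr_{D_1}\sub\si_{1i}$ and all three conditions follow at once; if $\ga\be\in D_1$, then $(\ga\al,\ga\be)$ is a non-diagonal pair in $\si_{1i}$ with $\ga\al\not\rL\ga\be$ (by the explicit form of $\ga$ in the proof of Lemma~\ref{la:sep}\ref{it:sep3}, the retained transversals of $\ga\al$ and $\ga\be$ have distinct lower parts, so their codomains differ), and hence $\si_{1i}\neq\De_{D_1}$ and $\si_{1i}\not\sub\mudown$. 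Running the left--right dual of Lemma~\ref{la:sep}\ref{it:sep3} (via the involution of $\P_n$) in exactly the same way yields, symmetrically, either $\mu_1\restr_{D_1}\sub\si_{1i}$ or a non-diagonal pair in $\si_{1i}$ that is not $\R$-related, giving $\si_{1i}\not\sub\muup$. This completes the argument.

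I expect this $q=2$ subcase to be the main obstacle. Separation lowers the rank by only one, so the ``$\S_2$-twist'' of the original pair survives on one side at a time, which forces one to rule out the unusual row-$1$ relations $\muup$ and $\mudown$ individually; this is precisely why both Lemma~\ref{la:sep}\ref{it:sep3} and its left--right dual are needed, together with the row-$0$/row-$1$ bridge of Lemma~\ref{la:D1}. The remaining cases are a routine matter of the monotonicity lemmas of Subsection~\ref{subsec:bgp} and the ideal-absorption Lemmas~\ref{la:B5'} and~\ref{la:B5}.
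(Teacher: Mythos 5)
Your proof is correct and follows essentially the same route as the paper: part (ii) via the monotonicity $\si_{qi}\sub\si_{q,i+1}$ and the classification of restrictions in rows $q\geq 2$; part (i) for $q\geq 3$ via Lemma \ref{la:sep}\ref{it:sep3} feeding into the ideal-absorption lemmas to force $M_{q-1,i}=R$; and for $q=2$ via that same separation (and its left--right dual) producing a non-$\L$-related (resp.\ non-$\R$-related) pair in $\si_{1i}$ to eliminate $\De$, $\mudown$ and $\muup$. The only differences are cosmetic -- e.g.\ the paper gets $(\ga\al,\ga\be)\notin\L$ from $(\al,\be)\in\R$ and left-compatibility of $\R$ rather than from the explicit construction of $\ga$, and uses Lemma \ref{la:D2} where you invoke Lemma \ref{la:D4} directly.
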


\begin{proof}
\ref{E4.1}  If $q>2$ then combining Lemmas \ref{la:sep}\ref{it:sep3} and \ref{la:B5} we have $D_{q-1,i}\subseteq I(\si)$, and so $M_{q-1,i}=R$.

For $q=2$, fix some $((i,\al),(i,\be))\in\si$ where $(\al,\be)\in\nu_{\S_2}$ and $\al\neq \be$.  By Lemmas \ref{la:sep}\ref{it:sep3}, \ref{la:0float} and \ref{la:A4}, there exists $\ga\in\P_n$ such that $\ga\al\in D_1$ and $\ga\be\in I_1\sm H_{\ga\al}$, with $\Float(\ga,\al)=\Float(\ga,\be)=0$.  It follows that $((i,\ga\al),(i,\ga\be))\in\si$.  From $(\al,\be)\in\H\sub\R$, we have $(\ga\al,\ga\be)\in\R\sub\D$, so in fact $(\ga\al,\ga\be)\in\si_{1i}$.  Since $(\ga\al,\ga\be)\not\in\H$ but $(\ga\al,\ga\be)\in\R$, we deduce $(\ga\al,\ga\be)\not\in\L$, and so $\si_{1i}\not\sub\mudown$.  Thus, Lemma \ref{la:D2} gives $\muup\sub\si_{1i}$.  The dual of the above argument gives $\mudown\sub\si_{1i}$, so in fact $\mu=\mudown\vee\muup\sub\si_{1i}$.  This rules out the possibilities $\Delta,\muup,\mudown$ for $M_{q-1,i}$ (cf.~Lemma \ref{la:D4}).

\ref{E4.2}  Consulting Table \ref{tab:M}, $M_{q,i+1}$ is certainly $R$ or some $N'\normal\S_q$.  In the latter case, Lemma \ref{la:Ap3} gives $\nu_N=\si_{qi}\sub\si_{q,i+1}=\nu_{N'}$, and so $N\leq N'$.
\end{proof}

\begin{lemma}\label{la:De}
If $M_{qi}=\De$, then $M_{q,i-1}=\De$ if $i\geq1$, and $M_{q+1,i}=\De$ if $q<n$.
\end{lemma}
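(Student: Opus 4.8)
The plan is to peel the definition of the C-matrix entry $M_{qi}$ off Table \ref{tab:M}, reduce both assertions to statements about the restrictions $\si_{qi}$ and about membership in the ideal $I(\si)$, and then invoke the monotonicity and propagation lemmas already established.

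For the assertion about the entry immediately to the left, the essential input will be Lemma \ref{la:Ap3}, which gives $\si_{q,i-1}\subseteq\si_{qi}$. Since $M_{qi}=\De$ implies $\si_{qi}=\De_{D_q}$ (directly from Table \ref{tab:M}, in every row), we get $\si_{q,i-1}=\De_{D_q}$; for $q\geq1$ this is precisely $M_{q,i-1}=\De$. For $q=0$ one additionally has to verify the extra clause defining $M_{0,i-1}=\De$, namely that $\si\cap(D_{0,i-1}\times D_{1j})=\emptyset$ for all $j$; but any pair in $\si\cap(D_{0,i-1}\times D_{1j})$, multiplied on the right by $(1,\id)$ (Lemma \ref{la:Ap1}), would land in $\si\cap(D_{0i}\times D_{1,j+1})$, contradicting $M_{0i}=\De$.

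For the assertion about the entry immediately above, I would argue by contradiction: assume $q<n$, $M_{qi}=\De$, but $M_{q+1,i}\neq\De$, and split on $q$. If $q=0$, then $\si_{1i}\neq\De_{D_1}$, so Lemma \ref{la:D5} applied with $j=i$ places $((i,\wh\gamma),(i+1,\gamma))$ in $\si$ for every $\gamma\in D_1$; as $\wh\gamma\in D_0$, this exhibits a $\si$-relationship between $D_{0i}$ and $D_{1,i+1}$, contradicting the defining clause of $M_{0i}=\De$. If $q\geq1$, then $q+1\geq2$, and inspecting Table \ref{tab:M} the non-$\De$ entry $M_{q+1,i}$ is either $R$ or an $N$-symbol; in the former case Lemma \ref{la:E3} forces $M_{qi}=R$, and in the latter Lemma \ref{la:E4}\ref{E4.1} forces $M_{qi}\in\{R,\mu,\rho,\lambda\}$ — either way contradicting $M_{qi}=\De$. (The classification in Lemma \ref{la:B8} can be used to re-derive the possible shapes of $M_{q+1,i}$, but is not needed.)

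I do not expect a genuine obstacle here: the statement is essentially bookkeeping built on results already in hand. The only mildly delicate point is that a $\De$ in row $0$ encodes more than triviality of $\si_{0i}$ — it also encodes the absence of $\si$-links from $D_{0i}$ up to row $1$ — so the two row-$0$ sub-cases must be dispatched separately, via the column-translation Lemma \ref{la:Ap1} and the down-propagation Lemma \ref{la:D5} respectively.
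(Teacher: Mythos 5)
Your proposal is correct and follows essentially the same route as the paper's own proof: the left-neighbour claim via Lemma \ref{la:Ap3} plus the translation argument of Lemma \ref{la:Ap1} to exclude $M_{0,i-1}=\mu$, and the above-neighbour claim via Lemma \ref{la:D5} (with $j=i$) for $q=0$ and Lemmas \ref{la:E3} and \ref{la:E4} for $q\geq1$. The only difference is that you spell out the case analysis in more detail than the paper does.
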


\pf
For the first statement, suppose $i\geq1$.  From $M_{qi}=\De$ we have $\si_{qi}=\De_{D_q}$, so Lemma~\ref{la:Ap3} gives $\si_{q,i-1}=\De_{D_q}$; this completes the proof for $q\geq1$.  For $q=0$, we could only have $M_{0,i-1}=\De$ or $\mu$; but in the latter case we would have $\si\cap(D_{0,i-1}\times D_{1j})\neq \emptyset$ for some $j\in\N$, and Lemma~\ref{la:Ap1} would then give $\si\cap(D_{0i}\times D_{1,j+1})\neq \emptyset$, whence $M_{0i}=\mu$, a contradiction.

The second statement follows immediately from Lemma \ref{la:D5} (with $j=i$) for $q=0$, or from Lemmas \ref{la:E3} and \ref{la:E4} for $0<q<n$.
\epf

Now we move on to the entries in rows $0$ and $1$ and their interdependencies:

\begin{lemma}
\label{la:E7}
If $\si\cap (D_{0i}\times D_{1j})\neq\emptyset$ then $M_{0i}=M_{1j}\in\{\mu,\lambda,\rho, R\}$,
and either 
\[
\text{$[i< \min\th_0$ and $j<\min\th_1]$ \qquad or \qquad $[i\geq \min\th_0$ and $j\geq\min\th_1]$.}
\]
\end{lemma}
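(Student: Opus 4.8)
The plan is to extract, from a single pair $((i,\al),(j,\be))\in\si\cap(D_{0i}\times D_{1j})$, enough $\si$-relationships to pin down both $M_{0i}$ and $M_{1j}$, and then to read off the constraints linking $i,j$ to $\min\th_0,\min\th_1$. First I would invoke Lemma~\ref{la:D1}: from $\si\cap(D_{0i}\times D_{1j})\neq\emptyset$ we get $((i,\wh\ga),(j,\ga))\in\si$ for every $\ga\in D_1$, together with $\si_{1j}\supseteq\mu_1\restr_{D_1}$ (since the second set in Lemma~\ref{la:D1} is exactly $\mu_1\restr_{D_1}$). Consulting Lemma~\ref{la:D4}, this already forces $M_{1j}\in\{\mu,\lambda,\rho,R\}$. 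Dually (post-multiplying by $\om$, or rather using that $\wh\ga=(\ga\om)\al(\om\ga)$ as in the proof of Lemma~\ref{la:D1} already gives everything), the relationships $((i,\wh\ga),(j,\ga))\in\si$ for all $\ga\in D_1$ also tell us about row $0$: applying $(0,\om)\cdot(-)\cdot(0,\om)$ to such a pair yields $((i,\wh\om),(j,\wh\om))\in\si$, so $(i,j)\notin$ "nothing", but more importantly, for $\ga,\de\in D_1$ with $\wh\ga\rL\wh\de$ we get $(i,\wh\ga)\rsi(j,\ga)$, and if $\si_{1j}\supseteq\lam_1\restr_{D_1}$ then also $(j,\ga)\rsi(j,\de)\rsi(i,\wh\de)$, so $\si_{0i}\supseteq\lam_0\restr_{D_0}$; and symmetrically for $\rho$ and $R$. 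Conversely, any nontrivial pair in $\si_{0i}$ lifts to a relationship between $D_{0i}$ and $D_{1i}$ — here one uses Lemma~\ref{la:A3} exactly as in the proof of Lemma~\ref{la:D3} — hence $M_{0i}\in\{\mu,\lam,\rho,R\}$, and a short case-check against Table~\ref{tab:M} and the inclusion diagram in the proof of Lemma~\ref{la:D4} shows $M_{0i}=M_{1j}$: the same Green's-relation condition ($\wh\al=\wh\be$, $\wh\al\rL\wh\be$, $\wh\al\rR\wh\be$, or no condition) governs both restrictions once a single cross-row relationship is present.

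For the numerical constraint I would argue as follows. Suppose first $i\geq\min\th_0=:m_0$; I must show $j\geq\min\th_1=:m_1$. Since $M_{1j}\neq\De$ by the first part, Lemma~\ref{la:D4} gives $\si_{1j}\neq\De_{D_1}$, and Lemma~\ref{la:E1} (or rather the analysis of rows $0,1$) forces $M_{1k}\neq\De$ for all $k\geq j$. But we can say more directly: from $((i,\al),(j,\be))\in\si$ and Lemma~\ref{la:(i+1,j)} we get $(i+1,j)\in\th_0$; combined with $i\geq m_0$ this gives $i+1>m_0$, so $j\in\th_0$-class of $i+1$, i.e.\ $j\geq m_0$. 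Now I need $j\geq m_1$, and for this I use that $\si_{1j}\neq\De_{D_1}$ together with Lemma~\ref{la:D5}: if $\si_{1i'}\neq\De_{D_1}$ then $((k,\wh\ga),(k+1,\ga))\in\si$ for all $k\geq i'$ and $\ga\in D_1$; applying the dual relationship one extracts $(k,k+ (\text{something}))\in\th_1$ forcing $k\geq m_1$. The cleanest route: $M_{1j}\neq\De$ and $j$ being the position of a non-$\De$ entry in row $1$ means (looking at row types RT1--RT7, summarised in Remark~\ref{re:se}) that the first non-$\De$ entry in row $1$ sits at some position $\geq\min\th_1$; combined with the companion statement $M_{0i}\neq\De$ and the relation $M_{0i}=M_{1j}$, the verticality/row-type constraints then pin $j\geq m_1$. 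Symmetrically, if $j\geq m_1$ then post-composing $((i,\al),(j,\be))$ appropriately and using Lemma~\ref{la:C1} plus Lemma~\ref{la:C2} gives $i\geq m_0$; and if $i<m_0$ then necessarily $j<m_1$, again by contraposition of the above.

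The main obstacle, I expect, is the second ("either/or") clause rather than the identification of the symbol. The symbol part is essentially a bookkeeping exercise on top of Lemmas~\ref{la:D1}, \ref{la:D3}, \ref{la:D4} and Table~\ref{tab:M}. The numerical dichotomy is more delicate because it is genuinely a statement about how the congruence on row $1$ "lags" the congruence on row $0$ by one column — precisely the phenomenon isolated in Example~\ref{ex:weirdproj} and Lemma~\ref{la:A2} — and getting it without circularity requires care about which earlier lemmas may legitimately be used (in particular one must not secretly assume $(\Th,M)$ is already a C-pair, since that is what Subsection~\ref{subsec:CPAC} is in the process of establishing). I would therefore be careful to derive the dichotomy purely from Lemmas~\ref{la:Ap1}--\ref{la:Ap2}, \ref{la:C1}, \ref{la:C2}, \ref{la:(i+1,j)}, \ref{la:D1} and \ref{la:D5}, treating $\th_0\supseteq\th_1$ (Lemma~\ref{la:siqcong}) as the only structural input about $\Th$.
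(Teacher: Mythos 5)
Your identification of the symbol $M_{1j}$ (via Lemma \ref{la:D1}, which forces $\mu_1\restr_{D_1}\sub\si_{1j}$, and Lemma \ref{la:D4}) is sound, and the claim $M_{0i}=M_{1j}$ can indeed be read off as you suggest — though note that $M_{0i}\neq\De$ is immediate from Table \ref{tab:M}, since $M_{0i}=\De$ is \emph{defined} by $\si\cap(D_{0i}\times D_{1j})=\emptyset$ for all $j$, so no appeal to Lemma \ref{la:A3} is needed for that direction.

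The numerical dichotomy, however, is where your argument breaks down, and in exactly the way you warned yourself against. Your ``cleanest route'' — that $M_{1j}\neq\De$ forces $j\geq\min\th_1$ by inspection of the row types \ref{RT1}--\ref{RT7} — is circular: at this stage of Subsection \ref{subsec:CPAC} it has not yet been shown that rows $0$ and $1$ of $M$ conform to those types (that is Lemma \ref{la:rowq}, which \emph{uses} the present lemma). Worse, the auxiliary claim is simply false for genuine C-matrices: in types \ref{RT2}, \ref{RT5}, \ref{RT6} and \ref{RT7} there are non-$\De$ entries in row $1$ at positions strictly below $\min\th_1$ (indeed $\min\th_1=\infty$ in \ref{RT2}), and the whole point of the ``either/or'' clause is to accommodate the matching initial $\mu$s with $i<\min\th_0$ and $j<\min\th_1$. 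Your fallback via Lemmas \ref{la:(i+1,j)} and \ref{la:D5} only yields $j\geq\min\th_0$, which is weaker than $j\geq\min\th_1$ since $\th_1\sub\th_0$, and the phrase ``one extracts $(k,k+(\text{something}))\in\th_1$'' is not a proof. The missing step is short: assuming $j\geq\min\th_1$ and setting $d:=\per\th_1<\infty$, Lemma \ref{la:D1} gives $(i,\wh\al)\rsi(j,\al)$ for any $\al\in D_1$, Lemma \ref{la:Ap1} gives $(i+d,\wh\al)\rsi(j+d,\al)$, and the definition of $\th_1$ gives $(j,\al)\rsi(j+d,\al)$; chaining these yields $(i,i+d)\in\th_0$ and hence $i\geq\min\th_0$. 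The converse implication is obtained by the symmetric chain using $d:=\per\th_0$, and together these give the dichotomy. You had every ingredient named (Lemmas \ref{la:D1} and \ref{la:Ap1}) but never assembled this transport of periodicity across rows, which is the actual content of the claim.
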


\begin{proof}
Consulting Table \ref{tab:M}, we have $M_{0i}\in\{\mu,\lambda,\rho, R\}$.  It then follows quickly from Lemma~\ref{la:D1} that $M_{1j}=M_{0i}$.

Suppose now that $j\geq\min\th_1$. Let $d:=\per\th_1$, and let $\alpha\in D_1$ be arbitrary.
Using Lemmas~\ref{la:D1} and~\ref{la:Ap1}, and the definition of $\th_1$, we have
$(i,\widehat{\alpha})\rsi(j,\alpha)\rsi(j+d,\alpha)\rsi(i+d,\widehat{\alpha})$,
and so $i\geq \min\th_0$.
An entirely analogous argument shows that if $i\geq\min\th_0$ then $j\geq\min\th_1$, and completes the proof.
\end{proof}

\begin{lemma}
\label{la:E10}
If $M_{1i}\neq\Delta$ then $M_{0i}=M_{1,i+1}\in\{ \mu,\rho,\lambda,R\}$ and $\per\th_0=\per\th_1$.
\end{lemma}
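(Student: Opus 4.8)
The plan is to exploit Lemma~\ref{la:D5}, which is applicable precisely because $M_{1i}\neq\Delta$ forces $\si_{1i}\neq\Delta_{D_1}$ (by the $q=1$ part of Table~\ref{tab:M}). That lemma supplies, for every $j\geq i$ and every $\gamma\in D_1$, the relationship $((j,\widehat\gamma),(j+1,\gamma))\in\si$; since $\widehat\gamma\in D_0$ while $\gamma\in D_1$, each such pair witnesses a $\si$-link between rows $0$ and $1$, one column apart. This single family of pairs drives both halves of the statement.

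For the first assertion, taking $j=i$ gives $\si\cap(D_{0i}\times D_{1,i+1})\neq\emptyset$, so Lemma~\ref{la:E7} applies directly and yields $M_{0i}=M_{1,i+1}\in\{\mu,\lambda,\rho,R\}$. Nothing further is needed here.

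For the equality of periods, one inequality is free: by Lemma~\ref{la:siqcong} we have $\th_1\subseteq\th_0$, so $\per\th_0\mid\per\th_1$ by~\eqref{eq:th1th2}. For the reverse, first note that if $\th_0=\Delta_\N$ then $\th_1\subseteq\th_0$ forces $\th_1=\Delta_\N$ and both periods equal $\infty$. Otherwise set $m:=\min\th_0<\infty$ and $d:=\per\th_0<\infty$, fix any $\gamma\in D_1$, and work at the index $j:=\max(i,m)$. Applying Lemma~\ref{la:D5} at $j$ and at $j+d$ (both $\geq i$) gives $((j,\widehat\gamma),(j+1,\gamma))\in\si$ and $((j+d,\widehat\gamma),(j+d+1,\gamma))\in\si$; since $j\geq m$ we have $(j,j+d)\in\th_0$ and hence $((j,\widehat\gamma),(j+d,\widehat\gamma))\in\si$ because $\widehat\gamma\in D_0$. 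Chaining these three relations by symmetry and transitivity gives $((j+1,\gamma),(j+d+1,\gamma))\in\si$, that is, $(j+1,j+d+1)\in\th_1$, a non-trivial pair since $d\geq1$; thus $\per\th_1\mid d=\per\th_0$, and combined with the free inequality this gives $\per\th_0=\per\th_1$.

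The only real subtlety — and the reason for the bookkeeping above — is that we have no a priori control over whether $i\geq\min\th_0$, so one cannot run the chaining argument at the index $i$ itself; passing to $j=\max(i,\min\th_0)$ circumvents this, and the degenerate case $\th_0=\Delta_\N$ (where $\min\th_0$ is infinite) must be peeled off first. Everything else is a routine application of the already-established translational lemmas.
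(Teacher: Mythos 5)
Your proof is correct and follows essentially the same route as the paper: Lemma~\ref{la:D5} (at $j=i$) plus Lemma~\ref{la:E7} for the first assertion, and the chaining argument at $j=\max(i,\min\th_0)$ for the equality of periods. The only cosmetic difference is that you invoke Lemma~\ref{la:D5} a second time at $j+d$ where the paper instead translates the single pair by $(d,\id)$ via Lemma~\ref{la:Ap1}; both steps are valid and interchangeable.
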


\begin{proof}
The first assertion follows from Lemmas \ref{la:D5} and \ref{la:E7}.  For the second, $\th_1\sub\th_0$ gives $\per\th_0\leq\per\th_1$.  It remains to show that $\per\th_1\leq\per\th_0$.  This being clear if $\per\th_0=\infty$, suppose instead that $\th_0=(m,m+d)^\sharp$.  Fix some $\al\in D_1$, and put $j:=\max(i,m)$.  Since $j\geq i$, Lemma \ref{la:D5} gives ${((j,\wh\al),(j+1,\al))\in\si}$.  Since $j\geq m$, we have $(j,j+d)\in\th_0$.  Combining the above with Lemma \ref{la:Ap1}, it follows that ${(j+1,\al) \rsi (j,\wh\al) \rsi (j+d,\wh\al) \rsi (j+1+d,\al)}$.  But then $(j+1,j+1+d)\in\th_1$, so that ${\per\th_1\leq d=\per\th_0}$, as required.
\end{proof}

\begin{lemma}
\label{la:E11}
If $M_{qi}\in\{\rho,\lambda,R\}$ then $i\geq\min\th_q$ and $\per\th_q=1$.
\end{lemma}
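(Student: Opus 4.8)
The plan is to treat the three symbols $R$, $\rho$, $\lambda$ one at a time. If $M_{qi}=R$ there is nothing to do, since both assertions are already contained in Lemma \ref{la:E3}. So the real content is the case $M_{qi}\in\{\rho,\lambda\}$; by the specification of $M$ in Table \ref{tab:M} this can occur only for $q\in\{0,1\}$, and (since for $n\leq1$ rows $0$ and $1$ carry only the symbols $\De$ and $R$) it forces $n\geq2$ and $\si_{qi}\neq\De_{D_q}$. Because the $*$-involution induces a left-right duality on $\Ptw n$ which interchanges $\rho\leftrightarrow\lambda$ and $\R\leftrightarrow\L$ while preserving the C-chain $\Th$, it suffices to handle $M_{qi}=\rho$.

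The case $q=0$ is immediate: $\si_{0i}=\rho_0\restr_{D_0}$ is a non-trivial equivalence on $D_0$, so Lemma \ref{la:C2} gives $(i,i+1)\in\th_0$, which by the description of congruences on $\N$ says precisely that $\th_0=(m,m+1)^\sharp$ for some $m\leq i$; that is, $\per\th_0=1$ and $i\geq\min\th_0$.

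For $q=1$ the strategy is to push the non-triviality of $\si_{1i}=\rho_1\restr_{D_1}$ down into row $0$ \emph{without shifting column}. Since $n\geq2$, one can choose $\al,\be\in D_1$ with $\ker\al=\ker\be$ but $\coker\al\neq\coker\be$; then $\wh\al\rR\wh\be$, so $(\al,\be)\in\rho_1\restr_{D_1}=\si_{1i}$, giving $((i,\al),(i,\be))\in\si$. The second part of Lemma \ref{la:A3} supplies $\ga\in\P_n$ with $\rank(\al\ga)\neq\rank(\be\ga)$ and $\Float(\al,\ga)=\Float(\be,\ga)=0$; right-multiplying the pair by $(0,\ga)$ thus stays in column $i$ and separates the two ranks into $\{0,1\}$, so that $\si\cap(D_{0i}\times D_{1i})\neq\emptyset$. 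This is the crux of the argument, and the point of insisting on column $i$ is that Lemma \ref{la:D5} on its own would only furnish a link between $D_{0i}$ and $D_{1,i+1}$, leaving an off-by-one slack in $\min\th_1$.

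With the column-$i$ link $\si\cap(D_{0i}\times D_{1i})\neq\emptyset$ in hand, the rest is bookkeeping: Lemma \ref{la:(i+1,j)} yields $(i+1,i)\in\th_0$, whence $\per\th_0=1$ and $i\geq\min\th_0$ exactly as in the $q=0$ case; Lemma \ref{la:E7} applied to the same link upgrades $i\geq\min\th_0$ to $i\geq\min\th_1$; and Lemma \ref{la:E10}, applicable since $M_{1i}=\rho\neq\De$, gives $\per\th_1=\per\th_0=1$. I expect the only real difficulty to be isolating that column-$i$ relation between $D_0$ and $D_1$; everything after that follows directly from the lemmas of this subsection.
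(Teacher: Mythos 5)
Your proof is correct and follows essentially the same route as the paper's: the $R$ case is delegated to Lemma \ref{la:E3}, the $q=0$ case comes from the non-triviality of $\si_{0i}$ forcing $(i,i+1)\in\th_0$, and the crux for $q=1$ is exactly the paper's use of Lemma \ref{la:A3} to manufacture a same-column pair in $\si\cap(D_{0i}\times D_{1i})$. The only (harmless) difference is in the final bookkeeping, where you cite Lemmas \ref{la:(i+1,j)}, \ref{la:E7} and \ref{la:E10} in place of the paper's direct transitivity chain $(i,\al)\rsi(i,\be)\rsi(i+1,\be)\rsi(i+1,\al)$ yielding $(i,i+1)\in\th_1$.
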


\begin{proof}
The $q=0$ case follows from Lemma \ref{la:C4a'}\ref{C4a'2}, and the $M_{qi}=R$ case from Lemma \ref{la:E3}. So consider $q=1$.
By Lemma \ref{la:E10} and $\si_{1i}\sub\si_{1,i+1}$, we have $M_{0i}=M_{1,i+1}\in\{\rho,\lambda, R\}$.  The $q=0$ case then gives $i\geq\min\th_0$ and $\per\th_0=1$.
Since $\si_{1i}$ contains $\lam_1\restr_{D_1}$ or $\rho_1\restr_{D_1}$, Lemma \ref{la:A3} implies there exists $((i,\al),(i,\be))\in\si\cap (D_{1i}\times D_{0i})$.  Then using Lemma \ref{la:Ap1} and $(i,i+1)\in\th_0$, we have $(i,\al) \rsi (i,\be) \rsi (i+1,\be) \rsi (i+1,\al)$, so that $(i,i+1)\in\th_1$.  The result follows.
\end{proof}

\begin{lemma}\label{la:mumin}
If $M_{0i}=\mu$ for some $i<\min\th_0$, then there exists a unique $j\in\N$ such that $\si\cap(D_{0i}\times D_{1j})\neq \emptyset$.  Furthermore, we have $M_{1j}=\mu$, and also
\[
i<j<\min\th_1 \AND i+\min\th_1=j+\min\th_0.
\]
\end{lemma}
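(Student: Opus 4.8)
The plan is to extract the existence of $j$ straight from the definition of the C-matrix, and then to pin $j$ down and verify the two displayed relations using only the ``translation'' lemmas of Subsections~\ref{subsec:bgp} and~\ref{subsec:I10} (chiefly Lemmas~\ref{la:Ap1}, \ref{la:(i+1,j)}, \ref{la:C1'}, \ref{la:D1} and~\ref{la:E7}) together with the elementary description of congruences on $\N$. First I would note that, by Table~\ref{tab:M}, the hypothesis $M_{0i}=\mu$ says exactly that $\si_{0i}=\De_{D_0}$ \emph{and} that $\si\cap(D_{0i}\times D_{1j})\neq\emptyset$ for at least one $j\in\N$; this is the required existence. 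Then, for \emph{any} $j$ with $\si\cap(D_{0i}\times D_{1j})\neq\emptyset$, Lemma~\ref{la:E7} immediately gives $M_{1j}=M_{0i}=\mu$, and, since $i<\min\th_0$ by hypothesis, the dichotomy in that same lemma forces $j<\min\th_1$.

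Next I would prove \emph{uniqueness}. Suppose $\si\cap(D_{0i}\times D_{1j})\neq\emptyset$ and $\si\cap(D_{0i}\times D_{1j'})\neq\emptyset$; by the previous paragraph $j,j'<\min\th_1$. Fix any $\ga\in D_1$ (which is non-empty as $n\geq1$). Applying Lemma~\ref{la:D1} to each of the two relations yields $(j,\ga)\rsi(i,\wh\ga)\rsi(j',\ga)$, so $\si\cap(D_{1j}\times D_{1j'})\neq\emptyset$, and Lemma~\ref{la:C1'} then gives $(j,j')\in\th_1$; since both $j$ and $j'$ lie below $\min\th_1$, and the only pairs of $\th_1$ with a coordinate below $\min\th_1$ are diagonal, this forces $j=j'$. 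For the inequality $i<j$ I would use Lemma~\ref{la:(i+1,j)} to get $(i+1,j)\in\th_0$: if $i+1=j$ we are done; otherwise $(i+1,j)$ is a non-diagonal pair of $\th_0$, hence $i+1\geq\min\th_0$ and $j\geq\min\th_0$, and in particular $j\geq\min\th_0>i$.

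The identity $i+\min\th_1=j+\min\th_0$ is the real content. If $\th_0=\De_\N$ (so $\th_1=\De_\N$ too, as $\th_1\sub\th_0$), then $(i+1,j)\in\th_0$ forces $j=i+1$ and both sides equal $\infty$, so I would assume $m_0:=\min\th_0<\infty$ and write $m_1:=\min\th_1$. Starting from a fixed pair in $\si\cap(D_{0i}\times D_{1j})$, I would slide it up the grid via Lemma~\ref{la:Ap1}: translating by $m_0-i-1\geq0$ produces a non-empty $\si\cap(D_{0,m_0-1}\times D_{1,j'})$ with $j'=j+m_0-i-1$, and because $m_0-1<m_0=\min\th_0$, Lemma~\ref{la:E7} forces $j'<m_1$, i.e. $j+m_0-i\leq m_1$; translating one further step gives $\si\cap(D_{0,m_0}\times D_{1,j'+1})\neq\emptyset$, and now $m_0=\min\th_0$ puts us in the \emph{other} branch of the Lemma~\ref{la:E7} dichotomy, so $j'+1\geq m_1$, i.e. $j+m_0-i\geq m_1$. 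Combining gives $i+m_1=j+m_0$, whence $j=i+(m_1-m_0)<m_1$ since $i<m_0$. The only point requiring a little care is exactly this last manoeuvre: choosing the two slide amounts so that one application of Lemma~\ref{la:E7} lands strictly below $\min\th_0$ and the next lands exactly at $\min\th_0$, together with a brief nod to the $\infty$-convention in the degenerate case; the rest is routine bookkeeping with the structure of congruences on $\N$.
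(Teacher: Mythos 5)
Your proposal is correct and follows essentially the same route as the paper: existence and $M_{1j}=\mu$ from Table~\ref{tab:M} and Lemma~\ref{la:E7}, and the identity $i+\min\th_1=j+\min\th_0$ via exactly the paper's sliding argument (Lemma~\ref{la:Ap1} with shifts $m_0-i-1$ and $m_0-i$, then the dichotomy of Lemma~\ref{la:E7}). The only local differences are cosmetic: for uniqueness you conclude $(j,j')\in\th_1$ with $j,j'<\min\th_1$ rather than contradicting $i<\min\th_0$ as the paper does, and for $i<j$ you argue directly from $(i+1,j)\in\th_0$ via Lemma~\ref{la:(i+1,j)} where the paper invokes Lemma~\ref{la:D5} together with uniqueness — both variants are valid and of comparable length.
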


\pf
Beginning with the first assertion, $M_{0i}=\mu$ implies the existence of at least one such~$j$ (see Table \ref{tab:M}); $M_{1j}=\mu$ follows from Lemma \ref{la:E7}.  To prove uniqueness of $j$, and aiming for a contradiction, suppose ${\si\cap(D_{0i}\times D_{1j})\neq \emptyset}$ and $\si\cap(D_{0i}\times D_{1k})\neq \emptyset$ with $j<k$, and write $d:=k-j>0$.  Fix some $\al\in D_1$.  Lemma~\ref{la:D1} gives $(j,\al) \rsi (i,\wh\al) \rsi (k,\al)=(j+d,\al)$.  Combining this with Lemma \ref{la:Ap1}, we have ${(i,\wh\al) \rsi (j+d,\al) \rsi (i+d,\wh\al)}$, which gives $(i,i+d)\in\th_0$, contradicting $i<\min\th_0$.  Thus,~$j$ is indeed unique.

Since $i<\min\th_0$, Lemma \ref{la:E7} gives $j<\min\th_1$.  If $i\geq j$, then since $M_{1j}=\mu$, Lemma \ref{la:D5} gives $\si\cap(D_{0i}\times D_{1,i+1})\neq \emptyset$ with $i+1>j$, contradicting the uniqueness of $j$.  So $i<j$.

For the final assertion, write $m_0:=\min\th_0$ and $m_1:=\min\th_1$.  Since $M_{1j}=\mu$,  Lemma~\ref{la:E10} gives $\per\th_1=\per\th_0$.  It follows that either $m_0=m_1=\infty$ or else $m_0,m_1<\infty$, and of course $i+m_1=j+m_0$ only needs proof in the second case.
Since $i<m_0$, and since ${\si\cap(D_{0i}\times D_{1j})\neq \emptyset}$, Lemma \ref{la:Ap1} (with $k=m_0-i-1$ and $k=m_0-i$) tells us that $\si\cap(D_{0,m_0-1}\times D_{1,j+m_0-i-1})$ and $\si\cap(D_{0,m_0}\times D_{1,j+m_0-i})$ are both non-empty.  It then follows from Lemma \ref{la:E7} that ${j+m_0-i-1 < m_1}$ and $j+m_0-i \geq m_1$.  
Solving these leads to $i+m_1=j+m_0$.
\epf

We are now ready to prove the main result of this subsection.

\begin{lemma}
\label{la:rowq}
Given a congruence $\si$ on $\Ptw{n}$, the pair $\Pair=(\Th,M)$ given in Definitions \ref{defn:Th} and~\ref{defn:M} is a C-pair.
\end{lemma}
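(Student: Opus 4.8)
The goal is to verify that the pair $\Pair=(\Th,M)$ assembled from $\si$ in Definitions~\ref{defn:Th} and~\ref{defn:M} satisfies all the clauses of Definition~\ref{de:Cpair}: that $\Th$ is a C-chain, that every row of $M$ is one of the admissible row types \ref{RT1}--\ref{RT10}, and that the verticality conditions \ref{V1} and \ref{V2} hold. The work is essentially bookkeeping: we have already established (Lemma~\ref{la:siqcong}) that $\Th$ is a descending chain of congruences on $\N$, and Lemmas~\ref{la:C5}, \ref{la:D4}, \ref{la:B8} tell us exactly which relations the restrictions $\si_{qi}$ can be, hence which symbols can occur in $M$. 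The plan is to go row by row, feeding in the structural lemmas of Subsection~\ref{subsec:CPAC} (Lemmas~\ref{la:E1}, \ref{la:E3}, \ref{la:E4}, \ref{la:De}, \ref{la:E7}, \ref{la:E10}, \ref{la:E11}, \ref{la:mumin}) to pin down the left-to-right shape of each row and the vertical compatibility.

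First I would dispose of rows $q\geq2$. By Lemma~\ref{la:B8} each entry is $\De$, $R$, or an $N$-symbol. Lemma~\ref{la:De} shows the $\De$s form an initial segment; Lemma~\ref{la:E1} shows the row is eventually constant (constant from $\min\th_q$ on); Lemma~\ref{la:E4}\ref{E4.2} shows that once an $N$-symbol appears the subsequent $N$-symbols are non-decreasing and can only be superseded by $R$, and Lemma~\ref{la:E3} shows $R$s form a terminal segment with $\per\th_q=1$. Combining these, the row is $\De\cdots\De\,N_i\cdots N_{k-1}\,N\cdots$ or $\De\cdots\De\,N_i\cdots N_{m-1}\,R\cdots$; checking the index bounds ($k\leq\min\th_q$ in the first case using Lemma~\ref{la:E4}\ref{E4.1}/\ref{la:E1}, and $\th_q=(m,m+1)^\sharp$ in the second via Lemma~\ref{la:E3}) gives exactly \ref{RT8}, \ref{RT9} or \ref{RT10}. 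Verticality \ref{V1} is Lemma~\ref{la:E4}\ref{E4.1} together with Lemma~\ref{la:De}; verticality \ref{V2} is Lemma~\ref{la:E3}.

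Next, rows $0$ and $1$. Lemmas~\ref{la:C5} and~\ref{la:D4} bound the symbol sets. The key coupling is Lemma~\ref{la:E10}: if $M_{1i}\neq\De$ then $M_{0i}=M_{1,i+1}$ and $\per\th_0=\per\th_1$, so the two rows are "locked" after the first non-$\De$ entry of row $1$. Lemma~\ref{la:De} again makes the $\De$s initial in each row; Lemma~\ref{la:E1} makes both rows eventually constant. The remaining point is to identify, according to the relationship between $\min\th_0,\min\th_1$ and $\per\th_0=\per\th_1$ (and the degenerate cases $\th_0=\De_\N$, $\per\th_0=1$), which of \ref{RT1}--\ref{RT7} results. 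Here Lemma~\ref{la:E11} forces $\lam,\rho,R$ entries to sit at or past $\min\th_q$ with period $1$, Lemma~\ref{la:E7} pins down the alignment of row-$0$/row-$1$ relationships relative to $\min\th_0,\min\th_1$, and Lemma~\ref{la:mumin} handles the subtle placement of an isolated $\mu$ in row $0$ before $\min\th_0$ (it forces a matching $\mu$ in row $1$ at position $j$ with $i<j<\min\th_1$ and $i+\min\th_1=j+\min\th_0$) — this is what produces the "$\zeta$ then $\mu\cdots\mu$" offset patterns in \ref{RT2}, \ref{RT5}, \ref{RT6}, \ref{RT7}. One then checks that a $\muup$ or $\mudown$ symbol, which by Lemma~\ref{la:D4} lives only in row $1$, occurs at most once (Lemma~\ref{la:mumin}'s uniqueness of $j$) and exactly in the "$\zeta$" slot. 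Finally \ref{V2} for $R$s in row $1$ is automatic from Lemma~\ref{la:E3} (or by inspecting the resulting types).

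\textbf{Main obstacle.} The routine part is the $q\geq2$ rows; the delicate part is the case analysis for rows $0$ and $1$, specifically matching every feasible combination of $(\min\th_0,\per\th_0,\min\th_1,\per\th_1)$ and the possible symbols to exactly one of the seven templates \ref{RT1}--\ref{RT7} — in particular getting the off-by-one alignments right (the $\zeta$-slot, the condition $l-1\equiv m\pmod d$ in \ref{RT7}, the strict inequality $i<m$ in \ref{RT5}). The workhorse lemmas for this are Lemmas~\ref{la:E7}, \ref{la:E10}, \ref{la:E11} and especially Lemma~\ref{la:mumin}; I expect the bulk of the write-up to be a careful enumeration verifying that these lemmas leave no admissible shape outside the list and no forbidden shape inside it.
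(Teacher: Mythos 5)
Your proposal is correct and follows essentially the same route as the paper's proof: rows $q\geq2$ are handled via Lemmas \ref{la:B8}, \ref{la:E1}, \ref{la:E3}, \ref{la:E4} and \ref{la:De}, the verticality conditions come from Lemmas \ref{la:E3} and \ref{la:E4}\ref{E4.1}, and rows $0$ and $1$ are sorted into types \ref{RT1}--\ref{RT7} by a case analysis on $\th_0,\th_1$ driven by Lemmas \ref{la:E7}, \ref{la:E10}, \ref{la:E11} and \ref{la:mumin}. You have also correctly identified the genuinely delicate part, namely the off-by-one alignment of the matching $\mu$s governed by Lemma \ref{la:mumin}, which is exactly where the paper's proof spends its effort in the subcases $l=m+1$ and $l>m+1$.
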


\begin{proof}
We have already seen that $\Th$ is a C-chain in Lemma \ref{la:siqcong}, so we now turn to the matrix~$M$.
By Lemmas~\ref{la:E1}--\ref{la:E4}, each row $q\geq 2$ is of type \ref{RT8}--\ref{RT10}, and the verticality conditions \ref{V1} and \ref{V2} hold.
It remains to be proved that rows $0$ and $1$ are of one of the types \ref{RT1}--\ref{RT7}.
We split our considerations into cases, depending on whether $\th_0$ and/or $\th_1$ is $\Delta_\N$.
Throughout the proof we make extensive use of Table \ref{tab:M} without explicit reference, and also of the fact that any entry above or to the left of a $\De$ is also $\De$ (Lemma \ref{la:De}).  We also keep the meaning of symbols such as $i$, $\xi$ and $\ze$ from the row type specifications in Subsection \ref{subsec:Cpairs}.

\setcounter{caseco}{0}

\case $\th_0=\th_1=\Delta_\N$.
By Lemma  \ref{la:E11}, the only symbols that can appear in row $0$ are $\Delta$ and~$\mu$, and in row $1$ the only possibilities are $\Delta$, $\muup$, $\mudown$ and $\mu$.
If row $0$ consists entirely of $\De$s, then so too does row $1$ and we have type \ref{RT1}.  Otherwise, row $0$ has the form $\De\dots\De\mu\mu\mu\dots$, with the first $\mu$ in position $i$, say.  The entries above the $\De$s are also $\De$s.  For any $j\geq i$, it follows from $M_{0j}=\mu$ and Lemma \ref{la:(i+1,j)} that $\si\cap(D_{0j}\times D_{1,j+1})\not=\emptyset$; Lemma \ref{la:mumin} then gives $M_{1,j+1}=\mu$.  Thus, $M_{1k}=\mu$ for all $k\geq i+1$.  Since $M_{1i}\in\{\De,\muup,\mudown,\mu\}$, we have type \ref{RT2}.

\case
 $\th_0\neq\Delta_\N$ and $\th_1=\Delta_\N$.
Lemma \ref{la:E10} implies that row $1$ consists entirely of $\De$s.  It then follows from Lemmas \ref{la:mumin} and \ref{la:D1} that row $0$ may not contain any $\mu$.  If row $0$ consists entirely of $\De$s, then we have \ref{RT1}.  Otherwise, by Lemmas \ref{la:E1} and \ref{la:E11}, row $0$ has the form $\Delta\dots\Delta\xi\xi\dots$, with $\xi\in \{\lambda,\rho,R\}$ and $\per\th_0=1$, and hence we have type \ref{RT3}.

\case $\th_0,\th_1\neq\Delta_\N$.
If all the entries in row $1$ are $\Delta$, then as in the previous case we have type~\ref{RT1} or \ref{RT3}.  So for the remainder of the proof we will assume that some entries of row $1$ are distinct from $\Delta$.  By Lemma \ref{la:E10} and $\th_0\supseteq\th_1$, we must have
\[
\th_0=(m,m+d)^\sharp \AND \th_1=(l,l+d)^\sharp\qquad \text{for some $0\leq m\leq l$ and $d\geq 1$.}
\]
From Lemmas \ref{la:E1} and \ref{la:E10}, there exists $\xi\in\{ \mu,\lambda,\rho , R\}$ such that $M_{0j}=M_{1k}=\xi$ for all $j\geq m$ and all $k\geq l$.  
%To reserve $i$ for the label from Subsection \ref{subsec:Cpairs}. 
Furthermore, we note that if $\xi\in\{\lambda,\rho, R\}$ then $d=1$ by Lemma \ref{la:E11}.
We now split into subcases, depending on the relationship between $m$ and $l$.

\setcounter{subcaseco}{0}

\subcase $m=l$.
We claim that any entries on both rows to the left of $m$ equal $\Delta$, and we note then that we will have type \ref{RT4}.  To prove the claim, it is sufficient to show that $M_{0,m-1}=\Delta$ if $m\geq1$.
But if $M_{0,m-1}\neq \De$, then Lemma \ref{la:E11} gives $M_{0,m-1}=\mu$, and Lemma~\ref{la:mumin} then implies the existence of an integer $j$ satisfying $m-1<j<m$, a contradiction.

\subcase $l=m+1$.
If $m=0$ or if $M_{0,m-1}=\Delta$, then $M_{0j}=M_{1j}=\Delta$ for all $j<m$; the entry $M_{1m}$ can only be one of $\Delta$, $\muup$, $\mudown$ or $\mu$ by Lemma \ref{la:E11}, and we have type~\ref{RT6}.  
The only remaining option (again see Lemma \ref{la:E11}) is that ${M_{0i}=\dots=M_{0,m-1}=\mu}$ for some $i\leq m-1$, and we assume that $i$ is minimal with this property.  Again, we must have $M_{0j}=M_{1j}=\Delta$ for all $j<i$.  Applying Lemma \ref{la:mumin}, and keeping ${\min\th_1=\min\th_0+1}$ in mind, it follows that $M_{1,i+1}=\dots=M_{1m}=\mu$.  Finally, the entry $M_{1i}$ can again only be one of $\Delta$, $\muup$, $\mudown$ or $\mu$, and we have type~\ref{RT5}.

\subcase $l>m+1$.
As usual, the entry $M_{1,l-1}$ must be one of $\Delta$, $\muup$, $\mudown$ or $\mu$.  If $M_{1,l-2}\neq \De$, then Lemma~\ref{la:D5} would give $\si\cap(D_{0,l-2}\times D_{1,l-1})\neq \emptyset$, and this contradicts Lemma \ref{la:E7} since $l-2\geq\min\th_0$ (as $l>m+1$) and $l-1<\min\th_1$.  It follows that $M_{1,l-2}=\De$, and hence $M_{1j}=\De$ for all $j\leq l-2$. 
If $m=0$ or $M_{0,m-1}=\Delta$, then $M_{0j}=\Delta$ for all~$j<m$, and we have \ref{RT6}.

So now suppose $m\geq1$ and $M_{0,m-1}\neq \De$, which means $M_{0,m-1}=\mu$.  Applying Lemma \ref{la:mumin} with $i=m-1$, the $j$ from the conclusion has to be $j=l-1$; in particular, we have $M_{1,l-1}=\mu$.  
Since $\si\cap(D_{0,m-1}\times D_{1,l-1})\neq \emptyset$, Lemma \ref{la:(i+1,j)} gives $(m,l-1)\in\th_0$, i.e.~${l-1\equiv m\pmod d}$.
If $m\geq2$ and $M_{0,m-2}\neq \De$, then $M_{0,m-2}=\mu$, and as above Lemma \ref{la:mumin} (with $i=m-2$) leads to $M_{1,l-2}=\mu$, contradicting $M_{1,l-2}=\De$.  Thus, we have either $m=1$ or $M_{0,m-2}=\De$, so that $M_{0j}=\De$ for all $j\leq m-2$.  Therefore, this time we have type \ref{RT7}.
\end{proof}

\subsection{Restrictions to pairs of \boldmath{$\D$}-classes}
\label{subsec:further}

Now that we have associated the C-pair $\Pair=(\Th,M)$ to the congruence $\si$ on $\Ptw n$ (Definitions~\ref{defn:Th} and~\ref{defn:M}), we wish to show that $\si$ is one of the congruences associated to the pair (Definitions~\ref{de:cg} and~\ref{def:exc}).  We do this in Subsection~\ref{subsec:CongCpair}, but first we require some further technical lemmas describing the possible restrictions of $\si$ to pairs of $\D$-classes.

For any $q\in\bnz$, we clearly have $(i,j)\in\th_q\implies \si\cap(D_{qi}\times D_{qj})\neq \emptyset$.  By Lemmas \ref{la:C1} and~\ref{la:C1'}, the reverse implication holds as well for $q\leq1$.  This need not be the case for $q\geq2$, however, as shown by the exceptional congruences.  The next lemma shows how to deal with this possibility.

For the duration of Subsection \ref{subsec:further}, we will treat $\De$-entries in row $2$ as $N$-symbols, $\De\equiv\A_2$ (see~Remark \ref{rem:nuN}).

\begin{lemma}
\label{la:E5}
If $\si\cap (D_{qi}\times D_{qj})\neq\emptyset$, but $(i,j)\not\in\th_q$, then $\Pair$ is exceptional, $\hgt(M)=q$, and $(i,j)\in\thx_q\sm\th_q$.
\end{lemma}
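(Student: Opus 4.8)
The plan is to show that if $\si$ relates two $\D$-classes in the same row $q$ whose indices are \emph{not} related by $\th_q$, then this extra relationship forces the structure described in Definition~\ref{def:exc}. First I would observe that by Lemma~\ref{la:B8}, the restriction $\si_{qi}$ is either $\nab_{D_q}$ or $\nu_N$ for some $N\normal\S_q$; but if $\si_{qi}=\nab_{D_q}$ then (for $q\neq n$) $D_{qi}\sub I(\si)$, and even for $q=n$ the hypothesis would quickly force $(i,j)\in\th_q$ via Lemma~\ref{la:(i+1,j)}-type arguments. More carefully: if $\si\cap(D_{qi}\times D_{qj})\neq\emptyset$ but we could find a witnessing pair $((i,\al),(j,\be))$ with $(\al,\be)\notin\H$, then Lemma~\ref{la:B5} would give $I_{qi}\cup I_{qj}\sub I(\si)$, hence $M_{qi}=M_{qj}=R$ and thus (Lemma~\ref{la:E3}) $i,j\geq\min\th_q$ with $\per\th_q=1$, which easily yields $(i,j)\in\th_q$, a contradiction. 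So \emph{every} witnessing pair must be $\H$-related; in particular $\si_{qi},\si_{qj}\sub\H$, and Lemma~\ref{la:B14} tells us $\si_{qi}=\nu_N$, $\si_{qj}=\nu_{N'}$ for some normal subgroups $N,N'$.

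Next I would extract the permutational-difference information. Fix a witnessing pair $((i,\al),(j,\be))$ with $\al\rH\be$ and let $\pi=\pd(\al,\be)$. Post- and pre-multiplying by suitable idempotents (as in the proof of Lemma~\ref{la:D1}, or using Lemma~\ref{la:AA2} together with Lemma~\ref{la:0float} to avoid creating floating components) lets us move this relationship around the $\H$-classes of $D_q$ while preserving the conjugacy class of $\pi$; this shows $\pi\notin N$ (else $((i,\al),(j,\al))\in\si$ would give $(i,j)\in\th_q$) and $\pi\notin N'$. Now consider composing the pair $((i,\al),(j,\be))$ with $((j,\be),(i,\gamma))$, where $\gamma\rH\al$ is chosen so that $\pd(\be,\gamma)=\pi$ as well (possible since the $\H$-class is acted on simply transitively by $\S_q$ via $\pd$, using Lemma~\ref{la:0float} to keep floats at zero). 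Transitivity gives $((i,\al),(i,\gamma))\in\si$ with $\pd(\al,\gamma)=\pi^2$, so $\pi^2\in N$; hence $\langle\pi\rangle N/N$ has order $\leq 2$ in $\S_q/N$, i.e.\ $N$ has index $2$ in $\langle N,\pi\rangle$. Iterating, $((i,\al),(j,\be))$ composed with $((j,\be),(i,\al))$ and Lemma~\ref{la:Ap1} forces a periodicity: repeatedly stepping $i\mapsto j\mapsto 2j-i\mapsto\cdots$ one sees $\si\cap(D_{qi}\times D_{q,i+2|j-i|})\neq\emptyset$ via $\pi^2\in N\subseteq\si_{q\bullet}$, whence $(i,i+2(j-i))\in\th_q$ — giving $\per\th_q\mid 2(j-i)$ but $\per\th_q\nmid (j-i)$ — so $\per\th_q$ is even, say $\per\th_q=2d$ with $d\mid(j-i)$ and $(j-i)/d$ odd; thus $(i,j)\in\thx_q\sm\th_q$. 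One then checks $N=\A_q$ (for $q>2$) resp.\ the $q=2$ conditions: since $\si_{qi}=\nu_N$ must be compatible with the index-$2$ twist and with $\per\th_q=2d$, Remark~\ref{re:se}\ref{se1} and the row-type analysis of Lemma~\ref{la:rowq} pin down $M_{qm}=\A_q$ (or $\De$ with $M_{1m}\in\{\mu,\rho,\lam,R\}$, $\thx_2\sub\th_1$), and uniqueness of the exceptional row is Remark~\ref{re:se}\ref{se6}. Hence $\Pair$ is exceptional with $\hgt(M)=q$.

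The main obstacle I anticipate is the bookkeeping in the $q=2$ case, where $\{\id_2\}=\A_2$ and one must argue with the interaction between row $2$ and row $1$: there $\si_{2i}=\De_{D_2}$ is possible while the `twist' lives in the relationship between $D_{2i}$ and lower $\D$-classes, so one cannot simply read off $N=\A_q$ from $\si_{2i}$. Instead one uses Lemma~\ref{la:sep}\ref{it:sep3} (and Lemma~\ref{la:A4} to keep floats zero) to push a non-$\H$-twist down to row $1$, producing elements of $\si_{1k}$ outside $\mu_1$ hence forcing $M_{1k}\in\{\mu,\rho,\lam,R\}$ and, via the $(i+1,j)$-mechanism of Lemma~\ref{la:(i+1,j)}, the containment $\thx_2\sub\th_1$; checking that no \emph{larger} subgroup than $\A_q$ can occur (for $q>2$) uses simplicity of $\A_q$ together with the index-$2$ constraint $\pi^2\in N$, $\pi\notin N$. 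A secondary subtlety is ensuring the witnessing pair can always be taken $\H$-related: this is exactly where Lemma~\ref{la:B5} is indispensable, ruling out the $\nab_{D_q}$ alternative, and one should state that reduction cleanly at the outset.
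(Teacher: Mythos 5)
Your overall strategy matches the paper's: rule out non-$\H$-related witnesses via Lemma~\ref{la:B5} (which forces $M_{qi}=M_{qj}=R$ and hence $(i,j)\in\th_q$, a contradiction), deduce $\si_{qi}=\nu_N$ and $\si_{qj}=\nu_{N'}$ from Lemma~\ref{la:B14}, extract $\pi=\pd(\al,\be)\notin N$ from a witnessing pair, and derive the period-doubling $(i,i+2(j-i))\in\th_q$ to place $(i,j)$ in $\thx_q\sm\th_q$. (You should also record explicitly at the outset that $q\geq 2$, via Lemmas~\ref{la:C1} and~\ref{la:C1'}, before invoking these lemmas and before the notion of exceptional row even makes sense.)

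The genuine gap is the identification $N=\A_q$. The constraints you extract --- $\pi\notin N$, $\pi^2\in N$, $N$ a proper normal subgroup of $\S_q$ --- do \emph{not} force $N=\A_q$: take $N=\{\id_q\}$ and $\pi$ a transposition for $q\geq3$, or $N=\mathcal K_4$ for $q=4$. ``Simplicity of $\A_q$ together with the index-$2$ constraint'' yields nothing against these examples, and the row types \ref{RT9}/\ref{RT10} permit \emph{any} non-trivial normal subgroup as an entry, so no row-type analysis can pin this down either. The paper's essential extra input is that the set of permutational differences realised by $\si\cap(D_{qi}\times D_{qj})$ inside a group $\H$-class is a single coset $N\pi$ that is closed under conjugation (right translation conjugates $\pd$), so $\pi N$ is central in $\S_q/N$; since for $N\neq\A_q$ the quotient $\S_q/N$ has trivial centre, $N=\A_q$ follows (the paper phrases this as: choose $\eta$ with $[\pi,\eta]\notin N$ and derive $[\pi,\eta]\in N$). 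Relatedly, your derivation of $\pi^2\in N$ --- composing with a pair $((j,\be),(i,\ga))$ having $\pd(\be,\ga)=\pi$ --- needs justification that such a pair lies in $\si$: reversing the given pair only yields $\pd=\pi^{-1}$, and one must appeal to conjugation-closedness together with the fact that $\pi$ and $\pi^{-1}$ are conjugate in $\S_q$; the paper sidesteps this entirely by right-multiplying by $(j-i,\pi^\natural)$ and chaining forwards to $((i,\id_q^\natural),(i+l(j-i),\id_q^\natural))$. Finally, the $q=2$ clauses ($\thx_2\sub\th_1$ and $M_{1m}\in\{\mu,\rho,\lam,R\}$) require explicit constructions landing both images in a common $\D$-class (the paper's $\om$ and $\eta_1,\eta_2$ computations); your sketch via Lemma~\ref{la:sep}\ref{it:sep3} does not yet deliver this, since the two elements being pushed down start in different columns.
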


\begin{proof}
Clearly $i\neq j$, say $i<j$.  By Lemmas \ref{la:C1} and \ref{la:C1'} we have $q\geq2$.  
Referring to Definition~\ref{def:exc}, we must show that all of the following items hold:
\bena
\item \label{E5a} $\th_q=(m,m+2d)^\sharp$ for some $m\geq0$ and $d\geq1$;
\item \label{E5b} $\thx_q:=(m,m+d)^\sharp\subseteq\th_{q-1}$ if $q=2$;
\item \label{E5c} $M_{qm}=\A_q$ (remembering $\A_2\equiv\De$ for $q=2$);
\item \label{E5d} $M_{1m}\in\{\mu,\rho,\lam,R\}$ if $q=2$;
\item \label{E5e} $(i,j)\in\thx_q\sm\th_q$.
\eena
Let us begin with an arbitrary $((i,\alpha),(j,\beta))\in\si\cap (D_{qi}\times D_{qj})$.

We first claim that $M_{qi}\not\in\{\S_q,R\}$.  Indeed, if $M_{qi}=R$, then we also have $((i,\alpha),(i,\beta))\in\si$, and hence $((i,\beta),(j,\beta))\in\si$ by transitivity, so that $(i,j)\in\th_q$, a contradiction.
If $M_{qi}=\S_q$, note that Lemma \ref{la:B5} gives $(\al,\be)\in\H\restr_{D_q}=\nu_{\S_q}=\si_{qi}$, so that $((i,\al),(i,\be))\in\si$ again, leading to the same contradiction.  This completes the proof of the claim.

It follows from Lemma \ref{la:B8} that $\si_{qi}=\nu_N$ for some $N\vartriangleleft \S_q$.
An analogous argument shows that $\si_{qj}=\nu_{N'}$ for some $N'\vartriangleleft \S_q$, and so $N\leq N'\leq \A_q$ by Lemma \ref{la:Ap3}.
By Lemma \ref{la:B5}, for any $((i,\ga),(j,\de))\in\si\cap (D_{qi}\times D_{qj})$ we must have $\ga\rH\de$.

Using Lemmas \ref{la:0float} and \ref{la:A4} we see that every element $(i,\ga)$ in the $\R$-class of $(i,\al)$ is $\si$-related to some element of $D_{qj}$; similarly, every element in the $\L$-class of any such $(i,\ga)$ is $\si$-related to some element of $D_{qj}$.  Since $\D=\R\circ\L$, and by the previous paragraph, it follows that:
\begin{equation}\label{eq:DqiDqj}
\forall\gamma\in D_q:\: \exists \delta\in H_\gamma:\: (i,\gamma)\rsi(j,\delta) ,
\end{equation}
where $H_\ga$ is the $\H$-class of $\ga$ in $\P_n$.

Let us now focus on a particular $\H$-class of $D_q$, the one containing the elements $\alpha$ such that
$\dom\alpha=\codom\alpha=\bn$ and $\bn/\ker\alpha=\bn/\ker\beta=\{\{1\},\dots,\{q-1\},\{q,\dots,n\}\}$.
This is a group $\H$-class isomorphic to $\S_q$, and we denote the natural isomorphism by
\[
\pi\mt\pi^\natural = \begin{partn}{3}1^\natural&\cdots&q^\natural\\(1\pi)^\natural&\cdots&(q\pi)^\natural\end{partn} \qquad\text{for $\pi\in\S_q$\COMMA where } j^\natural = \begin{cases}
\{j\} &\text{if $j<q$}\\
\{q,\dots,n\} &\text{if $j=q$.}
\end{cases}
\]
Observe that $\Float(\pi^\natural,\eta)=\Float(\eta,\pi^\natural)=0$ for all $\pi\in \S_q$ and $\eta\in \P_n$.

By \eqref{eq:DqiDqj} we have $((i,\id_q^\natural),(j,\pi^\natural))\in\si$ for some $\pi\in \S_q$.
Note that $\pi\not\in N$, for otherwise $(i,\pi^\natural)\rsi (i,\id_q^\natural)\rsi (j,\pi^\natural)$, contradicting $(i,j)\not\in\th_q$.
Writing $e:=j-i>0$, we have
\[
((i+e,\pi^\natural),(i+2e,(\pi^2)^\natural)) = ((i,\id_q^\natural),(j,\pi^\natural)) \cdot (e,\pi^\natural) \in \si.
\]
Continuing and using transitivity we conclude $((i,\id_q^\sharp),(i+le,\id_q^\sharp))\in\si$, where $l\geq1$ is the order of~$\pi$ in $\S_q$.
Therefore $(i,i+le)\in\th_q$.  In particular, $\th_q\neq \De_\N$, say $\th_q=(m,m+f)^\sharp$.  Since $i\neq i+le$, we also have $i\geq m$, so also $j\geq i\geq m$, and Lemma \ref{la:E1} then gives $N=N'$.

Now, supposing $N\neq \A_q$, the quotient $\S_q/N$ has a trivial center, and, recalling $\pi\not\in N$, there exists $\eta\in \S_q$ such that
$[\pi,\eta]=\pi\eta\pi^{-1}\eta^{-1}\not\in N$.
Consider again the pair $((i,\id_q^\natural),(j,\pi^\natural))\in\si$, and multiply it by $(0,\eta^\natural)$ on the left, and on the right, to obtain
$(j,(\eta\pi)^\natural)\rsi (i,\eta^\natural)\rsi(j,(\pi\eta)^\natural)$,
from which it follows that $[\pi,\eta]=\pd((\eta\pi)^\natural,(\pi\eta)^\natural)\in N$, a contradiction.  (The $\pd$ operator was defined just before Theorem \ref{thm:CongPn}.)
Therefore, we must have $N=N'=\A_q$.  Since $i\geq m$ and $M_{qi}=\A_q$, it follows from Remark \ref{re:se}\ref{se1} that $M_{qm}=\A_q$,
i.e.~\ref{E5c} holds.

It now follows that $\pi$ is an odd permutation, and $\pi^2$ even.  Since $M_{q,i+2e}=\A_q$ by Lemma~\ref{la:E1}, we have
\[
(i,\id_q^\natural)\rsi(i+e,\pi^\natural)\rsi(i+2e,(\pi^2)^\natural)\rsi (i+2e,\id_q^\natural),
\]
so that $(i,i+2e)\in\th_q$.  Hence $f=\per\th_q\mid 2e$, and, combining with $f\nmid e$ (as $(i,i+e)=(i,j)\not\in\th_q$) it follows that $f$ is even, say $f=2d$, and that $j-i=e\equiv d\pmod{2d}$.  Since $i,j\geq m$, it follows that $(i,j)\in(m,m+d)^\sharp\sm\th_q$.
This all shows that \ref{E5a} and \ref{E5e} both hold.

We are left to deal with \ref{E5b} and \ref{E5d}, so we assume that $q=2$ for the rest of the proof.  Note that the permutation $\pi\in\S_2\sm\A_2$ must in fact be the transposition $(1,2)$.  To simplify notation in what follows, we will write
\[
\ga := \id_2^\natural = \begin{partn}{2}1^\natural&2^\natural\\1^\natural&2^\natural\end{partn}
\AND
\de := \pi^\natural = \begin{partn}{2}1^\natural&2^\natural\\2^\natural&1^\natural\end{partn}.
\]
As above, we have $((i,\ga),(j,\de))\in\si$.  Next we claim that
\begin{equation}\label{eq:ideta}
((m,\ga),(m+d,\de))\in\si .
\end{equation}
To prove this, let $t\in\N$ be such that $m+2td\geq i$, say $m+2td = i+u$.  
Since $j-i\equiv d\pmod {2d}$ and $i,j\geq m$, we have $(j,i+d)\in\th_2$, and so $(i,\ga) \rsi (j,\de) \rsi (i+d,\de)$.  Combining the above with Lemma \ref{la:Ap1}, and keeping in mind that $\per\th_2=2d$, it follows that indeed
\[
(m,\ga) \rsi (m+2td,\ga) = (i+u,\ga) \rsi (i+d+u,\de) = (m+d+2td,\de) \rsi (m+d,\de).
\]

Using \eqref{eq:ideta}, and again writing $\omega:=\begin{partn}{1} {\ \! \bn\ \!}\\ {\ \! \bn\ \!} \end{partn}$, we have
\[
((m,\om),(m+d,\om)) = (0,\om) \cdot ((m,\ga),(m+d,\de)) \in \si.
\]
Since $\om\in D_1$, this shows that $(m,m+d)\in\th_1$, from which \ref{E5b} follows.

Finally, let $\eta_1=\begin{partn}{2}1^\natural&2^\natural\\\hhline{~|-}2^\natural&1^\natural\end{partn}$ and $\eta_2=\begin{partn}{2}2^\natural&1^\natural\\\hhline{~|-}1^\natural&2^\natural\end{partn}$.  Then again using \eqref{eq:ideta} we have
\[
((m,\eta_1),(m+d,\eta_1\de)) = (0,\eta_1)\cdot((m,\ga),(m+d,\de))\in\si,
\]
and similarly $((m,\eta_2),(m+d,\de\eta_2))$.  But $\eta_1\de=\de\eta_2=\begin{partn}{2}1^\natural&2^\natural\\\hhline{~|-}1^\natural&2^\natural\end{partn}$, so it follows that
\[
(m,\eta_1) \rsi (m+d,\eta_1\de) = (m+d,\de\eta_2) \rsi (m,\eta_2),
\]
and so $(\eta_1,\eta_2)\in\si_{1m}$.  Since $\eta_1$ and $\eta_2$ are neither $\muup$- nor $\mudown$-related (as they are neither $\R$- nor $\L$-related), it follows that $\si_{1m}$ cannot be one of $\De_{D_1}$, $\muup$ or $\mudown$.  Examining Table \ref{tab:M}, we see then that $M_{1m}\in\{\mu,\rho,\lam,R\}$.  This completes the proof of \ref{E5d}, and indeed of the lemma.
\end{proof}

The next lemma describes the conditions under which $\si$-relationships can exist between distinct $\D$-classes, and then the two subsequent ones characterise all such relationships.

\begin{lemma}
\label{la:DD}
Suppose $\si\cap (D_{qi}\times D_{rj})\neq\emptyset$, where $q\leq r$ and $(q,i)\neq (r,j)$.  Then at least one of the following holds:
\begin{thmenumerate}
\item
\label{DD1}
$M_{qi}=M_{rj}=R$; 
\item
\label{DD2}
$q=r$, $M_{qi}=M_{rj}$ and $(i,j)\in\th_q$;
\item
\label{DD3}
$q=0$, $r=1$, $M_{0i}=M_{1j}\neq \De$, $i\geq \min\th_0$, $j\geq \min\th_1$ and $(i+1,j)\in \th_0$;
\item
\label{DD4}
$q=0$, $r=1$, $M_{0i}=M_{1j}=\mu$, $i< \min\th_0$, $j< \min\th_1$ and $(i+1,j)\in \th_0$;
\item
\label{DD5}
$q=r\geq2$, $\Pair$ is exceptional, $\hgt(M)=q$, and $(i,j)\in\thx_q\sm\th_q$.
\end{thmenumerate}
\end{lemma}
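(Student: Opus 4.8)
The plan is a case analysis on how the rows $q$ and $r$ compare, reducing in each case to results already established. Suppose first that $q=r$; since $(q,i)\neq(r,j)$ we then have $i\neq j$. When $q=0$ I would invoke Lemma~\ref{la:C1} to obtain $(i,j)\in\th_0$, whereupon Remark~\ref{re:se}\ref{se1} gives $M_{0i}=M_{0j}$ and~\ref{DD2} holds; the case $q=1$ is identical with Lemma~\ref{la:C1'} in place of Lemma~\ref{la:C1}. When $q\geq2$ I would appeal to Lemma~\ref{la:E5}: either $(i,j)\in\th_q$, in which case Remark~\ref{re:se}\ref{se1} again gives $M_{qi}=M_{qj}$ and hence~\ref{DD2}, or else $\Pair$ is exceptional with $\hgt(M)=q$ and $(i,j)\in\thx_q\sm\th_q$, which is precisely~\ref{DD5}.

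Next I would treat $q<r$ with $r\geq2$. Since $\si$ is symmetric, $\si\cap(D_{rj}\times D_{qi})\neq\emptyset$, and as $r>q$ and $r\geq2$ we may apply Lemma~\ref{la:B5'} to conclude $I_{rj}\cup I_{qi}\subseteq I(\si)$; in particular $D_{qi},D_{rj}\subseteq I(\si)$, and then the construction of $M$ (Table~\ref{tab:M}) forces $M_{qi}=M_{rj}=R$, which is~\ref{DD1}.

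The only remaining case is $q=0$, $r=1$. Here Lemma~\ref{la:E7} supplies $M_{0i}=M_{1j}\in\{\mu,\lam,\rho,R\}$ together with the dichotomy that $i$ and $j$ lie on the same side of $\min\th_0$ and $\min\th_1$ respectively, while Lemma~\ref{la:(i+1,j)} supplies $(i+1,j)\in\th_0$. I would then split on the value of $M_{0i}$. If $M_{0i}=R$ then~\ref{DD1} holds. If $M_{0i}\in\{\lam,\rho\}$, then Lemma~\ref{la:E11} forces $i\geq\min\th_0$, so the dichotomy places us in the branch $i\geq\min\th_0$, $j\geq\min\th_1$, and~\ref{DD3} holds. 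If $M_{0i}=\mu$, I would use the dichotomy once more: the branch $i\geq\min\th_0$ (equivalently $j\geq\min\th_1$) gives~\ref{DD3}, and the branch $i<\min\th_0$ (equivalently $j<\min\th_1$) gives~\ref{DD4}.

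I do not expect any single step to be a genuine obstacle: the substantive content — above all the recognition of the exceptional situation — has already been carried out in Lemmas~\ref{la:E5}, \ref{la:B5'} and~\ref{la:E7}, and here it only remains to read off the correct conclusion in each branch. The one place requiring care is the bookkeeping in the $q=0$, $r=1$ case, where one must match the value of $M_{0i}$ and the position of $i$ relative to $\min\th_0$ against the appropriate one of~\ref{DD1}, \ref{DD3} and~\ref{DD4}, and confirm — via Lemmas~\ref{la:E11} and~\ref{la:E7} — that the combination ``$M_{0i}\in\{\lam,\rho\}$ with $i<\min\th_0$'' cannot arise.
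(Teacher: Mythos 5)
Your proposal is correct and follows essentially the same case decomposition and the same key lemmas (Lemmas~\ref{la:C1}, \ref{la:C1'}, \ref{la:E5}, \ref{la:B5'}/\ref{la:B5}, \ref{la:(i+1,j)}, \ref{la:E7} and \ref{la:E11}) as the paper's own proof. The only differences are cosmetic: you cite Remark~\ref{re:se}\ref{se1} where the paper cites Lemma~\ref{la:E1}, and in the $q=0$, $r=1$ case you branch on the value of $M_{0i}$ first rather than on the $\min\th$ dichotomy, which is logically equivalent.
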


\begin{proof}
We split our considerations into cases, depending on whether $q=r$ and whether $r>1$.

\setcounter{caseco}{0}

\case $q\neq r$ and $r>1$.
By Lemma \ref{la:B5} we have $M_{qi}=M_{rj}=R$, and so \ref{DD1} holds.

\case $q=r>1$.
If $(i,j)\in\th_q$ then Lemma \ref{la:E1} gives \ref{DD2}.
Otherwise Lemma \ref{la:E5} gives~\ref{DD5}.

\case $q=r\leq 1$.
Here Lemmas \ref{la:C1}, \ref{la:C1'} and \ref{la:E1} imply \ref{DD2}.

\case $q=0$ and $r=1$.
Lemmas \ref{la:(i+1,j)} and \ref{la:E7} give $(i+1,j)\in\th_0$ and $M_{0i}=M_{1j}\in\{\mu,\lam,\rho,R\}$.  Lemma \ref{la:E7} also tells us that either $i\geq \min\th_0$ and $j\geq \min\th_1$, or else $i<\min\th_0$ and $j<\min\th_1$.  In the former case, \ref{DD3} holds.  In the latter case, Lemma \ref{la:E11} gives $M_{0i}=\mu$, so \ref{DD4} holds.
\end{proof}

It turns out that the converse of Lemma \ref{la:DD} is \emph{almost} true.  The only exception is in item~\ref{DD5}, which concerns exceptional congruences.  Accordingly, the next lemma treats cases \ref{DD1}--\ref{DD4}, and the following one deals with \ref{DD5}.

\begin{lemma}\label{la:DD'}
Suppose $q,r\in\bnz$ and $i,j\in\N$ are such that $q\leq r$ and $(q,i)\neq (r,j)$.  If any of conditions \ref{DD1}--\ref{DD4} from Lemma \ref{la:DD} hold, then $\si\cap (D_{qi}\times D_{rj})\neq\emptyset$.  Moreover, in these respective cases, the following hold for all $\al\in D_q$ and $\be\in D_r$:
\ben
\item \label{DD'1} $((i,\alpha),(j,\beta))\in\si$;
\item \label{DD'2} $((i,\alpha),(j,\beta))\in\si\iff (\alpha,\beta)\in{\si}_{qi}$;
\item \label{DD'3} $((i,\alpha),(j,\beta))\in\si\iff (\alpha,\widehat{\beta})\in{\si}_{0i}$;
\item \label{DD'4} $((i,\alpha),(j,\beta))\in\si\iff \alpha=\widehat{\beta}$.
\een
\end{lemma}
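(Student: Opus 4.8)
The strategy is to treat the four cases separately, and in each to exhibit a single pair in $\si\cap(D_{qi}\times D_{rj})$, from which the existence statement follows, and then to upgrade that pair to the full `iff' description by using the congruence translation lemmas of Subsection~\ref{subsec:bgp} together with the structural lemmas already established for each row. In case~\ref{DD1}, where $M_{qi}=M_{rj}=R$, by definition of the matrix (Table~\ref{tab:M}) both $D_{qi}$ and $D_{rj}$ lie in $I(\si)$, so $\nab_{D_{qi}}\cup\nab_{D_{rj}}\cup(D_{qi}\times D_{rj})\sub\si$, giving at once that every $((i,\al),(j,\be))$ with $\al\in D_q$, $\be\in D_r$ is in $\si$. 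In case~\ref{DD2}, where $q=r$, $M_{qi}=M_{rj}$ and $(i,j)\in\th_q$, first note that $(i,j)\in\th_q$ gives $((i,\al),(j,\al))\in\si$ for all $\al\in D_q$ by definition of $\th_q$; then Lemma~\ref{la:Ap2}\ref{it:Ap2.2} yields $\si_{qi}=\si_{qj}$, and the claimed equivalence $((i,\al),(j,\be))\in\si\iff(\al,\be)\in\si_{qi}$ follows by composing with the pairs $((j,\be),(i,\be))\in\si$ and transitivity.

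For case~\ref{DD3} ($q=0$, $r=1$, $M_{0i}=M_{1j}=\xi\neq\De$, $i\geq\min\th_0$, $j\geq\min\th_1$, $(i+1,j)\in\th_0$), the key is that $M_{1j}\neq\De$ means $\si_{1j}\neq\De_{D_1}$, so Lemma~\ref{la:D5} applies: $\{((k,\wh\ga),(k+1,\ga)):\ga\in D_1\}\sub\si$ for all $k\geq j'$ where $\si_{1j'}\neq\De_{D_1}$ first occurs, and in particular $((j-1,\wh\ga),(j,\ga))\in\si$ can be arranged; combined with $(i,j-1)\in\th_0$ (which follows from $(i+1,j)\in\th_0$ after noting $i,j-1\geq\min\th_0$, using \eqref{eq:th1th2}-type reasoning on periods, or more directly from $M_{0i}\ne\De$ forcing $\per\th_0=1$ when $\xi\in\{\lam,\rho,R\}$ and handling $\xi=\mu$ via Lemma~\ref{la:mumin}), this gives $((i,\wh\ga),(j,\ga))\in\si$ for all $\ga\in D_1$. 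The forward direction of the equivalence then comes from applying Lemma~\ref{la:D1}, which shows $\si\cap(D_{0i}\times D_{1j})\ne\emptyset$ forces $((i,\wh\ga),(j,\ga))\in\si$ for all $\ga\in D_1$ and also pins down $\si_{1j}$ in terms of $\si_{0i}$ on $\wh{\phantom{\ga}}$-images; the reverse direction chains $(i,\al)\rsi(i,\wh\be)\rsi(j,\be)$, using that $(\al,\wh\be)\in\si_{0i}$ and the first relation holds by definition of $\si_{0i}$. Case~\ref{DD4} is the same argument but with $i<\min\th_0$, $j<\min\th_1$; here $M_{0i}=\mu$ and Lemma~\ref{la:mumin} provides the unique $j$ with $\si\cap(D_{0i}\times D_{1j})\ne\emptyset$ and the relations $i<j$, $i+\min\th_1=j+\min\th_0$, so that $j$ is forced and one checks $((i,\al),(j,\be))\in\si\iff\al=\wh\be$: the forward implication uses that $\si_{0i}=\De_{D_0}$ (as $i<\min\th_0$, by Lemma~\ref{la:C4a'}) together with Lemma~\ref{la:D1}, and the reverse follows from Lemma~\ref{la:D1} applied to the known non-empty intersection.

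The main obstacle I expect is the bookkeeping in cases~\ref{DD3} and~\ref{DD4}: one must carefully extract from $(i+1,j)\in\th_0$ (plus the period/minimum data encoded in the row type) the auxiliary relation $(i,j-1)\in\th_0$ or the analogous shift needed to make the $\wh\ga\mapsto\ga$ step of Lemma~\ref{la:D5} line up with the correct column, and one must be sure the hypotheses `$i\geq\min\th_0$, $j\geq\min\th_1$' versus `$i<\min\th_0$, $j<\min\th_1$' are used exactly where the row-type specifications (\ref{RT2}--\ref{RT7}) and Lemmas~\ref{la:E7},~\ref{la:mumin} permit. Once the correct auxiliary $\th_0$- or $\th_1$-memberships are in hand, each equivalence reduces to a short transitivity chain through a $\wh{\phantom{\ga}}$-image, so the remaining work is routine. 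Throughout, Remark~\ref{re:se} and Table~\ref{tab:M} are used silently to know which symbol $M_{qi}$ is and hence which of Lemmas~\ref{la:C5},~\ref{la:D4},~\ref{la:B8} pins down $\si_{qi}$.
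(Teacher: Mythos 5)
Your cases \ref{DD1} and \ref{DD2} match the paper exactly, and the overall architecture (exhibit one pair in $\si\cap(D_{qi}\times D_{rj})$, then upgrade to the full equivalence by transitivity through a $\wh{\phantom{\ga}}$-image) is the right one. However, the mechanism you propose for case \ref{DD3} has a genuine gap: you want to combine $((j-1,\wh\ga),(j,\ga))\in\si$ with $(i,j-1)\in\th_0$, and neither ingredient is available in general. Lemma \ref{la:D5} only yields $((k,\wh\ga),(k+1,\ga))\in\si$ for $k\geq k_0$, where $k_0$ is the first column with $\si_{1k_0}\neq\De_{D_1}$; if $j$ is itself that first column, then $k=j-1$ is out of range. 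Likewise $(i,j-1)\in\th_0$ fails whenever $j-1<\min\th_0$ and $i\neq j-1$. Concretely, take type \ref{RT4} with $\min\th_0=\min\th_1=5$, $\per\th_0=1$, $i=7$, $j=5$: then $(i+1,j)=(8,5)\in\th_0$, so the hypotheses of \ref{DD3} hold, but $(7,4)\notin\th_0$ and $M_{1,4}=\De$, so both steps of your chain break. The paper's proof goes \emph{forward} rather than backward: it takes $((j,\wh\be),(j+1,\be))\in\si$ directly from Lemma \ref{la:D5}, translates by $(d-1,\id)$ where $d=\per\th_0=\per\th_1\geq1$ (Lemma \ref{la:Ap1}), and then closes the loop using the periodicity relations $(i,i+d)\in\th_0$ and $(j,j+d)\in\th_1$, which are guaranteed precisely by the hypotheses $i\geq\min\th_0$ and $j\geq\min\th_1$. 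This wrap-around via the common period is the missing idea.

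A second, smaller gap is in case \ref{DD4}: you identify the $j$ of the hypothesis with the unique index $k$ supplied by Lemma \ref{la:mumin} by declaring that ``$j$ is forced'', but the hypotheses only give $(i+1,j)\in\th_0$, $M_{1j}=\mu$ and $j<\min\th_1$, which together with $(i+1,k)\in\th_0$ yield $(j,k)\in\th_0$ and not immediately $j=k$. The paper devotes a full contradiction argument (showing $j\neq k$ would force $m_0=i+1$, $m_1=k+1$, $i=k-1$ and ultimately contradict $i<\min\th_0$) to establish $j=k$. Once that is done, your use of Lemma \ref{la:D1} for both directions of the equivalences in \ref{DD'3} and \ref{DD'4} is fine.
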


\pf
\ref{DD1}  By definition of $M$, when $M_{qi}=M_{rj}=R$ we have $D_{qi},D_{rj}\subseteq I(\si)$, which is a $\si$-class, giving the claims.

\ref{DD2}  Since $(i,j)\in\th_q$, certainly $\si\cap (D_{qi}\times D_{qj})\neq\emptyset$.  Moreover, for any $\al,\be\in D_q$ it follows from $((i,\beta),(j,\beta))\in\si$ that $((i,\al),(j,\be)) \in \si \iff ((i,\al),(i,\be)) \in \si \iff (\al,\be) \in \si_{qi}$.

\ref{DD3}  Write $d:=\per\th_0=\per\th_1$ (see Lemma \ref{la:E10}), and let $\al\in D_0$ and $\be\in D_1$.  From $M_{1j}\neq \De$, Lemma \ref{la:D5} gives $((j,\wh\be),(j+1,\be))\in\si$.  It also follows from $(i+1,j)\in\th_0$ that $((i+1,\wh\be),(j,\wh\be))\in\si$.  Since $i\geq \min\th_0$ and $j\geq \min\th_1$, we have $(i,i+d)\in\th_0$ and $(j,j+d)\in\th_1$, with $1\leq d<\infty$.  Combining all of the above with Lemma \ref{la:Ap1}, we have
\[
(i,\wh\be) \rsi (i+d,\wh\be) = ((i+1)+(d-1),\wh\be) \rsi (j+(d-1),\wh\be) \rsi ((j+1)+(d-1),\be) = (j+d,\be) \rsi (j,\be).
\]
Consequently, $\si\cap (D_{0i}\times D_{1j})\neq\emptyset$.  From $((i,\wh\be),(j,\be))\in\si$, it also follows that
\[
((i,\al),(j,\be))\in\si \iff ((i,\al),(i,\wh\be))\in\si \iff (\al,\wh\be)\in\si_{0i}.
\]

\ref{DD4}  As in the previous part, it is enough to show that $((i,\wh\be),(j,\be))\in\si$ for all $\be\in D_1$, keeping in mind that $\si_{0i}=\De_{D_0}$, as $M_{0i}=\mu$.  By Lemma \ref{la:mumin}, we have $\si\cap(D_{0i}\times D_{1k})\neq \emptyset$ for a unique $k\in\N$, and we also have $M_{1k}=\mu$, $i<k<m_1$, and $i+m_1=k+m_0$.  By Lemmas~\ref{la:D1} and \ref{la:(i+1,j)} we have $((i,\wh\be),(k,\be))\in\si$ and $(i+1,k)\in\th_0$.  Since also $(i+1,j)\in\th_0$ by assumption, we have $(j,k)\in\th_0$.  

To complete the proof, it remains to show that $j=k$, as we have already shown that ${((i,\wh\be),(k,\be))\in\si}$.  Aiming for a contradiction, suppose instead that $j\neq k$.  Since $(j,k)\in\th_0$, we then have $j,k\geq m_0$.  Since also $(i+1,j)\in\th_0$ it follows that $i+1\geq m_0$ as well.  Combined with $i<m_0$ (which is one of the underlying assumptions in this case), we deduce that in fact $m_0=i+1$.  From $i+m_1=k+m_0$, it follows that $m_1=k+1$.  
Since $j<m_1=k+1$ and $j\neq k$, we then deduce $j\leq k-1$.  Since $M_{1j}=\mu$, Lemma \ref{la:D5} then gives $((k-1,\wh\be),(k,\be))\in\si$.  Combined with $((i,\wh\be),(k,\be))\in\si$ and transitivity, it follows that $((i,\wh\be),(k-1,\wh\be))\in\si$ and so $(i,k-1)\in\th_0$.  Since $i<m_0$ we deduce that $i=k-1$.  But then $j\leq k-1=i$.  Adding $e:=i-j\geq0$ to $(j,i+1)\in\th_0$, we obtain $(i,i+1+e)\in\th_0$, with $i+1+e>i$, and this contradicts $i<m_0$.  This completes the proof.
\epf

\begin{lemma}\label{la:DD''}
Suppose $\Pair$ is exceptional, with $\hgt(M)=q$.  If $\si\cap (D_{qi}\times D_{qj})\neq\emptyset$ for some $(i,j)\in\thx_q\sm\th_q$, then for all $\al,\be\in D_q$:
\[
((i,\alpha),(j,\beta))\in\si \IFf \alpha\rH\beta \text{ \ and \ }\pd(\alpha,\beta)\in \S_q\setminus \A_q.
\]
\end{lemma}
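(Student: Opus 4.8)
The plan is to prove the two implications separately, after first reading off the relevant data of the C-pair. Since $\Pair$ is exceptional with $\hgt(M)=q$, Definition \ref{def:exc} gives $\th_q=(m,m+2d)^\sharp$ and $M_{qm}=\A_q$ (interpreting $\A_2$ as the symbol $\De$ when $q=2$, as in Remark \ref{rem:nuN}), and by definition $\thx_q=(m,m+d)^\sharp$. A pair $(i,j)\in\thx_q\sm\th_q$ necessarily has $i,j\geq m=\min\th_q$, so Lemma \ref{la:E1} yields $M_{qi}=M_{qj}=\A_q$; in particular $M_{qi}\neq R$, and Table \ref{tab:M} then gives $\si_{qi}=\si_{qj}=\nu_{\A_q}$. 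These two observations carry the argument.

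For the forward implication, suppose $((i,\al),(j,\be))\in\si$, so $\al,\be\in D_q$. If $(\al,\be)\notin\H$, then Lemma \ref{la:B5} would force $D_{qi}\sub I(\si)$ and hence $M_{qi}=R$, contradicting $M_{qi}=\A_q$; so $\al\rH\be$, and $\pd(\al,\be)$ is defined (up to conjugacy in $\S_q$). If in addition $\pd(\al,\be)\in\A_q$, i.e.~$(\al,\be)\in\nu_{\A_q}=\si_{qj}$, then $((j,\al),(j,\be))\in\si$, so transitivity with $((i,\al),(j,\be))\in\si$ gives $((i,\al),(j,\al))\in\si$ and hence $(i,j)\in\th_q$ --- again a contradiction. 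Therefore $\pd(\al,\be)\in\S_q\sm\A_q$.

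For the converse, I would fix once and for all a witness $((i,\al_0),(j,\be_0))\in\si\cap(D_{qi}\times D_{qj})$, which exists by hypothesis; the forward implication (already proved) tells us $\al_0\rH\be_0$ and $\pd(\al_0,\be_0)$ is odd. Now take any $\al,\be\in D_q$ with $\al\rH\be$ and $\pd(\al,\be)$ odd. The idea is to transport the witness into the $\H$-class $H_\al$: since $\al\rJ\al_0$, Lemma \ref{la:0float} supplies $\eta_1,\eta_2\in\P_n$ with $\al=\eta_1\al_0\eta_2$ and $\Float(\eta_1,\al_0,\eta_2)=0$. One then checks, using $\al_0\rH\be_0$ together with Lemmas \ref{la:A4} and \ref{la:AA2}, that also $\Float(\eta_1,\be_0,\eta_2)=0$, whence $((i,\al),(j,\be_1))\in\si$ for $\be_1:=\eta_1\be_0\eta_2$; two applications of Lemma \ref{la:AA2} (multiplying $\al_0\rH\be_0$ on the left by $\eta_1$, then on the right by $\eta_2$) give $\al\rH\be_1$ and show that $\pd(\al,\be_1)$ has the same parity as $\pd(\al_0,\be_0)$, so is odd. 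Finally, since $\al\rH\be\rH\be_1$, the relation $\nu_{\A_q}$ has exactly two classes inside $H_\al$ --- the even and odd cosets under $\pd(\cdot,\al)$ --- and both $(\al,\be)$ and $(\al,\be_1)$ lie in the odd one; hence $(\be_1,\be)\in\nu_{\A_q}=\si_{qj}$, so $((j,\be_1),(j,\be))\in\si$, and transitivity yields $((i,\al),(j,\be))\in\si$, as required.

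The step I expect to require the most care is the float bookkeeping in the converse, namely verifying $\Float(\eta_1,\be_0,\eta_2)=0$: this rests on Lemma \ref{la:A4} (that $\rL$- and $\rR$-related partitions create equally many floating components), applied via $\al_0\rR\be_0$ to handle $\Float(\eta_1,\be_0)$ and via $\eta_1\al_0\rL\eta_1\be_0$ --- itself a consequence of Lemma \ref{la:AA2} --- to handle $\Float(\eta_1\be_0,\eta_2)$. Everything else (tracking parities of permutational differences, and reading the entries $M_{qi},M_{qj}=\A_q$ off Table \ref{tab:M}) is routine.
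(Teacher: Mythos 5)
Your proof is correct and follows essentially the same route as the paper's: the forward direction is the same transitivity argument (deriving the contradiction $(i,j)\in\th_q$ from $(\al,\be)\in\nu_{\A_q}=\si_{qj}$), and the converse produces an element of $H_\al$ that is $\si$-related to $(i,\al)$ with odd permutational difference and then exploits the two-coset structure of $\nu_{\A_q}$ inside an $\H$-class. The only difference is presentational: the paper obtains that intermediate element by citing the $\D=\R\circ\L$ argument from the proof of Lemma \ref{la:E5}, whereas you carry out the transport explicitly via the sandwich $\eta_1(\cdot)\eta_2$ and verify the float bookkeeping with Lemmas \ref{la:A4} and \ref{la:AA2} directly, which is a correct unpacking of the same computation.
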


\pf
Since $M_{qi}\neq R$, Lemma \ref{la:B5} tells us that any pair $((i,\al),(j,\be))\in\si\cap (D_{qi}\times D_{qj})$ satisfies $\al\rH\be$.  So we need to show that for $(\al,\be)\in\H\restr_{D_q}$ we have
\[
((i,\alpha),(j,\beta))\in\si \iff \pd(\alpha,\beta)\in \S_q\setminus \A_q, \qquad\text{i.e.}\qquad ((i,\alpha),(j,\beta))\in\si \iff (\al,\be)\not\in\nu_{\A_q}.
\]

($\Rightarrow$)  Aiming for a contradiction, suppose $((i,\alpha),(j,\beta))\in\si$ and $(\al,\be)\in\nu_{\A_q}$.  Then from $M_{qi}=\A_q$, we have $((i,\al),(i,\be))\in\si$, so that $((i,\be),(j,\be))\in\si$, contradicting $(i,j)\not\in\th_q$.  

($\Leftarrow$)  Suppose $(\al,\be)\not\in\nu_{\A_q}$.  As in the proof of Lemma \ref{la:E5}, it follows from $\si\cap (D_{qi}\times D_{qj})\neq\emptyset$ that $((i,\al),(j,\ga))\in\si$ for some $\ga\in D_q$.  As in the previous two paragraphs, we have $\al\rH\ga$ and $(\al,\ga)\not\in\nu_{\A_q}$.  Since the $\H$-class containing $\al,\be,\ga$ is split into two $\nu_{\A_q}$-classes, and since $(\al,\be),(\al,\ga)\not\in\nu_{\A_q}$, it follows that $(\be,\ga)\in\nu_{\A_q}$.  Since $\si_{qj}=\nu_{\A_q}$, this gives $((j,\be),(j,\ga))\in\si$.  It then follows by transitivity that $((i,\al),(j,\be))\in\si$.
\epf

\subsection{The completion of the proof}
\label{subsec:CongCpair}

We are now ready to complete the proof of our main result, Theorem \ref{thm:main}.

\begin{prop}
\label{pr:siissipi}
Let $\si$ be a congruence on $\Ptw{n}$, and let $\Pair:=(\Th,M)$ be the C-pair given in Definitions \ref{defn:Th} and~\ref{defn:M}.
Then $\si=\cg(\Pair)$, or else 
$\Pair$ is exceptional and $\si=\cgx(\Pair)$.
\end{prop}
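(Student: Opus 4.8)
By Proposition~\ref{pro:arecongs} both $\cg(\Pair)$ and (when $\Pair$ is exceptional) $\cgx(\Pair)$ are congruences, so the task is to decide which equals $\si$. The plan is to establish the inclusions $\cg(\Pair)\subseteq\si\subseteq\cgx(\Pair)$ (reading $\cgx(\Pair):=\cg(\Pair)$ when $\Pair$ is not exceptional) and then, in the exceptional case, to rule out anything strictly between. I would begin with $\si\subseteq\cgx(\Pair)$: given $((i,\al),(j,\be))\in\si$ with $q=\rank\al$, $r=\rank\be$, show that one of \ref{C1}--\ref{C9} holds, with \ref{C9} invoked only when $\Pair$ is exceptional. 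If $\al=\be$ then $q=r$ and $(i,j)\in\th_q$ by the definition of $\th_q$, so \ref{C1'} applies, hence one of \ref{C1}--\ref{C8} by Remark~\ref{rem:C1'}. If $q=r$, $i=j$ and $\al\neq\be$, then $(\al,\be)\in\si_{qi}$, and Lemmas~\ref{la:C5},~\ref{la:D4} and~\ref{la:B8} list the possibilities for $\si_{qi}$, each of which matches, through Table~\ref{tab:M}, a value of $M_{qi}$ and a clause among \ref{C2}--\ref{C8} (e.g.\ $\si_{qi}=\nu_N$ gives \ref{C3}, $\si_{qi}\in\{\lam_0\restr_{D_0},\lam_1\restr_{D_1}\}$ gives \ref{C4}, $\si_{qi}=\mudown$ gives \ref{C6}, $\si_{qi}=\nab$ gives \ref{C2}). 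If $(q,i)\neq(r,j)$, assume $q\leq r$ and apply Lemma~\ref{la:DD}: outcome~\ref{DD1} gives \ref{C2}; outcome~\ref{DD2} reduces, via Lemma~\ref{la:DD'}\ref{DD'2}, to the same-$\D$-class analysis together with $(i,j)\in\th_q$; outcomes~\ref{DD3} and~\ref{DD4} are converted by Lemma~\ref{la:DD'}\ref{DD'3},\ref{DD'4} into \ref{C2}, \ref{C4}, \ref{C5} or \ref{C8}, the choice dictated by whether $M_{0i}$ is $R$, $\lam$, $\rho$ or $\mu$ (and, in the $\mu$ case, the second versus third option of~\ref{C8} according to $i<\min\th_0$ or $i\geq\min\th_0$); and outcome~\ref{DD5} yields exactly~\ref{C9}, which forces $\Pair$ exceptional.

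Next I would prove $\cg(\Pair)\subseteq\si$ by running through \ref{C1}--\ref{C8}. Condition \ref{C1} is immediate from the definition of $\th_q$; \ref{C2} holds because $M_{qi}=M_{rj}=R$ means $D_{qi},D_{rj}\subseteq I(\si)$, a single $\si$-class; \ref{C3}--\ref{C7} follow because the relevant value of $M_{qi}$ forces $\si_{qi}$ to be exactly the named relation ($\nu_N$, $\lam_1\restr_{D_1}$, $\mudown$, etc.), so that $(\al,\be)\in\si_{qi}$, combined with $(i,j)\in\th_q$ where required; and~\ref{C8} splits into its three listed sub-cases, the first handled through $\si_{qi}$ and $(i,j)\in\th_q$ as before, the other two by checking — using Lemmas~\ref{la:E10},~\ref{la:E11} and~\ref{la:mumin} to certify the hypotheses from the matrix data — that $\si$ meets the conditions of case~\ref{DD3} or~\ref{DD4} of Lemma~\ref{la:DD}, so that Lemma~\ref{la:DD'}\ref{DD'3},\ref{DD'4} supplies the pair; the cross-row instances of \ref{C4} and~\ref{C5} are obtained the same way.

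Combining the inclusions, if $\Pair$ is not exceptional then $\si=\cg(\Pair)$. If $\Pair$ is exceptional then $\cg(\Pair)\subseteq\si\subseteq\cgx(\Pair)$, and it remains to show that $\si\neq\cg(\Pair)$ forces $\si\supseteq\cgx(\Pair)$. In that situation $\si$ contains a pair satisfying \ref{C9} but none of \ref{C1}--\ref{C8}; in particular $\si\cap(D_{qi_0}\times D_{qj_0})\neq\emptyset$ for some $(i_0,j_0)\in\thx_q\sm\th_q$, where $q=\hgt(M)$. Using Lemma~\ref{la:Ap1} (adding a common constant to both coordinates) and translations along $\th_q$ (changing one coordinate by a multiple of $\per\th_q$ while keeping it $\geq\min\th_q$), one reaches $\si\cap(D_{qi}\times D_{qj})\neq\emptyset$ for every $(i,j)\in\thx_q\sm\th_q$, and then Lemma~\ref{la:DD''} shows that all pairs described in~\ref{C9} lie in $\si$. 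Hence $\cgx(\Pair)\subseteq\si$, so $\si=\cgx(\Pair)$.

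I expect the main obstacle to be the exhaustive matching in the first two paragraphs: ensuring that each combination of a $\si$-relationship (as classified by the restriction lemmas of Subsections~\ref{subsec:I00}--\ref{subsec:Iq0} and by the pair-of-$\D$-class Lemmas~\ref{la:DD}--\ref{la:DD''}) with the corresponding matrix symbols lands on exactly one of the clauses~\ref{C1}--\ref{C9}, with no pair missed and none spuriously admitted. The delicate point is clause~\ref{C8}, whose three-way split is governed by the positions of $i$ and $j$ relative to $\min\th_q$ and $\min\th_r$ and by the behaviour of $\al\mapsto\wh\al$ under multiplication (Lemma~\ref{la:A2}); here the cross-row cases $\{q,r\}=\{0,1\}$ must be kept strictly apart from the same-row cases, and the hypotheses of Lemma~\ref{la:DD'} must be verified from the matrix data via Lemmas~\ref{la:E10},~\ref{la:E11} and~\ref{la:mumin}. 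The exceptional-case propagation is comparatively routine once Lemmas~\ref{la:Ap1} and~\ref{la:DD''} are in hand.
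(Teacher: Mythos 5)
Your proposal is correct and rests on the same machinery as the paper's proof — the case analysis by matrix entry via Table \ref{tab:M}, and Lemmas \ref{la:DD}, \ref{la:DD'} and \ref{la:DD''} — so it is essentially the same argument, merely reorganised as two inclusions plus a sandwich rather than a direct case-by-case biconditional. Your explicit propagation step in the exceptional case (translating a single witness in $\si\cap(D_{qi_0}\times D_{qj_0})$ to all $(i,j)\in\thx_q\sm\th_q$ via Lemma \ref{la:Ap1} and $\th_q$-translations before invoking Lemma \ref{la:DD''}) is sound and in fact makes explicit a point the paper's proof leaves implicit.
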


\begin{proof}
Suppose first that $\Pair$ is not exceptional, and let $\tau:=\cg(\Pair)$.  We need to prove that
\begin{equation}
\label{eq:sisipi}
((i,\alpha),(j,\beta))\in \si \iff ((i,\alpha),(j,\beta))\in\tau\qquad\text{for all  $i,j\in\N$ and all $\al,\be\in\P_n$}.
\end{equation}
To do so, fix $i,j\in\N$ and $\al,\be\in\P_n$, and write $q:=\rank\al$ and $r:=\rank\be$.  

Since the matrix entries are defined with direct reference to the restrictions of $\si$ to the corresponding $\D$-classes, it immediately follows that \eqref{eq:sisipi} holds whenever $(q,i)=(r,j)$.
So let us assume that $(q,i)\neq (r,j)$, and without loss of generality that $q\leq r$.
By inspection of Lemma~\ref{la:DD} and \ref{C1}--\ref{C8} we see that if $M_{qi}\neq M_{rj}$ then
$\si\cap (D_{qi}\times D_{rj})=\emptyset= \tau\cap (D_{qi}\times D_{rj})$,
and so~\eqref{eq:sisipi} holds.
So for the rest of the proof, we assume that $M_{qi}=M_{rj}$. 
We now split into cases, depending on the actual value of $M_{qi}$.

\setcounter{caseco}{0}

\case 
\label{ca:De}
$M_{qi}=M_{rj}=\Delta$.  If we do not have both $q=r$ and $(i,j)\in\th_q$, then by Lemma~\ref{la:DD}\ref{DD2} and \ref{C1}, $\si\cap(D_{qi}\times D_{rj})=\emptyset=\tau\cap(D_{qi}\times D_{rj})$.  For this, note that when $q=r=2$, item~\ref{DD5} of Lemma \ref{la:DD} involves $M_{qi}=M_{rj}=\A_2\equiv\De$; however the remaining conditions of this item cannot hold, as $\Pair$ is not exceptional.  If $q=r$ and $(i,j)\in\th_q$ then Lemma \ref{la:DD'}\ref{DD'2} and~\ref{C1} give
 \[
((i,\alpha),(j,\beta))\in\si\iff
  (\alpha,\beta)\in{\si}_{qi}=\Delta_{D_{q}}
 \iff
 \al=\be \iff
 ((i,\alpha),(j,\beta))\in\tau.
\]

\case $M_{qi}=M_{rj}=R$.
This is an immediate consequence of Lemma \ref{la:DD'}\ref{DD'1} and
 \ref{C2}.

\case 
\label{ca:N}
$M_{qi}=M_{rj}=N\unlhd \S_q$. 
Here we must of course have $q=r\geq 2$, and hence $i\neq j$.
Again, note that item \ref{DD5} from Lemma \ref{la:DD} cannot hold, since $\Pair$ is not exceptional. 
Therefore, if $(i,j)\not\in\th_q$ then $\si\cap (D_{qi}\times D_{rj})=\emptyset= \tau\cap (D_{qi}\times D_{rj})$.
If $(i,j)\in\th_q$ then by Lemma \ref{la:DD'}\ref{DD'2} and \ref{C3},
\[
((i,\alpha),(j,\beta))\in\si \Iff
(\alpha,\beta)\in{\si}_{qi}=\nu_N
  \Iff
\alpha\rH\beta \text{ and } \pd(\alpha,\beta)\in  N
\Iff
((i,\alpha),(j,\beta))\in\tau.
\]

In all the remaining cases we have $q,r\in\{0,1\}$.

\case $M_{qi}=M_{rj}=\lambda$.
By Lemma \ref{la:E11} we must have $i\geq\min\th_q$, $j\geq\min\th_r$ and $\per\th_q=\per\th_r=1$. 
Now, if $q=r$, then $i,j\geq\min\th_q$ gives $(i,j)\in\th_q$, and we use Lemma \ref{la:DD'}\ref{DD'2} and~\ref{C4} to obtain
\[
((i,\alpha),(j,\beta))\in \si\iff (\alpha,\beta)\in{\si}_{qi} = \lam_q\restr_{D_q} \iff
\widehat{\alpha}\rL\widehat{\beta}\iff ((i,\alpha),(j,\beta))\in\tau.
\]
If $q\neq r$, i.e.~$q=0$ and $r=1$, we use Lemma \ref{la:DD'}\ref{DD'3}
 and \ref{C4}, noting that $\al=\wh\al$ and $(i+1,j)\in\th_0$ (as $i\geq\min\th_0$, $j\geq\min\th_1\geq\min\th_0$ and $\per\th_0=1$):
 \[
((i,\alpha),(j,\beta))\in \si\iff (\alpha,\widehat{\beta})\in{\si}_{0i}= \lam_0\restr_{D_0} \iff
\alpha\rL\widehat{\beta}\iff ((i,\alpha),(j,\beta))\in\tau.
\]

\case $M_{qi}=M_{rj}=\rho$. This is dual to the previous case.

\case $M_{qi}=M_{rj}\in\{\muup,\mudown\}$.  Since there is at most one such entry, this case does not arise.

%\case $M_{qi}=M_{rj}=\mudown$. This case also does not arise.

\case $M_{qi}=M_{rj}=\mu$.  Suppose first that $q=r$. In the same way as in Case \ref{ca:N} we can deal with the case $(i,j)\not\in\th_q$. If $(i,j)\in\th_q$ then using Lemma \ref{la:DD'}\ref{DD'2} and \ref{C8} we have:
 \[
((i,\alpha),(j,\beta))\in \si\iff (\alpha,\beta)\in{\si}_{qi}= \mu_q\restr_{D_q} \iff
\widehat{\alpha}=\widehat{\beta}\iff ((i,\alpha),(j,\beta))\in\tau.
\]

Now suppose $q\neq r$, i.e.~$q=0$ and $r=1$.  In this case, by \ref{C8}, $\tau\cap(D_{qi}\times D_{rj})$ is non-empty precisely when one of conditions \ref{DD3} or \ref{DD4} of Lemma \ref{la:DD} holds.  By Lemmas \ref{la:DD} and \ref{la:DD'}, these are precisely the conditions for $\si\cap(D_{qi}\times D_{rj})$ to be non-empty.  By \ref{C8} and Lemma \ref{la:DD'}, when one of these conditions holds, we have
\[
((i,\al),(j,\be))\in\si \iff \al=\wh\be \iff ((i,\al),(j,\be))\in\tau,
\]
keeping in mind that $\al=\wh\al$ (as $\al\in D_0$), and that $\si_{0i}=\De_{D_0}$ (as $M_{0i}=\mu$).

This completes the proof in the non-exceptional case.

\bigskip

Suppose now that $\Pair$ is exceptional and that $\sigma\neq\cg(\Pair)$.
This time let $\tau:=\cgx(\Pair)$.   
Since $\tau$ differs from $\cg(\Pair)$ only by virtue of containing certain pairs from $\D$-classes whose corresponding entry is $\A_q\unlhd \S_q$ (including $\A_2\equiv\De$ for $q=2$), as per Definition \ref{def:exc},
it follows that the preceding argument remains valid, with the exception of Cases \ref{ca:De} and \ref{ca:N}, at the point where we ruled out the conditions from Lemma~\ref{la:DD}\ref{DD5}.
So this time we use Lemma \ref{la:DD''} and~\ref{C9} to obtain:
\[
((i,\alpha),(j,\beta))\in\si \IFf \alpha\rH\beta \text{ \ and \ } \pd(\alpha,\beta)\in \S_q\setminus \A_q \IFf ((i,\alpha),(j,\beta))\in\tau,
\]
and the proof is complete.
\end{proof}

\section{Description of the inclusion ordering in terms of C-Pairs}
\label{sec:inclusion}

Having shown how to encode congruences on $\Ptw{n}$ as C-pairs, we now want to express the inclusion ordering on congruences in terms of an appropriate ordering on C-pairs (Theorem \ref{thm:comparisons}).

To build towards this, let $\leqc$ be the ordering on C-chains defined by componentwise inclusion of congruences on $\N$.
Next, on the set
\[
\{ \Delta,\muup,\mudown,\mu,\lambda,\rho,R\}\cup\set{ N}{ \{\id_q\}\neq N\unlhd \S_q,\ 2\leq q\leq n}
\]
of all possible C-matrix entries, we define an ordering via Hasse diagram in Figure \ref{fig:M}.  With a slight abuse of notation we will denote this ordering by $\leqc$ as well.
Next we extend this ordering to an ordering $\leqc$ on the set of all C-matrices in a componentwise manner.
And, finally, we define~$\leqc$ on the set of all C-pairs, also componentwise.

\begin{figure}[ht]
\begin{center}

\begin{tikzpicture}[scale=1]
\node (Delta) at (2,-0.5) {$\Delta$};
\node (S2) at (-0.5,3) {$\S_2$};
\node (A3) at (1,1.5) {$\A_3$};
\node (S3) at (1,3) {$\S_3$};
\node (K4) at (2,0.5) {$\mathcal K_4$};
\node (A4) at (2,1.5) {$\A_4$};
\node (S4) at (2,3) {$\S_4$};
\node (A5) at (3,1.5) {$\A_5$};
\node (S5) at (3,3) {$\S_5$};
\node at (4,2.25) {$\dots$};
\node (An) at (5,1.5) {$\A_n$};
\node (Sn) at (5,3) {$\S_n$};
\node (R) at (2,4.5) {$R$};

\node (mudown) at (-1.5,1) {$\mudown$};
\node (muup) at (-3,1) {$\muup$};
\node (mu) at (-2.25,2) {$\mu$};
\node (lambda) at (-1.5,3) {$\rho$};
\node (rho) at (-3,3) {$\lam$};

\draw
(Delta)--(S2)
(Delta)--(A3)--(S3)
(Delta)--(K4)--(A4)--(S4)
(Delta)--(A5)--(S5)
(Delta)--(An)--(Sn)
(Delta)--(mudown)--(mu)--(lambda)
(Delta)--(muup)--(mu)--(rho)
(rho)--(R)
(lambda)--(R)
(S2)--(R)
(S3)--(R)
(S4)--(R)
(S5)--(R)
(Sn)--(R)
;
\end{tikzpicture}

\caption{The partial ordering $\leqc$ on  the C-matrix entries.}
\label{fig:M}

\end{center}
\end{figure}
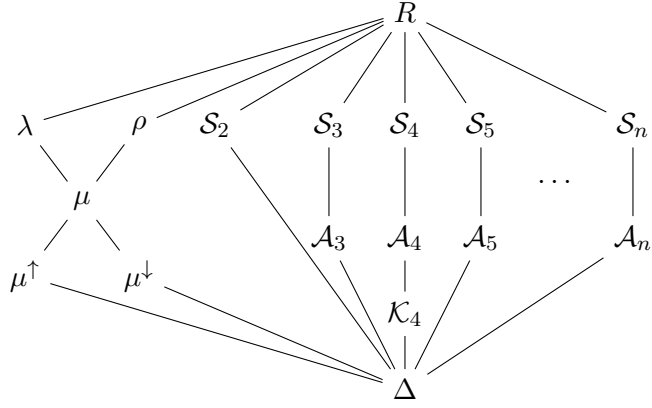

Ideally, one might hope that $\sigma^1\subseteq \sigma^2\iff\Pair^1\leqc\Pair^2$, where $\sigma^t$ are congruences and $\Pair^t$ are their corresponding C-pairs.
Unfortunately, this is not true, due to 
related pairs brought in by matching $\mu$s in rows $0$ and $1$, as well as those brought in by the exceptional congruences.
The most succinct statement we can make, which will be then used in the full description, as well as subsequent applications, is the following:

\begin{lemma}
\label{la:shortcomp}
Let $\si^1$ and $\si^2$ be two congruences on $\Ptw{n}$, with corresponding C-pairs ${\Pair^1=(\Th^1,M^1)}$ and ${\Pair^2=(\Th^2,M^2)}$.
%, $t=1,2$.
\begin{thmenumerate}
\item
\label{it:shco1}
If $\si^1\subseteq\si^2$ then $\Pair^1\leqc\Pair^2$.
\item
\label{it:shco2}
If $\Pair^1\leqc\Pair^2$, $M^1$ is not of type~\ref{RT2},~\ref{RT5} or~\ref{RT7},
and each $\sigma^t=\cg(\Pair^t)$, then 
$\sigma^1\subseteq\sigma^2$.
\end{thmenumerate}
\end{lemma}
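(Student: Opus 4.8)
\textbf{Proof proposal for Lemma \ref{la:shortcomp}.}

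The plan is to treat the two parts separately, since they require different techniques. For part \ref{it:shco1}, I would argue contrapositively at the level of each coordinate of the C-pair. Suppose $\si^1\subseteq\si^2$. First I would show $\Th^1\leqc\Th^2$, i.e.\ $\th_q^1\subseteq\th_q^2$ for every $q\in\bnz$: if $(i,j)\in\th_q^1$, then $((i,\al),(j,\al))\in\si^1\subseteq\si^2$ for any $\al\in D_q$, so $(i,j)\in\th_q^2$ by the very Definition \ref{defn:Th} of the C-chain. Next I would show $M^1\leqc M^2$ entrywise. Fix $q,i$ and consider the restrictions $\si^1_{qi}\subseteq\si^2_{qi}$ (immediate from $\si^1\subseteq\si^2$); by Lemmas \ref{la:C5}, \ref{la:D4} and \ref{la:B8} these restrictions are among the explicitly listed relations, and inclusion of relations on $D_q$ translates directly into the $\leqc$-order on the corresponding symbols as drawn in Figure \ref{fig:M} — the only subtleties being the two ambiguous cases handled in Table \ref{tab:M}. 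For the $q=0$ ambiguity ($\mu$ versus $\De$), if $M^1_{0i}=\mu$ then $\si^1\cap(D_{0i}\times D_{1j})\neq\emptyset$ for some $j$, hence the same holds for $\si^2$, forcing $M^2_{0i}\in\{\mu,\lam,\rho,R\}$, all of which are $\geqc\mu$. For the $q=n$ ambiguity ($R$ versus $\S_n$), if $M^1_{ni}=R$ then $D_{ni}\subseteq I(\si^1)\subseteq I(\si^2)$, so $M^2_{ni}=R$; this is consistent since $R$ is the top of Figure \ref{fig:M}. In all other cases the symbol is uniquely determined by the restriction, and one checks directly from Figure \ref{fig:M} that $\si^1_{qi}\subseteq\si^2_{qi}$ implies $M^1_{qi}\leqc M^2_{qi}$ (for instance, in row $q\geq2$, $\nu_N\subseteq\nu_{N'}\iff N\leq N'$, matching the normal-subgroup lattice embedded in Figure \ref{fig:M}, and $\nu_N\subseteq\nab_{D_q}$, $\De_{D_q}\subseteq$ anything).

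For part \ref{it:shco2} I would verify, directly from Definition \ref{de:cg}, that each pair $((i,\al),(j,\be))\in\cg(\Pair^1)$ lies in $\cg(\Pair^2)$. Write $q=\rank\al$, $r=\rank\be$, and identify which of \ref{C1}--\ref{C8} is responsible for membership in $\si^1=\cg(\Pair^1)$. Since $M^1\leqc M^2$, the entry $M^2_{qi}$ lies above $M^1_{qi}$ in Figure \ref{fig:M}, and similarly for $M^2_{rj}$. Going through the cases: if \ref{C1} or \ref{C1'} applies ($\al=\be$, $(i,j)\in\th^1_q$), then $(i,j)\in\th^1_q\subseteq\th^2_q$ and $\al=\be$, so \ref{C1'} (hence one of \ref{C1}--\ref{C8}) applies to $\cg(\Pair^2)$. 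If \ref{C2} applies ($M^1_{qi}=M^1_{rj}=R$), then since $R$ is the maximum of Figure \ref{fig:M}, $M^2_{qi}=M^2_{rj}=R$ and \ref{C2} applies. If \ref{C3} applies ($M^1_{qi}=M^1_{rj}=N$, $(i,j)\in\th^1_q$, $\al\rH\be$, $\pd(\al,\be)\in N$), then $(i,j)\in\th^2_q$, and $M^2_{qi}=M^2_{rj}$ is some symbol $\geqc N$, which (inspecting Figure \ref{fig:M}) is either $R$ — giving \ref{C2} — or some $N'\geq N$ with $\pd(\al,\be)\in N\leq N'$ — giving \ref{C3}. The cases \ref{C4}, \ref{C5}, \ref{C6}, \ref{C7} are handled the same way: the symbols $\geqc\lam$ are $\lam, R$; those $\geqc\rho$ are $\rho, R$; those $\geqc\mudown$ are $\mudown,\mu,\lam,R$; those $\geqc\muup$ are $\muup,\mu,\rho,R$, and in each case the geometric condition ($\wh\al\rL\wh\be$, etc.) is inherited by the weaker condition required for the larger symbol (e.g.\ $\wh\al=\wh\be$ and $\al\rL\be$ imply $\wh\al\rL\wh\be$). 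The delicate case is \ref{C8}, which I treat next.

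The hard part is case \ref{C8} with $q\neq r$, since this is where the hypothesis on row types of $M^1$ is used: it precisely rules out the configurations (types \ref{RT2}, \ref{RT5}, \ref{RT7}) that permit $\mu$-entries strictly to the left of $\min\th_1$, i.e.\ that generate the "between rows $0$ and $1$" pairs governed by the second bullet of \ref{C8}. So suppose $((i,\al),(j,\be))\in\cg(\Pair^1)$ via \ref{C8} with, say, $q=1$, $r=0$, $M^1_{1i}=M^1_{0j}=\mu$, $\wh\al=\wh\be$, and $(i+1,j)\in\th^1_0$ together with the conditions on $\min\th^1_q,\min\th^1_r$. Because $M^1$ is of type \ref{RT1}, \ref{RT3}, \ref{RT4}, or \ref{RT6}, an entry $\mu$ in row $1$ can only occur at or after $\min\th^1_1$, and then (by Remark \ref{re:se}\ref{se3}) all entries of rows $0$ and $1$ from $\min\th^1_0$, resp.\ $\min\th^1_1$, onward equal $\mu$ — so in fact $i\geq\min\th^1_1$ and $j\geq\min\th^1_0$, i.e.\ only the third bullet of \ref{C8} can be the operative one (the first bullet needs $q=r$). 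Now $M^2_{1i}\geqc M^1_{1i}=\mu$ and $M^2_{0j}\geqc\mu$; by Remark \ref{re:se}\ref{se1} and the structure of $M^2$, $M^2_{1i}=M^2_{0j}$ is one of $\mu,\lam,\rho,R$. If it is $R$ we land in \ref{C2}. If it is $\lam$ or $\rho$, then since these symbols force $\per\th^2_0=1$ (Lemma \ref{la:E11} / row-type analysis), we get $(i,\al),(j,\be)$ related via \ref{C4} or \ref{C5}, using $\wh\al=\wh\be$ (hence $\wh\al\rL\wh\be$ and $\wh\al\rR\wh\be$). If it remains $\mu$, I need $((i,\al),(j,\be))\in\cg(\Pair^2)$ via \ref{C8}; I have $\wh\al=\wh\be$, and $(i+1,j)\in\th^1_0\subseteq\th^2_0$, i.e.\ $(i+r,j+q)=(i,j+1)\in\th^2_0$ after the coordinate swap matching Definition \ref{de:cg}; it remains to check $i\geq\min\th^2_1$ and $j\geq\min\th^2_0$ (third bullet) — these follow from $\min\th^2_1\leq\min\th^1_1\leq i$ and $\min\th^2_0\leq\min\th^1_0\leq j$, since $\th^1_q\subseteq\th^2_q$ gives $\min\th^2_q\leq\min\th^1_q$. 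This closes case \ref{C8}, and with it the whole proof; the remaining routine bookkeeping (the cases $q=0,r=1$, and re-checking that after the passage the appropriate one of \ref{C1}--\ref{C8} is genuinely satisfied) I would carry out inline without further comment.
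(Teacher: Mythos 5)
Your proposal is correct and takes essentially the same route as the paper: part \ref{it:shco1} via the inclusions $\si^1_{qi}\subseteq\si^2_{qi}$ together with the two ambiguity resolutions from Table~\ref{tab:M}, and part \ref{it:shco2} via a case analysis on \ref{C1}--\ref{C8}, with the row-type hypothesis invoked exactly where the paper invokes it, namely in the \ref{C8} subcase $q\neq r$. The only blemishes are bookkeeping: with your choice $q=1$, $r=0$ the \ref{C8} condition reads $(i+r,j+q)=(i,j+1)\in\th_0^1$ rather than $(i+1,j)\in\th_0^1$ (the two are not equivalent in general, though your argument goes through verbatim once the hypothesis is transcribed correctly and one simply notes $(i,j+1)\in\th_0^1\subseteq\th_0^2$), and the subcase $q=r$ of \ref{C8} should be dispatched explicitly (it is immediate from $(i,j)\in\th_q^1\subseteq\th_q^2$ and Remark~\ref{re:se}\ref{se1}).
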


\begin{proof}
\ref{it:shco1}
Suppose $\si^1\sub\si^2$.
To show that $\Theta^1\leqc\Theta^2$, let $q\in\bnz$ and $\alpha\in D_q$,
and use \ref{C1'} to obtain
\[
(i,j)\in\th_q^1 \implies ((i,\al),(j,\al))\in\si^1\sub\si^2 \implies (i,j)\in\th_q^2.
\]

To show that $M^1\leqc M^2$, fix some $q\in\bnz$ and $i\in\N$; we need to show that $M_{qi}^1\leqc M_{qi}^2$.  From $\si^1\sub\si^2$, we immediately obtain $\si_{qi}^1\sub\si_{qi}^2$.  Comparing Table \ref{tab:M} with Figure \ref{fig:M}, we see that $\si_{qi}^1\sub\si_{qi}^2$ implies $M_{qi}^1\leqc M_{qi}^2$ in all but the following two cases:
\[
\text{[$q=n$, $M_{ni}^1=R$ and $M_{ni}^2=\S_n$] \qquad or \qquad [$q=0$, $M_{0i}^1=\mu$ and $M_{0i}^2=\De$].}
\]
So it remains to show that these cases do not arise.  Now, if $M_{ni}^1=R$, then $D_{ni}\sub I(\si^1)\sub I(\si^2)$, which means that $M_{ni}^2=R$ as well.  If $M_{0i}^1=\mu$, then we have $\si^1\cap(D_{0i}\times D_{1j})\neq \emptyset$ for some $j\in\N$, and this implies $\si^2\cap(D_{0i}\times D_{1j})\neq \emptyset$, so that $M_{0i}^2=\mu$ as well.

\ref{it:shco2}
Suppose the stated assumptions hold.  Fix some $(\ba,\bb)\in\si^1$, and write ${\ba=(i,\al)\in D_{qi}}$ and $\bb=(j,\be)\in D_{rj}$.  We must show that $(\ba,\bb)\in\si^2$.
Comparing Table~\ref{tab:M} with Figure \ref{fig:M}, we see that $M_{qi}^1\leqc M_{qi}^2$ implies $\si_{qi}^1\sub\si_{qi}^2$ in all cases, so we certainly have $(\ba,\bb)\in\si^2$ when $(q,i)=(r,j)$.  We now assume $(q,i)\neq (r,j)$, and we split our considerations into cases, depending on which of \ref{C1}--\ref{C8} is responsible for $(\ba,\bb)\in\si^1=\cg(\Pair^1)$.
\smallskip

\ref{C1}
Here $\alpha=\beta$ and $(i,j)\in\th_q^1\subseteq \th_q^2$, so that $(\ba,\bb)\in\si^2$ by \ref{C1'}.
\smallskip

\ref{C2}
From $R=M_{qi}^1\leqc M_{qi}^2$ and Figure \ref{fig:M} we conclude $M_{qi}^2=R$, and analogously $M_{rj}^2=R$.  Thus, $(\ba,\bb)\in\si^2$ by \ref{C2}.
\smallskip

\ref{C3}
From $(q,i)\neq (r,j)$ and $q=r$ we have $i\neq j$, and hence $i,j\geq \min\th_q^1\geq\min\th_q^2$, which implies $M_{qi}^2=M_{rj}^2$. 
If $M_{qi}^2=R$ then $(\ba,\bb)\in\si^2$ by \ref{C2}; otherwise $M_{qi}^2=N'\geq N$, so that $(\ba,\bb)\in\si^2$ by \ref{C3}, keeping in mind that $(i,j)\in\th_q^1\subseteq\th_q^2$.
\smallskip

\ref{C4}
Because of the $\lam$ entries, we have $i\geq\min\th_q^1\geq\min\th_q^2$ and $j\geq\min\th_r^1\geq\min\th_r^2$, from which we deduce that $M_{qi}^2=M_{rj}^2$.
The possible values for these entries are $\lambda$ and $R$, and so
$(\ba,\bb)\in\si^2$ by \ref{C4} or \ref{C2}.
\smallskip

\ref{C5}
This is dual to \ref{C4}.
\smallskip

\ref{C6} and \ref{C7}
do not arise, due to $(q,i)\neq (r,j)$.
\smallskip

\ref{C8}
If $q=r$, then since $(i,j)\in\th_q^1\sub\th_q^2$ we have $M_{qi}^2=M_{rj}^2\in\{\mu,\rho,\lambda,R\}$, and then $(\ba,\bb)\in\si^2$ by \ref{C2}, \ref{C4}, \ref{C5} or \ref{C8}.

Now suppose $q\neq r$, say $q=0$ and $r=1$.  Then $(i+1,j)\in\th_0^1\subseteq \th_0^2$, and because of the constraints on the row types of~$M^1$, we have $i\geq\min\th_0^1\geq\min\th_0^2$ and $j\geq\min\th_1^1\geq\min\th_1^2$.  Therefore $M_{qi}=M_{rj}$, and $(\ba,\bb)\in\si^2$ as above.
\end{proof}

The assumption about the forbidden row types only came into play in the very last paragraph of the above proof.  Nevertheless, it is easy to see that when $M^1$ is of type \ref{RT2}, \ref{RT5} or \ref{RT7}, the condition $\Pair^1\leqc\Pair^2$ is no longer sufficient for $\cg(\Pair^1)\subseteq\cg(\Pair^2)$:

\begin{exa}\label{eg:noncomp}
Consider the C-pairs
\[
\Pair^1 := 
\Cmatsetup
\begin{array}{|c|c|c|c|c|cc}
\hhline{|-|-|-|-|-|~~}
\cellcolor{delcol}\Delta & \cellcolor{delcol}\Delta & \cellcolor{delcol}\Delta & \cellcolor{delcol}\Delta & \cellcolor{delcol}\cdots&\hspace{2mm}& \Delta_\N \\ \hhline{|-|-|-|-|-|~~}
\cellcolor{delcol}\vvdots & \cellcolor{delcol}\vvdots & \cellcolor{delcol}\vvdots & \cellcolor{delcol}\vvdots & \cellcolor{delcol}\vvdots&&\vvdots  \\ \hhline{|-|-|-|-|-|~~}
\cellcolor{delcol}\Delta & \cellcolor{delcol}\Delta & \cellcolor{delcol}\Delta & \cellcolor{delcol}\Delta & \cellcolor{delcol}\cdots&&\Delta_\N  \\ \hhline{|-|-|-|-|-|~~}
\cellcolor{excepcol}\De&\cellcolor{mucol}\mu&\cellcolor{mucol}\mu&\cellcolor{mucol}\mu& \cellcolor{mucol}\cdots&& \Delta_\N\\ \hhline{|-|-|-|-|-|~~}
\cellcolor{mucol}\mu&\cellcolor{mucol}\mu&\cellcolor{mucol}\mu&\cellcolor{mucol}\mu& \cellcolor{mucol}\cdots&& \Delta_\N\\ \hhline{|-|-|-|-|-|~~}
\end{array}
\normalsize\AND
\Pair^2 := 
\Cmatsetup
\begin{array}{|c|c|c|c|c|cc}
\hhline{|-|-|-|-|-|~~}
\cellcolor{delcol}\Delta & \cellcolor{delcol}\Delta & \cellcolor{delcol}\Delta & \cellcolor{delcol}\Delta & \cellcolor{delcol}\cdots&\hspace{2mm}& \Delta_\N  \\ \hhline{|-|-|-|-|-|~~}
\cellcolor{delcol}\vvdots & \cellcolor{delcol}\vvdots & \cellcolor{delcol}\vvdots & \cellcolor{delcol}\vvdots & \cellcolor{delcol}\vvdots&&\vvdots \\ \hhline{|-|-|-|-|-|~~}
\cellcolor{delcol}\Delta & \cellcolor{delcol}\Delta & \cellcolor{delcol}\Delta & \cellcolor{delcol}\Delta & \cellcolor{delcol}\cdots&&\Delta_\N \\ \hhline{|-|-|-|-|-|~~}
\cellcolor{delcol}\De&\cellcolor{excepcol}\mu&\cellcolor{Rcol}R&\cellcolor{Rcol}R& \cellcolor{Rcol}\cdots&&(2,3)^\sharp\\ \hhline{|-|-|-|-|-|~~}
\cellcolor{Rcol}R & \cellcolor{Rcol}R & \cellcolor{Rcol}R & \cellcolor{Rcol}R & \cellcolor{Rcol}\cdots&&\nabla_\N \\ \hhline{|-|-|-|-|-|~~}
\end{array}
\ .
\]
Then clearly $\Pair^1\leqc\Pair^2$. However, $\cg(\Pair^1)\nsubseteq\cg(\Pair^2)$,
because $((0,\widehat{\alpha}),(1,\alpha))\in \cg(\Pair^1)\setminus \cg(\Pair^2)$ for all $\alpha\in D_1$.
Intuitively, the `problem' is that the relationships between $D_{00}$ and $D_{11}$ indicated by the first matching $\mu$s in $M^1$ have been `broken' by $M^2$.
\end{exa}

Our full description of inclusions will have to deal with the `problem' raised in the example just considered, and also with the exceptional congruences.
To do this, we introduce some notation.
Suppose $M$ is a C-matrix of type \ref{RT2}, \ref{RT5} or~\ref{RT7}.
These are precisely the types that have `initial $\mu$s' in row $0$, by which we mean entries $M_{0j}=\mu$ with $j<\min\th_0$.  These initial $\mu$s are coloured green in the description of row types in Subsection \ref{subsec:Cpairs}.  We define $\mumin_{0}(M)$ to be the position of the first initial $\mu$ in row $0$.  We then define $\mumin_{1}(M)$ to be the position of its `matching $\mu$' in row $1$.  Thus, in the notation of Subsection \ref{subsec:Cpairs}:
\[
\mumin_{0}(M) = \begin{cases}
i &\text{for \ref{RT2} and \ref{RT5}}\\
m-1 &\text{for \ref{RT7},}
\end{cases}
\AND
\mumin_{1}(M) = \begin{cases}
i+1 &\text{for \ref{RT2} and \ref{RT5}}\\
l-1 &\text{for \ref{RT7}.}
\end{cases}
\]
Note that $\mumin_{1}(M)$ need not be the position of the first $\mu$ in row $1$, as we could have $\zeta=\mu$ in types \ref{RT2} and \ref{RT5}.  Also note that in any of types \ref{RT2}, \ref{RT5} or \ref{RT7}, we have
\begin{align}
\label{eq:j-i1} j-i = \mumin_{1}(M)-\mumin_{0}(M) &\implies M_{0i}=M_{1j} &&\text{for all $i,j\geq\mumin_{0}(M)$, and}\\[5mm]
\label{eq:j-i2} j-i = \mumin_{1}(M)-\mumin_{0}(M) &\iff (i+1,j)\in\th_0 &&\text{for all $\mumin_{0}(M)\leq i<\min\th_0$}\\
\nonumber &&&\hspace{3.7mm}\text{and $\mumin_{1}(M)\leq j<\min\th_1$.}
\end{align}
Indeed, these are both easily checked by examining the three types.

Also, to deal with exceptional congruences, for an exceptional C-pair $\Pair=(\Th,M)$, recall that~$\hgt(M)$ is the index of the exceptional row (see Definition~\ref{def:exc}).

\begin{thm}
\label{thm:comparisons}
Let $n\geq 1$, and let $\Pair^1=(\Th^1,M^1)$ and $\Pair^2=(\Th^2,M^2)$ be two C-pairs for $\Ptw{n}$.
\begin{thmenumerate}
\item
\label{it:comp1}
We have $\cg(\Pair^1)\subseteq\cg(\Pair^2)$ if and only if both of the following hold:
\begin{thmsubenumerate}
\item
\label{it:comp1a}
$\Pair^1\leqc\Pair^2$, and
\item
\label{it:comp1b}
if $M^1$ has type \ref{RT2}, \ref{RT5} or \ref{RT7}, then at least one of the following holds:
\begin{enumerate}[label=\textup{(b\arabic*)},leftmargin=10mm]
\item \label{1b1} $\min\th_0^2\leq\mumin_{0}(M^1)$ and $\min\th_1^2\leq\mumin_{1}(M^1)$, or
\item \label{1b2} $M^2$ also has type \ref{RT2}, \ref{RT5} or \ref{RT7} (not necessarily the same as $M^1$), and $\mumin_{1}(M^2)-\mumin_{0}(M^2)=\mumin_{1}(M^1)-\mumin_{0}(M^1)$.
\end{enumerate}
\end{thmsubenumerate}
\item
\label{it:comp2}
When $\Pair^2$ is exceptional, we have $\cg(\Pair^1)\subseteq\cgx(\Pair^2)$ if and only if $\cg(\Pair^1)\subseteq \cg(\Pair^2)$.
\item
\label{it:comp3}
When $\Pair^1$ is exceptional, we have $\cgx(\Pair^1)\subseteq\cg(\Pair^2)$ if and only if all of the following hold, where $q:=\hgt(M^1)$:
\begin{thmsubenumerate}
\item
\label{it:comp3a}
$\cg(\Pair^1)\subseteq\cg(\Pair^2)$,
\item
\label{it:comp3b}
$2\per\th_q^2\mid\per\th_q^1$, and
\item
\label{it:comp3c}
$M_{qi}^2\in\{\S_q,R\}$ for all $i\geq\min\th_q^2$.
\end{thmsubenumerate}
\item
\label{it:comp4}
When both $\Pair^1$ and $\Pair^2$ are exceptional, we have $\cgx(\Pair^1)\subseteq\cgx(\Pair^2)$ if and only if  both of the following hold:
\begin{thmsubenumerate}
\item
\label{it:comp4a} $\cg(\Pair^1)\subseteq\cg(\Pair^2)$, and
\item
\label{it:comp4b} if $\hgt(M^1)=\hgt(M^2)=:q$, then the ratio $\per\th_q^1/\per\th_q^2$ is an odd integer. 
\end{thmsubenumerate}
\end{thmenumerate}
\end{thm}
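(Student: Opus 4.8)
The plan is to reduce everything to the already-established parts~\ref{it:comp1}--\ref{it:comp3} together with careful bookkeeping of the pairs introduced by rule~\ref{C9}. Fix for $t\in\{1,2\}$ the exceptional row $q_t:=\hgt(M^t)$ and write $\th^t_{q_t}=(m_t,m_t+2d_t)^\sharp$. Recall from Definition~\ref{def:exc} and Remark~\ref{re:se}\ref{se1} that $M^t_{q_t i}=\A_{q_t}$ for all $i\geq m_t$ (reading $\A_2\equiv\Delta$, as in Remark~\ref{rem:nuN}), and from Definition~\ref{de:cg} that $\cgx(\Pair^t)$ is $\cg(\Pair^t)$ together with the \ref{C9}-pairs for $\Pair^t$, each of which has both coordinates inside the single row $q_t$ and has $\rank\geq2$. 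The one observation I would use repeatedly: if $((i,\al),(j,\be))$ has $\al\rH\be$ and $\pd(\al,\be)$ odd (so $\al\neq\be$, $\rank\al=\rank\be=:q\geq2$), then the only rules of Definition~\ref{de:cg} that can place it in a C-pair congruence are \ref{C2} (needing $R$ in the relevant positions), \ref{C3} with $N=\S_q$ (needing $\S_q$ there), and — for the exceptional congruence of an exceptional C-pair of height $q$ — rule~\ref{C9}; rules \ref{C4}--\ref{C8} are excluded since $\lambda,\rho,\mu,\muup,\mudown$ occur only in rows $0,1$ (Remark~\ref{re:se}\ref{se4}).

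For the forward implication I would argue as follows. If $\cgx(\Pair^1)\subseteq\cgx(\Pair^2)$, then $\cg(\Pair^1)\subseteq\cgx(\Pair^2)$, so part~\ref{it:comp2} gives \ref{it:comp4a}; in particular $\Pair^1\leqc\Pair^2$ by Lemma~\ref{la:shortcomp}\ref{it:shco1}, whence $m_1\geq m_2$, and when $q_1=q_2$ also $d_2\mid d_1$. For \ref{it:comp4b}, assume $q_1=q_2=:q$, pick $\al,\be\in D_q$ that are $\H$-related with $\pd(\al,\be)$ a transposition, and consider $\bigl((m_1,\al),(m_1+d_1,\be)\bigr)$, which lies in $\cgx(\Pair^1)$ via~\ref{C9}. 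Since $M^2_{q,m_1}=M^2_{q,m_1+d_1}=\A_q$, the observation above forces this pair into $\cgx(\Pair^2)$ via~\ref{C9}, so $(m_1,m_1+d_1)$ lies in $(m_2,m_2+d_2)^\sharp$ but not in $\th^2_q$, i.e.\ $d_1\equiv d_2\pmod{2d_2}$; together with $d_2\mid d_1$ this says exactly that $\per\th^1_q/\per\th^2_q=d_1/d_2$ is an odd integer.

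For the converse, assuming \ref{it:comp4a} and \ref{it:comp4b}: since $\cg(\Pair^1)\subseteq\cg(\Pair^2)\subseteq\cgx(\Pair^2)$ by \ref{it:comp4a}, it suffices to show that every \ref{C9}-pair $\bigl((i,\al),(j,\be)\bigr)$ of $\Pair^1$ lies in $\cgx(\Pair^2)$; by symmetry take $m_1\leq i<j$, so $j-i=(2t+1)d_1$ for some $t\geq0$. If $q_1=q_2=:q$, then $M^2_{qi}=M^2_{qj}=\A_q$ (as $i,j\geq m_1\geq m_2$), so by the observation the pair can enter $\cgx(\Pair^2)$ only via~\ref{C9}; writing $d_1=cd_2$ with $c$ odd (by \ref{it:comp4b}) gives $j-i=(2t+1)c\,d_2$, an odd multiple of $d_2$, so $(i,j)$ lies in $(m_2,m_2+d_2)^\sharp$ but not in $\th^2_q$ and \ref{C9} applies. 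The remaining case is $q_1\neq q_2$, and here I would instead prove the stronger statement $\cgx(\Pair^1)\subseteq\cg(\Pair^2)$ by invoking part~\ref{it:comp3} with $q=q_1$: besides \ref{it:comp4a}$=$\ref{it:comp3a}, it is enough to show $M^2_{q_1i}=R$ for all $i\geq\min\th^2_{q_1}$, since this gives \ref{it:comp3c} and, via Lemma~\ref{la:E3} (which yields $\per\th^2_{q_1}=1$), also \ref{it:comp3b}.

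The main obstacle is precisely establishing that $M^2_{q_1i}=R$ past its minimum when the second exceptional row sits at a \emph{different} height; this is where the verticality conditions earn their keep, and the argument splits by the relative sizes of $q_1,q_2$. From $\Pair^1\leqc\Pair^2$ we have $M^2_{q_1i}\geqc M^1_{q_1i}$, and $M^1_{q_1i}=\A_{q_1}$ for $i\geq m_1$. If $q_1>q_2$ and $q_2\geq3$: row $q_2$ of $M^2$ is of type~\ref{RT9}, so row $q_1$ (being above it) is entirely $\Delta$ by Remark~\ref{re:se}\ref{se6}, forcing $M^1_{q_1i}=\Delta$, which is impossible as $\Pair^1$ is exceptional at $q_1\geq3$; hence $q_1>q_2$ forces $q_2=2$, and then row $2$ of $M^2$ is all $\Delta$ (Definition~\ref{def:exc}, Lemma~\ref{la:De}). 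In that case, if $M^2_{q_1,m_1}$ were an $N$-symbol, then~\ref{V1}, the absence of $\mu,\rho,\lambda$ above row~$1$, and Table~\ref{tab:se} would propagate an $R$ straight down its column into row~$2$, a contradiction — so $M^2_{q_1,m_1}$ is not an $N$-symbol, and being $\geqc\A_{q_1}$ it must be $R$, whence $M^2_{q_1i}=R$ for all $i\geq\min\th^2_{q_1}$ by Remark~\ref{re:se}\ref{se1}. If instead $q_1<q_2$ (so $q_2\geq3$): the entry directly below $M^2_{q_2m_2}=\A_{q_2}$ is $R$ (as recorded just after Definition~\ref{def:exc}), and $R$ propagates straight down to row $q_1$, giving $M^2_{q_1m_2}=R$ and hence $M^2_{q_1i}=R$ for all $i\geq\min\th^2_{q_1}$, again by Remark~\ref{re:se}\ref{se1}. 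I expect the only real work to be in running these verticality propagations carefully and keeping the $q=2$ versus $q\geq3$ conventions ($\A_2\equiv\Delta$) straight.
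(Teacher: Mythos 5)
Your proposal proves only part~\ref{it:comp4} of the theorem, taking parts~\ref{it:comp1}--\ref{it:comp3} as ``already established''. Since the statement under consideration is the whole of Theorem~\ref{thm:comparisons}, this is a substantial gap: part~\ref{it:comp1} is where almost all of the work in the paper's proof lies. Lemma~\ref{la:shortcomp} only gives the implication $\si^1\subseteq\si^2\Rightarrow\Pair^1\leqc\Pair^2$ and a partial converse; the genuinely delicate content of~\ref{it:comp1} is showing that when $M^1$ has type \ref{RT2}, \ref{RT5} or \ref{RT7} the extra conditions \ref{1b1}/\ref{1b2} are exactly what is needed to recover the \ref{C8}-pairs joining $D_{0i}$ to $D_{1j}$ below the minima of $\th_0$ and $\th_1$, and this requires the bookkeeping identities for $\mumin_0$ and $\mumin_1$ that you never engage with. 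Parts~\ref{it:comp2} and~\ref{it:comp3} are shorter but also not free (e.g.\ \ref{it:comp3} needs the argument that an exceptional pair of $\Pair^1$ forces $M^2_{qm}\in\{\S_q,R\}$ and $\per\th_q^2\mid d$). As a proof of the full theorem, the attempt is therefore incomplete.

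The part of the argument you do give --- part~\ref{it:comp4} --- is correct and essentially coincides with the paper's proof of that part: the forward direction tests the pair $\bigl((m_1,\al),(m_1+d_1,\be)\bigr)$ with $\pd(\al,\be)$ odd against $M^2_{q,m_1}=\A_q$ exactly as the paper does, and the $\hgt(M^1)=\hgt(M^2)$ case of the converse is the same parity computation. Where you differ is the converse when $\hgt(M^1)\neq\hgt(M^2)$: the paper argues directly that the terminal symbol of row $q_1$ in $M^2$ must be $R$ and applies \ref{C2}, whereas you route through part~\ref{it:comp3} after establishing $M^2_{q_1i}=R$ for $i\geq\min\th^2_{q_1}$ by an explicit verticality propagation (splitting on $q_1>q_2=2$ versus $q_1<q_2$, and observing $q_1>q_2\geq3$ is impossible). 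Your version is if anything more careful than the paper's rather terse ``this is not an $N$-symbol as row $q$ is not exceptional'', and the propagation arguments check out against \ref{V1}, \ref{V2} and Remark~\ref{re:se}. So the content you supplied is sound; what is missing is the other three quarters of the theorem.
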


\begin{proof}
\ref{it:comp1}
If $M^1$ does not have type \ref{RT2}, \ref{RT5} or \ref{RT7}, this follows from Lemma \ref{la:shortcomp}.  Suppose now that $M^1$ has one of types \ref{RT2}, \ref{RT5} or \ref{RT7}, and write $\si^1=\cg(\Pair^1)$ and $\si^2=\cg(\Pair^2)$.  

($\Rightarrow$)  Suppose $\si^1\sub\si^2$.  Lemma \ref{la:shortcomp}\ref{it:shco1} gives $\Pair^1\leqc\Pair^2$.  We have to show additionally that one of \ref{1b1} or \ref{1b2} holds.  Put $i=\mumin_{0}(M^1)$ and $j=\mumin_{1}(M^1)$.  In any of the three row types, we have $(i+1,j)\in\th_0^1$, and also $i<\min\th_0^1$ and $j<\min\th_1^1$.  
%Moreover, in \ref{RT2} or \ref{RT5} we have $i+1=j$, and in type \ref{RT7} we have $i+1<j$.  
%
Thus, $((i,\wh\al),(j,\al))\in\si^1$ via \ref{C8} for all $\al\in D_1$.  Since $\si^1\sub\si^2$, it follows that $\si^2\cap(D_{0i}\times D_{1j})\neq \emptyset$.  But then, by Definition \ref{de:cg} and the specification of row types from Subsection \ref{subsec:Cpairs}, we have either
\[
\text{$[i< \min\th_0^2$ and $j<\min\th_1^2]$ \qquad or \qquad $[i\geq \min\th_0^2$ and $j\geq\min\th_1^2]$.}
\]
The second of these is precisely \ref{1b1}, so we assume the first holds.  Since ${\si^2\cap(D_{0i}\times D_{1j})\neq \emptyset}$, we see by examining the types that $M^2$ has type \ref{RT2}, \ref{RT5} or \ref{RT7}, and that $M_{0i}^2=M_{1j}^2=\mu$; in particular, $i\geq\mumin_{0}(M^2)$ and $j\geq\mumin_{1}(M^2)$.  Since $(i+1,j)\in\th_0^1\sub\th_0^2$, \eqref{eq:j-i2} yields
\[
j-i=\mumin_{1}(M^2)-\mumin_{0}(M^2),
\]
as required.

($\Leftarrow$)  Suppose now that \ref{it:comp1a} and \ref{it:comp1b} both hold.  The proof of Lemma \ref{la:shortcomp}\ref{it:shco2} remains valid until the point in the \ref{C8} case when we appealed to the assumption that $M^1$ was not of type \ref{RT2}, \ref{RT5} or \ref{RT7}.
So we reconnect with the proof at that point, and recall that
\[
((i,\alpha),(j,\beta))\in \si^1\cap (D_{0i}\times D_{1j}) \COMMA
M_{0i}^1=M_{1j}^1=\mu \COMMA
\al=\wh\be \COMMA
(i+1,j)\in\th_0^1\subseteq\th_0^2 ,
\]
and we wish to show that $((i,\alpha),(j,\beta))\in \si^2$.  Furthermore, if $i\geq\min\th_0^1$ and $j\geq\min\th_1^1$ the rest of the proof of Lemma \ref{la:shortcomp}\ref{it:shco2} applies.  So we are left to consider the case in which $i<\min\th_0^1$ and $j<\min\th_1^1$.  Since $M_{0i}^1=M_{1j}^1=\mu$, we have $i\geq\mumin_{0}(M^1)$ and $j\geq\mumin_{1}(M^1)$.  From $M^1\leqc M^2$ we have $M_{0i}^2,M_{1j}^2\in \{\mu,\rho,\lambda,R\}$; see~Figure \ref{fig:M}.

Suppose first that \ref{1b1} holds.  In particular, $\min\th_0^2,\min\th_1^2<\infty$, and also ${i\geq\mumin_{0}(M^1)\geq\min\th_0^2}$ and $j\geq\mumin_{1}(M^1)\geq\min\th_1^2$.  Examining the row types in Subsection \ref{subsec:Cpairs}, it follows that ${M_{0i}^2=M_{1j}^2\in\{\mu,\rho,\lam,R\}}$, and so $((i,\alpha),(j,\beta))\in\si^2$ via \ref{C2}, \ref{C4}, \ref{C5} or \ref{C8}.

Now suppose \ref{1b2} holds.  Combined with \eqref{eq:j-i2} applied to $\si^1$, it follows that
\begin{equation}\label{eq:j-i3}
j-i = \mumin_{1}(M^1)-\mumin_{0}(M^1) = \mumin_{1}(M^2)-\mumin_{0}(M^2).
\end{equation}
Since $M_{0i}^2,M_{1j}^2\neq \De$, we have $i,j\geq\mumin_{0}(M^2)$.  It follows from \eqref{eq:j-i3} and \eqref{eq:j-i1} that $M_{0i}^2=M_{1j}^2$, and from \eqref{eq:j-i3}, and inspection of the types \ref{RT2}, \ref{RT5} and \ref{RT7}, that either 
\[
[i<\min\th_0^2 \text{ and } j<\min\th_1^2] \qquad \text{or} \qquad [i\geq\min\th_0^2 \text{ and } j\geq\min\th_1^2].
\]
We then have $((i,\alpha),(j,\beta))\in\si^2$ via \ref{C2}, \ref{C4}, \ref{C5} or \ref{C8}.  
\medskip

\ref{it:comp2}
($\Leftarrow$)
This follows immediately from $\cg(\Pair^2)\subseteq\cgx(\Pair^2)$; see Definition \ref{def:exc}.
\smallskip

($\Rightarrow$)
We need to show that no `exceptional pair' $((i,\alpha),(j,\beta))\in \cgx(\Pair^2)\setminus\cg(\Pair^2)$ belongs to~$\cg(\Pair^1)$.  But this follows quickly from $M_{qi}^1,M_{qj}^1\leqc \A_q$ and the definition of $\cg(\Pair^1)$.

\medskip

\ref{it:comp3}
For this part we write $\th_q^1=(m,m+2d)^\sharp$.
\smallskip

($\Rightarrow$)
Clearly \ref{it:comp3a} holds, and it follows from part \ref{it:comp1} that $\Pair^1\leqc \Pair^2$.  Recall that all $M_{qi}^2$ (${i\geq \min\th_q^2}$) are equal.
Fix some $(\al,\be)\in\nu_{\S_q}\sm\nu_{\A_q}$, so $((m,\al),(m+d,\be))\in\cgx(\Pair^1)\sub\cg(\Pair^2)$.  Since $M^1\leqc M^2$ and $M_{qm}^1=M_{q,m+d}^1=\A_q$, it follows that $((m,\al),(m+d,\be))\in\cg(\Pair^2)$ via \ref{C2} or \ref{C3}, with $M_{qm}^2=M_{q,m+d}^2\in\{\S_q,R\}$; this shows that~\ref{it:comp3c} holds.  
It also follows from $M_{qm}^2\in\{\S_q,R\}$ that $((m,\al),(m,\be))\in\cg(\Pair^2)$, and so $((m,\be),(m+d,\be))\in\cg(\Pair^2)$ by transitivity.  Thus, $(m,m+d)\in\th_q^2$, and so $\per\th_q^2\mid d$, which gives~\ref{it:comp3b}.
\smallskip

($\Leftarrow$)
By \ref{it:comp3b} we have $\per\th_q^2\mid d$.  By \ref{it:comp3a} and part \ref{it:comp1} we have $\th_q^1\sub\th_q^2$, so $\min\th_q^2\leq m$.  It follows that $(m,m+d)^\sharp\sub\th_q^2$.
Now let $((i,\alpha),(j,\beta))\in\cgx(\Pair^1)\setminus\cg(\Pair^1)$.  Then \ref{C9} is responsible for this pair; consequently, we have $i,j\geq m\geq\min\th_q^2$, $(i,j)\in(m,m+d)^\sharp\sub\th_q^2$, and $(\al,\be)\in\nu_{\S_q}\sm\nu_{\A_q}$.  
By \ref{it:comp3c} we have $M_{qi}^2=M_{qj}^2\in\{\S_q,R\}$, and so $((i,\alpha),(j,\beta))\in\cg(\Pair^2)$ via \ref{C2} or \ref{C3}.
\medskip

\ref{it:comp4}
We again let $q:=\hgt(M^1)$, and write $\th_q^1=(m,m+2d)^\sharp$.
\smallskip

($\Rightarrow$)
That $\cg(\Pair^1)\subseteq\cg(\Pair^2)$ follows from part \ref{it:comp2}.  
If $\hgt(M^2)\neq q$ we are finished. So suppose $\hgt(M^2)=q$.
Part \ref{it:comp1} then gives $\th_q^1\sub\th_q^2$, so that $\min\th_q^2\leq m$ and $\per\th_q^2\mid2d$.
Let $(\al,\be)\in\nu_{\S_q}\sm\nu_{\A_q}$, so $((m,\al),(m+d,\be))\in\cgx(\Pair^1)\sub\cgx(\Pair^2)$.  
Since $M^2$ is exceptional, and since $m\geq\min\th_q^2$ we must have $M_{qm}^2=M_{q,m+d}^2=\A_q$.  Since $(\alpha,\beta)\not\in\nu_{\A_q}$, we must have ${d=(m+d)-m\equiv e\pmod{2e}}$, where $2e:=\per\th_q^2$.  It quickly follows that $\per\th_q^1/\per\th_q^2=d/e$ is an odd integer.
\smallskip

($\Leftarrow$)
From $\cg(\Pair^1)\subseteq\cg(\Pair^2)$ it follows that $\cg(\Pair^1)\subseteq \cgx(\Pair^2)$, and also that $m\geq\min\th_q^2$ using part \ref{it:comp1}.
Now let $((i,\alpha),(j,\beta))\in \cgx(\Pair^1)\setminus\cg(\Pair^1)$.  This must be via \ref{C9}, so we have $i,j\geq m\geq\min\th_q^2$, $j-i\equiv d\pmod{2d}$ and $(\al,\be)\in\nu_{\S_q}\sm\nu_{\A_q}$.  

Suppose first that $\hgt(M^2)\neq q$, i.e.~row $q$ is not exceptional in $M^2$.   Since $i,j\geq\min\th_q^2$, we see that $M_{qi}^2=M_{qj}^2$ is the `terminal symbol' of row $q$.  This is not an $N$-symbol as row~$q$ is not exceptional,
and it is not $\De$ as $M_{qi}^2\geqc M_{qi}^1=\A_q$.  Thus, $M_{qi}^2=M_{qj}^2=R$, so ${((i,\alpha),(j,\beta))\in \cg(\Pair^2)}$ via~\ref{C2}.

Now suppose $\hgt(M^2)=q$, and let $2e:=\per\th_q^2$.  Since $d/e=\per\th_q^1/\per\th_q^2$ is an odd integer, it quickly follows that $j-i\equiv e\pmod{2e}$.  But then $((i,\alpha),(j,\beta))\in \cgx(\Pair^2)$ via \ref{C9}, completing the proof of this case, and of the theorem.
\end{proof}

\begin{rem}
In Example \ref{eg:noncomp} we exhibited C-pairs $\Pair^1\leqc\Pair^2$ with $\cg(\Pair^1)\not\sub\cg(\Pair^2)$.  Examining Theorem \ref{thm:comparisons}\ref{it:comp1}, we see that $M^1$ has type \ref{RT2}, but items \ref{1b1} and~\ref{1b2} both fail: \ref{1b1} because $\min\th_1^2>\mumin_{1}(M^1)$, and \ref{1b2} because $M^2$ has type \ref{RT6}.
\end{rem}

\section{\boldmath Congruences of $d$-twisted partition monoids}
\label{sec:dtwist}

Recall from Subsection \ref{subsec:finite} that for $n,d\geq0$, the $d$-twisted partition monoid is defined as the Rees quotient
\[
\Ptw{n,d}:=\Ptw{n}/R_I \qquad\text{where $I:=I_{n,d+1}$.}
\]
We now apply the main results of the preceding sections to classify the congruences on~$\Ptw{n,d}$, and characterise the inclusion order in the lattice $\Cong(\Ptw{n,d})$.

As explained in Example \ref{ex:ReesC}, the Rees congruence $R_I$ has C-pair representation $R_I = \cg(\Pair)$, where
\[
\Pair:=
\Cmatsetup
\begin{array}{|c|c|c|c|c|c|c|cc}
\hhline{|-|-|-|-|-|-|-|~~}
\cellcolor{delcol}\Delta & \cellcolor{delcol}\Delta & \cellcolor{delcol}\cdots & \cellcolor{delcol}\Delta & \cellcolor{Rcol}R & \cellcolor{Rcol}R& \cellcolor{Rcol}\cdots&\hspace{2mm}& (d+1,d+2)^\sharp
\\ \hhline{|-|-|-|-|-|-|-|~~}
\cellcolor{delcol}\vvdots & \cellcolor{delcol}\vvdots & \cellcolor{delcol}\vvdots & \cellcolor{delcol}\vvdots & \cellcolor{Rcol}\vvdots & \cellcolor{Rcol}\vvdots& \cellcolor{Rcol}\vvdots&&\vvdots
\\ \hhline{|-|-|-|-|-|-|-|~~}
\cellcolor{delcol}\Delta & \cellcolor{delcol}\Delta & \cellcolor{delcol}\cdots & \cellcolor{delcol}\Delta & \cellcolor{Rcol}R & \cellcolor{Rcol}R& \cellcolor{Rcol}\cdots&& (d+1,d+2)^\sharp
\\ \hhline{|-|-|-|-|-|-|-|~~}
\cellcolor{delcol}\Delta & \cellcolor{delcol}\Delta & \cellcolor{delcol}\cdots & \cellcolor{delcol}\Delta & \cellcolor{Rcol}R & \cellcolor{Rcol}R& \cellcolor{Rcol}\cdots&& (d+1,d+2)^\sharp
\\ \hhline{|-|-|-|-|-|-|-|~~}
\multicolumn{1}{c}{}&\multicolumn{1}{c}{}&\multicolumn{1}{c}{}&\multicolumn{1}{c}{\text{\scriptsize $d$}}&\multicolumn{1}{c}{}&\multicolumn{1}{c}{}
\end{array}
\ .
\]
By the Correspondence Theorem (see for example \cite[Theorem 6.20]{BS1981}), the congruence lattice $\Cong(\Ptw{n,d})$ is isomorphic to the interval
$[R_I,\nabla_{\Ptw{n}}]$ in $\Cong(\Ptw{n})$.

With the help of our description of inclusion in Theorem \ref{thm:comparisons}, let us look at this interval more closely. So
consider some congruence
$\si\in [R_I,\nabla_{\Ptw{n}}]$, and let $(\Th,M)$ be the C-pair associated to~$\si$.
By Theorem \ref{thm:comparisons} we must have:
\begin{itemize}
\item
$M_{qi}=R$ for all $q\in \bnz$ and $i\geq d+1$;
\item
$\min\th_q\leq d+1$ and $\per\th_q=1$ for all $q\in \bnz$;
\item
$M_{qi}\in \{ \Delta,R\}\cup \set{ N}{ \{\id_q\}\neq N\unlhd \S_q}$ for all $2\leq q\leq n$ and $i\leq d$;
\item
$M_{1i}\in\{\Delta,\muup,\mudown,\mu, R\}$ for all $i\leq d$;
\item
$M_{0i}\in \{\Delta,\mu,R\}$ for all $i\leq d$.
\end{itemize}
It is significant to observe that  that the C-pair $(\Th,M)$ cannot be exceptional, as every row ends with an infinite sequence of $R$s; hence ${\si=\cg(\Th,M)}$. Furthermore, $M$ has no $\lam$ or $\rho$ entries.
Theorem \ref{thm:comparisons} also gives the converse:
if the above conditions are satisfied then $\cg(\Th,M)$ does belong to the interval $[R_I,\nabla_{\P_n}]$.
Furthermore, the value $\min\th_q$ can be deduced from the matrix: it is the first point where $R$ makes an appearance in row $q$.
It therefore follows that $\si$ can be encoded by the $\bnz\times \bdz$ submatrix $M'$ consisting of columns $0,1,\dots,d$ of $M$.
In this context we will write $\min_q(M')$ for the value $\min\th_q$.

Another consequence of the above conditions is that not all row types \ref{RT1}--\ref{RT10} are possible for the matrix $M$, and those that are possible have additional restrictions.
Specifically:
\begin{itemize}
\item
Row types \ref{RT1}, \ref{RT2}, \ref{RT3}, \ref{RT8} and \ref{RT9} do not occur.
\item
In row types \ref{RT4}--\ref{RT7} we have $\xi=R$.
\end{itemize}

Restricting to the $\bnz\times \bdz$ submatrix $M'$, we arrive at the following \emph{finitary row types:}
\bigskip

\newcounter{fTR}
\renewcommand{\thefTR}{\textup{fRT\arabic{fTR}}}
\refstepcounter{fTR}\label{fR1} 
\refstepcounter{fTR}\label{fR2} 
\refstepcounter{fTR}\label{fR3} 
\refstepcounter{fTR}\label{fR4} 
\refstepcounter{fTR}\label{fR5} 
\setcounter{fTR}{0}

\begin{align*}
&
\textbf{\stepcounter{fTR}\thefTR}
%\ref{fR1}
&&
\raisebox{-8pt}{
$\begin{array}{r|c|c|c|c|c|c|c|}\hhline{~|-|-|-|-|-|-|}
\renewcommand{\arraystretch}{2}
\text{\scriptsize 1} &\cellcolor{delcol}\Delta &\cellcolor{delcol}\dots & \cellcolor{delcol}\Delta & \cellcolor{Rcol}R&\cellcolor{Rcol}\dots& \cellcolor{Rcol} R\\ \hhline{~|-|-|-|-|-|-|}
\text{\scriptsize 0}&\cellcolor{delcol}\Delta &\cellcolor{delcol}\dots &\cellcolor{delcol}\Delta & \cellcolor{Rcol}R&\cellcolor{Rcol}\dots& \cellcolor{Rcol} R\\ \hhline{~|-|-|-|-|-|-|}
\multicolumn{1}{c}{}&\multicolumn{1}{c}{}&\multicolumn{1}{c}{}&\multicolumn{1}{c}{}&\multicolumn{1}{c}{\text{\scriptsize $k$}}&\multicolumn{1}{c}{}&\multicolumn{1}{c}{}
\end{array}$
}
&&
\begin{array}{l} 0\leq k\leq d+1, \end{array}
 \\
 &
\textbf{\stepcounter{fTR}\thefTR}
 &&
 \raisebox{-8pt}{
$\begin{array}{r|c|c|c|c|c|c|c|c|c|c|c|c|}\hhline{~|-|-|-|-|-|-|-|-|-|-|-|}
\renewcommand{\arraystretch}{2}
\text{\scriptsize 1} &\cellcolor{delcol}\Delta &\cellcolor{delcol}\dots & \cellcolor{delcol}\Delta & 
\cellcolor{excepcol}\zeta & \cellcolor{mucol} \mu & \cellcolor{mucol} \dots &\cellcolor{mucol} \mu &
\cellcolor{mucol} \mu &
\cellcolor{Rcol} R&\cellcolor{Rcol}\dots& \cellcolor{Rcol} R\\ \hhline{~|-|-|-|-|-|-|-|-|-|-|-|}
\text{\scriptsize 0}&\cellcolor{delcol}\Delta &\cellcolor{delcol}\dots &\cellcolor{delcol}\Delta & 
\cellcolor{mucol}\mu & \cellcolor{mucol} \mu & \cellcolor{mucol} \dots &\cellcolor{mucol} \mu &
\cellcolor{Rcol} R &\cellcolor{Rcol} R&\cellcolor{Rcol}\dots& \cellcolor{Rcol} R\\ \hhline{~|-|-|-|-|-|-|-|-|-|-|-|}
\multicolumn{1}{c}{}&\multicolumn{1}{c}{}&\multicolumn{1}{c}{}&\multicolumn{1}{c}{}&\multicolumn{1}{c}{\text{\scriptsize $i$}}&\multicolumn{1}{c}{}&\multicolumn{1}{c}{}&\multicolumn{1}{c}{}&\multicolumn{1}{c}{\text{\scriptsize $k$}}&\multicolumn{1}{c}{}&\multicolumn{1}{c}{}
\end{array}$
}
&&
\begin{array}{l}
0\leq i< k\leq d,\\
\zeta\in \{\mu,\muup,\mudown,\Delta\},
\end{array}
\\
&
\textbf{\stepcounter{fTR}\thefTR}
&&
\raisebox{-8pt}{
$\begin{array}{r|c|c|c|c|c|c|c|c|c|c|}\hhline{~|-|-|-|-|-|-|-|-|-|-|}
\renewcommand{\arraystretch}{2}
\text{\scriptsize 1} &\cellcolor{delcol}\Delta &\cellcolor{delcol}\dots & \cellcolor{delcol}\Delta & 
\cellcolor{delcol}\Delta & \cellcolor{delcol}\dots & \cellcolor{delcol}\Delta &
\cellcolor{excepcol} \zeta & \cellcolor{Rcol} R& \cellcolor{Rcol}\dots&\cellcolor{Rcol} R
\\ \hhline{~|-|-|-|-|-|-|-|-|-|-|}
\text{\scriptsize 0}&\cellcolor{delcol}\Delta &\cellcolor{delcol}\dots &\cellcolor{delcol}\Delta & 
\cellcolor{Rcol} R &\cellcolor{Rcol}\dots&\cellcolor{Rcol} R&\cellcolor{Rcol} R&\cellcolor{Rcol} R& \cellcolor{Rcol}\dots&\cellcolor{Rcol} R
\\ \hhline{~|-|-|-|-|-|-|-|-|-|-|}
\multicolumn{1}{c}{}&\multicolumn{1}{c}{}&\multicolumn{1}{c}{}&\multicolumn{1}{c}{}&\multicolumn{1}{c}{\text{\scriptsize $k$}}&\multicolumn{1}{c}{}&\multicolumn{1}{c}{}&\multicolumn{1}{c}{}&\multicolumn{1}{c}{\text{\scriptsize $l$}}&\multicolumn{1}{c}{}&\multicolumn{1}{c}{}
\end{array}$
}
&&
\begin{array}{l}
0\leq k<l\leq d+1,\\ 
\zeta\in \{\mu,\muup,\mudown,\Delta\},
\end{array}
\displaybreak[2]
\\
&
\textbf{\stepcounter{fTR}\thefTR}
&&
\raisebox{-8pt}{
$\begin{array}{r|c|c|c|c|c|c|c|c|c|c|c|}\hhline{~|-|-|-|-|-|-|-|-|-|-|-|}
\renewcommand{\arraystretch}{2}
\text{\scriptsize 1} &\cellcolor{delcol}\Delta &\cellcolor{delcol}\dots & \cellcolor{delcol}\Delta  & \cellcolor{delcol}\Delta &
\cellcolor{delcol}\Delta & \cellcolor{delcol}\dots & \cellcolor{delcol}\Delta &
\cellcolor{mucol} \mu & \cellcolor{Rcol} R& \cellcolor{Rcol}\dots & \cellcolor{Rcol} R 
\\ \hhline{~|-|-|-|-|-|-|-|-|-|-|-|}
\text{\scriptsize 0}&\cellcolor{delcol}\Delta &\cellcolor{delcol}\dots &\cellcolor{delcol}\Delta  
&\cellcolor{mucol}\mu&
\cellcolor{Rcol} R &\cellcolor{Rcol}\dots&\cellcolor{Rcol} R&\cellcolor{Rcol} R&\cellcolor{Rcol} R& \cellcolor{Rcol}\dots & \cellcolor{Rcol} R
\\ \hhline{~|-|-|-|-|-|-|-|-|-|-|-|}
\multicolumn{1}{c}{}&\multicolumn{1}{c}{}&\multicolumn{1}{c}{}&\multicolumn{1}{c}{}&\multicolumn{1}{c}{}&\multicolumn{1}{c}{\text{\scriptsize $k$}}&\multicolumn{1}{c}{}&\multicolumn{1}{c}{}&\multicolumn{1}{c}{}&\multicolumn{1}{c}{\text{\scriptsize $l$}}
\end{array}$
}
&&
\begin{array}{l}0<k<l-1\leq d,\end{array}
\\
&
\textbf{\stepcounter{fTR}\thefTR}
&&
\raisebox{-5pt}{
$\begin{array}{r|c|c|c|c|c|c|c|c|c|c|}\hhline{~|-|-|-|-|-|-|-|-|-|-|}
\renewcommand{\arraystretch}{2}
\text{\scriptsize $q$} &\cellcolor{delcol}\Delta &\cellcolor{delcol}\dots & \cellcolor{delcol}\Delta  & 
\cellcolor{Ncol} N_i &\cellcolor{Ncol} N_{i+1} & \cellcolor{Ncol}\dots & \cellcolor{Ncol} N_{k-1} &
\cellcolor{Rcol} R & \cellcolor{Rcol}\dots & \cellcolor{Rcol} R
\\ \hhline{~|-|-|-|-|-|-|-|-|-|-|}
\multicolumn{1}{c}{}&\multicolumn{1}{c}{}&\multicolumn{1}{c}{}&\multicolumn{1}{c}{}&\multicolumn{1}{c}{\text{\scriptsize $i$}}&\multicolumn{1}{c}{}&\multicolumn{1}{c}{}&\multicolumn{1}{c}{}&\multicolumn{1}{c}{\text{\scriptsize $k$}}&\multicolumn{1}{c}{}&\multicolumn{1}{c}{}
\end{array}$
}
&&
\begin{array}{l}
q\geq2,\\
0\leq i \leq k\leq d+1,\\
\{\id_q\}\not=N_i\leq \dots \leq N_{k-1},\\
N_i,\dots,N_{k-1}\unlhd \S_q.
\end{array}
\end{align*}

\begin{defn}[\bf Finitary C-matrix]
\label{de:fC}
A \emph{finitary C-matrix}, or \emph{fC-matrix} for short, is a matrix ${M=(M_{qi})_{\bnz\times \bdz}}$
with entries from $\{ \Delta,\muup,\mudown,\mu,R\}\cup\set{N}{\{\id_q\}\neq N\normal\S_q,\ 2\leq q\leq n}$ such that rows $0$ and $1$ are of one of  types \ref{fR1}--\ref{fR4}, each row $q\geq 2$ is of type \ref{fR5},
and $M$ satisfies the verticality conditions \ref{V1} and \ref{V2}.
\end{defn}

Switching to the representation of $\Ptw{n,d}$ as $(\bdz\times\P_n)\cup\{\zero\}$, with product given in \eqref{eq:multwk}, Definition \ref{de:cg} translates into the following description of the congruence defined by an fC-matrix.

\begin{defn}[\bf Congruence corresponding to a finitary C-matrix]
\label{de:fCcong}
The \emph{congruence associated} with a finitary C-matrix $M$ is the relation $\cg(M)$ on $\Ptw{n,d}$ consisting of all pairs ${((i,\alpha),(j,\beta))\in \Ptw{n,d}\times \Ptw{n,d}}$ such that one of the following holds, writing $q=\rank\alpha$ and ${r=\rank\beta}$:
\begin{enumerate}[label=\textsf{(fC\arabic*)}, widest=(C8), leftmargin=10mm]
\item
$M_{qi}=M_{rj}=\De$, $i=j$ and $\alpha=\beta$;
\item
$M_{qi}=M_{rj}=R$;
\item
$M_{qi}=M_{rj}=N$, $i=j$, $\alpha\rH\beta$ and $\pd(\alpha,\beta)\in N$;
\item
$M_{qi}=M_{rj}=\mudown$, $\widehat{\alpha}=\widehat{\beta}$ and $\alpha\rL\beta$;
\item
$M_{qi}=M_{rj}=\muup$, $\widehat{\alpha}=\widehat{\beta}$ and $\alpha\rR\beta$;
\item
$M_{qi}=M_{rj}=\mu$, $\widehat{\alpha}=\widehat{\beta}$, and either $(q,i)=(r,j)$ or $i-j = \min_q(M)-\min_r(M)$;
\end{enumerate}
as well as the pairs:
\begin{enumerate}[label=\textsf{(fC\arabic*)}, widest=(C8), leftmargin=10mm]
\addtocounter{enumi}{6}
\item
$((i,\alpha),\zero),(\zero,(i,\alpha))$ with $M_{qi}=R$;
\item
$(\zero,\zero)$.
\end{enumerate}
\end{defn}

Putting all these observations together, and combining with Theorem \ref{thm:main} we obtain the following classification of the  congruences on $\Ptw{n,d}$:

\begin{thm}
\label{thm:finmain}
For $n\geq1$ and $d\geq0$, the congruences on the $d$-twisted partition monoid $\Ptw{n,d}$ are precisely $\fcg(M)$, where $M$ is any fC-matrix. \epfres
\end{thm}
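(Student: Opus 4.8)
The plan is to derive this classification from Theorems \ref{thm:main} and \ref{thm:comparisons} together with the Correspondence Theorem, using the analysis of the interval $[R_I,\nabla_{\Ptw n}]$ carried out just above. Since $\Cong(\Ptw{n,d})$ is isomorphic to that interval via $\si\mapsto\si/R_I$, and $R_I$ is represented by the (type \ref{RT3}) C-pair $\Pair_{R_I}$ displayed at the start of this section, my first step would be to identify exactly which C-pairs $(\Th,M)$ give a congruence in the interval. Applying Theorem \ref{thm:comparisons}\ref{it:comp1} with $\Pair^1=\Pair_{R_I}$ — whose row type makes clause \ref{it:comp1b} vacuous — membership reduces to $\Pair_{R_I}\leqc(\Th,M)$, which by \eqref{eq:th1th2} and Figure \ref{fig:M} forces $\per\th_q=1$ and $\min\th_q\le d+1$ for all $q$, and $M_{qi}=R$ for all $i\ge d+1$. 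Reading the row types \ref{RT1}--\ref{RT10} under this last constraint then yields the additional restrictions recorded above (no $\lam$ or $\rho$; the only symbols in columns $0,\dots,d$ being $\De$, $R$, the $N$-symbols in rows $\ge2$, and $\mu,\muup,\mudown$ in rows $\le1$), and shows that such a C-pair can never be exceptional, since by Lemma \ref{la:E1} the entry $M_{q,\min\th_q}$ equals the terminal symbol $R$ of row $q$, whereas an exceptional row requires $\A_q$ there (or $\De$ when $q=2$). Hence every congruence in the interval has the form $\cg(\Th,M)$.

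Next I would set up a bijection between these surviving C-pairs and the fC-matrices of Definition \ref{de:fC}. In one direction, restrict $M$ to its columns $0,\dots,d$ to obtain $M'$; I would check that each row of $M'$ is of one of the finitary types \ref{fR1}--\ref{fR5} — exactly what the full types \ref{RT4}--\ref{RT7} and \ref{RT10} collapse to once $\xi=R$ is imposed, with the degenerate all-$\De$/all-$\mu$ cases of \ref{RT1}--\ref{RT3}, \ref{RT8}, \ref{RT9} absorbed — and that the verticality conditions \ref{V1}, \ref{V2} persist, so $M'$ is an fC-matrix. In the other direction, given an fC-matrix $M'$, extend it to a $\bnz\times\N$ matrix $M$ by filling columns $i\ge d+1$ with $R$, and let $\th_q$ be the congruence on $\N$ of period $1$ with minimum $\min_q(M')\in\{0,\dots,d+1\}$, the index of the first $R$ in row $q$ of $M'$ (or $d+1$ if there is none); I would verify that $\Th=(\th_0,\dots,\th_n)$ is a descending chain and that $(\Th,M)$ is a C-pair satisfying the interval constraints, so $\cg(\Th,M)\in[R_I,\nabla_{\Ptw n}]$ by Theorem \ref{thm:comparisons}. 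The point making these mutually inverse is that $\Th$ is recoverable from $M'$: for C-pairs in the interval, Lemma \ref{la:E1} pins the column of the first $R$ of row $q$ of $M'$ to be exactly $\min\th_q=\min_q(M')$.

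The third step is to translate $\cg(\Th,M)/R_I$ into the language of Definition \ref{de:fCcong} under the identification $\Ptw{n,d}=(\bdz\times\P_n)\cup\{\zero\}$, with $\zero$ the $R_I$-class of $I$. For a pair $((i,\al),(j,\be))$ with $i,j\le d$ and $M_{qi},M_{rj}\ne R$, relatedness is decided by clauses \ref{C1}--\ref{C8} of Definition \ref{de:cg} (clauses \ref{C4}, \ref{C5} being vacuous, as no $\lam$ or $\rho$ occurs); since $\per\th_q=1$ throughout, clause \ref{C1} reduces to the condition of (fC1), clause \ref{C3} to that of (fC3), and the $\mu$-clause \ref{C8} collapses — using $\per\th_q=1$ and the recoverability of $\min\th_q$ from $M'$ — to the alternative in (fC6). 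Any pair with some entry equal to $R$ has both entries $R$ (by \ref{C2}), so both members lie in the single $\si$-class $I(\si)\supseteq I$ and thus map to $\zero$ in the quotient, which accounts for (fC2), (fC7), (fC8) together with $(\zero,\zero)$. Conversely, every pair of $\cg(M')$ arises from such a pair of $\cg(\Th,M)$. Combining the three steps with Theorem \ref{thm:main} completes the argument.

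The hardest part will be the second step: confirming that $(\Th,M)\mapsto M'$ is a genuine bijection onto fC-matrices, and in particular that the infinitary chain $\Th$ is correctly and uniquely reconstructed from the finite data $M'$. The collapse of clause \ref{C8} in the third step also requires a short argument, but it is routine once one knows that $\per\th_q=1$ for every $q$.
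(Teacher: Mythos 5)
Your proposal follows essentially the same route as the paper: the Correspondence Theorem identifies $\Cong(\Ptw{n,d})$ with the interval $[R_I,\nabla_{\Ptw{n}}]$, Theorem \ref{thm:comparisons} pins down exactly which (necessarily non-exceptional) C-pairs land in that interval, the surviving data is repackaged as the finitary submatrix $M'$ with $\min\th_q$ recovered as the position of the first $R$ in each row, and Definition \ref{de:cg} is then translated clause-by-clause into Definition \ref{de:fCcong}. The only quibble is that $\Pair_{R_I}$ has rows $0$ and $1$ of type \ref{RT4} rather than \ref{RT3}, which is immaterial since clause \ref{it:comp1b} of Theorem \ref{thm:comparisons} is vacuous either way.
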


The description of inclusion given in Theorem \ref{thm:comparisons} also becomes much simpler, in that only part~\ref{it:comp1} applies.
However, the complication caused by the matching $\mu$s in rows $0$ and $1$ persists.  The following statement uses the $\mumin_{0}(M)$ and $\mumin_{1}(M)$ notation introduced before Theorem~\ref{thm:comparisons}, which applies to fC-matrices of types \ref{fR2} and \ref{fR4}; in these types we have $\mumin_{1}(M)-\mumin_{0}(M)=\min_1(M)-\min_0(M)$.

\begin{thm}
\label{thm:fincomp}
Let $n\geq 1$ and $d\geq 0$, and let 
$M^1$ and $M^2$ be any two fC-matrices for $\Ptw{n,d}$. Then $\fcg(M^1)\sub\fcg(M^2)$ if and only if both of the following hold:
\begin{thmsubenumerate}
\item
\label{it:fc1}
$M^1\leqc M^2$;
\item
\label{it:fc2}
If $M^1$ has type \ref{fR2} or \ref{fR4}, then at least one of the following holds:
\begin{enumerate}[label=\textup{(b\arabic*)},leftmargin=10mm]
\item \label{fb1} $\min_0(M^2)\leq\mumin_{0}(M^1)$ and $\min_1(M^2)\leq\mumin_{1}(M^1)$, or
\item \label{fb2} $M^2$ also has type \ref{fR2} or \ref{fR4}, and $\min_1(M^2)-\min_0(M^2)=\min_1(M^1)-\min_0(M^1)$.  \epfres
\end{enumerate}
\end{thmsubenumerate}
\end{thm}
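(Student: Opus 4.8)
The plan is to derive this from the description of inclusion in Theorem~\ref{thm:comparisons}\ref{it:comp1}, via the identification of $\Cong(\Ptw{n,d})$ with the interval $[R_I,\nabla_{\Ptw{n}}]$ of $\Cong(\Ptw{n})$ established by the Correspondence Theorem and the analysis carried out just before the statement. First I would record the bijection $M\mapsto(\Th,M^+)$ between fC-matrices and those C-pairs lying in this interval: here $M^+$ is obtained from $M$ by appending an infinite run of $R$s to each row, $\per\th_q=1$, and $\min\th_q=\min_q(M)$ is the first column in which $R$ occurs in row $q$ of $M$. It has already been observed that no such C-pair is exceptional, so under this bijection $\fcg(M)$ corresponds to $\cg(\Th,M^+)$, and hence $\fcg(M^1)\sub\fcg(M^2)$ if and only if $\cg(\Th^1,M^{1+})\sub\cg(\Th^2,M^{2+})$. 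It therefore suffices to match the two sides of Theorem~\ref{thm:comparisons}\ref{it:comp1} against conditions \ref{it:fc1} and \ref{it:fc2}.

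Next I would show that $(\Th^1,M^{1+})\leqc(\Th^2,M^{2+})$ is equivalent to $M^1\leqc M^2$ in the fC-matrix order. Since columns $d+1,d+2,\dots$ of each $M^{t+}$ are constantly $R$, the matrix comparison $M^{1+}\leqc M^{2+}$ says exactly the same as $M^1\leqc M^2$. The C-chain comparison $\Th^1\leqc\Th^2$ is then automatic: by \eqref{eq:th1th2} it amounts to $\min\th_q^1\geq\min\th_q^2$ together with $\per\th_q^2\mid\per\th_q^1$ for each $q$; the divisibility is trivial because every $\per\th_q^t$ equals $1$, and the inequality holds because $R$ is the top element of the order of Figure~\ref{fig:M}, so $M^1\leqc M^2$ forces the first occurrence of $R$ in each row of $M^2$ to be no later than in $M^1$, i.e.\ $\min_q(M^2)\leq\min_q(M^1)$. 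Thus \ref{it:comp1a} is equivalent to \ref{it:fc1}. This little argument (periods are $1$, first-$R$ positions are monotone) is really the only place where anything beyond routine translation is used.

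Finally I would translate the secondary clause \ref{it:comp1b}. Among the row types available to a C-matrix lying in the interval, types \ref{RT1}, \ref{RT2}, \ref{RT3}, \ref{RT8}, \ref{RT9} are excluded, while \ref{RT4}--\ref{RT7} and \ref{RT10} restrict (with $\xi=R$) to \ref{fR1}--\ref{fR5}; in particular $M^{1+}$ has type \ref{RT5} or \ref{RT7} precisely when $M^1$ has type \ref{fR2} or \ref{fR4} (note that \ref{RT2}, the remaining type in \ref{it:comp1b}, cannot occur here, so it contributes nothing). In these cases $\mumin_{0}$ and $\mumin_{1}$ of $M^{1+}$ coincide with those of $M^1$ — the relevant initial $\mu$s sit in columns $\le d$ — and $\min\th_q^{t+}=\min_q(M^t)$. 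Substituting these equalities, condition \ref{1b1} becomes \ref{fb1}, and \ref{1b2} becomes \ref{fb2} after invoking the identity $\mumin_{1}(M)-\mumin_{0}(M)=\min_1(M)-\min_0(M)$ valid for types \ref{fR2} and \ref{fR4} (equivalently \eqref{eq:j-i2}). Combining this with the previous paragraph yields the stated equivalence. I expect the only genuinely fiddly part to be the last paragraph: checking that the bijection $M\mapsto(\Th,M^+)$ is order-preserving in both directions and that the $\mumin$/$\min$ identity transfers correctly, all of which is to be read off from the row-type tables in Subsection~\ref{subsec:Cpairs} and the list of finitary row types above.
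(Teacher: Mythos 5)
Your proposal is correct and follows essentially the same route as the paper: the paper also derives Theorem \ref{thm:fincomp} by passing through the Correspondence Theorem to the interval $[R_I,\nab_{\Ptw n}]$ of $\Cong(\Ptw n)$, observing that no C-pair in that interval is exceptional so only Theorem \ref{thm:comparisons}\ref{it:comp1} applies, and then translating conditions \ref{it:comp1a} and \ref{it:comp1b} using $\per\th_q=1$, $\min\th_q=\min_q(M)$, the exclusion of type \ref{RT2}, and the identity $\mumin_1(M)-\mumin_0(M)=\min_1(M)-\min_0(M)$ for types \ref{fR2} and \ref{fR4}. Your extra verification that $M^1\leqc M^2$ already forces $\Th^1\leqc\Th^2$ (via monotonicity of first-$R$ positions) is exactly the small check the paper leaves implicit.
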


The very special case of the $0$-twisted partition monoid $\Ptw{n,0}$ deserves a separate mention,
not least because it provided an
early source of motivation for the work presented here, in the form of a question
V. Mazorchuk asked the first author at the 2018 Rhodesfest conference in Bar Ilan.
Mazorchuk observed that the ideals of $\Ptw{n,0}$ form a chain, and asked whether the methods of 
\cite{EMRT2018,ER2020} can be applied to describe its congruences. This indeed is the case, but such a description can also be derived as a (very) special case of Theorems \ref{thm:finmain} and \ref{thm:fincomp}.

Indeed, when $d=0$ the fC-matrices are just columns.  There are two basic patterns (with ${\{\id_q\}\neq N\normal\S_q}$ in row $q\geq2$ in the second), as well as four `sporadic' forms:
\begin{equation}\label{eq:fC0}
{
\Cmatsetup
\begin{array}{|c|}\hline
\renewcommand{\arraystretch}{2}
\cellcolor{delcol}\Delta \\ \hline
\cellcolor{delcol}\vvdots \\ \hline
\cellcolor{delcol}\Delta \\ \hline
\cellcolor{delcol}\Delta \\ \hline
\cellcolor{Rcol}R \\ \hline
\cellcolor{Rcol}\vvdots \\ \hline
\cellcolor{Rcol}R \\ \hline
\end{array}
\ \COMMA 
\begin{array}{c|c|}\hhline{~|-}
 \renewcommand{\arraystretch}{2}
& \cellcolor{delcol}\Delta \\ \hhline{~|-}
& \cellcolor{delcol}\vvdots \\ \hhline{~|-}
& \cellcolor{delcol}\Delta \\ \hhline{~|-}
\text{\scriptsize $q$} & \cellcolor{Ncol}N\\ \hhline{~|-}
& \cellcolor{Rcol}R \\ \hhline{~|-}
& \cellcolor{Rcol}\vvdots \\ \hhline{~|-}
& \cellcolor{Rcol}R \\ \hhline{~|-}
\end{array}
\qquad\AND\qquad
\begin{array}{|c|}\hline
\renewcommand{\arraystretch}{2}
\cellcolor{delcol}\Delta \\ \hline
\cellcolor{delcol}\vvdots \\ \hline
\cellcolor{delcol}\Delta \\ \hline
\cellcolor{delcol}\Delta \\ \hline
%\cellcolor{delcol}\Delta \\ \hline
\cellcolor{excepcol}\muup \\ \hline
\cellcolor{Rcol}R \\ \hline
\end{array}
\ \COMMA
\begin{array}{|c|}\hline
\renewcommand{\arraystretch}{2}
\cellcolor{delcol}\Delta \\ \hline
\cellcolor{delcol}\vvdots \\ \hline
\cellcolor{delcol}\Delta \\ \hline
\cellcolor{delcol}\Delta \\ \hline
%\cellcolor{delcol}\Delta \\ \hline
\cellcolor{excepcol}\mudown \\ \hline
\cellcolor{Rcol}R \\ \hline
\end{array}
\ \COMMA
\begin{array}{|c|}\hline
\renewcommand{\arraystretch}{2}
\cellcolor{delcol}\Delta \\ \hline
\cellcolor{delcol}\vvdots \\ \hline
\cellcolor{delcol}\Delta \\ \hline
\cellcolor{delcol}\Delta \\ \hline
%\cellcolor{delcol}\Delta \\ \hline
\cellcolor{excepcol}\mu \\ \hline
\cellcolor{Rcol}R \\ \hline
\end{array}
\ \COMMA
\begin{array}{|c|}\hline
\renewcommand{\arraystretch}{2}
\cellcolor{delcol}\Delta \\ \hline
\cellcolor{delcol}\vvdots \\ \hline
\cellcolor{delcol}\Delta \\ \hline
%\cellcolor{delcol}\Delta \\ \hline
\cellcolor{Ncol}\S_2 \\ \hline
\cellcolor{excepcol}\mu \\ \hline
\cellcolor{Rcol}R \\ \hline
\end{array}
\ .
}
\end{equation}
Identifying $\Ptw{n,0}$ with the set $\P_n\cup\{\zero\}$, with product given in \eqref{eq:multw0}, the simple forms of the fC-matrices in \eqref{eq:fC0} lead to a neat description of the congruences of $\Ptw{n,0}$, which dispenses with matrices altogether, and which we now give.  For the statement, we define $\rank(\zero)=-\infty$.  We also slightly abuse notation, by momentarily re-using symbols to give convenient names to the congruences.

\begin{thm}
\label{thm:CongPn0}
For $n\geq2$, the congruences on the 0-twisted partition monoid~$\Ptw{n,0}$ are precisely:
\begin{itemize}
%\item
%the trivial congruence $\De_{\Ptw{n,0}}$,
\item
the Rees congruences $R_q:= \bigset{ (\alpha,\beta)\in\Ptw{n,0}\times\Ptw{n,0}}{ \alpha=\beta\text{ or } \rank\alpha,\rank\beta\leq q}$ for $q\in\{-\infty,0,\dots,n\}$, including $\nab_{\Ptw{n,0}} = R_n$ and $\De_{\Ptw{n,0}}=R_{-\infty}$,
\item
the relations $R_N:=R_{q-1}\cup\nu_N$ for $q\in\{2,\dots,n\}$ and $\{\id_q\}\neq N\unlhd \S_q$,
\item
the relations 
\begin{align*}
\muup &:= R_0 \cup \bigset{ (\alpha,\beta)\in D_1\times D_1}{ \widehat{\alpha}=\widehat{\beta},\ \al\rR\be}, \\
\mudown &:= R_0 \cup \bigset{ (\alpha,\beta)\in D_1\times D_1}{ \widehat{\alpha}=\widehat{\beta},\ \al\rL\be}, \\
\mu &:= R_0 \cup \bigset{ (\alpha,\beta)\in D_1\times D_1}{ \widehat{\alpha}=\widehat{\beta}}, \\
\mu_{\S_2} &:= \mu\cup\nu_{\S_2}.
\end{align*}
\end{itemize}
The congruence lattice $\Cong(\Ptw{n,0})$ is shown in Figure \ref{fig:CongPn0}.  \epfres
\end{thm}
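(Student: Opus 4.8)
The plan is to obtain Theorem~\ref{thm:CongPn0} as the $d=0$ specialisation of Theorems~\ref{thm:finmain} and~\ref{thm:fincomp}, transported through the identification of $\Ptw{n,0}$ with $\P_n\cup\{\zero\}$ (product~\eqref{eq:multw0}) from Subsection~\ref{subsec:finite}. By Theorem~\ref{thm:finmain} every congruence on $\Ptw{n,0}$ is $\fcg(M)$ for an fC-matrix $M=(M_{qi})_{\bnz\times\bdz}$, and when $d=0$ such a matrix has a single column, so it is determined by the tuple $(M_{00},M_{10},\dots,M_{n0})$. The first task is therefore to list all admissible such columns, and the second is to evaluate $\fcg$ on each of them.

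For the enumeration I would run through the finitary row types. With $d=0$, types~\ref{fR2} (which needs $0\le i<k\le 0$) and~\ref{fR4} (which needs $0<k<l-1\le 0$) cannot occur, so rows $0$ and $1$ come from~\ref{fR1} or~\ref{fR3}; inspecting these (using $0\le k\le d+1=1$, resp.\ $k=0,\ l=1$) shows that the pair $(M_{00},M_{10})$ must be one of $(\De,\De)$, $(R,R)$, $(R,\De)$, $(R,\mu)$, $(R,\muup)$, $(R,\mudown)$. For $q\ge2$, type~\ref{fR5} with $0\le i\le k\le 1$ forces $M_{q0}\in\{\De,R\}\cup\set{N}{\{\id_q\}\neq N\normal\S_q}$. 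Now I would impose the verticality conditions~\ref{V1},~\ref{V2} and Lemma~\ref{la:De} (a $\De$ forces $\De$ in every position above it): the $\De$'s form a top segment of the column; by~\ref{V2} an $R$ in a row $q\ge2$ requires an $R$ directly below; and by~\ref{V1} an $N$-symbol in row $q$ requires the entry below it to be $R$ (when $q\ge3$) or $R$ or $\mu$ (when $q=2$). A short bottom-up case analysis then leaves exactly the columns listed in the statement: the column of all $\De$'s; for each $q\in\{-\infty,0,1,\dots,n\}$ the column with $R$ in rows $\le q$ and $\De$ above; for each $q\in\{2,\dots,n\}$ and each non-trivial $N\normal\S_q$ the column with $R$ in rows $0,\dots,q-1$, entry $N$ in row $q$, and $\De$ above; the columns $(R,\muup,\De,\dots)$, $(R,\mudown,\De,\dots)$, $(R,\mu,\De,\dots)$; and the column $(R,\mu,\S_2,\De,\dots)$ --- the last two occurring precisely because $\mu$ is the one non-$R$ symbol that an $N$-symbol is permitted to sit above. (Here the hypothesis $n\ge2$ is what makes room for the $\S_2$ entry and for the $R_N$ columns.)

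Next, for each admissible column I would read $\fcg(M)$ off Definition~\ref{de:fCcong}, recalling that $d=0$ forces all column indices to equal $0$ and that $\rank\zero=-\infty$. The column with $R$ in rows $\le q$ yields, via clauses \textsf{(fC2)}, \textsf{(fC7)}, \textsf{(fC8)} and \textsf{(fC1)}, exactly $\bigset{(\al,\be)}{\al=\be\text{ or }\rank\al,\rank\be\le q}=R_q$ (with $q=-\infty$ for the all-$\De$ column and $q=n$ for the all-$R$ column). Inserting an $N$ in row $q$ adds, via \textsf{(fC3)}, the set $\nu_N$ on $D_q$, giving $R_{q-1}\cup\nu_N=R_N$. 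The three $\mu$-type columns give $R_0$ together with the appropriate relation on $D_1$ via \textsf{(fC5)}, \textsf{(fC4)} or \textsf{(fC6)} --- namely $\muup$, $\mudown$ and $\mu$ as named --- and the final column additionally contributes $\nu_{\S_2}=\H\restr_{D_2}$ via \textsf{(fC3)}, yielding $\mu\cup\nu_{\S_2}=\mu_{\S_2}$. This proves the classification.

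Finally, the lattice structure follows from Theorem~\ref{thm:fincomp}. None of the fC-matrices above has type~\ref{fR2} or~\ref{fR4}, so condition~\ref{it:fc2} is vacuous and $\fcg(M^1)\subseteq\fcg(M^2)\iff M^1\leqc M^2$; in particular $M\mapsto\fcg(M)$ is an order isomorphism onto $\Cong(\Ptw{n,0})$, so the congruences listed are pairwise distinct, and the inclusion order is simply the componentwise order on columns induced by the ordering of matrix entries in Figure~\ref{fig:M}. Drawing this order on the columns enumerated above gives Figure~\ref{fig:CongPn0}. The one genuinely delicate point is the case analysis enumerating the admissible columns: one must check that the verticality conditions together with Lemma~\ref{la:De} leave precisely the listed shapes, with neither omissions nor spurious extra columns.
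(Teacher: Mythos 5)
Your proposal is correct and takes essentially the same route as the paper: Theorem~\ref{thm:CongPn0} is obtained there as the $d=0$ specialisation of Theorems~\ref{thm:finmain} and~\ref{thm:fincomp}, by listing the admissible single-column fC-matrices (the display~\eqref{eq:fC0}) and translating each into the stated congruence, with the inclusion order read off from Theorem~\ref{thm:fincomp} since types \ref{fR2} and \ref{fR4} cannot occur for $d=0$. Your enumeration of columns and the evaluation of $\fcg$ on each agree exactly with the paper's list; you have merely written out the verticality-condition case analysis that the paper leaves to inspection.
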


\begin{figure}[ht]
\begin{center}
\begin{tikzpicture}[scale=0.9]
\begin{scope}[minimum size=7mm,inner sep=0.5pt, outer sep=1pt]
\node (D) at (0,0) [draw=blue,fill=white,circle,line width=1.2pt] {\footnotesize $\De$};
\node (R0) at (0,1.4) [draw=blue,fill=white,circle,line width=1.2pt] {\footnotesize $R_0$};
\node (mud) at (-1,2.6) [draw=red,fill=white,circle,line width=1.2pt] {\footnotesize $\mudown$};
\node (muu) at (1,2.6) [draw=red,fill=white,circle,line width=1.2pt] {\footnotesize $\muup$};
\node (mu) at (0,3.8) [draw=red,fill=white,circle,line width=1.2pt] {\footnotesize $\mu$};
\node (R1) at (-1,5) [draw=blue,fill=white,circle,line width=1.2pt] {\footnotesize $R_1$};
\node (muS2) at (1,5) [draw=red,fill=white,circle,line width=1.2pt] {\footnotesize $\mu_{S_2}$};
\node (S2) at (0,6.2) [draw,circle]  {\footnotesize $R_{\S_2}$};
\node (R2) at (0,7.6) [draw=blue,fill=white,circle,line width=1.2pt]{\footnotesize$R_2$};
\node (A3) at (0,9) [draw,circle] {\footnotesize $R_{\A_3}$};
\node (Sn) at (0,11.8)[draw,circle]  {\footnotesize $R_{\S_n}$};
\node (Rn) at (0,13.2)  [draw=blue,fill=white,circle,line width=1.2pt]  {\footnotesize $R_n$};
\node (dots) at (0,10.65) {\footnotesize $\vdots$};
\node () at (.95,13.25) {\footnotesize $=\nab$};
\end{scope}
\draw (D)--(R0) (R0)--(mud) (R0)--(muu) (mud)--(mu) (muu)--(mu) (mu)--(R1) (mu)--(muS2) (R1)--(S2) (muS2)--(S2)
(S2)--(R2) (R2)--(A3) (A3)--(0,10) (0,11)--(Sn)--(Rn);
\end{tikzpicture}
\caption{The Hasse diagram of $\Cong(\Ptw{n,0})$; Rees congruences are indicated in blue outline, `sporadic' congruences in red, and we abbreviate $\De=\De_{\Ptw{n,0}}$ and $\nab=\nab_{\Ptw{n,0}}$.  
}
\label{fig:CongPn0}
\end{center}
\end{figure}
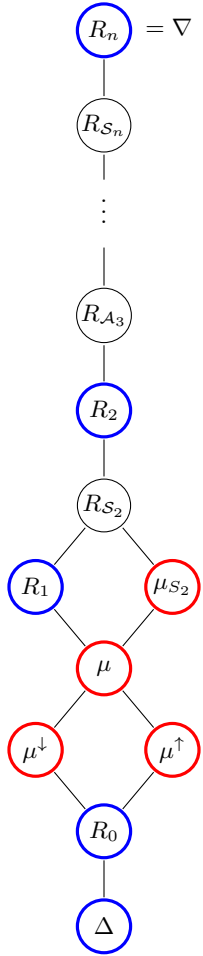

It is interesting to compare the structures of the lattices $\Cong(\P_n)$ and $\Cong(\Ptw{n,0})$ shown in Figures \ref{fig:CongPn} and \ref{fig:CongPn0}.  There are certainly some obvious similarities.  Both have a chain at the top of the lattice, consisting of the interval $[R_{\S_2},\nab]$, and below this both lattices feature four-element diamond sublattices; they differ, however, in the number of these diamonds, as well as the way they connect to each other.

For $d>0$ the lattices $\Cong(\Ptw{n,d})$ are much more complicated, even for small values of $n$ and~$d$.
As an illustration, Figure \ref{fig:CongP32} shows the Hasse diagram of $\Cong(\Ptw{3,2})$.
By the Correspondence Theorem, this lattice contains a principal filter isomorphic to $\Cong(\Ptw{3,1})$,
which in turn contains a copy of $\Cong(\Ptw{3,0})$; these two lattices are highlighted in the figure, as are the Rees congruences.
The figure was produced using the \textsf{Digraphs} package \cite{Digraphs} in \textsf{GAP} \cite{GAP4}, as well as Graphviz \cite{Graphviz} and dot2tex \cite{dot2tex}.  

The \textsf{Semigroups} package \cite{Semigroups} for \textsf{GAP} can directly compute the congruences of $\Ptw{n,d}$ for relatively small $n$ and $d$, by performing a simple but time-consuming search, and this formed an important part of our initial investigations on the topic.  However, Figure \ref{fig:CongP32} was created using our combinatorial description of the lattice via fC-matrices encapsulated by Theorems \ref{thm:finmain} and~\ref{thm:fincomp}, which allows one to deal with larger $n$ and $d$.

For fixed $n$ we have a sequence of lattices $\Cong(\Ptw{n,d})$ for $d=0,1,2,3,4,\ldots$, and as each is contained in the next, $\Cong(\Ptw{n})$ contains the direct limit of this chain:
\[
\bigcup_{d\in\N} [R_{I_{n,d+1}},\nab_{\Ptw n}].
\]
However, this limit is not $\Cong(\Ptw n)$ itself.  Indeed, the congruences belonging to this sublattice can be characterised in many equivalent ways: for example,
\bit
\item those containing a Rees congruence of the form $R_{I_{ni}}$ for some $i\in\N$, or
\item those whose associated C-matrix has the top row of type \ref{RT10}, or
\item those whose projection to $\P_n$ is the universal congruence.
\eit
A more detailed and systematic analysis of the properties of the lattices $\Cong(\Ptw{n})$ and $\Cong(\Ptw{n,d})$ will be the subject of a future article \cite{ERtwisted2}.

\begin{figure}[htp]
\begin{center}
\includegraphics[width=\textwidth]{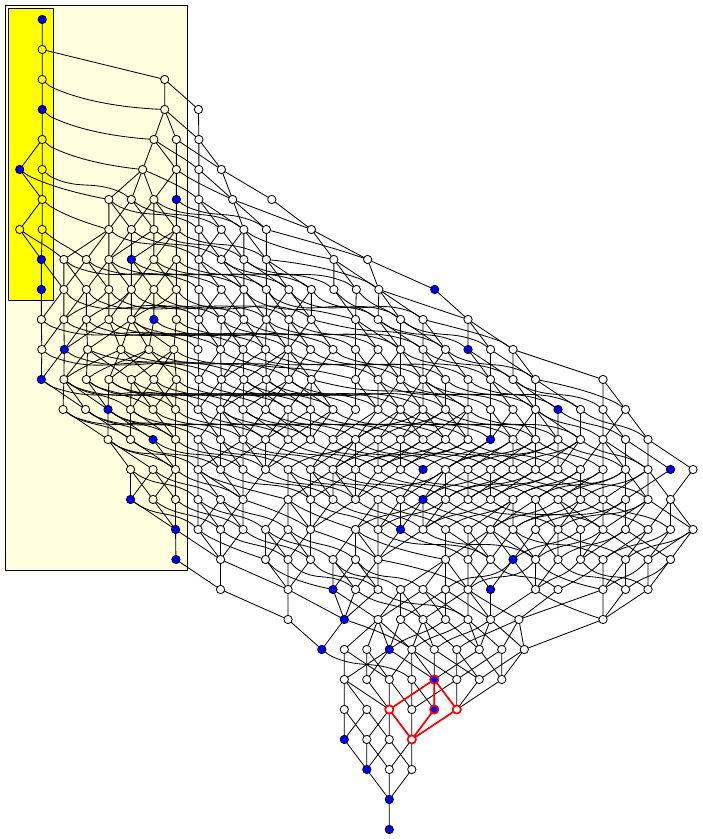}
\caption{Hasse diagram of $\Cong(\Ptw{3,2})$, with sublattices corresponding to $\Cong(\Ptw{3,1})$ and 
$\Cong(\Ptw{3,0})$ highlighted.  Vertices corresponding to Rees congruences are coloured blue,
and a copy of the 5-element diamond is higlighted in red.}
\label{fig:CongP32}
\end{center}
\end{figure}

\section{The (non-)partition monoids \boldmath{$\Ptw{1}$} and \boldmath{$\Ptw{1,d}$}}
\label{sec:n1}

In this final section we consider an interesting kind of degeneracy that arises by considering small values of $n$.

When $n=0$, the partition monoid consists of the empty partition only, and hence is trivial.
It then follows that $\Ptw{0}\cong\N$, and its congruence lattice is completely described by \eqref{eq:th1th2}.

When $n=1$ there are precisely two partitions, namely
$\begin{partn}{1} {\ \! 1\ \!}\\ {\ \! 1\ \!}\end{partn}$ and
$\begin{partn}{1} {\ \! 1\ \!}\\ \hhline{-} {\ \! 1\ \!}\end{partn}$, and $\P_{1}$ is isomorphic to $(\{0,1\},\times)$, the two-element semilattice.
 It follows that $\Ptw{1}$ is isomorphic to $\N\times\{0,1\}$ under the multiplication
 \[
 (i,q)\multw(j,r)=(i+j+\de_{0q}\de_{0r},qr),
 \]
where $\de$ is the Kronecker delta.  Theorem \ref{thm:main} remains valid for $n=1$, even though many congruences become redundant.  For one thing, there are no rows $q\geq2$, and so no $N$-symbols, and no exceptional congruences.  Additionally, since $\wh\al=\wh\be$ for all $\al,\be\in\P_1$, it follows that certain symbols play the same role:  $\muup\equiv\mudown\equiv\De$ and $\lam\equiv\rho\equiv R$, and there are no unmatched~$\mu$s.  Thus, C-matrices have labels from $\{\De,\mu,R\}$, and only items \ref{C1}, \ref{C2} and \ref{C8} from Definition \ref{de:cg} are needed to specify the congruence $\cg(\Th,M)$.

Turning to the finite monoids $\Ptw{1,d}$, the case $d=0$ is trivial, with the congruence lattice a three-element chain. So, let us assume that $d>0$. 
Here there are only three families of fC-matrix:
\begin{align*}
&\Cmatsetup\begin{array}{r|c|c|c|c|c|c|c|c|c|c|}\hhline{~|-|-|-|-|-|-|-|-|-|-|}
\renewcommand{\arraystretch}{2}
\text{\scriptsize 1} &\cellcolor{delcol}\Delta &\cellcolor{delcol}\dots & \cellcolor{delcol}\Delta & 
\cellcolor{delcol}\Delta & \cellcolor{delcol}\dots & \cellcolor{delcol}\Delta &
\cellcolor{Rcol} R& \cellcolor{Rcol}\dots&\cellcolor{Rcol} R
\\ \hhline{~|-|-|-|-|-|-|-|-|-|-|}
\text{\scriptsize 0}&\cellcolor{delcol}\Delta &\cellcolor{delcol}\dots &\cellcolor{delcol}\Delta & 
\cellcolor{Rcol} R &\cellcolor{Rcol}\dots&\cellcolor{Rcol} R&\cellcolor{Rcol} R& \cellcolor{Rcol}\dots&\cellcolor{Rcol} R
\\ \hhline{~|-|-|-|-|-|-|-|-|-|-|}
\multicolumn{1}{c}{}&\multicolumn{1}{c}{}&\multicolumn{1}{c}{}&\multicolumn{1}{c}{}&\multicolumn{1}{c}{\text{\scriptsize $i$}}&\multicolumn{1}{c}{}&\multicolumn{1}{c}{}&\multicolumn{1}{c}{\text{\scriptsize $j$}}&\multicolumn{1}{c}{}&\multicolumn{1}{c}{}
\end{array}
&&\hspace{-1.8cm}\text{for $0\leq i\leq j\leq d+1$,}\\[2mm]
&\Cmatsetup\begin{array}{r|c|c|c|c|c|c|c|c|c|c|c|c|}\hhline{~|-|-|-|-|-|-|-|-|-|-|-|}
\renewcommand{\arraystretch}{2}
\text{\scriptsize 1} &\cellcolor{delcol}\Delta &\cellcolor{delcol}\dots & \cellcolor{delcol}\Delta &\cellcolor{delcol}\Delta & \cellcolor{mucol} \mu & \cellcolor{mucol} \dots &\cellcolor{mucol} \mu &
\cellcolor{mucol} \mu &
\cellcolor{Rcol} R&\cellcolor{Rcol}\dots& \cellcolor{Rcol} R\\ \hhline{~|-|-|-|-|-|-|-|-|-|-|-|}
\text{\scriptsize 0}&\cellcolor{delcol}\Delta &\cellcolor{delcol}\dots &\cellcolor{delcol}\Delta & 
\cellcolor{mucol}\mu & \cellcolor{mucol} \mu & \cellcolor{mucol} \dots &\cellcolor{mucol} \mu &
\cellcolor{Rcol} R &\cellcolor{Rcol} R&\cellcolor{Rcol}\dots& \cellcolor{Rcol} R\\ \hhline{~|-|-|-|-|-|-|-|-|-|-|-|}
\multicolumn{1}{c}{}&\multicolumn{1}{c}{}&\multicolumn{1}{c}{}&\multicolumn{1}{c}{}&\multicolumn{1}{c}{\text{\scriptsize $i$}}&\multicolumn{1}{c}{}&\multicolumn{1}{c}{}&\multicolumn{1}{c}{}&\multicolumn{1}{c}{\text{\scriptsize $j$}}&\multicolumn{1}{c}{}&\multicolumn{1}{c}{}
\end{array}
&&\hspace{-1.8cm}\text{for $0\leq i< j\leq d$,}\\[2mm]
&\Cmatsetup\begin{array}{r|c|c|c|c|c|c|c|c|c|c|c|}\hhline{~|-|-|-|-|-|-|-|-|-|-|-|}
\renewcommand{\arraystretch}{2}
\text{\scriptsize 1} &\cellcolor{delcol}\Delta &\cellcolor{delcol}\dots & \cellcolor{delcol}\Delta  & \cellcolor{delcol}\Delta &
\cellcolor{delcol}\Delta & \cellcolor{delcol}\dots & \cellcolor{delcol}\Delta &
\cellcolor{mucol} \mu & \cellcolor{Rcol} R& \cellcolor{Rcol}\dots & \cellcolor{Rcol} R 
\\ \hhline{~|-|-|-|-|-|-|-|-|-|-|-|}
\text{\scriptsize 0}&\cellcolor{delcol}\Delta &\cellcolor{delcol}\dots &\cellcolor{delcol}\Delta  
&\cellcolor{mucol}\mu&
\cellcolor{Rcol} R &\cellcolor{Rcol}\dots&\cellcolor{Rcol} R&\cellcolor{Rcol} R&\cellcolor{Rcol} R& \cellcolor{Rcol}\dots & \cellcolor{Rcol} R
\\ \hhline{~|-|-|-|-|-|-|-|-|-|-|-|}
\multicolumn{1}{c}{}&\multicolumn{1}{c}{}&\multicolumn{1}{c}{}&\multicolumn{1}{c}{}&\multicolumn{1}{c}{}&\multicolumn{1}{c}{\text{\scriptsize $i$}}&\multicolumn{1}{c}{}&\multicolumn{1}{c}{}&\multicolumn{1}{c}{}&\multicolumn{1}{c}{\text{\scriptsize $j$}}
\end{array}
&&\hspace{-1.8cm}\text{for $1\leq i< j-1\leq d$.}
\end{align*}
Each such fC-matrix leads to a (unique) congruence, and we denote the three families of congruences by $R_{ij}$ ($0\leq i\leq j\leq d+1$), $\si_{ij}$ ($0\leq i< j\leq d$) and $\tau_{ij}$ ($1\leq i< j-1\leq d$), respectively.  The inclusion relation among these congruences takes on a particularly simple form, and the lattice $\Cong(\Ptw{1,d})$ has a neat structure; see Figure \ref{fig:CongP14} for $d=4$.  

We remark that it is apparent from Figure \ref{fig:CongP14} that $\Cong(\Ptw{1,d})$ contains many five-element diamond sublattices, which means that this lattice is not distributive.  Although it is less obvious, the lattices $\Cong(\Ptw{n,d})$ also contain diamonds for $n\geq2$ and $d\geq1$, though not for $d=0$; for example, Figure~\ref{fig:CongP32} indicates a diamond sublattice of $\Cong(\Ptw{3,2})$ in red.  
Distributivity, modularity and other properties of the lattices $\Cong(\Ptw{n})$ and $\Cong(\Ptw{n,d})$
will be one of the main topics of the forthcoming article \cite{ERtwisted2}.

\begin{figure}[ht]
\begin{center}
\begin{tikzpicture}[scale=1.2]
\foreach \x/\y in {0/0,1/2,2/3,3/4} {\draw(\x,\y)--(\x,10-\y);}
\foreach \x in {0,1,2,3,4} {\draw(0-\x,1+\x)--(4-\x,5+\x);}
\foreach \x in {0,1,2,3,4} {\draw(0+\x,1+\x)--(-4+\x,5+\x);}
\begin{scope}[minimum size=6mm,inner sep=0.5pt, outer sep=1pt]
\foreach \L/\x/\y in {55/0/0, 45/0/1, 44/0/2, 34/0/3, 33/0/4, 23/0/5, 22/0/6, 12/0/7, 11/0/8, 01/0/9, 00/0/10, 35/1/2, 24/1/4, 13/1/6, 02/1/8, 25/2/3, 14/2/5, 03/2/7, 15/3/4, 04/3/6, 05/4/5} {\node (R\x\y) at (\x,\y) [draw=blue,fill=white,circle,line width=1.2pt] {\footnotesize $R_{\L}$};}
\foreach \L/\x/\y in {35/1/3, 24/1/5, 13/1/7, 25/2/4, 14/2/6, 15/3/5} {\node (R\x\y) at (\x,\y) [draw=red,fill=white,circle,line width=1.2pt] {\footnotesize $\tau_{\L}$};}
\foreach \L/\x/\y in {34/1/2, 23/1/4, 12/1/6, 01/1/8, 24/2/3, 13/2/5, 02/2/7, 14/3/4, 03/3/6, 04/4/5} {\node (R\x\y) at (-\x,\y) [draw=green,fill=white,circle,line width=1.2pt] {\footnotesize $\si_{\L}$};}
\end{scope}
\end{tikzpicture}
\caption{The Hasse diagram of $\Cong(\Ptw{1,4})$.}
\label{fig:CongP14}
\end{center}
\end{figure}

\footnotesize
\def\bibspacing{-1.1pt}
\bibliography{biblio}
\bibliographystyle{abbrv}

\end{document}